\renewcommand{\epsilon}{\varepsilon}
\renewcommand{\phi}{\varphi}
\newtheorem{theorem}{Theorem}[section]
\newtheorem{remark}[theorem]{Remark}
\newtheorem{lemma}[theorem]{Lemma}
\newtheorem{corollary}[theorem]{Corollary}
\newtheorem{conjecture}[theorem]{Conjecture}
\newtheorem{proposition}[theorem]{Proposition} 
\newtheorem{definition}[theorem]{Definition} 
\newtheorem{claim}[theorem]{Claim}
\begin{document}

\numberwithin{equation}{section}

\title{The local twisted Gan-Gross-Prasad conjecture for $\text{GL(V)/U(V)}$}
\author{Nhat Hoang Le}
\email{lnhoang@nus.edu.sg}
\address{Department of Mathematics, Block S17, National University of Singapore, 10 Lower Kent Ridge Drive, 119076.}

\classification{22E50, 22E35, 20G05 (primary)}
\keywords{Gan-Gross-Prasad conjecture, branching laws, local relative trace formula, unitary groups, Weil representation, skew-Hermitian spaces, Langlands parameters, tempered representations.}
\thanks{The author would like to thank his advisor Wee Teck Gan for suggesting this problem and many useful comments. The author also thanks Lei Zhang for his guidance during when the project was being conducted. He would like to thank Raphael Beuzart-Plessis and Chen-Bo Zhu for serving his thesis committee. He especially thanks Raphael Beuzart-Plessis for his hospitality as well as many helpful advices during the time the author visited Aix-Marseille University. The author thanks Rui Chen, Chen Wan and Jialiang Zou for many helpful discussions and encouragements. This project is supported by the NUS President's Graduate Fellowship and the NUS Overseas Research Immersion Award.}

\begin{abstract}
In this paper, following the method developed by J.-L. Waldspurger and R. Beuzart-Plessis for Bessel models, we study two local relative trace formulas for the local twisted Gan-Gross-Prasad conjecture. By obtaining spectral expansions and a partial geometric comparision between the two formulas, we prove the local twisted Gan-Gross-Prasad conjecture for $\text{GL(V)/U(V)}$ over nonarchimedean fields for tempered representations.
\end{abstract}

\maketitle

\vspace*{6pt}\tableofcontents

\section{Introduction}\label{sec1}

The Gross-Prasad and the Gan-Gross-Prasad conjectures \cite{GP92,GP94,GGP12a} considered a family of restriction problems for classical groups and proposed precise answers to these problems using the local and global Langlands correspondence. These restriction problems were formulated in terms of a pair $W\subset V$ of orthogonal, hermitian, symplectic or skew-hermitian spaces. For Bessel models, the Gan-Gross-Prasad conjecture predicts the behavior of 
$
\text{Hom}_{H}\left(\pi,\xi\right),
$
where $\pi$ is an irreducible representation in a generic L-packet of $G$ (the
product of isometric groups of $V$ and $W$) and $\xi$ is a certain character of a certain subgroup $H$ of $G$. The above conjecture has been largely solved by a series of works
by J.-L. Waldspurger, C. Moeglin, R. Beuzart-Plessis, W.T. Gan, A.
Ichino, H.Xue, Z. Luo, C. Chen \cite{Wal10,Wal12a,Wal12c,Wal12d,Wal12e,MW12,BP14,BP15,BP16,BP20,GI16,Xue23a,Luo21,CL22,Ch21,Ch23}. For Fourier-Jacobi models, the conjectures considered (skew-hermitian) unitary groups or symplectic-metaplectic groups, and attached the Weil representation to these models. Over p-adic fields, they were resolved by W.T.Gan and A.Ichino in skew-hermitian case \cite{GI16}, and H. Atobe in symplectic-metaplectic
case \cite{Ato18}. In archimedean case, H. Xue \cite{Xue23b} proved the conjecture for tempered representations of unitary groups. The symplectic-metaplectic case is an ongoing work by C. Chen, R. Chen and J. Zou.

In \cite{GGP23}, W.T. Gan, B.H. Gross and D. Prasad proposed a twisted variant of the Fourier-Jacobi case for unitary groups. For simplicity, let us consider the case when $F$ is a $p$-adic field. Let $E/F$ be a quadratic extension of $F$ and $V$ be an $n$-dimensional skew-hermitian space over $E$. Let $\psi$ be a nontrivial additive character of $F$ and $\mu$ be a conjugate-symplectic character of $E^{\times}$ (i.e. the restriction
of $\mu$ to $F^{\times}$ is the quadratic character $\omega_{E/F}$
associated to $E/F$ by the local class field theory). Let $\omega_{V,\psi,\mu}$
be the Weil representation of the isometry group $U\left(V\right)$.
Let $K/F$ be a quadratic extension of $F$ (we may take $K=E$). The three authors consider a branching problem associated to the triple $\left(G,H,\omega_{V,\psi,\mu}\right)$, where $G=\text{Res}_{K/F}U\left(V_{K}\right)$ and $H_{V}=U\left(V\right)$.

To be more precise, for an irreducible generic representation $\pi$
of $G\left(F\right)$, they consider the problem of determining 
$$m_{V}\left(\pi\right)=\dim\text{Hom}_{H_{V}}\left(\pi,\omega_{V,\psi,\mu}\right).$$
We remark that a partial result when $K \neq E$ has been achieved in \cite{CG22}. We focus on the case when $K=E$, in which case $G=\text{GL}(V)$. We state \cite[Conjecture 2.1]{GGP23}.
\begin{conjecture}\label{1.1}
$ $
\begin{enumerate}
\item If $\pi\in\text{Irr}\left(G\left(F\right)\right)$ is generic, then
$$
\underset{V}{\sum}\ \dim\text{Hom}_{H_{V}}\left(\pi,\omega_{V,\psi,\mu}\right)=1,
$$
where $V$ runs over equivalence classes of $n$-dimensional skew-hermitian forms.
\item For generic $\pi\in\text{Irr}\left(G\left(F\right)\right)$, the unique
skew-hermitian space $V$ which gives nonzero contribution to the
above sum satisfies
$$
\mu\left(\det V\right)=\epsilon\left(1/2,\pi\times{}^{\sigma}\pi^{\vee}\times\mu^{-1},\psi_{E}\right)\cdot\omega_{\pi}\left(-1\right)^{n}\cdot\omega_{E/F}\left(-1\right)^{n\left(n-1\right)/2},
$$
where $\omega_\pi$ is the central character of $\pi$.
\end{enumerate}
\end{conjecture}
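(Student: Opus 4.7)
The plan is to establish two local relative trace formulas attached to the triple $(G, H_V, \omega_{V,\psi,\mu})$ and to compare them, following the blueprint laid out by Waldspurger and Beuzart-Plessis for Bessel models, with the Weil representation now playing the role of the Bessel character. For each skew-Hermitian space $V$ and each strongly cuspidal test function $f \in C_c^\infty(G(F))$, I would attach a distribution $J_V(f)$ given by a regularized integral of $f$ against a matrix coefficient of $\omega_{V,\psi,\mu}$ along the diagonal of $H_V \times H_V$ inside $G$. Using Arthur's truncation together with Waldspurger's analysis of strongly cuspidal functions, $J_V$ should admit a geometric expansion as a sum of weighted orbital integrals of $f$ supported on the elliptic regular semisimple classes of $H_V(F)$, twisted by an explicit Weil-type oscillatory factor. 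Dualizing via Sauvageot's density theorem and the Plancherel formula for $\mathrm{GL}_n(E)$ yields a spectral expansion of the shape $J_V(f) = \int_{\Pi_{\mathrm{temp}}(G)} m_V(\pi)\, \hat{\theta}_f(\pi)\, d\pi$, where $\hat{\theta}_f$ is the weighted character of $f$. Summing over skew-Hermitian $V$ of dimension $n$ then encodes $\sum_V m_V(\pi)$ as a spectral invariant of $J := \sum_V J_V$.

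In parallel, I would introduce a companion distribution $J^*(f)$ whose spectral expansion produces the epsilon-factor side of Conjecture \ref{1.1}(2); concretely, $J^*$ should be built from a Rankin--Selberg or doubling-style integral that extracts the local factor $\epsilon(1/2, \pi \times {}^{\sigma}\pi^\vee \times \mu^{-1}, \psi_E)$ together with the correction factors $\omega_\pi(-1)^n$ and $\omega_{E/F}(-1)^{n(n-1)/2}$. Its geometric expansion will involve orbital integrals on a dual symmetric space, related to those in $J$ by a Cayley-type transfer of test functions. Matching the two geometric expansions on the elliptic regular locus --- a \emph{partial} comparison in the sense of the abstract --- yields the identity $J(f) = J^*(f)$ on strongly cuspidal functions, and reading off the equality of spectral expansions gives $\sum_V m_V(\pi) = \epsilon(\pi)$ for tempered $\pi$, where $\epsilon(\pi)$ denotes the right-hand side of Conjecture \ref{1.1}(2). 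Since $\epsilon(\pi) \in \{0,1\}$ and is nonzero on a unique $V$, both parts of the conjecture follow simultaneously; the extension from strongly cuspidal $f$ to arbitrary tempered $\pi$ is then standard via Sauvageot-type density.

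The main obstacle is the partial geometric comparison. Because the kernel of $J_V$ carries the Weil representation, the orbital integrals appearing in its geometric expansion are twisted by oscillatory factors whose local integration must be handled via Fourier-theoretic identities involving the Weil index of $\psi$ and the conjugate-symplectic character $\mu$; the resulting transfer factors are more intricate than in the Bessel case, and identifying them with those produced on the $J^*$ side requires a careful Cayley/logarithm analysis adapted to the symmetric pair $(\mathrm{GL}(V), U(V))$. A secondary but essential point is the germ behaviour of these twisted orbital integrals near singular semisimple elements, which governs whether the non-elliptic contributions cancel after summation over $V$; controlling these germs in the presence of the Weil oscillator is precisely what restricts the comparison to the ``partial'' regime, and is why the method delivers the conjecture only for tempered (rather than all generic) representations.
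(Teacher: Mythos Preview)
Your proposal has a structural confusion that would derail the argument. You aim to prove a single identity $\sum_V m_V(\pi) = \epsilon(\pi)$ and then assert ``$\epsilon(\pi) \in \{0,1\}$ and is nonzero on a unique $V$'', but the root number on the right of part (2) is a sign $\pm 1$, not an element of $\{0,1\}$, and it does not depend on $V$. The two parts of the conjecture are two \emph{separate} linear equations in the pair $(m_V(\pi), m_{V'}(\pi))$: part (1) asserts the unsigned sum $m_V + m_{V'} = 1$, while part (2) asserts that the \emph{signed} sum $\mu(\det V)\, m_V + \mu(\det V')\, m_{V'}$ equals the root number. The paper establishes these via two distinct trace-formula comparisons, not one; a single spectral identity of the form you wrote cannot recover both.

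Beyond this, the paper's architecture diverges from your sketch in essential ways. For part (1) the paper explicitly \emph{avoids} a geometric expansion of $J_V$: instead it linearizes $J_V + J_{V'}$ to a functional $J_{\mathrm{qc}}$ on quasi-characters of $G(F)$ and proves $J_{\mathrm{qc}}(\theta) = c_\theta(1)$ by induction on $n$, Harish-Chandra semisimple descent (which reduces to lower-rank twisted GGP triples via Proposition \ref{4.1}), and a homogeneity argument on the Lie algebra that exploits the minimal wavefront set of $\omega_{V,\psi,\mu,\chi}$; the Steinberg representation supplies the normalizing constant. For part (2), the companion distribution $J^*$ is not a Rankin--Selberg or doubling integral on $G$ but a \emph{twisted} trace formula on $\tilde{M}(F) = (\mathrm{Res}_{E/F}\mathrm{GL}_n \times \mathrm{GL}_n)\theta_n$; its spectral side yields the epsilon factor through the local functional equation applied to an explicit tempered intertwining (Proposition \ref{8.2}), and the comparison with $\mu(\det V) J_V + \mu(\det V') J_{V'}$ is carried out via the twisted endoscopic character identity (Theorem \ref{8.1}) on elliptic germs, not via a Cayley-type transfer of orbital integrals. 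The oscillatory transfer factors, germ expansions near singular elements, and Sauvageot density that you flag as the main obstacles simply do not enter; the paper bypasses them entirely by working at the level of quasi-characters and elliptic germs.
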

In \cite{GGP23}, the three authors resolve the above conjecture for principal series representations, and in general reduce the conjecture to the case of discrete series by using Mackey theory. In addition, the case of Steinberg representation was proved. The main result of this paper is to prove that Conjecture \ref{1.1} holds for tempered representations. 
\begin{theorem}\label{thm1.2}
\begin{enumerate}
\item If $\pi\in\text{Irr}\left(G\left(F\right)\right)$ is tempered, then
$$
\underset{V}{\sum}\ \dim\text{Hom}_{H_{V}}\left(\pi,\omega_{V,\psi,\mu}\right)=1,
$$
where $V$ runs over equivalence classes of $n$-dimensional skew-hermitian forms.
\item For tempered $\pi\in\text{Irr}\left(G\left(F\right)\right)$, the unique
skew-hermitian space $V$ which gives nonzero contribution to the
above sum satisfies
$$
\mu\left(\det V\right)=\epsilon\left(1/2,\pi\times{}^{\sigma}\pi^{\vee}\times\mu^{-1},\psi_{E}\right)\cdot\omega_{\pi}\left(-1\right)^{n}\cdot\omega_{E/F}\left(-1\right)^{n\left(n-1\right)/2}.
$$
\end{enumerate}
\end{theorem}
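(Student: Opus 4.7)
The plan is to adapt the local relative trace formula method of Waldspurger and Beuzart-Plessis, originally developed for Bessel models, to the twisted Fourier-Jacobi setting $(\mathrm{GL}(V), U(V), \omega_{V,\psi,\mu})$. For each $n$-dimensional skew-hermitian space $V$ over $E$, I would construct a distribution $J_V$ on an appropriate Schwartz-type space of functions on $G(F)$ twisted by the Weil representation, whose kernel encodes the $H_V \times H_V$-invariant pairing of matrix coefficients of tempered representations against $\omega_{V,\psi,\mu}$. The backbone of the argument is to produce two expansions of $J_V$: a spectral expansion in which the contribution of a tempered $\pi$ is controlled by $m_V(\pi)$ and the Plancherel density, and a geometric expansion in terms of weighted orbital integrals on $H_V$.

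First I would establish the spectral side and derive a multiplicity formula of the shape
$$m_V(\pi) \;=\; \sum_{T \subset H_V} |W(H_V,T)|^{-1} \int_{T(F)} D^{H_V}(t)\,\Theta_\pi(t)\,c_{\pi,\omega}(t)\,\mathrm{d}t,$$
where $T$ runs over conjugacy classes of maximal tori in $H_V$ and $c_{\pi,\omega}(t)$ packages the germ of the Weil-twisted matrix coefficient of $\pi$ near $t$. This step rests on Harish-Chandra's Plancherel theorem on $G$, the asymptotic behaviour of tempered matrix coefficients, and the Harish-Chandra-Howe character expansion of $\pi$. The delicate analytic point is the control of the singularities at the identity in the presence of $\omega_{V,\psi,\mu}$, which requires twisted analogues of Beuzart-Plessis's uniform bounds for truncated integrals of matrix coefficients.

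Next I would introduce an auxiliary ``split'' trace formula $I$ whose spectral expansion naturally produces the epsilon factor $\varepsilon(1/2, \pi \times {}^\sigma\pi^\vee \times \mu^{-1}, \psi_E)$ by way of a Rankin-Selberg or Godement-Jacquet type period on $\mathrm{GL}(V)$. A \emph{partial} geometric comparison between $I$ and $\sum_V J_V$, matching the regular semisimple orbital integrals of matching test functions while deducing the nilpotent contributions from the spectral side via the multiplicity formula above, would yield an identity of the form
$$\sum_V m_V(\pi) \;=\; \text{contribution from }\varepsilon\text{-factors and central characters},$$
from which both parts of Theorem \ref{thm1.2} follow. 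Summing over all $V$ gives the multiplicity-one assertion (1), while isolating which $V$ supports the nonzero contribution produces the dichotomy by $\mu(\det V)$ in (2).

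The main obstacle, in parallel with the Bessel setting, is expected to lie on the geometric side. The conjugate-symplectic twist introduced by the Weil representation distorts the standard matching of orbital integrals, and the nilpotent contributions at the identity must be tracked with precision. Verifying that the germ expansion of the Weil-twisted matrix coefficients is compatible with the character expansion of $\pi$, and then proving the required cancellations across the equivalence classes of skew-hermitian forms $V$, will be the hardest part. The already-known principal series and Steinberg cases from \cite{GGP23}, together with the reduction to discrete series by Mackey theory established there, should provide useful rigidity constraints that allow the effort to concentrate on the genuinely new tempered cases.
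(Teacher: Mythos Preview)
Your overall strategy is correct and closely parallels the paper's approach, but there is one substantive divergence worth flagging. You propose to establish a geometric multiplicity formula of the shape
\[
m_V(\pi) \;=\; \sum_{T \subset H_V} |W(H_V,T)|^{-1} \int_{T(F)} D^{H_V}(t)\,\Theta_\pi(t)\,c_{\pi,\omega}(t)\,\mathrm{d}t,
\]
and then compare it with an auxiliary ``split'' formula. The paper explicitly avoids any such formula: its novelty is that no geometric expansion of $J_V$ is ever written down. Instead, after obtaining the spectral expansion $J_V(f)=\int_{\mathcal{X}(G)}\hat\theta_f(\pi)m_V(\bar\pi)\,d\pi$, the paper linearizes the problem by passing to a linear form $J_{\mathrm{qc}}$ on the space of quasi-characters, and proves directly by induction on $n$ (via Harish-Chandra semisimple descent away from the center, followed by descent to the Lie algebra and a homogeneity argument using the wavefront set of $\omega_{V,\psi,\mu,\chi}$) that $J_{\mathrm{qc}}(\theta)=c_\theta(1)$ for every quasi-character $\theta$. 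The Steinberg case from \cite{GGP23} is used at the end to pin down the last constant. This bypasses entirely the delicate analysis of germs of Weil-twisted matrix coefficients that your displayed formula would require, and it is not clear that your $c_{\pi,\omega}(t)$ is even well-defined in the form you suggest, since the Weil representation is infinite-dimensional and the relevant ``germ'' is the character of its isotypic component, governed by a minimal nilpotent orbit rather than by a simple test function.

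For part (ii), your auxiliary formula $I$ is realized in the paper as a \emph{twisted} trace formula on $\tilde M=(\mathrm{Res}_{E/F}\mathrm{GL}_n\times\mathrm{GL}_n)\theta_n$, with the Weil representation extended to the twisted space so that the Rankin--Selberg local functional equation produces $\epsilon(1/2,\pi\times{}^\sigma\pi^\vee\times\mu^{-1},\psi_E)$ exactly as you anticipate. Again the paper does not attempt a full geometric expansion: it proves a spectral expansion of $\tilde J$, transfers to quasi-characters, and compares only the \emph{elliptic} parts of $J_{\mathrm{qc}}$ and $\tilde J_{\mathrm{qc}}$ via the twisted endoscopic character identity for $\mathrm{GL}_n(E)$, reducing everything to a single constant near the identity which is killed by a support argument (Lemma~\ref{10.6}). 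Your ``partial geometric comparison'' is thus the right instinct, but the paper's execution is sharper than matching orbital integrals of test functions: it works entirely at the level of quasi-characters and never needs the full geometric side on either formula.
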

We now give a brief description of our strategy. Our proof follows the local trace formula approach developed by Waldspurger, Beuzart-Plessis and Wan in the case of Bessel models. We are however dealing with the Fourier-Jacobi model here, and a new feature in this situation is the presence of the Weil representation of the subgroup $H$ (as opposed to a $1$-dimensional character of $H$ in the Bessel case). As far as we are aware, our work is the first instance where a local trace formula approach has been successfully  developed for a Fourier-Jacobi type model. As in \cite{Wal10,Wal12a,Wal12c,BP14,BP15,BP16,BP20,Wan19} and many following works related to geometric multiplicity formulas, Theorem \ref{thm1.2} should follow from a comparison of a formula for the multiplicity $m_{V}\left(\pi\right)$, which express $m_{V}\left(\pi\right)$ in terms of the Harish-Chandra character of $\pi$, and its twisted variant, which carries the information of epsilon factors. Recall there exists a locally integrable smooth function $\theta_{\pi}$ on the regular semisimple locus $G_{\text{reg}}\left(F\right)$ of $G\left(F\right)$ such that 
$$
\text{Trace }\pi\left(f\right)=\int_{G\left(F\right)}\theta_{\pi}\left(x\right)f\left(x\right)dx,
$$
for all $f\in C_{c}^{\infty}\left(G\left(F\right)\right)$. The function $\theta_\pi$ is unique and is called the Harish-Chandra character of $\pi$. One can regularize the character $\theta_{\pi}$ on $G_{\text{reg}}\left(F\right)$ to a function $c_{\theta_{\pi}}$ on the semisimple locus $G_{\text{ss}}\left(F\right)$ of $G\left(F\right)$. If $x\in G_{\text{reg}}\left(F\right)$, we have $c_{\theta_{\pi}}\left(x\right)=\theta_{\pi}\left(x\right)$.
For general $x\in G_{\text{ss}}\left(F\right)$, we have $c_{\theta_{\pi}}\left(x\right)$
is the main coefficient of a certain local expansion of $\theta_{\pi}$
near $x$. For the precise definition of $c_{\theta_{\pi}}$, we refer
the reader to section \ref{sec3.3}.

We focus on part 1 of Theorem \ref{thm1.2}. Taking inspirations from \cite{Wal10,BP16,BP20},
we consider the left regular representation $L^{2}\left(H\left(F\right)\backslash G\left(F\right),\omega_{V,\psi,\mu}\right)$ of $G\left(F\right)$. Recall that the Weil representation $\omega_{V,\psi,\mu}$ can be realised as the space of Schwartz functions on a Lagrangian space $X$ of the symplectic space $\text{Res}_{E/F}V$. A function $f\in C_{c}^{\infty}\left(G\left(F\right)\right)$ acts
on $L^{2}\left(H\left(F\right)\backslash G\left(F\right),\omega_{V,\psi,\mu}\right)$ by 
$$
\left(R\left(f\right)\phi\right)\left(x,w\right)=\int_{G\left(F\right)}f\left(g\right)\phi\left(xg,w\right)dg,
$$
where $\phi\in L^{2}\left(H\left(F\right)\backslash G\left(F\right),\omega_{V,\psi,\mu}\right)$,
$x\in G\left(F\right)$ and $w\in X$. We have 
$$
\left(R\left(f\right)\phi\right)\left(x,w\right)=\int_{H\left(F\right)\backslash G\left(F\right)}K_{f}\left(x,y\right)\phi\left(y,w\right)dy,
$$
for any $\phi\in L^{2}\left(H\left(F\right)\backslash G\left(F\right),\omega_{V,\psi,\mu}\right)$
and $x,y\in G\left(F\right)$, where 
$$
K_{f}\left(x,y\right)=\int_{H\left(F\right)}f\left(x^{-1}hy\right)\omega_{V,\psi,\mu}\left(h\right)dh,\ \ x,y\in G\left(F\right)
$$
is the $\text{End}\left(\omega_{V,\psi,\mu}\right)$-valued kernel
function associated to $f$. In order to study the representation
$L^{2}\left(H\left(F\right)\backslash G\left(F\right),\omega_{V,\psi,\mu}\right)$,
we compute the trace of $R\left(f\right)$, which is the following linear form 
$$
J_{V}\left(f\right)=\int_{Z_{G}\left(F\right)H\left(F\right)\backslash G\left(F\right)}\text{Trace}\left(K_{f}\left(x,x\right)\right)dx.
$$
$$
=\int_{Z_{G}\left(F\right)H\left(F\right)\backslash G\left(F\right)}\underset{i}{\sum}\int_{H\left(F\right)}f\left(g^{-1}hg\right)\left\langle \phi_{i},\omega_{V,\psi,\mu}\left(h\right)\phi_{i}\right\rangle dhdg,
$$
where $f\in{\mathcal{C}}_{\text{scusp}}\left(G\left(F\right)\right)$ and
$\left\{ \phi_{i}\right\} _{i}$ is an orthonormal basis for $\omega_{V,\psi,\mu}$.
Following the method in \cite{BP20}, we obtain a spectral expansion
of $J_{V}$, which involves a space  of virtual tempered representations ${\mathcal{X}}\left(G\right)$. This corresponds to Theorem \ref{6.1} in Section \ref{sec6}.
\begin{theorem}
For any strongly cuspidal function $f\in{\mathcal{C}}_{\text{scusp}}\left(G\left(F\right)\right)$,
we have 
$$
J_{V}\left(f\right)=\int_{{\mathcal{X}}\left(G\right)}\hat{\theta}_{f}\left(\pi\right)m_{V}\left(\bar{\pi}\right)d\pi.
$$
\end{theorem}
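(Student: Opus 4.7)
The plan is to adapt the spectral-expansion argument of Beuzart-Plessis in the Bessel setting, as developed in \cite{BP20}, to the present Fourier-Jacobi case; the essential new feature is that the Weil representation $\omega_{V,\psi,\mu}$ replaces the one-dimensional character of $H$ inside the inner integral defining $J_V$. The starting point is the expansion
$$
\text{Trace}\bigl(K_f(x,x)\bigr) = \sum_i \int_{H(F)} f(x^{-1}hx)\,\langle \omega_{V,\psi,\mu}(h)\phi_i,\phi_i\rangle\,dh,
$$
obtained by computing the operator trace against the orthonormal basis $\{\phi_i\}$.

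First I would introduce a family of truncation functions $\kappa_N$ on $Z_G(F)H(F)\backslash G(F)$ and define a truncated linear form $J_V^N(f)$. The convergence of $J_V(f)$ itself, together with the limit relation $\lim_{N\to\infty} J_V^N(f) = J_V(f)$, would be settled in a prior section from the strong cuspidality of $f$ together with decay estimates for the matrix coefficients of $\omega_{V,\psi,\mu}$ along $H(F)$ modulo $Z_G(F)$. In parallel I would establish a matrix-coefficient formula
$$
m_V(\bar{\pi}) = \int_{Z_G(F)H(F)\backslash G(F)} \sum_i \int_{H(F)} \langle \pi(g)v,v\rangle\,\langle \omega_{V,\psi,\mu}(h)\phi_i,\phi_i\rangle\,dh\,dg
$$
for suitably normalised matrix coefficients of $\pi$, imitating the analogous computation in the Bessel case but with pairing against $\omega_{V,\psi,\mu}$ in place of a character.

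Next I would insert the Plancherel-type spectral expansion of the kernel,
$$
K_f(x,y) = \int_{{\mathcal{X}}(G)} K_{f,\pi}(x,y)\,d\pi,
$$
and exploit that, because $f$ is strongly cuspidal, Arthur's theory of weighted characters identifies the spectral coefficient with $\hat{\theta}_f(\pi)$. Substituting into $J_V^N(f)$ and formally interchanging the ${\mathcal{X}}(G)$-integral with the outer quotient integral produces a truncated pairing that, by the matrix-coefficient identity above, converges to $m_V(\bar{\pi})$ as $N\to\infty$, yielding the asserted identity after passing to the limit.

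The main obstacle I anticipate is justifying this interchange of integrals and the subsequent $N\to\infty$ limit. This requires polynomial growth estimates in the spectral parameter for $\hat{\theta}_f(\pi)$, asymptotic control of tempered characters on $G_{\text{reg}}(F)$, and careful decay bounds for the matrix coefficients of $\omega_{V,\psi,\mu}$ along $H(F)/Z_G(F)$. The genuinely new analytic input compared to the Bessel setting lies in handling $\omega_{V,\psi,\mu}$: its matrix coefficients are not compactly supported modulo the center, and the oscillatory behaviour inherent to the Weil representation must be combined with Harish-Chandra's Schwartz estimates in order to obtain uniform convergence on both sides.
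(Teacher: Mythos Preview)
Your outline has the right high-level shape but misses the two structural ideas on which the paper's proof actually rests.

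First, the ``matrix-coefficient formula'' you propose for $m_V(\bar\pi)$ is not an elementary input: in the paper this is exactly the content of the explicit tempered intertwining theorem (Theorem \ref{5.1}) and its consequence Corollary \ref{5.8}. What is needed is not a closed formula for $m_V(\bar\pi)$ as an integral of a single matrix coefficient, but rather the existence, for each compact set in $\mathcal X_{\mathrm{temp}}(G)$, of $\phi,\phi'\in\omega_{V,\psi,\mu,\chi}$ and a section $T$ with $\mathcal L_\pi^{\phi,\phi'}(T_\pi)=m_V(\bar\pi)$. Proving this requires showing $m_V(\bar\pi)\ne 0\Leftrightarrow \mathcal L_\pi^{\phi_1,\phi_2}\not\equiv 0$, which in turn uses parabolic induction (Proposition \ref{5.4}) and, crucially, the induction hypothesis that $m_V\le 1$ for lower rank. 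Your sketch treats this as a routine analogue of the Bessel case, but it is in fact the new core ingredient.

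Second, the paper does \emph{not} proceed by truncating $J_V$ and directly expanding the kernel over $\mathcal X(G)$. Instead it introduces an auxiliary distribution $J_{\mathrm{aux}}(f,f',\phi,\phi')$ depending on an extra test function $f'$; the point is that one can choose $f'$ (via Corollary \ref{5.8}) so that $\mathcal L_\pi^{\phi,\phi'}(\pi(f'^{\vee}))=m_V(\bar\pi)$ on the support of $\pi\mapsto\pi(f)$, and then $K(f_1,x)=K^2_{f_1,f',\phi,\phi'}(x,x)$. The spectral expansion of $J_{\mathrm{aux}}$ (Proposition \ref{6.4}) then gives the result for the non-discrete part $f_1$ of $f$. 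The discrete-series part $f_2$ is handled separately by a direct computation using the nondegenerate pairing of Proposition \ref{6.7}. Your proposed direct interchange of the $\mathcal X(G)$-integral with the quotient integral would face exactly the analytic difficulties that the $J_{\mathrm{aux}}$ detour is designed to avoid, and without the $f_1/f_2$ splitting you would also have no way to invoke $m_V\le 1$ where it is needed.
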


Here $\hat{\theta}_{f}\left(\pi\right)$ is a certain weighted character
associated to $f$, which is defined explicitly in Section \ref{sec3.6}.

One novelty in our treatment is that unlike \cite{Wal10,BP16,BP20}, we do not need to obtain a geometric expansion of $J_V$ in order to prove our main results. In particular, we do not need to obtain a geometric multiplicity formula for $m_V(\pi)$. Instead of considering a single linear form $J_{V}$, we first linearize the problem to a linear form of quasi-characters on $G\left(F\right)$ (denoted by $QC(G(F))$), and then obtain a ``geometric expansion'' for the sum of such linear forms over isomorphism classes of skew-hermitian spaces. To be more precise, we define the following linear form on $QC\left(G\left(F\right)\right)$
$$
J_{\text{qc}}\left(\theta\right)=c_\theta(1)+\underset{\pi\in{\Pi}_{2}\left(G\right)}{\sum}\ \left(m_{V}\left(\bar{\pi}\right)+m_{V^\prime}(\bar{\pi})-1\right)\int_{\Gamma_{\text{ell}}\left(G\right)}D^{G}\left(x\right)\theta\left(x\right)\theta_{\pi}\left(x\right)dx.
$$
In order to prove the strong multiplicity one property, we prove the following theorem, which is Theorem \ref{7.1} in section \ref{sec7}, by induction.
\begin{theorem}
For any $\theta\in QC\left(G\left(F\right)\right)$, we have 
$$
J_{\text{qc}}\left(\theta\right)=c_{\theta}\left(1\right).
$$
\end{theorem}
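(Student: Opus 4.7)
The argument proceeds by induction on $n = \dim V$. For $n=1$, $G = E^{\times}$ is abelian, there are no discrete series beyond characters, and a direct computation yields the identity. Assume the theorem is known for every dimension strictly less than $n$.

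The first observation is that the identity $J_{\text{qc}}(\theta) = c_{\theta}(1)$ for all $\theta \in QC(G(F))$ is equivalent to the strong multiplicity one statement
\[ m_{V}(\bar\pi) + m_{V'}(\bar\pi) = 1 \qquad \text{for all } \pi \in \Pi_{2}(G). \]
Indeed, testing $J_{\text{qc}}$ on $\theta = \theta_{\pi_{0}}$ for a fixed $\pi_{0}\in \Pi_{2}(G)$ and using the orthogonality of elliptic characters extracts the single coefficient $m_{V}(\bar\pi_{0}) + m_{V'}(\bar\pi_{0}) - 1$; conversely, if these coefficients all vanish the identity is tautological. It therefore suffices to verify multiplicity one on discrete series.

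The next step applies the spectral expansion of Theorem \ref{6.1} simultaneously to $V$ and its companion $V'$: for any strongly cuspidal $f$,
\[ J_{V}(f) + J_{V'}(f) = \int_{\mathcal{X}(G)} \hat{\theta}_{f}(\pi)\bigl(m_{V}(\bar\pi) + m_{V'}(\bar\pi)\bigr) d\pi. \]
On the non-elliptic locus of $\mathcal{X}(G)$, every virtual representation is parabolically induced from a proper Levi $\mathrm{GL}(V_{1})\times\cdots\times\mathrm{GL}(V_{k})$ with each $\dim V_{i} < n$. Parabolic descent for the Fourier--Jacobi multiplicity, combined with the inductive hypothesis on each factor and the principal series case proved in \cite{GGP23}, yields $m_{V}(\bar\pi) + m_{V'}(\bar\pi) = 1$ throughout this locus. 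The Plancherel identity $\int_{\mathcal{X}(G)} \hat{\theta}_{f}(\pi)\,d\pi = c_{\theta_{f}}(1)$ for strongly cuspidal $f$ then collapses the right-hand side to exactly $J_{\text{qc}}(\theta_{f})$, yielding the intermediate identity $J_{V}(f) + J_{V'}(f) = J_{\text{qc}}(\theta_{f})$.

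The hardest step, and the main technical obstacle, is to establish independently the \emph{partial geometric comparison}
\[ J_{V}(f) + J_{V'}(f) = c_{\theta_{f}}(1) \]
for every strongly cuspidal $f$. This requires a simultaneous geometric analysis of the kernels $K_{f}(x,x)$ on $V$ and on $V'$: after an appropriate truncation one expands each integral according to semisimple conjugacy classes, and the decisive input is the interplay between the matrix coefficients of the two Weil representations $\omega_{V,\psi,\mu}$ and $\omega_{V',\psi,\mu}$, producing cancellations between the $V$ and $V'$ contributions on every non-central orbit and leaving only the identity contribution $c_{\theta_{f}}(1)$. Combined with the spectral calculation this gives $J_{\text{qc}}(\theta_{f}) = c_{\theta_{f}}(1)$ for every strongly cuspidal $f$. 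Extension to arbitrary $\theta \in QC(G(F))$ is obtained by decomposing $\theta$ into a strongly cuspidal part and a parabolically induced part, the latter handled by compatibility of both $J_{\text{qc}}$ and $c_{\theta}(1)$ with parabolic descent and the induction hypothesis. Applying the resulting identity to pseudo-coefficients of each $\pi_{0}\in \Pi_{2}(G)$ then isolates the desired multiplicity one and closes the induction.
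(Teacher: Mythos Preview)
Your broad outline matches the paper's strategy---induction on $n$, the identity $J(f)=J_{\text{qc}}(\theta_f)$ via the spectral expansion plus inductive multiplicity-one on proper Levi factors (this is Lemma~\ref{7.2}), and then a geometric analysis of $J(f)$. But your description of the geometric step contains two genuine gaps.

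First, the vanishing on non-central elliptic orbits does \emph{not} come from any direct ``interplay between the matrix coefficients of $\omega_{V,\psi,\mu}$ and $\omega_{V',\psi,\mu}$ producing cancellations''. The actual mechanism (Proposition~\ref{7.3}) is Harish-Chandra semisimple descent: for a non-central elliptic $x\in H_V(F)$, the centraliser triple $(G_x,(H_V)_x,\omega_{V,\psi,\mu}|_{(H_V)_x})$ is again a twisted GGP triple of strictly smaller rank (this uses the functoriality of Weil representations, Proposition~\ref{4.1}), and the \emph{induction hypothesis} applied to $G_x$ gives $J^{(H_V)_x}(f_x)+J^{(H_{V'})_{x'}}(f_x)=c_{\theta_{x,\Omega_x}}(1)=0$. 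No self-contained cancellation between the two Weil representations is available; the induction hypothesis does all the work here, and you have not invoked it at this point.

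Second, once $J_{\text{qc}}$ is known to be supported at the identity, you still owe an argument for the constant. The paper (Proposition~\ref{7.4} and Section~\ref{sec7.4}) descends to the Lie algebra, uses that the wave-front set of $\omega_{V,\psi,\mu,\chi}$ is the closure of a \emph{minimal} nilpotent orbit (Corollary~\ref{4.5}), and runs a homogeneity argument under $X\mapsto\lambda^{-1}X$ to conclude $J_{\text{qc}}(\theta)=c_\chi\cdot c_\theta(1)$ for some constant $c_\chi$ depending only on the central character. The value $c_1=1$ is then pinned down by the known Steinberg case $m(\mathrm{St})=1$ from \cite{GGP23}, and $c_\chi=c_1$ for general $\chi$ requires a separate comparison of quasi-characters with different central characters. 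Your proposal asserts that the identity contribution is exactly $c_{\theta_f}(1)$ without any of these ingredients; in particular, without the Steinberg input there is no way to normalize the constant.
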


We now consider part 2 of Theorem \ref{thm1.2}. The first step is formulating a twisted variant of the trace formula considered in part 1. We fix a choice of $E$-basis for $V$. Let $M=\text{Res}_{E/F}\text{GL}_{n}\times\text{GL}_{n}$ and $\theta_{n}:\left(g,h\right)\mapsto\left(J_{n}{}^{t}\bar{h}^{-1}J_{n}^{-1},J_{n}{}^{t}\bar{g}^{-1}J_{n}^{-1}\right)$ be an involution on $M$, where 
$$
J_{n}=\left(\begin{array}{cccc}
0 & \cdots & 0 & -1\\
\vdots & 0 & 1 & 0\\
0 & \cdots & 0 & \vdots\\
\left(-1\right)^{n} & 0 & \cdots & 0
\end{array}\right).
$$
The restriction of $\theta_{n}$ to $G=\text{Res}_{E/F}\text{GL}_{n}$
(realised by the diagonal embedding to $M$) deduces an involution
on $G$, noting that here $G$ can be identified with $\text{Res}_{E/F}\text{GL}_{n}$ by our choice of basis for $V$. Let $\tilde{M}=M\theta_{n}$ and $\tilde{G}=G\theta_{n}$.
We define the Weil representation $\omega_{\mu}$ of $G\left(F\right)$
realised on ${\mathcal{S}}\left(V\right)$ by 
$$
\left(\omega_{\mu}\left(g\right)\phi\right)\left(v\right)=\left|\det g\right|^{\frac{1}{2}}\mu\left(\det g\right)\phi\left(vg\right),
$$
for any $g\in G\left(F\right)$ and $\phi\in{\mathcal{S}}\left(V\right)$.
Then $\omega_{\mu}$ can be extended to a representation $\tilde{\omega}_{\psi,\mu}$
of $\tilde{G}\left(F\right)$ by letting $\tilde{\omega}_{\psi,\mu}\left(\theta_{n}\right)\phi=\widehat{\overline{\phi}\left(\cdot J_{n}\right)}$,
where $\bar{\phi}\left(v\right)=\phi\left(\bar{v}\right)$ induced by the action of $\text{Gal}(E/F)$ on $\text{Res}_{E/F}V$ (depending on a choice of $E$-basis) and $\hat{\phi}$
is the Fourier transform of $\phi$ with respect to $\psi_{E}$. Let
$(\pi,\tilde{\pi})$ be an irreducible tempered representation of $\tilde{M}\left(F\right)$.
We consider the $1$-dimensional space $\text{Hom}_{G}\left(\pi,\omega_{\mu}\right)$ (cf. \cite[Theorem B]{Sun12}).
Let $\ell\in\text{Hom}_{G}\left(\pi,\omega_{\mu}\right)$ be a nonzero element.
There exists a constant $c$ such that $\tilde{\omega}_{\psi,\mu}\left(\tilde{y}\right)\circ\ell\circ\tilde{\pi}\left(\tilde{y}\right)^{-1}=c\ell$,
for any $\tilde{y}\in\tilde{G}\left(F\right)$. This constant measures
the failure of $\ell$ from being an $\tilde{G}\left(F\right)$-equivariant
homomorphism. Moreover, by a classical local Rankin-Selberg $L$-function argument, one can show that $c$ is equal to an elementary factor up to $\epsilon\left(\frac{1}{2},\pi\times\mu^{-1},\psi_{E}\right)$.

We follow the setting in \cite{BP14}. By Frobenius reciprocity, we have 
$$
\text{Hom}_{\tilde{G}}\left(\tilde{\pi},\tilde{\omega}_{\psi,\mu}\right)=\text{Hom}_{\tilde{M}}\left(\tilde{\pi},\text{Ind}_{\tilde{G}}^{\tilde{M}}\tilde{\omega}_{\psi,\mu}\right).
$$
This motivates us to study the unitary representation $L^{2}\left(\tilde{G}\left(F\right)\backslash\tilde{M}\left(F\right),\tilde{\omega}_{\psi,\mu}\right)$
of $\tilde{M}\left(F\right)$, whose action is given by right translation.
Similar to part 1, a function $\tilde{f}\in C^{\infty}_{c}\left(\tilde{M}\left(F\right)\right)$
acts on this space by a kernel operator 
$$
K_{\tilde{f}}\left(x,y\right)=\int_{\tilde{G}\left(F\right)}f\left(y^{-1}\tilde{h}x\right)\tilde{\omega}_{\psi,\mu}\left(\tilde{h}\right)d\tilde{h},\ \ x,y\in M\left(F\right).
$$
We write the trace of $\text{Ind}_{\tilde{G}}^{\tilde{M}}\left(\tilde{f}\right)$
as the integration over $G\left(F\right)\backslash M\left(F\right)$
of $\text{Tr}\left(K_{\tilde{f}}\left(x,x\right)\right)$. More precisely,
we define the following linear form 
$$
\tilde{J}\left(\tilde{f}\right)=\int_{A_{\tilde{M}}\left(F\right)G\left(F\right)\backslash M\left(F\right)}\underset{i}{\sum}\int_{\tilde{G}\left(F\right)}\tilde{f}\left(g^{-1}\tilde{h}g\right)\left\langle \phi_{i},\tilde{\omega}_{\psi,\mu}\left(\tilde{h}\right)\phi_{i}\right\rangle d\tilde{h}dg,
$$
where $\left\{ \phi_{i}\right\} _{I}$ is an orthonormal basis for
$\omega_{\mu}$. Unlike \cite{Wal12c,BP14,BP15}, we do not
obtain the geometric side of $\tilde{J}(\tilde{f})$. Our alternative approach is
to compare the elliptic part of $\tilde{J}\left(\tilde{f}\right)$
and one in $J\left(f\right)=\mu\left(\det V\right)J_{V}\left(f\right)+\mu\left(\det V^{\prime}\right)J_{V^{\prime}}\left(f\right)$
directly. By using a spectral expansion
of $\tilde{J}\left(\tilde{f}\right)$, combining with induction and
Harish-Chandra descent, it suffices to compare the two linear forms
near the identity. Then one can use a homogeneity statement and an
instance for the Steinberg representation as in \cite[Corollary 5.10]{GGP23}
to prove Theorem \ref{thm1.2}.

We conclude this introduction with an outline of this paper. The second section is intended to set up the notations and recall some well-known results. In section \ref{sec4}, we introduce the local twisted GGP conjecture and its basic objects. They include a Weil representation and its functorial property, as well as spherical variety structure and some estimates. 

Sections 4-6 are devoted to proving the first part of Theorem \ref{thm1.2}. In section \ref{sec5}, we show that there exists an explicit tempered intertwining, which
is essential in the proof of a spectral side expansion. Section \ref{sec6}
contains an introduction of the distribution $J_{V}\left(f\right)$
and its spectral expansion. In section \ref{sec7}, we prove a geometric
expansion of the distribution $J_{\text{qc}}$, which is a linearization of the distribution $J_V\left(f\right)$ studied in the previous section, and then use it to
prove Theorem \ref{thm1.2}(i).

Sections 7-9 are devoted to proving part (ii) of Theorem \ref{thm1.2}. In section \ref{sec8}, we recall twisted endoscopy, base change, Whittaker model and a simple proof for the twisted endoscopic character identity in our case of interest. We also introduce a twisted local trace formula and some fundamental settings, including the relation between tempered intertwinings and the $\epsilon$-factors of our interest. Section \ref{sec9} is devoted to proving a spectral expansion for our twisted trace formula. In the last section, which is section \ref{sec10}, we make a comparison between the elliptic part of our twisted trace formula and the original one (i.e. the linear form $J_{\text{qc}}$), and then use it to prove Theorem \ref{thm1.2}(ii).

Finally, in Appendix \ref{app}, we give a proof for the finite multiplicity of the local twisted GGP conjecture.

\section{Preliminaries}\label{sec2}
\subsection{Groups, measures and notations}\label{sec2.1}

Let $F$ be a $p$-adic field for which we fix an algebraic closure
$\bar{F}$. We denote by $\left|\,\cdot\,\right|_{F}$ the canonical absolute
value on $F$ as well as its unique extension to $\bar{F}$. Let $G$
be a connected reductive group defined over $F$. We denote by $A_{G}$
the split component of the connected component of the center
$Z_{G}$ of $G$. Let $X^{*}\left(G\right)$ be the group of algebraic characters of $G$
defined over $F$ and ${\mathcal{A}}_{G}^{*}=X^{*}\left(G\right)\otimes_{\mathbb{Z}}\mathbb{R}$
and ${\mathcal{A}}_{G}=\text{Hom}\left(X^{*}\left(G\right),\mathbb{R}\right)$.
We define the homomorphism 
$$
\begin{array}{ccccc}
H_{G} & : & G\left(F\right) & \longrightarrow & {\mathcal{A}}_{G}\\
 &  & g & \mapsto & \left(\chi\mapsto\log\left|\chi\left(g\right)\right|_{F}\right)
\end{array}.
$$
Let ${\mathcal{A}}_{G,F}$ and $\tilde{{\mathcal{A}}}_{G,F}$ be images of $G\left(F\right)$
and $A_{G}\left(F\right)$ in ${\mathcal{A}}_{G}$ via $H_{G}$. They are
lattices in ${\mathcal{A}}_{G}$. We set ${\mathcal{A}}_{G,F}^{\vee}=\text{Hom}\left({\mathcal{A}}_{G,F},2\pi\mathbb{Z}\right)$
and $\tilde{{\mathcal{A}}}_{G,F}^{\vee}=\text{Hom}\left(\tilde{{\mathcal{A}}}_{G,F},2\pi\mathbb{Z}\right)$
to be lattices in ${\mathcal{A}}_{G}^{*}$. For a maximal torus $T$ of $G$,
let $\delta\left(G\right)=\dim G-\dim T$, noting that it does not depend on
choices of $T$.

We denote by $\mathfrak{g}$ the Lie algebra of $G$ and $\left(g,X\right) \mapsto gXg^{-1}: G\times\mathfrak{g} \rightarrow \mathfrak{g}$ the adjoint action. For $x\in G$, we denote by $Z_{G}\left(x\right)$
the centralizer of $x$ in $G$ and by $G_{x}$ its identity component.
We call an element $x$ in $G$ semisimple if it is contained in a
maximal torus of $G$. We denote by $G_{ss}$ the subset of $G$ containing
its semisimple elements. For $x\in G_{ss}$, we set $D^{G}\left(x\right)=\left|\det\left(1-\text{Ad}\left(x\right)\right)_{\mid\mathfrak{g}/\mathfrak{g}_{x}}\right|_{F}$.
An element $x\in G$ is called regular if $Z_{G}\left(x\right)$ is
abelian and $G_{x}$ is a torus. We denote by $G_{reg}$ the subset
of regular elements in $G$. Let ${\mathcal{T}}\left(G\right)$ be a set
of representatives for the conjugacy classes of maximal tori in $G$.
A maximal torus $T$ of $G$ is elliptic if $A_{T}=A_{G}$. An element
$x\in G\left(F\right)$ is said to be elliptic if it belongs to some
elliptic maximal torus. We set $G\left(F\right)_{\text{ell}}$ and
$G_{\text{reg}}\left(F\right)_{\text{ell}}$ the subsets of elliptic
elements in $G\left(F\right)$ and $G_{\text{reg}}\left(F\right)$.

Let us fix a minimal parabolic subgroup $P_{\min}$ of $G$ and a
Levi component $M_{\min}$. We fix a maximal compact subgroup $K$
of $G\left(F\right)$ in good relative position to $M_{\min}$. Let $P=MU$
be a parabolic subgroup of $G$. We have the Iwasawa decomposition
$G\left(F\right)=M\left(F\right)U\left(F\right)K$. We can choose
maps 
$$m_{P}:G\left(F\right)\rightarrow M\left(F\right),\ \ \ u_{P}:G\left(F\right)\rightarrow U\left(F\right),
\ \ \ k_{P}:G\left(F\right)\rightarrow K$$
such that $g=m_{P}\left(g\right)u_{P}\left(g\right)k_{P}\left(g\right)$,
for all $g\in G\left(F\right)$. Then we extend the homomorphism
$H_{M}$, which is defined similarly to $H_G$, to $H_{P}(g)=H_{M}\left(m_{P}\left(g\right)\right)$, for $g\in G(F)$. The above map depends on the maximal compact subgroup $K$ but its
restriction to $P\left(F\right)$ does not and is given by $H_P(mu)=H_M(m)$ for all $m\in M(F)$ and $u \in U(F)$. For a Levi subgroup $M$
of $G$, we denote by ${\mathcal{P}}\left(M\right)$, ${\mathcal{L}}\left(M\right)$
and ${\mathcal{F}}\left(M\right)$ the finite sets of parabolic subgroups
admitting $M$ as their Levi component, of Levi subgroups containing $M$
and of parabolic subgroups containing $M$ respectively. If $M\subset L$
are two Levi subgroups, we set ${\mathcal{A}}_{M}^{L}={\mathcal{A}}_{M}/{\mathcal{A}}_{L}$.

For every Levi subgroup $M$ and maximal torus $T$ of $G$, we denote
by $W\left(G,M\right)$ and $W\left(G,T\right)$ the Weyl groups of
$M\left(F\right)$ and $T\left(F\right)$ respectively, that is 
$$
W\left(G,M\right)=\text{Norm}_{G\left(F\right)}\left(M\right)/M\left(F\right)\text{ and }W\left(G,T\right)=\text{Norm}_{G\left(F\right)}\left(T\right)/T\left(F\right).
$$
We have the Weyl integration formula 
$$
\int_{G\left(F\right)}f\left(g\right)dg=\underset{T\in{\mathcal{T}}\left(G\right)}{\sum}\left|W\left(G,T\right)\right|^{-1}\int_{T\left(F\right)}D^{G}\left(t\right)\left(\int_{T\left(F\right)\backslash G\left(F\right)}f\left(g^{-1}tg\right)dg\right)dt,
$$
for any $f\in C_{c}^{\infty}\left(G\left(F\right)\right)$, where the measure on $T\left(F\right)\backslash G\left(F\right)$ (which we also denote by $dg$) arises from the quotient of ones on $G(F)$ and $T(F)$.

A twisted group is a pair $\left(G,\tilde{G}\right)$, where $G$
is a connected reductive group defined over $F$ and $\tilde{G}$
is a $G$-bitorsor, i.e. an algebraic variety defined over $F$ with
two left and right commutative actions, each of them making $\tilde{G}$ into a principal homogeneous space
under $G$. The underlying group $G$ is usually omitted and we denote the twisted group $\left(G,\tilde{G}\right)$ by $\tilde{G}$. Note that when $\tilde{G}=G$ and $G$-actions are group actions,
we have $\left(G,\tilde{G}\right)$ coincides with $G$.

Let $\tilde{G}$ be a twisted group. For any $\tilde{x}\in\tilde{G}$,
there exists a unique automorphism $\theta_{\tilde{x}}$ of $G$ such
that $\tilde{x}g=\theta_{\tilde{x}}\left(g\right)\tilde{x}$ for all
$g\in G$. This induces automorphisms on $X^{*}\left(G\right)$,
$A_{G}$ and ${\mathcal{A}}_{G}$, which are independent of choices of $\tilde{x}$.
For simplicity, we denote the three automorphisms by $\theta_{\tilde{G}}$. 

Assume that $\theta_{\tilde{G}}$ is of finite order. Denote 
$$
A_{\tilde{G}}=\left(A_{G}^{\theta_{\tilde{G}}=1}\right)^{0},\ {\mathcal{A}}_{\tilde{G}}={\mathcal{A}}_{G}^{\theta_{\tilde{G}}=1},\ {\mathcal{A}}_{\tilde{G}}^{*}=\left({\mathcal{A}}_{G}^{*}\right)^{\theta_{\tilde{G}}=1},\,a_{\tilde{G}}=\dim\left({\mathcal{A}}_{\tilde{G}}\right).
$$
Similar to the untwisted setting, we define the homomorphism $H_{\tilde{G}}: G\left(F\right) \rightarrow {\mathcal{A}}_{\tilde{G}}$. The group $G$ admits a conjugation action on $\tilde{G}$ by $\left(g,\tilde{x}\right)=g\tilde{x}g^{-1}$.
For a subset $\tilde{X}$ of $\tilde{G}$, we denote by $\text{Norm}_{G}\left(\tilde{X}\right)$
resp. $Z_{G}\left(\tilde{X}\right)$ resp. $G_{\tilde{X}}$ the normalizer
resp. the centralizer resp. the identity component of the centralizer
of $\tilde{X}$. For a subset $X$ of $G$, we denote by $N_{\tilde{G}}\left(X\right)$
and $Z_{\tilde{G}}\left(X\right)$ the normalizer and centralizer
of $X$ in $\tilde{G}$ via the action $\left(\tilde{x},g\right)\mapsto\theta_{\tilde{x}}\left(g\right)$.

We call an element $\tilde{x}$ in $\tilde{G}$ semisimple if there
exists a pair $\left(B,T\right)$ consisting of a Borel subgroup $B$
of $G$ and a maximal torus $T$ of $B$ defined over $\bar{F}$ such
that $\tilde{x}$ normalizes $B$ and $T$. We denote by $\tilde{G}_{ss}$
the subset of $\tilde{G}$ containing its semisimple elements. For
$\tilde{x}\in\tilde{G}_{ss}$, let $D^{\tilde{G}}\left(\tilde{x}\right)=\left|\det\left(1-\theta_{\tilde{x}}\right)_{\mid\mathfrak{g}/\mathfrak{g}_{\tilde{x}}}\right|_{F}$. An element $\tilde{x}\in\tilde{G}$ is called regular if $Z_{G}\left(\tilde{x}\right)$ is abelian and $G_{\tilde{x}}$ is a torus. We denote by $\tilde{G}_{reg}$ the subset of regular elements.

We denote a twisted parabolic subgroup of $\tilde{G}$ by a pair $\left(P,\tilde{P}\right)$,
where $P$ is a parabolic subgroup of $G$ defined over $F$ and $\tilde{P}$
is the normalizer of $P$ in $\tilde{G}$ such that $\tilde{P}\left(F\right)\neq0$.
For such pair, $\tilde{P}$ completely determines $P$, so we often
call $\tilde{P}$ as a twisted parabolic subgroup. 
A twisted Levi component of $\tilde{P}$ is a pair $\left(M,\tilde{M}\right)$
consisting of a Levi component $M$ of $P$ (defined over $F$) and
the normalizer $\tilde{M}$ of $M$ in $\tilde{P}$ such that $\tilde{M}\left(F\right)\neq\emptyset$.
As the second term completely determines the first term, we call $\tilde{M}$
a twisted Levi component of $\tilde{P}$. Let $\tilde{P}=\tilde{M}U$. We can
naturally extend the modulus character $\delta_{P}$ to $\tilde{P}\left(F\right)$. Namely, for any $\tilde{x}=\tilde{m}u \in \tilde{P}(F)$, we set $\delta_P(\tilde{x})=\det(\theta_{\tilde{m}\mid\mathfrak{u}})$. For a twisted Levi subgroup
$\tilde{M}$, we denote by ${\mathcal{P}}\left(\tilde{M}\right),\,{\mathcal{F}}\left(\tilde{M}\right),\,{\mathcal{L}}\left(\tilde{M}\right)$
the finite sets of twisted parabolic subgroups admitting
$\tilde{M}$ as a twisted Levi component, of twisted parabolic subgroups
containing $\tilde{M}$ and of twisted Levi subgroups containing $\tilde{M}$,
respectively. For twisted Levi subgroups $\tilde{M},\tilde{L}$ and
a twisted parabolic subgroup $\tilde{P}$, we notice that $\tilde{M}\subset\tilde{L}$
and $\tilde{M}\subset\tilde{P}$ imply $M\subset L$ and $M\subset P$,
respectively. Let $\tilde{Q}$ be a twisted parabolic subgroup. Then
$\tilde{Q}=\tilde{L}U$, where $\tilde{L}$ is a twisted Levi component
of $\tilde{Q}$ and $U$ is the unipotent radical of $Q$.
Twisted Levi subgroups are characterized as centralizers in $\tilde{G}$
of split tori. In other words, if $A$ is a split subtorus of $G$
such that $Z_{\tilde{G}}\left(A\right)\left(F\right)\neq\emptyset$,
then $Z_{\tilde{G}}\left(A\right)$ is a twisted Levi of $\tilde{G}$.
Conversely, if $\tilde{M}$ is a twisted Levi of $\tilde{G}$, then
$\tilde{M}=Z_{\tilde{G}}\left(A_{\tilde{M}}\right)$.

Let $W^{G}=\text{Norm}_{G\left(F\right)}\left(M_{\min}\right)/M_{\min}\left(F\right)$.
We denote $\tilde{P}_{\min}=N_{\tilde{G}}\left(P_{\min}\right)$ and $\tilde{M}_{\min}=N_{\tilde{G}}\left(P_{\min},M_{\min}\right)$.
Then $\tilde{P}_{\min}$ is a minimal twisted parabolic subgroup and
$\tilde{M}_{\min}$ is a minimal Levi component. Let ${\mathcal{L}}^{\tilde{G}}={\mathcal{L}}\left(\tilde{M}_{\min}\right)$. Let $\tilde{M}\in{\mathcal{L}}^{\tilde{G}}$ and $\tilde{P}=\tilde{M}U\in{\mathcal{P}}\left(\tilde{M}\right)$. Similar to the untwisted setting, we can also define a map $H_{\tilde{P}} : G\left(F\right) \rightarrow {\mathcal{A}}_{\tilde{M}}$.

A twisted maximal torus of $\tilde{G}$ is a pair $\left(T,\tilde{T}\right)$
consisting of a maximal torus $T$ of $G$ defined over $F$ and a
subvariety $\tilde{T}$ of $\tilde{G}$ (defined over $F$), which
is the intersection of normalizers of $T$ and a Borel subgroup
$B$ (defined over $\bar{F}$) containing $T$ in $\tilde{G}$, such that $\tilde{T}\left(F\right)\neq\emptyset$.
For such pair, the restriction to $T$ of automorphisms $\theta_{\tilde{x}}$
for $\tilde{x}\in\tilde{T}$ does not depend on $\tilde{x}$. We denote
this restriction by $\theta_{\tilde{T}}$, or simply $\theta$ if
there is no confusion. Let $T_{\theta}$ be the connected component
of the subgroup of fixed points $T^{\theta}$. We denote by $\tilde{T}\left(F\right)/\theta$
the set of orbits of the action of $T\left(F\right)$ on $\tilde{T}\left(F\right)$
by conjugation. It is naturally an $F$-analytic manifold and for
all $\tilde{t}\in\tilde{T}\left(F\right)/\theta$, the map $t \mapsto t\tilde{t}:T_{\theta}\left(F\right) \rightarrow \tilde{T}\left(F\right)/\theta$ is a local isomorphism. If $T$ is split, we define a Haar measure of $T\left(F\right)$ such that the volume of its maximal compact subgroup is equal to $1$. In general, we provide $A_{T}\left(F\right)$ with this measure and choose a measure for $T\left(F\right)$ such that $\text{vol}\left(T\left(F\right)/A_{T}\left(F\right)\right)=1$.
There exists a unique measure of $\tilde{T}\left(F\right)/\theta$
such that the above map preserves local measure for any $\tilde{t}\in\tilde{T}\left(F\right)/\theta$.
We equip $\tilde{T}\left(F\right)/\theta$ with this measure. Moreover, the principal homogeneous space $\tilde{G}(F)$ inherits the measure of $G(F)$. We have the Weyl integration formula 
$$
\int_{\tilde{G}\left(F\right)}\tilde{f}\left(\tilde{x}\right)d\tilde{x}=\underset{\tilde{T}\in{\mathcal{T}}\left(\tilde{G}\right)}{\sum}\left|W\left(G,\tilde{T}\right)\right|^{-1}\left|T^{\theta}\left(F\right):T_{\theta}\left(F\right)\right|^{-1}\int_{\tilde{T}\left(F\right)/\theta}D^{\tilde{G}}\left(\tilde{t}\right)\int_{T_{\theta}\left(F\right)\backslash G\left(F\right)}f\left(g^{-1}\tilde{t}g\right)dgd\tilde{t},
$$
for any $\tilde{f}\in C_{c}^{\infty}\left(\tilde{G}\left(F\right)\right)$, where $dg$ is the measure defined by the quotient of ones on $G(F)$ and $T_\theta(F)$.

We denote $\mathcal{A}_{\tilde{G},F}=H_{\tilde{G}}\left(G\left(F\right)\right)$,
${\mathcal{A}}_{A_{\tilde{G}},F}=H_{\tilde{G}}\left(A_{\tilde{G}}\left(F\right)\right)$,
${\mathcal{A}}_{\tilde{G},F}^{\vee}=\text{Hom}\left({\mathcal{A}}_{\tilde{G},F},2\pi\mathbb{Z}\right)$,
${\mathcal{A}}_{A_{\tilde{G}},F}^{\vee}=\text{Hom}\left({\mathcal{A}}_{A_{\tilde{G}},F},2\pi\mathbb{Z}\right)$.
Then ${\mathcal{A}}_{\tilde{G},F}$ and ${\mathcal{A}}_{A_{\tilde{G}},F}$ are
lattices in ${\mathcal{A}}_{\tilde{G}}$, whereas ${\mathcal{A}}_{\tilde{G},F}^{\vee}$ and ${\mathcal{A}}_{A_{\tilde{G}},F}^{\vee}$ are lattices in ${\mathcal{A}}_{\tilde{G}}^{*}$.
We equip these lattices with the counting measure. We set Haar measures
on ${\mathcal{A}}_{\tilde{G}}$ and ${\mathcal{A}}_{\tilde{G}}^{*}$ such that
volumes of ${\mathcal{A}}_{\tilde{G}}/{\mathcal{A}}_{A_{\tilde{G}},F}$ and
${\mathcal{A}}_{\tilde{G}}^{*}/{\mathcal{A}}_{A_{\tilde{G}},F}^{\vee}$ are
equal to $1$, respectively.

For an affine algebraic variety $X$ over $\bar{F}$, let ${\mathcal{O}}\left(X\right)$
be the ring of regular functions. We choose a finite set of generators
$\left\{ f_{1},\ldots,f_{m}\right\} $ of ${\mathcal{O}}\left(X\right)$ as an $\bar{F}$-algebra. Define 
$$
\sigma_{X}\left(x\right)=1+\log\left(\max\left\{ 1,\left|f_{1}\left(x\right)\right|,\ldots,\left|f_{m}\left(x\right)\right|\right\} \right),\ \text{for } x\in X.
$$
Two such functions $\sigma_{X}$ and $\tilde{\sigma}_{X}$ are called
equivalent if $\sigma_{X}\sim\tilde{\sigma}_{X}$, i.e. there exists
$C_{1},C_{2}>0$ such that $C_{1}\tilde{\sigma}_{X}<\sigma_{X}<C_{2}\tilde{\sigma}_{X}$.
A log-norm on $X$ is a particular function $\sigma_{X}$ inside its
equivalence class. Generally, for any algebraic variety $X$ over
$\bar{F}$, we choose a finite open affine covering $\left(U_{i}\right)_{i\in I}$
of $X$ and fix log-norms $\sigma_{U_{i}}$ on $U_{i}$, for $i\in I$.
We define a log-norm on $X$ by letting 
$$
\sigma_{X}\left(x\right)=\inf\left\{ \sigma_{U_{i}}\left(x\right)\mid\ i\in I\text{ and }x\in U_{i}\right\} .
$$
We denote by $\Xi^G$ the Harish-Chandra function on $G(F)$ (see \cite[Section 1.5]{BP20} for a precise definition). By using the function $\Xi^{G}$, as in \cite[Section 1.5]{BP20}, we can define the (weak) Harish-Chandra Schwartz space ${\mathcal{C}}\left(G\left(F\right)\right)$ (resp. $\mathcal{C}^w(G(F))$).

Let us fix an element $\tilde{x}\in\tilde{G}\left(F\right)$. We define the Harish-Chandra-Schwartz
space ${\mathcal{C}}\left(\tilde{G}\left(F\right)\right)$ as the space of functions $\tilde{f}\in C\left(\tilde{G}\left(F\right)\right)$ such that $f(g):=\tilde{f}(g\tilde{x})$ lies in $\mathcal{C}(G(F))$.

\subsection{Tempered representations}\label{sec2.3}

A unitary representation of $G\left(F\right)$ is a continuous representation
$\left(\pi,V_{\pi}\right)$ of $G\left(F\right)$ on a Hilbert space
$V_{\pi}$ such that for any $g\in G\left(F\right)$, the operator
$\pi\left(g\right)$ is unitary. There is an action of $i{\mathcal{A}}_{G}^{*}$
on unitary representations given by $\left(\lambda,\pi\right)\mapsto\pi_{\lambda}$,
where $\pi_{\lambda}\left(g\right)=e^{\lambda\left(H_{G}\left(g\right)\right)}\pi\left(g\right)$
for any $g\in G\left(F\right)$. We denote by $i{\mathcal{A}}_{G,\pi}^{*}$
the stabilizer of $\pi$ for this action. Let $\text{End}\left(\pi\right)$ be the space of continuous endomorphisms of the space of $\pi$ and $\text{End}\left(\pi\right)^\infty$ be its subspace containing smooth vectors. As in \cite[Section 2.2]{BP20}, we endow $\text{End}\left(\pi\right)^\infty$ with its finest locally convex topology. From now, we assume any representations that we consider are of finite length. Let $\text{Temp}(G)$ and $\Pi_2(G)$ be the sets of isomorphism classes of irreducible tempered representations and irreducible square-integrable representations, respectively. 

Square-integrable representations are preserved by unramified twists. Let $\Pi_{2}\left(G\right)/i\mathcal{A}^*_{G,F}$ be the set of orbits in $\Pi_{2}\left(G\right)$ via this action. Let ${\mathcal{X}}_{\text{temp}}\left(G\right)$ be the set of isomorphism classes of tempered representations of $G\left(F\right)$ of the form $i_{M}^{G}\left(\sigma\right)$, where $M$ is a Levi subgroup of $G$ and $\sigma$ is an irreducible square-integrable representation of $M\left(F\right)$. Let ${\mathcal{M}}$ be a set of representatives for the conjugacy classes
of Levi subgroups of $G$. Then ${\mathcal{X}}_{\text{temp}}\left(G\right)$ is naturally a quotient of 
$$
\tilde{{\mathcal{X}}}_{\text{temp}}\left(G\right)=\underset{M\in{\mathcal{M}}}{\bigsqcup}\ \ \underset{{\mathcal{O}}\in\Pi_{2}\left(M\right)/i{\mathcal{A}}_{M,F}^{*}}{\bigsqcup}{\mathcal{O}}.
$$
Since each orbit $\mathcal{O}\in \{\Pi_{2}\left(M\right)\}$ is a quotient of $i{\mathcal{A}}_{M,F}^{*}$ by a finite subgroup, ${\mathcal{X}}_{\text{temp}}\left(G\right)$ is a real smooth manifold. We equip ${\mathcal{X}}_{\text{temp}}\left(G\right)$
with the quotient topology.

Let $V$ be a locally convex topological vector space. A function
$f:{\mathcal{X}}_{\text{temp}}\left(G\right)\rightarrow V$ is smooth if
the pullback of $f$ to $\tilde{{\mathcal{X}}}_{\text{temp}}\left(G\right)$
is a smooth function. We denote by $C^{\infty}\left({\mathcal{X}}_{\text{temp}}\left(G\right),V\right)$
the space of smooth functions on ${\mathcal{X}}_{\text{temp}}\left(G\right)$
taking values in $V$. For simplicity, we set $C^{\infty}\left({\mathcal{X}}_{\text{temp}}\left(G\right)\right)=C^{\infty}\left({\mathcal{X}}_{\text{temp}}\left(G\right),\mathbb{C}\right)$.

We define a regular Borel measure $d\pi$ on ${\mathcal{X}}_{\text{temp}}\left(G\right)$
by 
$$
\int_{{\mathcal{X}}_{\text{temp}}\left(G\right)}\phi\left(\pi\right)d\pi=\underset{M\in{\mathcal{M}}}{\sum}\left|W\left(G,M\right)\right|^{-1}\underset{{\mathcal{O}}\in\Pi_{2}\left(M\right)/i{\mathcal{A}}_{M,F}^{*}}{\sum}\left[i{\mathcal{A}}_{M,\sigma}^{\vee}:i{\mathcal{A}}_{M,F}^{\vee}\right]^{-1}\int_{i{\mathcal{A}}_{M,F}^{*}}\phi\left(i_{P}^{G}\left(\sigma_{\lambda}\right)\right)d\lambda,
$$
for any $\phi\in C_{c}^{\infty}\left({\mathcal{X}}_{\text{temp}}\left(G\right)\right)$,
where for any $M\in{\mathcal{M}}$ we have fixed $P\in{\mathcal{P}}\left(M\right)$
and for any ${\mathcal{O}}\in\Pi_{2}\left(M\right)/i{\mathcal{A}}_{M,F}^{*}$
we have fixed a base-point $\sigma\in{\mathcal{O}}$. For any $\pi=i^G_P(\sigma)\in \mathcal{X}_\text{temp}(G)$, we set $\mu(\pi)=d(\sigma)j(\sigma)^{-1}$. This quantity only depends on $\pi$.

We now define a space of smooth functions $C^{\infty}\left({\mathcal{X}}_{\text{temp}}\left(G\right),{\mathcal{E}}\left(G\right)\right)$. The elements in $C^{\infty}\left({\mathcal{X}}_{\text{temp}}\left(G\right),{\mathcal{E}}\left(G\right)\right)$ are defined as functions $\pi\in\text{Temp}\left(G\right)\mapsto T_{\pi}\in\text{End}\left(\pi\right)^\infty$ such that for any parabolic subgroup $P=MU$ and $\sigma\in\Pi_{2}\left(M\right)$,
setting $\pi_{K}=i_{K\cap P}^{K}\left(\sigma\mid_{P\cap K}\right)$
and $\pi_{\lambda}=i_{P}^{G}\left(\sigma_{\lambda}\right)$ for all
$\lambda\in i{\mathcal{A}}_{M}^{*}$, the function $\lambda\in i{\mathcal{A}}_{M}^{*}\mapsto T_{\pi_{\lambda}}\in\text{End}\left(\pi_{\lambda}\right)^\infty\simeq\text{End}\left(\pi_{K}\right)^\infty$ is smooth. Let 
$$
\mathcal{C}\left({\mathcal{X}}_{\text{temp}}\left(G\right),{\mathcal{E}}\left(G\right)\right)=\left\{ T\in C^{\infty}\left({\mathcal{X}}_{\text{temp}}\left(G\right),{\mathcal{E}}\left(G\right)\right)\mid\,\text{Supp}\left(T\right)\text{ is compact}\right\} .
$$
For later use, we state the matricial Paley-Wiener theorem, which
is a strong version of the Harish-Chandra Plancherel formula \cite{Wal03}.
\begin{theorem}\label{2.2}
$ $
\begin{enumerate}
\item The map $f\in{\mathcal{C}}\left(G\left(F\right)\right)\mapsto\left(\pi\in\text{Temp}\left(G\right)\mapsto\pi\left(f\right)\in\text{End}\left(\pi\right)^\infty\right)$
induces a topological isomorphism ${\mathcal{C}}\left(G(F)\right)\simeq \mathcal{C}\left({\mathcal{X}}_{\text{temp}}\left(G\right),{\mathcal{E}}\left(G\right)\right)$.
\item The inverse of that isomorphism is given by sending $T\in \mathcal{C}\left({\mathcal{X}}_{\text{temp}}\left(G\right),{\mathcal{E}}\left(G\right)\right)$
to the function $f_{T}$ defined by 
$$
f_{T}\left(g\right)=\int_{{\mathcal{X}}_{\text{temp}}\left(G\right)}\text{Trace}\left(\pi\left(g^{-1}\right)T_{\pi}\right)\mu\left(\pi\right)d\pi,
$$
where $\mu(\pi)$ is defined in \cite[page 59]{BP20}.
\end{enumerate}
\end{theorem}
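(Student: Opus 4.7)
The plan is to deduce this from Harish-Chandra's Plancherel theorem for $G(F)$ in the form refined by Waldspurger \cite{Wal03}, essentially by matching the two sides coordinate by coordinate through the wave-packet decomposition. I would structure the argument as a forward direction, a backward direction, and a check of mutual inverseness/continuity.

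For the forward direction, given $f\in\mathcal{C}(G(F))$ I would define $T_{f,\pi}=\pi(f)$ and verify the three defining properties of $\mathcal{C}(\mathcal{X}_{\text{temp}}(G),\mathcal{E}(G))$. That $\pi(f)$ lands in $\text{End}(\pi)^\infty$ follows from Dixmier--Malliavin together with the Schwartz decay of $f$. Smoothness in the parameter $\lambda\in i\mathcal{A}_M^*$ at a fixed $\sigma\in\Pi_2(M)$ reduces, after trivializing the family $\pi_\lambda=i_P^G(\sigma_\lambda)$ against the compact picture $\pi_K$, to differentiating under the integral sign in
\[
\langle \pi_\lambda(f)v,w\rangle=\int_{G(F)} f(g)\,\langle\pi_\lambda(g)v,w\rangle\,dg,
\]
which is justified because the matrix coefficients of $\pi_\lambda$ in the $K$-picture depend smoothly in $\lambda$ with derivatives bounded by a fixed multiple of $\Xi^G(g)\sigma_G(g)^N$, and $f$ is Schwartz. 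Compact support in $\mathcal{X}_{\text{temp}}(G)$ is the subtler part; here I would use Harish-Chandra's estimates to show that for $\sigma$ outside a compact subset of $\Pi_2(M)/i\mathcal{A}_{M,F}^*$ the operator $i_P^G(\sigma_\lambda)(f)$ vanishes identically in $\lambda$, invoking that $\mathcal{C}(G(F))$ is generated by convolution with idempotents supported in a fixed open compact subgroup and the fact that only finitely many discrete series orbits meet each given Bernstein component.

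For the backward direction, the wave-packet formula
\[
f_T(g)=\int_{\mathcal{X}_{\text{temp}}(G)}\TR(\pi(g^{-1})T_\pi)\,\mu(\pi)\,d\pi
\]
is shown to lie in $\mathcal{C}(G(F))$ by Harish-Chandra's estimates on $\mu(\pi)$ and on normalized characters, combined with integration by parts in $\lambda$ to produce the required decay; this is exactly the content of the Plancherel theorem once $T$ is written out along the Levi stratification of $\mathcal{X}_{\text{temp}}(G)$. Mutual inverseness $f_{T_f}=f$ is precisely Harish-Chandra's Plancherel inversion formula, and $T_{f_T}=T$ follows from the orthogonality of tempered characters together with the fact that the matrices $T_\pi$ are recovered from their traces against $\pi(g^{-1})$ via the Fourier inversion on $\mathcal{X}_{\text{temp}}(G)$.

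The real work lies in the topological isomorphism statement: one has to match the countable family of seminorms defining $\mathcal{C}(G(F))$, which control $|f(g)|$ by $\Xi^G(g)(1+\sigma_G(g))^{-N}$ and its derivatives, with those on $\mathcal{C}(\mathcal{X}_{\text{temp}}(G),\mathcal{E}(G))$, which are $C^k$-norms of operator-valued functions against the Paley-Wiener topology. The continuity of $f\mapsto T_f$ is relatively direct from the estimates above, but the continuity of the inverse requires Harish-Chandra's uniform estimates on normalized Eisenstein integrals, which is the deepest ingredient and the only genuine obstacle; this is supplied by \cite{Wal03}, and I would simply cite it rather than re-prove it.
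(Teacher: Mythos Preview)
The paper does not prove this theorem at all: it is stated as a known result (``the matricial Paley-Wiener theorem, which is a strong version of the Harish-Chandra Plancherel formula'') and simply cited from \cite{Wal03}. Your sketch is a reasonable outline of what goes into Waldspurger's proof, and your final paragraph correctly identifies that the only deep ingredient is Harish-Chandra's uniform estimates on normalized Eisenstein integrals, which you would cite from \cite{Wal03} anyway---so in the end your proposal and the paper's treatment amount to the same thing: invoke \cite{Wal03}.

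One minor correction to your sketch: since $F$ is $p$-adic here, Dixmier--Malliavin is irrelevant. Any $f\in\mathcal{C}(G(F))$ is bi-invariant under some compact open subgroup $K$, so $\pi(f)$ factors through the finite-dimensional space $\pi^K$ and is automatically a smooth (indeed finite-rank) endomorphism. Similarly, ``differentiating under the integral sign'' in $\lambda$ is elementary in the $p$-adic setting because the dependence on $\lambda$ is through characters of a lattice, and compact support of $T_f$ follows directly from the fact that $f$ is $K$-bi-invariant for some $K$ together with the finiteness of Bernstein components with a given $K$-fixed vector. The genuinely analytic content---that wave packets $f_T$ lie in $\mathcal{C}(G(F))$ and that the two maps are continuous inverses---is exactly what \cite{Wal03} supplies.
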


We now consider representations of a twisted group $\tilde{G}(F)$. A representation of $\tilde{G}\left(F\right)$ is a triple $\left(\pi,\tilde{\pi},E_{\pi}\right)$,
where $\pi$ is a smooth representation of $G\left(F\right)$ with an underlying space $E_{\pi}$ and $\tilde{\pi}:\tilde{G}\left(F\right)\rightarrow\text{Aut}_{\mathbb{C}}\left(E_{\pi}\right)$
satisfying $\tilde{\pi}\left(g\tilde{x}g^{\prime}\right)=\pi\left(g\right)\tilde{\pi}\left(\tilde{x}\right)\pi\left(g^{\prime}\right)$,
for any $g,g^{\prime}\in G\left(F\right)$ and $\tilde{x}\in\tilde{G}\left(F\right)$.
Two representations $\left(\pi_{1},\tilde{\pi}_{1},E_{\pi_{1}}\right)$
and $\left(\pi_{2},\tilde{\pi}_{2},E_{\pi_{2}}\right)$ are equivalent
if there exists linear isomorphisms $A:E_{\pi_{1}}\rightarrow E_{\pi_{2}}$
and $B:E_{\pi_{1}}\rightarrow E_{\pi_{2}}$ which intertwine $\pi_{1}$
and $\pi_{2}$ and satisfy $B\tilde{\pi}_{1}\left(\tilde{x}\right)=\tilde{\pi}_{2}\left(\tilde{x}\right)A$,
for any $\tilde{x}\in\tilde{G}\left(F\right)$. We say a representation
$\left(\pi,\tilde{\pi},E_{\pi}\right)$ of $\tilde{G}\left(F\right)$
is admissible if $\pi$ is, and unitary if there exists a positive
definite hermitian product which is invariant under the image of $\tilde{\pi}$.
A representation $\left(\pi,\tilde{\pi},E_{\pi}\right)$ is tempered
if it is unitary, and $\pi$ is of finite length and any irreducible
subrepresentations of $\pi$ are tempered. In general, we omit the
term $\left(\pi,E_{\pi}\right)$ and denote by $\tilde{\pi}$ a representation
of $\tilde{G}\left(F\right)$.

\subsection{Elliptic representations and the spaces ${\mathcal{X}}\left(G\right)$ and $E_\text{disc}(\tilde{G})$}\label{sec2.4}

Let $R_{\text{temp}}\left(G\right)$ be the space of complex virtual
tempered representations of $G\left(F\right)$, i.e. the complex vector
space with basis $\text{Temp}\left(G\right)$ consisting of irreducible
tempered representations of $G\left(F\right)$.

In \cite{Art93}, Arthur defines a set ${\mathcal{X}}_{\text{ell}}\left(G\right)$
of virtual tempered representations of $G\left(F\right)$ called elliptic
representations, which are actually well-defined up to scalar of module
1, i.e. ${\mathcal{X}}_{\text{ell}}\left(G\right)\subset R_{\text{temp}}\left(G\right)/\mathbb{S}^{1}$. Let $R_{\text{ell}}\left(G\right)$
be the subspace of $R_{\text{temp}}\left(G\right)$ generated by ${\mathcal{X}}_{\text{ell}}\left(G\right)$
and denote by $R_{\text{ind}}\left(G\right)$ the subspace of $R_{\text{temp}}\left(G\right)$
generated by the image of all the linear maps $R_{\text{temp}}\left(M\right)\rightarrow R_{\text{temp}}\left(G\right)$ given by parabolic inductions, where $M$ is a proper Levi subgroup of $G$. Then $R_{\text{temp}}\left(G\right)$ can be decomposed into the direc sum $R_{\text{ind}}\left(G\right)\oplus R_{\text{ell}}\left(G\right)$. The set ${\mathcal{X}}_{\text{ell}}\left(G\right)$ is invariant under unramified twists. Let ${\mathcal{X}}_{\text{ell}}\left(G\right)/i{\mathcal{A}}_{G,F}^{*}$
be the set of unramified orbits in ${\mathcal{X}}_{\text{ell}}\left(G\right)$.
Let $\underline{{\mathcal{X}}}_{\text{ell}}\left(G\right)$ be the inverse
image of ${\mathcal{X}}_{\text{ell}}\left(G\right)$ in $R_{\text{temp}}\left(G\right)$.
This set is invariant under multiplication by $\mathbb{S}^{1}$.

We denote by ${\mathcal{X}}\left(G\right)$ the subset of $R_{\text{temp}}\left(G\right)/\mathbb{S}^{1}$
consisting of virtual representations of the form $i_{M}^{G}\left(\sigma\right)$,
where $M$ is a Levi subgroup of $G$ and $\sigma\in{\mathcal{X}}_{\text{ell}}\left(M\right)$.
Also, let $\underline{{\mathcal{X}}}\left(G\right)$ be the inverse image
of ${\mathcal{X}}\left(G\right)$ in $R_{\text{temp}}\left(G\right)$.
The fibers of the natural projection $\underline{{\mathcal{X}}}\left(G\right)\rightarrow{\mathcal{X}}\left(G\right)$
are all isomorphic to $\mathbb{S}^{1}$. Let ${\mathcal{M}}$ be a set
of representatives for the conjugacy classes of Levi subgroups of
$G$. Then ${\mathcal{X}}\left(G\right)$ is naturally a quotient of $\underset{M\in{\mathcal{M}}}{\bigsqcup}\ \ \underset{{\mathcal{O}}\in{\mathcal{X}}_{\text{ell}}\left(M\right)/i{\mathcal{A}}_{M,F}^{*}}{\bigsqcup}{\mathcal{O}}$. This defines a structure of topological space on ${\mathcal{X}}\left(G\right)$.
Let us define a regular Borel measure $d\pi$ on ${\mathcal{X}}\left(G\right)$
by 
$$
\int_{{\mathcal{X}}\left(G\left(F\right)\right)}\phi\left(\pi\right)d\pi=\underset{M\in{\mathcal{M}}}{\sum}\left|W\left(G,M\right)\right|^{-1}\underset{{\mathcal{O}}\in{\mathcal{X}}_{\text{ell}}\left(M\right)/i{\mathcal{A}}_{M,F}^{*}}{\sum}\left[i{\mathcal{A}}_{M,\sigma}^{\vee}:i{\mathcal{A}}_{M,F}^{\vee}\right]^{-1}\int_{i{\mathcal{A}}_{M,F}^{*}}\phi\left(i_{M}^{G}\left(\sigma_{\lambda}\right)\right)d\lambda,
$$
for any continuous and compactly supported function $\phi$ on ${\mathcal{X}}\left(G\right)$,
where a base point $\sigma\in{\mathcal{O}}$ is fixed for every orbit ${\mathcal{O}}\in{\mathcal{X}}_{\text{ell}}\left(M\right)/i{\mathcal{A}}_{M,F}^{*}$. Finally, we extend the function $\pi\mapsto D\left(\pi\right)$ to ${\mathcal{X}}\left(G\right)$ by setting $D\left(\pi\right)=D\left(\sigma\right)$ for any $\pi=i_{M}^{G}\left(\sigma\right)$, where $M$ is a Levi
subgroup and $\sigma\in{\mathcal{X}}_{\text{ell}}\left(M\right)$.

We now move to the setting of twisted groups. Let $\text{Temp}\left(\tilde{G}\right)$
be the set of $G\left(F\right)$-irreducible tempered representations
of $\tilde{G}\left(F\right)$. Let $R_{\text{temp}}\left(\tilde{G}\right)$
be the space of complex virtual tempered representations of $G\left(F\right)$,
i.e. the complex vector space with basis $\text{Temp}\left(\tilde{G}\right)$. We recall the subsets $E_\text{disc}(\tilde{G})$ and $E_\text{ell}(\tilde{G})$ of $\text{Temp}\left(\tilde{G}\right)/conj$ defined in \cite[Section 2.8]{BW23}.
We equip $E_\text{disc}(\tilde{G})$ with the unique measure such that for every $\tau \in E_{disc}(\tilde{G})$, the action map $\lambda \in i\mathcal{A}_{\tilde{G}}^* \mapsto \lambda \cdot r$ is locally measure preserving. For every sufficiently nice function $\varphi : E_\text{disc}(\tilde{G})\rightarrow \mathbb{C}$, we have
$$\int_{E_\text{disc}(\tilde{G})}\varphi(\tau) d\tau = \underset{\tau \in E_\text{disc}(\tilde{G})/i\mathcal{A}_{\tilde{G},F}^*}{\sum} |\text{Stab}(i\mathcal{A}_{\tilde{G},F}^*,\tau)|^{-1} \int_{i\mathcal{A}_{\tilde{G},F}^*} \varphi(\lambda \cdot \tau)d\lambda,$$
where $\text{Stab}(i\mathcal{A}_{\tilde{G},F}^*,\tau)$ is the stabilizer of $\tau$ in $i\mathcal{A}_{\tilde{G},F}^*$.

\subsection{$(G,M)$-families and $(\tilde{G},\tilde{M})$-families}\label{sec2.5}

We recall some facts about $(G,M)$-families in \cite[Section 17]{Art05}. Let $M$ be a Levi subgroup of $G$ and $V$ be a locally convex topological space. A $\left(G,M\right)$-family with values in $V$ is a family $\left(c_{P}\right)_{P\in{\mathcal{P}}\left(M\right)}$ of smooth functions
on $i{\mathcal{A}}_{M}^{*}$ taking values in $V$ such that for all adjacent
parabolic subgroups $P,P^{\prime}\in{\mathcal{P}}\left(M\right)$, the
functions $c_{P}$ and $c_{P^{\prime}}$ coincide on the hyperplane
supporting the wall that separates the positive chambers for $P$
and $P^{\prime}$. For any $\left(G,M\right)$-family $\left(c_{P}\right)_{P\in{\mathcal{P}}\left(M\right)}$,
Arthur associates an element $c_{M}$ of $V$ as follows. The function $c_{M}\left(\lambda\right)=\underset{P\in{\mathcal{P}}\left(M\right)}{\sum}c_{P}\left(\lambda\right)\theta_{P}\left(\lambda\right)^{-1}$ extends to a smooth function on $i{\mathcal{A}}_{M}^{*}$ where 
$$
\theta_{P}\left(\lambda\right)=\text{meas}\left({\mathcal{A}}_{M}^{G}/\mathbb{Z}\Delta_{P}^{\vee}\right)^{-1}\underset{\alpha\in\Delta_{P}}{\prod}\lambda\left(\alpha^{\vee}\right),\text{ for }P\in{\mathcal{P}}\left(M\right)
$$
and set $c_{M}=c_{M}\left(0\right)$. Here $\Delta_{P}$ is the set
of simple roots of $A_{M}$ in $P$, $\Delta_{P}^{\vee}$ is the corresponding
set of simple coroots, and for every $\alpha\in\Delta_{P}$, $\alpha^{\vee}$
is denoted as the corresponding simple coroot.

A $\left(G,M\right)$-orthogonal set is a family $\left(Y_{P}\right)_{P\in{\mathcal{P}}\left(M\right)}$
of points in ${\mathcal{A}}_{M}$ such that for any adjacent parabolic
subgroups $P,P^{\prime}\in{\mathcal{P}}\left(M\right)$ there exists a
real number $r_{P,P^{\prime}}$ such that $Y_{P}-Y_{P^{\prime}}=r_{P,P^{\prime}}\alpha^{\vee}$,
where $\alpha$ is the unique root of $A_{M}$ that is positive for
$P$ and negative for $P^{\prime}$. If moreover we have $r_{P,P^{\prime}}\geq0$
for any adjacent $P,P^{\prime}\in{\mathcal{P}}\left(M\right)$, then we
say that the family is positive. Clearly if $\left(Y_{P}\right)_{P\in{\mathcal{P}}\left(M\right)}$
is a $\left(G,M\right)$-orthogonal set, then the family $\left(c_{P}\right)_{P\in{\mathcal{P}}\left(M\right)}$
defined by $c_{P}\left(\lambda\right)=e^{\lambda\left(Y_{P}\right)}$
is a $\left(G,M\right)$-family. If the family $\left(Y_{P}\right)_{P\in{\mathcal{P}}\left(M\right)}$
is positive, then $c_{M}$ is the volume in ${\mathcal{A}}_{M}^{G}$ of
the convex hull of the set $\left\{ Y_{P}:\ P\in{\mathcal{P}}\left(M\right)\right\} $.

Let $\tilde{M}$ be a twisted Levi subgroup of $\tilde{G}$. As in Section 2.3 in \cite{Wal12b}, we extend previous notions to the setting of twisted groups, which are $\left(\tilde{G},\tilde{M}\right)$-families and $\left(\tilde{G},\tilde{M}\right)$-orthogonal
sets. A family $\mathcal{Y}=(Y_{\tilde{P}})_{\tilde{P}\in \mathcal{P}(\tilde{M})}$ of points in $\mathcal{A}_{\tilde{M}}$ is a $(\tilde{G},\tilde{M})$-orthogonal set if for every adjacent twisted parabolic subgroups $\tilde{P},\tilde{Q}\in \mathcal{P}(\tilde{M})$, we have
$Y_{\tilde{P}}-Y_{\tilde{Q}} \in \mathbb{R}\alpha_{\tilde{P},\tilde{Q}}^\vee$, where $\alpha_{\tilde{P},\tilde{Q}}^\vee$ is the unique root of $A_{\tilde{M}}$ that is positive for $\tilde{P}$ and negative for $\tilde{Q}$. Moreover, if $Y_{\tilde{P}}-Y_{\tilde{Q}}$ lies in $\mathbb{R}_{>0}\alpha_{\tilde{P},\tilde{Q}}^\vee$, for every pair of adjacent twisted parabolic subgroups $\tilde{P},\tilde{Q}\in \mathcal{P}(\tilde{M})$, we say $\mathcal{Y}$ is positive. Similar to the group case, if $\mathcal{Y}$ is positive, we can take $v^{\tilde{Q}}_{\tilde{L}}(\mathcal{Y})$ to be the volume of the convex hull of $(Y_{\tilde{P}})_{\tilde{P}\in \mathcal{P}(\tilde{L}),\tilde{P}\subset \tilde{Q}}$, for every $\tilde{L}\in \mathcal{L}(\tilde{M})$ and $\tilde{Q}\in \mathcal{F}(\tilde{L})$. We drop the superscript when $\tilde{Q}=\tilde{G}$.
\subsection{Harish-Chandra descent}\label{sec3.1}

Let $x\in G_{ss}\left(F\right)$. Let $\Omega_{x}\subseteq G_{x}\left(F\right)$ be a $G$-good open neighborhood of $x$ (see \cite[Section 3.2]{BP20} for this definition) and we set $\Omega=\Omega_{x}^{G}$. The following
integration formula holds for any $f$ which is integrable on $\Omega$
$$
\int_{\Omega}f\left(y\right)dy=\int_{Z_{G}\left(x\right)\left(F\right)\backslash G\left(F\right)}\int_{\Omega_{x}}f\left(g^{-1}yg\right)\eta_{x}^{G}\left(y\right)dydg
$$
$$
=\left[Z_{G}\left(x\right)\left(F\right):G_{x}\left(F\right)\right]^{-1}\int_{G_{x}\left(F\right)\backslash G\left(F\right)}\int_{\Omega_{x}}f\left(g^{-1}yg\right)\eta_{x}^{G}\left(y\right)dydg.
$$
For a function $f$ on $\Omega$, let $f_{x,\Omega_{x}}$ be the
function on $\Omega_{x}$ given by $f_{x,\Omega_{x}}\left(y\right)=\eta_{x}^{G}\left(y\right)^{1/2}f\left(y\right)$.
The map $f\mapsto f_{x,\Omega_{x}}$ induces topological isomorphisms
$$
C^{\infty}\left(\Omega\right)^{G}\simeq C^{\infty}\left(\Omega_{x}\right)^{Z_{G}\left(x\right)}\text{ and }C^{\infty}\left(\Omega_{\text{rss}}\right)^{G}\simeq C^{\infty}\left(\Omega_{x,\text{ rss}}\right)^{Z_{G}\left(x\right)}.
$$

Let $\tilde{x}\in\tilde{G}_{ss}\left(F\right)$ and $\theta_{\tilde{x}}$
be the unique automorphism of $G\left(F\right)$ defined by $\tilde{x}$.
A subset $\Omega\subset G\left(F\right)$ is called completely $\theta_{\tilde{x}}$-twisted
$G\left(F\right)$-invariant if it is $\theta_{\tilde{x}}$-twisted
$G\left(F\right)$-conjugation invariant (i.e $g^{-1}\Omega\theta_{\tilde{x}}\left(g\right)=\Omega$,
for any $g\in G\left(F\right)$) and for any $x\in\Omega$, its semisimple
part $x_{s}$ also lies in $\Omega$. Let us fix a completely $\theta_{\tilde{x}}$-twisted
$G\left(F\right)$-invariant open subset $\Omega\subset G\left(F\right)$.
Let $C^{\infty}\left(\Omega\tilde{x}\right)^{G}$ be the space of
smooth and $\theta_{\tilde{x}}$-twisted $G\left(F\right)$-invariant
functions on $\Omega$. It is a closed subset of $C^{\infty}\left(\Omega\tilde{x}\right)$
and we endow it with the induced locally convex topology. Let ${\mathcal{D}}^{\prime}\left(\Omega\tilde{x}\right)^{G}$
be the space of $\theta_{\tilde{x}}$-twisted $G\left(F\right)$-invariant
distributions on $\Omega\tilde{x}$. Similarly, let us denote by $C_{c}^{\infty}\left(\Omega\tilde{x}\right)$
the space of all functions $\tilde{f}\in C_{c}^{\infty}\left(G\left(F\right)\tilde{x}\right)$
such that $\overline{\text{Supp}\left(\tilde{f}\right)^{G}}\subset\Omega\tilde{x}$.

Let $\tilde{x}\in\tilde{G}_{ss}\left(F\right)$. An open subset $\Omega_{\tilde{x}}\subset G_{\tilde{x}}\left(F\right)$
is called $G$-good if it is completely $Z_{G}\left(\tilde{x}\right)\left(F\right)$-invariant
and the map 
$$
\begin{array}{ccc}
\Omega_{\tilde{x}}\times^{Z_{G}\left(\tilde{x}\right)\left(F\right)}G\left(F\right) & \longrightarrow & \tilde{G}\left(F\right)\\
\left(y,g\right) & \mapsto & g^{-1}y\theta_{\tilde{x}}\left(g\right)\tilde{x}
\end{array}
$$
induces an $F$-analytic isomorphism between $\Omega_{\tilde{x}}\times^{Z_{G}\left(\tilde{x}\right)\left(F\right)}G\left(F\right)$
and $\left(\Omega_{\tilde{x}}\tilde{x}\right)^{G}$. The Jacobian
of the above map at $\left(y,g\right)\in\Omega_{\tilde{x}}\times^{Z_{G}\left(\tilde{x}\right)\left(F\right)}G\left(F\right)$
is equal to $\eta_{\tilde{x}}^{\tilde{G}}\left(y\right)=D^{\tilde{G}}\left(\tilde{x}\right)\left|\det\left(1-\text{Ad}\left(y\right)\right)_{\mathfrak{g}/\mathfrak{g}_{\tilde{x}}}\right|$. An open subset $\Omega_{\tilde{x}}\subset G_{\tilde{x}}\left(F\right)$ is $G$-good if and only if the following conditions are satisfied 
\begin{enumerate}
\item $\Omega_{\tilde{x}}$ is completely $Z_{G}\left(\tilde{x}\right)\left(F\right)$-invariant;
\item For any $y\in\Omega_{\tilde{x}}$, we have $\eta_{\tilde{x}}^{\tilde{G}}\left(y\right)\neq0$;
\item For any $g\in G\left(F\right)$, $g^{-1}\Omega_{\tilde{x}}\theta_{\tilde{x}}\left(g\right)\cap\Omega_{\tilde{x}}$
is nonempty if and only if $g\in Z_{G}\left(\tilde{x}\right)\left(F\right)$.
\end{enumerate}
Let $\Omega_{\tilde{x}}\subset G_{\tilde{x}}\left(F\right)$ be a
$G$-good open neighborhood of $\tilde{x}$ and set $\Omega=\left(\Omega_{\tilde{x}}\tilde{x}\right)^{G}$.
The following integration formula holds for any $\tilde{f}$ which
is integrable on $\Omega$.
$$
\int_{\Omega}\tilde{f}\left(\tilde{y}\right)d\tilde{y}=\int_{Z_{G}\left(\tilde{x}\right)\left(F\right)\backslash G\left(F\right)}\int_{\Omega_{\tilde{x}}}\tilde{f}\left(g^{-1}y\theta_{\tilde{x}}\left(g\right)\tilde{x}\right)\eta_{\tilde{x}}^{\tilde{G}}\left(y\right)dydg
$$
$$
=\left[Z_{G}\left(\tilde{x}\right)\left(F\right):G_{\tilde{x}}\left(F\right)\right]^{-1}\int_{G_{\tilde{x}}\left(F\right)\backslash G\left(F\right)}\int_{\Omega_{\tilde{x}}}\tilde{f}\left(g^{-1}y\theta_{\tilde{x}}\left(g\right)\tilde{x}\right)\eta_{\tilde{x}}^{\tilde{G}}\left(y\right)dydg.
$$
For any function $\tilde{f}$ defined on $\Omega$, let $\tilde{f}_{\tilde{x},\Omega_{\tilde{x}}}$
be the function on $\Omega_{\tilde{x}}$ given by $\tilde{f}_{\tilde{x},\Omega_{\tilde{x}}}\left(y\right)=\eta_{\tilde{x}}^{\tilde{G}}\left(y\right)^{1/2}\tilde{f}\left(y\tilde{x}\right)$.
The map $\tilde{f}\mapsto\tilde{f}_{\tilde{x},\Omega_{\tilde{x}}}$
induces topological isomorphisms 
$$
C^{\infty}\left(\Omega\right)^{G}\simeq C^{\infty}\left(\Omega_{\tilde{x}}\right)^{Z_{G}\left(\tilde{x}\right)},\ \ \ C^{\infty}\left(\Omega_{\text{rss}}\right)^{G}\simeq C^{\infty}\left(\Omega_{\tilde{x},\text{ rss}}\right)^{Z_{G}\left(\tilde{x}\right)}.
$$

Let $\omega\subseteq\mathfrak{g}\left(F\right)$ be a $G$-excellent
open subset (see \cite[Section 3.3]{BP20} for this definition) and set $\Omega=\exp\left(\omega\right)$. The Jacobian of the exponential map at $X\in\omega_{ss}$ is given by $j^{G}\left(X\right)=D^{G}\left(e^{X}\right)D^{G}\left(X\right)^{-1}$. Hence, the following integration formula holds for any $f$ which is integrable on $\Omega$
$$
\int_{\Omega}f\left(g\right)dg=\int_{\omega}f\left(e^{X}\right)j^{G}\left(X\right)dX.
$$
For any function $f$ on $\Omega$, we set $f_{\omega}$ the function
on $\omega$ defined by $f_{\omega}\left(X\right)=j^{G}\left(X\right)^{1/2}f\left(e^{X}\right)$.
The map $f\rightarrow f_{\omega}$ induces topological isomorphisms
$$
C^{\infty}\left(\Omega\right)\simeq C^{\infty}\left(\omega\right)\text{ and }C^{\infty}\left(\Omega_{\text{rss}}\right)\simeq C^{\infty}\left(\omega_{\text{rss}}\right).
$$
\subsection{Quasi-characters}\label{sec3.3}

Let $\omega\subseteq\mathfrak{g}\left(F\right)$ be a completely $G\left(F\right)$-invariant
open subset. A quasi-character on $\omega$ is a $G\left(F\right)$-invariant
smooth function $\theta:\omega_{\text{reg}}\rightarrow\mathbb{C}$
satisfying the following condition: for any $X\in\omega_{ss}$, there
exists a $G$-good open neighborhood $\omega_{X}\subseteq\mathfrak{g}_{X}\left(F\right)$
of $X$ satisfying $\omega_{X}^{G}\subseteq\omega$ and coefficients
$c_{\theta,{\mathcal{O}}}\left(X\right)$, where ${\mathcal{O}}$ is in the set $\text{Nil}\left(\mathfrak{g}_{X}\right)$ containing nilpotent orbits of $\mathfrak{g}_X$,
such that 
$$
\theta\left(Y\right) = \underset{\mathcal{O} \in \text{Nil} \left(g_{X}\right)}{\sum}c_{\theta,{\mathcal{O}}}\left(X\right) \hat{j}\left(\mathcal{O},Y\right),\ \forall\,Y\in\omega_{X,\text{reg}}.
$$
Here $\hat{j}\left(\mathcal{O},\cdot\right)$ is the Fourier transform of the orbital integral of $\mathcal{O}$. If $\theta$ is a quasi-character on $\omega$ and $f\in C^{\infty}\left(\omega\right)^{G}$,
then $f\theta$ is also a quasi-character on $\omega$. We denote
by $QC\left(\omega\right)$ the space of all quasi-characters on $\omega$
and by $QC_{c}\left(\omega\right)$ the subspace of quasi-characters
on $\omega$ whose support is compact modulo conjugation.

Let $\Omega\subseteq G\left(F\right)$ be a completely $G\left(F\right)$-invariant
open subset. Similar to the setting of Lie algebras, a quasi-character on $\Omega$ is a $G\left(F\right)$-invariant
smooth function $\theta:\Omega_{\text{reg}}\rightarrow\mathbb{C}$
satisfying the following condition: for any $x\in\Omega_{ss}$, there
exists $\omega_{x}\subseteq\mathfrak{g}_{x}\left(F\right)$ a $G_{x}$-excellent
open neighborhood of $0$ satisfying $\left(x\exp\left(\omega_{x}\right)\right)^{G}\subseteq\Omega$
and coefficients $c_{\theta,{\mathcal{O}}}\left(x\right)$, where ${\mathcal{O}}\in\text{Nil}\left(\mathfrak{g}_{x}\right)$,
such that 
$$
\theta\left(xe^{Y}\right)=\underset{{\mathcal{O}}\in\text{Nil}\left(g_{x}\right)}{\sum}c_{\theta,{\mathcal{O}}}\left(x\right)\hat{j}\left({\mathcal{O}},X\right),\ \forall\,X\in\omega_{x,\text{reg}}.
$$
Then
$$
D^{G}\left(xe^{X}\right)^{1/2}\theta\left(xe^{X}\right)=D^{G}\left(xe^{X}\right)^{1/2}\underset{{\mathcal{O}}\in\text{Nil}_{\text{reg}}\left(g_{x}\right)}{\sum}c_{\theta,{\mathcal{O}}}\left(x\right)\hat{j}\left({\mathcal{O}},X\right)+O\left(\left|X\right|\right),
$$
for all $X\in\mathfrak{g}_{x,\text{reg}}\left(F\right)$ sufficiently
near $0$. By the homogeneity property of the functions $\hat{j}\left({\mathcal{O}},\cdot\right)$
and their linear independence, we can see that coefficients $c_{\theta,{\mathcal{O}}}\left(x\right)$,
where ${\mathcal{O}}\in\text{Nil}_{\text{reg}}\left(\mathfrak{g}_{x}\right)$,
are uniquely defined. We set 
$$
c_{\theta}\left(x\right)=\frac{1}{\left|\text{Nil}_{\text{reg}}\left(g_{x}\right)\right|}\underset{{\mathcal{O}}\in\text{Nil}_{\text{reg}}\left(g_{x}\right)}{\sum}c_{\theta,{\mathcal{O}}}\left(x\right),
$$
for all $x\in G_{ss}\left(F\right)$. This gives us a complex-valued
function $c_{\theta}$ on $G_{ss}\left(F\right)$. Similarly, for
any quasi-character $\theta$ on $\mathfrak{g}\left(F\right)$, we
can associate to it a function $c_{\theta}$ on $\mathfrak{g}_{ss}\left(F\right)$.
We recall \cite[Proposition 4.5.1]{BP20} for later use.
\begin{proposition}\label{3.2}
Let $\theta$ be a quasi-character on $G\left(F\right)$ and let $x\in G_{ss}\left(F\right)$.
We have the following properties. 
\begin{enumerate}
\item Assume $G_{x}$ is quasi-split. Let $B_{x}\subset G_{x}$ be a Borel
subgroup and $T_{\text{qd,}x}\subset B_{x}$ be a maximal torus (both
defined over $F$). Then
$$
D^{G}\left(x\right)^{1/2}c_{\theta}\left(x\right)=\left|W\left(G_{x},T_{\text{qd,}x}\right)\right|^{-1}\underset{x^{\prime}\in T_{\text{qd,}x}\left(F\right)\rightarrow x}{\lim}D^{G}\left(x^{\prime}\right)^{1/2}\theta\left(x^{\prime}\right).
$$
\item The function $\left(D^{G}\right)^{1/2}c_{\theta}$ is locally bounded
on $G\left(F\right)$. To be more precise, for any invariant and compact
modulo conjugation subset $L$ of $G\left(F\right)$, there exists
a continuous semi-norm $\nu_{L}$ on $QC\left(G\left(F\right)\right)$
such that 
$$
\underset{x\in L_{ss}}{\sup}D^{G}\left(x\right)^{1/2}\left|c_{\theta}\left(x\right)\right|\leq\nu_{L}\left(\theta\right),
$$
for all $\theta\in QC\left(G\left(F\right)\right)$.
\item Let $\Omega_{x}\subseteq G_{x}\left(F\right)$ be a $G$-good open
neighborhood of $x$. Then
$$
D^{G}\left(y\right)^{1/2}c_{\theta}\left(y\right)=D^{G_{x}}\left(y\right)^{1/2}c_{\theta_{x,\Omega_{x}}}\left(y\right),
$$
for any $y\in\Omega_{x,ss}$.
\end{enumerate}
\end{proposition}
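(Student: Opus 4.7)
The plan is to establish the three parts in a mutually reinforcing order: first (3) as the core Harish-Chandra descent identity, then (1) by reduction to the case $x=1$ combined with the asymptotic analysis of $\hat{j}(\mathcal{O},\cdot)$ on a split Cartan, and finally (2) by a local covering argument exploiting both.

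For part (3), fix $y \in \Omega_{x,ss}$. Since $y$ centralizes $x$, one has $\mathfrak{g}_y = (\mathfrak{g}_x)_y$, so the nilpotent orbit sets $\text{Nil}(\mathfrak{g}_y)$ indexing the local germ expansions of $\theta$ at $y$ and of $\theta_{x,\Omega_x}$ at $y$ coincide. The descent identity $\theta_{x,\Omega_x}(z) = \eta_x^G(z)^{1/2}\theta(z)$ on $\Omega_x$, together with the relation $\eta_x^G(y) = D^G(y)/D^{G_x}(y)$ and the fact that $\eta_x^G$ is smooth and nonvanishing near $y$, identifies the regular-nilpotent coefficients via $c_{\theta_{x,\Omega_x},\mathcal{O}}(y) = \eta_x^G(y)^{1/2} c_{\theta,\mathcal{O}}(y)$ for each $\mathcal{O} \in \text{Nil}_{\text{reg}}(\mathfrak{g}_y)$, where the higher-order terms in the Taylor expansion of $\eta_x^G(ye^Y)^{1/2}$ contribute only to lower-order terms in the germ expansion. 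Averaging over $\text{Nil}_{\text{reg}}(\mathfrak{g}_y)$ and rearranging yields the claimed identity.

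For part (1), apply (3) to reduce to the case $x=1$ with $G$ quasi-split, and then use the exponential descent of Section 3.1. The right-hand side becomes $|W(G,T_{\text{qd}})|^{-1} \lim_{Y \to 0,\, Y \in \mathfrak{t}_{\text{qd}}(F)_{\text{reg}}} D^G(Y)^{1/2} \sum_{\mathcal{O}} c_{\theta,\mathcal{O}}(1) \hat{j}(\mathcal{O}, Y)$. The homogeneity $\hat{j}(\mathcal{O},tY) = |t|_F^{-\dim(\mathcal{O})/2} \hat{j}(\mathcal{O},Y)$ together with $D^G(tY)^{1/2} = |t|_F^{\delta(G)/2} D^G(Y)^{1/2}$ kills the contribution of every $\mathcal{O}$ with $\dim(\mathcal{O}) < \delta(G)$, leaving only regular nilpotent orbits. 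For such $\mathcal{O}$, the Harish-Chandra formula on a quasi-split Cartan shows that $D^G(Y)^{1/2} \hat{j}(\mathcal{O}, Y)$ extends continuously to $Y=0$, where each orbit contributes a constant whose combinatorics, together with the bijection between $\text{Nil}_{\text{reg}}(\mathfrak{g})$ and the appropriate Weyl-type indexing set, recovers the averaging factor in the definition of $c_\theta(1)$.

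For part (2), cover $L$ by finitely many invariant open sets $(\Omega_{x_i})^G$, where $x_i$ ranges over a finite system of representatives of the semisimple conjugacy classes meeting $L$. By (3), bounding $(D^G)^{1/2}|c_\theta|$ on each piece reduces to bounding $(D^{G_{x_i}})^{1/2} |c_{\theta_{x_i,\Omega_{x_i}}}|$ in a neighborhood of the identity of $G_{x_i}(F)$, which in turn is controlled by the regular-nilpotent coefficients $c_{\theta_{x_i,\Omega_{x_i}},\mathcal{O}}(1)$. These depend continuously on $\theta$ for the natural topology on $QC(G(F))$ because the descent map $\theta \mapsto \theta_{x_i,\Omega_{x_i}}$ is multiplication by the smooth function $(\eta_{x_i}^G)^{1/2}$, providing the desired seminorm $\nu_L$. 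The main technical obstacle is the regular-nilpotent computation in part (1): extracting the precise factor $|W(G, T_{\text{qd}})|^{-1}$ requires careful tracking of the Harish-Chandra parametrization of regular nilpotent orbits against the Weyl-group action on the split Cartan; geometrically, a split maximal torus lies in $|W|$ distinct Borel subgroups, and this is exactly where the denominator originates.
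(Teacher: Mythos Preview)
The paper does not prove this proposition at all: it is quoted verbatim from \cite[Proposition 4.5.1]{BP20} and used without argument. Your sketch is therefore not being compared against a proof in the present paper but against the one in the cited reference, and your outline follows the same architecture as that source: prove the descent identity (3) first, deduce (1) by reducing to $x=1$ and using the homogeneity of $\hat{j}(\mathcal{O},\cdot)$ on a quasi-split Cartan, then obtain (2) by a finite covering.

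One small correction in part (3): the equality $\mathfrak{g}_y = (\mathfrak{g}_x)_y$ is not a consequence of ``$y$ centralizes $x$'' alone. What you actually need is the $G$-good condition on $\Omega_x$, which forces $\eta_x^G(y) \neq 0$, i.e.\ $1 - \mathrm{Ad}(y)$ is invertible on $\mathfrak{g}/\mathfrak{g}_x$; this is what pins $\mathfrak{g}_y$ inside $\mathfrak{g}_x$. With that adjustment the rest of your reasoning in (3) goes through, and parts (1) and (2) are then correctly organized around it.
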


Next, we define elliptic germs of quasi-characters.
\begin{definition}\label{3.3}
Let $G$ be a reductive group defined over $F$. Two quasi-characters
$\theta$ and $\theta^{\prime}$ of $G\left(F\right)$ are said to
be equivalent if there exists an open neighborhood $\Omega$ of $G_{\text{ell}}\left(F\right)$
such that $\theta=\theta^{\prime}$ on $\Omega_{\text{reg}}$. We
denote the set of equivalent classes of quasi-characters of $G\left(F\right)$
by ${\mathcal{E}}\left(G\right)$. We call ${\mathcal{E}}\left(G\right)$ the
set of elliptic germs of quasi-characters of $G\left(F\right)$.
\end{definition}
\begin{remark}\label{3.4}
    We can easily extend the above setting to twisted groups, including the definition of quasi-characters, Proposition \ref{3.2} and Definition \ref{3.3}.
\end{remark}
\subsection{Strongly cuspidal functions}\label{sec3.4}
Let $P=MU$ be a parabolic subgroup of $G$. For $f\in{\mathcal{C}}\left(G\left(F\right)\right)$,
we define 
$$f^{P}\left(m\right)=\delta_{P}\left(m\right)^{1/2}\int_{U\left(F\right)}f\left(mu\right)du,$$
where $m\in M\left(F\right)$, as a function in ${\mathcal{C}}\left(M\left(F\right)\right)$. A function $f\in{\mathcal{C}}\left(G\left(F\right)\right)$ is called strongly cuspidal if $f^{P}=0$ for any proper parabolic subgroup of $P$ of $G$. We denote by ${\mathcal{C}}_{\text{scusp}}\left(G\left(F\right)\right)$
the subspace of strongly cuspidal functions in ${\mathcal{C}}\left(G\left(F\right)\right)$.
More generally, for a completely $G\left(F\right)$-invariant open
subset $\Omega\subseteq G\left(F\right)$, we set ${\mathcal{C}}_{\text{scusp}}\left(\Omega\right)={\mathcal{C}}\left(\Omega\right)\cap{\mathcal{C}}_{\text{scusp}}\left(G\left(F\right)\right)$.

Recall that in section \ref{sec2.3}, we have defined a topological space ${\mathcal{C}}\left({\mathcal{X}}_{\text{temp}}\left(G\right),{\mathcal{E}}\left(G\right)\right)$
of smooth sections $\pi\in{\mathcal{X}}_{\text{temp}}\left(G\right)\mapsto T_{\pi}\in\text{End}\left(\pi\right)$
and the map that associates to $f\in{\mathcal{C}}\left(G\left(F\right)\right)$
its Fourier transform $\pi\in{\mathcal{X}}_{\text{temp}}\left(G\right)\mapsto\pi\left(f\right)$,
which induces a topological isomorphism ${\mathcal{C}}\left(G\left(F\right)\right)\simeq{\mathcal{C}}\left({\mathcal{X}}_{\text{temp}}\left(G\right),{\mathcal{E}}\left(G\right)\right)$.
We denote by ${\mathcal{C}}_{\text{scusp}}\left({\mathcal{X}}_{\text{temp}}\left(G\right),{\mathcal{E}}\left(G\right)\right)$
the image of ${\mathcal{C}}_{\text{scusp}}\left(G\left(F\right)\right)$
under this isomorphism .

Likewise, we can extend the definition of strongly cuspidal functions to twisted groups and we denote the space of strongly cuspidal functions on $\tilde{G}\left(F\right)$
by ${\mathcal{C}}_{\text{scusp}}\left(\tilde{G}\left(F\right)\right)$.

\subsection{Weighted orbital integrals of strongly cuspidal functions}\label{sec3.5}

Let $M$ be a Levi subgroup of $G$. As in Section \ref{sec2.1}, by fixing
a maximal compact subgroup $K$ of $G\left(F\right)$, we have the
following map $H_{P}:G\left(F\right)\rightarrow{\mathcal{A}}_{M},$ where $P\in{\mathcal{P}}\left(M\right)$. For every $g\in G\left(F\right)$,
the family $\left(H_{P}\left(g\right)\right)_{P\in{\mathcal{P}}\left(M\right)}$
is a positive $\left(G,M\right)$-orthogonal set. By the previous
section, this gives us a $\left(G,M\right)$-family $\left(v_{p}\left(g,\cdot\right)\right)_{P\in{\mathcal{P}}\left(M\right)}$
and the number $v_{M}\left(g\right)$ associated to this $\left(G,M\right)$-family,
i.e. the volume in ${\mathcal{A}}_{M}^{G}$ of the convex hull determined
by $\left(H_{P}\left(g\right)\right)_{P\in{\mathcal{P}}\left(M\right)}$.
The function $g\mapsto v_{M}\left(g\right)$ is left $M\left(F\right)$
and right $K$-invariant.

Let $x\in M\left(F\right)\cap G_{\text{rss}}\left(F\right)$. For
any $f\in{\mathcal{C}}\left(G\left(F\right)\right)$, we define the weighted
orbital integral of $f$ at $x$ by setting 
$$
J_{M}\left(x,f\right)=D^{G}\left(x\right)^{1/2}\int_{G_{x}\left(F\right)\backslash G\left(F\right)}f\left(g^{-1}xg\right)v_{M}\left(g\right)dg.
$$
This integral is absolutely convergent and defines a tempered distribution
$J_{M}\left(x,\cdot\right)$ on $G\left(F\right)$. More generally,
for the $\left(G,M\right)$-family $\left(v_{P}\left(g,\cdot\right)\right)_{P\in{\mathcal{P}}\left(M\right)}$,
it is possible to associate to it a complex number $v_{L}^{Q}\left(g\right)$,
for any $L\in{\mathcal{L}}\left(M\right)$ and $Q\in{\mathcal{F}}\left(L\right)$.
This allows us to define a tempered distribution $J_{L}^{Q}\left(x,\cdot\right)$
on $G\left(F\right)$, for any $L\in{\mathcal{L}}\left(M\right)$ and $Q\in{\mathcal{F}}\left(L\right)$,
by setting 
$$
J_{L}^{Q}\left(x,f\right)=D^{G}\left(x\right)^{1/2}\int_{G_{x}\left(F\right)\backslash G\left(F\right)}f\left(g^{-1}xg\right)v_{L}^{Q}\left(g\right)dg,\ \ f\in{\mathcal{C}}\left(G\left(F\right)\right).
$$
When $L=M$ and $Q=G$, this recovers the definition of $J_M$. For a regular semisimple element $x\in G_{\text{rss}}\left(F\right)$, let $M\left(x\right)$
be the centralizer of $A_{G_{x}}$ in $G$. For any $f\in{\mathcal{C}}_{\text{scusp}}\left(G\left(F\right)\right)$, we set 
$$
\theta_{f}\left(x\right)=\left(-1\right)^{a_{G}-a_{M\left(x\right)}}\nu\left(G_{x}\right)^{-1}D^{G}\left(x\right)^{-1/2}J_{M\left(x\right)}^{G}\left(x,f\right),
$$
where $x\in G_{\text{rss}}\left(F\right)$. By \cite[Lemma 5.2.1]{BP20}, the function
$\theta_{f}$ is invariant under conjugation. Moreover, it is a quasi-character
of $G\left(F\right)$.

We consider the case of twisted groups. Similar to the untwisted setting, for $\tilde{f}\in \mathcal{C}_{scusp}\left(\tilde{G}\left(F\right)\right)$, we can define the orbital integral $J_{\tilde{M}}\left(\,\cdot\,,\tilde{f}\right)$, where $\tilde{M}\in{\mathcal{L}}^{\tilde{G}}$, and the quasi-character $\theta_{\tilde{f}}$.

Let $\left(G_{n}\right)_{n}$ be an increasing sequence of compact
subsets modulo $A_{\tilde{G}}\left(F\right)$ of $G\left(F\right)$. We say $\left(G_{n}\right)_{n}$ is of moderate growth if there exists $c_{1},c_{2}>0$ such that for any $n\geq1$, we have  
$$
\left\{ g\in G:\bar{\sigma}\left(g\right)\leq c_{1}n\right\} \subseteq G_{n}\subseteq\left\{ g\in G:\bar{\sigma}\left(g\right)\leq c_{2}n\right\},
$$
where $\bar{\sigma}=\sigma_{A_{\tilde{G}}\backslash G}$. In the remainder of this section, we prove the following technical proposition.
\begin{proposition}\label{3.8}
Let $\left(G_{n}\right)_{n}$ be an increasing sequence of compact
subsets modulo $A_{\tilde{G}}\left(F\right)$ of $G\left(F\right)$ which is of moderate growth. We assume further
$\underset{n}{\bigcup}\ G_{n}=G\left(F\right)$ and $KG_{n}K=G_{n}$. For any $\tilde{f}\in{\mathcal{C}}_{\text{scusp}}\left(\tilde{G}\right)$
and $\tilde{x}\in\tilde{G}_{\text{reg}}\left(F\right)$, one has 
$$
\theta_{\tilde{f}}\left(\tilde{x}\right)=\underset{n}{\lim}\int_{A_{\tilde{G}}\left(F\right)\backslash G_{n}}\tilde{f}\left(g^{-1}\tilde{x}g\right)dg.
$$
\end{proposition}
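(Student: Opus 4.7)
The plan is to adapt the untwisted argument of \cite[Corollary 5.6.1]{BP20} to the twisted setting, combining the $(\tilde{G},\tilde{M})$-family machinery of Section \ref{sec2.5} with Arthur's truncation. Since $\tilde{f}(g^{-1}\tilde{x}g)$ is invariant under left translation of $g$ by $G_{\tilde{x}}(F)$ (by definition of $G_{\tilde{x}}$ as the identity component of the stabilizer of $\tilde{x}$ under twisted conjugation), I first refactor
$$
\int_{A_{\tilde{G}}(F)\backslash G_n}\tilde{f}(g^{-1}\tilde{x}g)\,dg = \int_{G_{\tilde{x}}(F)\backslash G(F)} \tilde{f}(g^{-1}\tilde{x}g)\,u(g,n)\,dg,
$$
where $u(g,n)=\int_{A_{\tilde{G}}(F)\backslash G_{\tilde{x}}(F)}\mathbf{1}_{G_n}(\gamma g)\,d\gamma$.

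Let $\tilde{M}(\tilde{x})=Z_{\tilde{G}}(A_{G_{\tilde{x}}})$ be the minimal twisted Levi containing $\tilde{x}$; since $\tilde{x}\in \tilde{G}_{\text{reg}}(F)$, $G_{\tilde{x}}$ is a torus and $A_{\tilde{G}}(F)\backslash G_{\tilde{x}}(F)$ sits as a cocompact lattice in $\mathcal{A}_{\tilde{M}(\tilde{x})}/\mathcal{A}_{\tilde{G}}$ via $H_{\tilde{G}}$. Using the Iwasawa decompositions associated to $\mathcal{P}(\tilde{M}(\tilde{x}))$ together with the hypothesis that $(G_n)$ is bi-$K$-invariant and of moderate growth, I show that $u(g,n)$ coincides, up to an error exponentially small in $n$ uniformly in $g$ on compact sets modulo $A_{\tilde{G}}(F)G_{\tilde{x}}(F)$, with the volume of the convex hull of a positive $(\tilde{G},\tilde{M}(\tilde{x}))$-orthogonal set $\mathcal{Y}(g,n)=(Y_{\tilde{P}}(g,n))_{\tilde{P}\in\mathcal{P}(\tilde{M}(\tilde{x}))}$ whose vertices grow linearly in $n$.

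I then expand this volume via Arthur's splitting formula for the $(\tilde{G},\tilde{M}(\tilde{x}))$-family associated to $\mathcal{Y}(g,n)$ to arrive at an asymptotic expansion
$$
\int_{A_{\tilde{G}}(F)\backslash G_n} \tilde{f}(g^{-1}\tilde{x}g)\,dg \sim \sum_{\tilde{L}\in \mathcal{L}(\tilde{M}(\tilde{x}))}\sum_{\tilde{Q}\in \mathcal{F}(\tilde{L})} c_{\tilde{L},\tilde{Q}}(n)\, D^{\tilde{G}}(\tilde{x})^{-1/2}J_{\tilde{L}}^{\tilde{Q}}(\tilde{x},\tilde{f}),
$$
where each $J^{\tilde{Q}}_{\tilde{L}}$ is the twisted weighted orbital integral of Section \ref{sec3.5}. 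Since $\tilde{f}$ is strongly cuspidal, $J^{\tilde{Q}}_{\tilde{L}}(\tilde{x},\tilde{f})=0$ for every proper twisted parabolic $\tilde{Q}$ (the integral factors through $\tilde{f}^{\tilde{Q}}=0$); so only the $\tilde{Q}=\tilde{G}$ terms survive, and a direct analysis of $c_{\tilde{L},\tilde{G}}(n)$ shows that only $\tilde{L}=\tilde{M}(\tilde{x})$ contributes a nonvanishing leading order, with limit coefficient $(-1)^{a_{\tilde{G}}-a_{\tilde{M}(\tilde{x})}}\nu(G_{\tilde{x}})^{-1}$. Substituting yields exactly $\theta_{\tilde{f}}(\tilde{x})$.

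The main obstacle is justifying the exchange of the limit $n\to\infty$ with the integration over the noncompact quotient $G_{\tilde{x}}(F)\backslash G(F)$: one needs uniform control of the error in the volume approximation of $u(g,n)$ in a form integrable against $|\tilde{f}(g^{-1}\tilde{x}g)|$. This combines the Harish-Chandra Schwartz decay of $\tilde{f}$ with standard $\Xi^{\tilde{G}}$-estimates on $G_{\tilde{x}}(F)\backslash G(F)$, and it is here that the moderate growth assumption on $(G_n)$ is indispensable, since without it $u(g,n)$ need not have the required asymptotic structure as a positive $(\tilde{G},\tilde{M}(\tilde{x}))$-family with linearly growing vertices.
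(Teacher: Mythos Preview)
Your overall plan is the right one and matches the paper's strategy: rewrite the truncated integral as $\int_{G_{\tilde{x}}(F)\backslash G(F)}\tilde{f}(g^{-1}\tilde{x}g)\,u(g,n)\,dg$, replace the combinatorial weight $u(g,n)$ by something built from a $(\tilde{G},\tilde{M}(\tilde{x}))$-orthogonal set, and use strong cuspidality to kill the contributions of proper twisted parabolics. The difference lies in how that replacement is carried out, and your version of it contains a genuine gap.

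The problem is your claim that $u(g,n)$ coincides with the convex-hull volume of a positive $(\tilde{G},\tilde{M}(\tilde{x}))$-orthogonal set up to an error \emph{exponentially small in $n$}. In the $p$-adic setting $u(g,n)=\int_{A_{\tilde{G}}(F)\backslash G_{\tilde{x}}(F)}\mathbf{1}_{G_n}(tg)\,dt$ is a weighted count of points of the lattice $H_{\tilde{M}}(G_{\tilde{x}}(F))$ inside a polytope of in-radius $\asymp n$; the discrepancy between such a lattice count and the continuous volume is polynomial in $n$ of degree $a_{\tilde{M}}-a_{\tilde{G}}-1$, not exponential. This is fatal for the next step: if you expand the volume via the splitting formula as $\sum_{\tilde{L}\in\mathcal{L}(\tilde{M}(\tilde{x}))} v_{\tilde{M}(\tilde{x})}^{\tilde{L}}(\mathcal{Y}_n)\,v_{\tilde{L}}^{\tilde{G}}(g)$, the coefficients $v_{\tilde{M}(\tilde{x})}^{\tilde{L}}(\mathcal{Y}_n)$ grow like $n^{a_{\tilde{M}(\tilde{x})}-a_{\tilde{L}}}$, so the largest terms come from $\tilde{L}=\tilde{G}$, not $\tilde{L}=\tilde{M}(\tilde{x})$; the corresponding $J_{\tilde{G}}^{\tilde{G}}(\tilde{x},\tilde{f})$ is the ordinary orbital integral, which has no reason to vanish for strongly cuspidal $\tilde{f}$ at a non-elliptic $\tilde{x}$. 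Without an exact control of the polynomial error terms you cannot conclude that the limit exists, let alone identify it.

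The paper avoids this difficulty by a two-step argument. First it introduces an auxiliary parameter $Y\in\mathcal{A}_{M_{\min}}^{+}$ with $\inf_{\alpha}\alpha(Y)\gg\log n$ and proves an \emph{exact} identity $I_n(\tilde{f})=I_Y(\tilde{f})$ (Proposition~\ref{3.9}): one decomposes both sides via the Langlands partition $\sum_{\tilde{Q}\in\mathcal{F}(\tilde{M})}\sigma_{\tilde{M}}^{\tilde{Q}}(\cdot,\mathcal{Z}(g))\tau_{\tilde{Q}}(\cdot-Z_{\tilde{Q}}(g))=1$ for a second auxiliary $Z$, shows $I_n^{\tilde{G}}=I_Y^{\tilde{G}}$ because both truncations agree on the inner region, and shows $I_n^{\tilde{Q}}=I_Y^{\tilde{Q}}=0$ for proper $\tilde{Q}$ using that $\int_{U_{\bar{Q}}}\tilde{f}(\cdot)\,d\bar{u}=0$. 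Only after this exact identity does the paper expand $v(g,Y)$ via \cite[Lemma~4.7]{Wal12b} as $\sum_{\Lambda,k}c_{\mathcal{R}}(\Lambda,k,g)e^{\Lambda(Y)}Y^k$; since $I_n(\tilde{f})$ is $Y$-independent, only the constant term $c_{\mathcal{R}}(0,0,g)$ survives, and a lattice-refinement $\mathcal{R}\to\frac{1}{k}\mathcal{R}$ identifies this constant with $(-1)^{a_{\tilde{M}}}v_{\tilde{M}}(g)$. It is precisely this mechanism---replacing ``approximate volume'' by ``exact equality with a $Y$-dependent expression and then extract the $Y$-constant term''---that your proposal is missing.
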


\begin{proof}
We choose $c_{1},c_{2}>0$ such that for any $n\geq1$,
we have 
$$
\left\{ g\in G:\bar{\sigma}\left(g\right)\leq c_{1}n\right\} \subseteq G_{n}\subseteq\left\{ g\in G:\bar{\sigma}\left(g\right)\leq c_{2}n\right\}.
$$
Let $I_{n}\left(\tilde{f}\right)=\int_{A_{\tilde{G}}\left(F\right)\backslash G_{n}}\tilde{f}\left(g^{-1}\tilde{x}g\right)dg$. Since $\tilde{x}$ is regular, $T=G_{\tilde{x}}$ is a torus. Observe
$$
I_{n}\left(\tilde{f}\right)=\int_{T\left(F\right)\backslash G\left(F\right)}\tilde{f}\left(g^{-1}\tilde{x}g\right)\kappa_{n}\left(g\right)dg,
$$
where $\kappa_{n}\left(g\right)=\int_{A_{\tilde{G}}\left(F\right)\backslash T\left(F\right)}\mathbf{1}_{G_{n}}\left(tg\right)dt$. Without loss of generality, assume $\tilde{M}=\tilde{M}\left(\tilde{x}\right)\in{\mathcal{L}}^{\tilde{G}}$.
Note that we have fixed a minimal twisted parabolic subgroup $\tilde{P}_{\min}$
with a minimal Levi component $\tilde{M}_{\min}$, together with a
maximal compact subgroup $K$ of $G\left(F\right)$ with relative
position to $M_{\min}$. Let $Y\in{\mathcal{A}}_{M_{\min}}$ such that
$\alpha\left(Y\right)>0$ for any $\alpha\in\Delta$ (we denote by
$\Delta$ the set of simple roots of $G$ associated to $P_{\min}$
and $M_{\min}$). For any $P\in{\mathcal{P}}\left(M_{\min}\right)$, there
exists a unique $w\in W\left(G,M_{\min}\right)$ such that $wP_{\min}w^{-1}=P$.
Let $Y_{P}=w\cdot Y$. For $\tilde{Q}\in{\mathcal{P}}\left(\tilde{M}\right)$,
we denote by $Y_{\tilde{Q}}$ the projection of $Y_{P}$ onto ${\mathcal{A}}_{\tilde{M}}$,
where $P\in{\mathcal{P}}\left(M_{\min}\right)$ satisfying $P\subset Q$.
For $g\in G\left(F\right)$ and $\tilde{Q}\in{\mathcal{P}}\left(\tilde{M}\right)$,
let $Y_{\tilde{Q}}\left(g\right)=Y_{\tilde{Q}}-H_{\overline{\tilde{Q}}}\left(g\right)$,
where $\overline{\tilde{Q}}$ is the opposite parabolic group of $\tilde{Q}$
with respect to $\tilde{M}$. Then ${\mathcal{Y}}\left(g\right)=\left(Y_{\tilde{Q}}\left(g\right)\right)_{\tilde{Q}\in{\mathcal{P}}\left(\tilde{M}\right)}$
is a $\left(\tilde{G},\tilde{M}\right)$-orthogonal family. Moreover,
there exists $c>0$ such that for all $g\in G\left(F\right)$ and
$Y\in{\mathcal{A}}_{M_{\min}}^{+}$ satisfying $\underset{m\in M\left(F\right)}{\inf}\sigma\left(mg\right)\leq c\underset{\alpha\in\Delta}{\inf}\alpha\left(Y\right)$, the $\left(\tilde{G},\tilde{M}\right)$-orthogonal set ${\mathcal{Y}}\left(g\right)$
is positive. 

Assume this is the case. Let $\sigma_{\tilde{M}}^{\tilde{G}}\left(\cdot,{\mathcal{Y}}\left(g\right)\right)$
be the characteristic function in ${\mathcal{A}}_{\tilde{M}}$ of the convex
hull of the family $\left(Y_{\tilde{Q}}\left(g\right)\right)_{\tilde{Q}\in{\mathcal{P}}\left(\tilde{M}\right)}$
and $v\left(g,Y\right)=\int_{A_{\tilde{G}}\left(F\right)\backslash T\left(F\right)}\sigma_{\tilde{M}}^{\tilde{G}}\left(H_{\tilde{M}}\left(t\right),{\mathcal{Y}}\left(g\right)\right)dt$. For $g\in G\left(F\right)$, define $\sigma_{T\backslash G}\left(g\right)=\underset{t\in T\left(F\right)}{\inf}\sigma\left(tg\right)$. Since $\tilde{f}$ is compactly supported and the projection $G$ onto $T\backslash G$ has the norm descent property, for $n$ sufficiently large, one has
$$
\sigma_{T\backslash G}\left(g\right)<\log n\ \ \ \text{ if }\ \ \ \tilde{f}\left(g^{-1}\tilde{x}g\right)\neq0.
$$
We prove the following proposition.
\begin{proposition}\label{3.9}
There exists $c_{3}>1/c$ sufficiently large and $n_{0}\geq1$ such
that if $n\geq n_{0}$ and $\underset{\alpha\in\Delta}{\inf}\alpha\left(Y\right)>c_{3}\log n$, one has the following equality 
$$
\int_{T\left(F\right)\backslash G\left(F\right)}\tilde{f}\left(g^{-1}\tilde{x}g\right)\kappa_{n}\left(g\right)dg=\int_{T\left(F\right)\backslash G\left(F\right)}\tilde{f}\left(g^{-1}\tilde{x}g\right)v\left(g,Y\right)dg.
$$
\end{proposition}

\begin{proof}
We take $c_{3}>1/c$ and $Y\in{\mathcal{A}}_{M_{\min}}$ such that $\underset{\alpha\in\Delta}{\inf}\alpha\left(Y\right)>c_{3}\log n$. Then for $n$ sufficiently large, ${\mathcal{Y}}\left(g\right)$ is positive $\left(\tilde{G},\tilde{M}\right)$-orthogonal, for any $g$ such
that $\tilde{f}\left(g^{-1}\tilde{x}g\right)\neq0$. Let 
$$
I_{Y}\left(\tilde{f}\right)=\int_{T\left(F\right)\backslash G\left(F\right)}\tilde{f}\left(g^{-1}\tilde{x}g\right)v\left(g,Y\right)dg.
$$
Let $Z\in{\mathcal{A}}_{M_{\min}}$ satisfying $\underset{\alpha\in\Delta}{\inf}\alpha\left(Z\right)\geq\frac{1}{c}\log n$. Similar to $\mathcal{Y}$, one can define a positive $\left(\tilde{G},\tilde{M}\right)$-orthogonal family ${\mathcal{Z}}\left(g\right)=\left(Z_{\tilde{Q}}\left(g\right)\right)_{\tilde{Q}\in{\mathcal{P}}\left(\tilde{M}\right)}$.
Let $\tilde{Q}=\tilde{L}U_{Q}\in{\mathcal{F}}\left(\tilde{M}\right)$,
we denote by $\sigma_{\tilde{M}}^{\tilde{Q}}\left(\cdot,{\mathcal{Z}}\left(g\right)\right)$
(resp. $\tau_{\tilde{Q}}$) the characteristic functions in ${\mathcal{A}}_{\tilde{M}}$
of the sum of ${\mathcal{A}}_{\tilde{L}}$ and the convex hull of $\left(Z_{\tilde{P}}\left(g\right)\right)_{\tilde{P}\subset\tilde{Q}}$
(resp. ${\mathcal{A}}_{\tilde{M}}^{\tilde{L}}\oplus{\mathcal{A}}_{\tilde{Q}}^{+}$).
We have $\underset{\tilde{Q}\in{\mathcal{F}}\left(\tilde{M}\right)}{\sum}\sigma_{\tilde{M}}^{\tilde{Q}}\left(\lambda,{\mathcal{Z}}\left(g\right)\right)\tau_{\tilde{Q}}\left(\lambda-Z\left(g\right)_{\tilde{Q}}\right)=1$, for any $\lambda\in{\mathcal{A}}_{\tilde{M}}$. Let 
$$
v\left(g,Y,\tilde{Q}\right)=\int_{A_{\tilde{G}}\left(F\right)\backslash T\left(F\right)}\sigma_{\tilde{M}}^{\tilde{G}}\left(H_{\tilde{M}}\left(t\right),{\mathcal{Y}}\left(g\right)\right)\sigma_{\tilde{M}}^{\tilde{Q}}\left(H_{\tilde{M}}\left(t\right),{\mathcal{Z}}\left(g\right)\right)\tau_{\tilde{Q}}\left(H_{\tilde{M}}\left(t\right)-Z\left(g\right)_{\tilde{Q}}\right)dt
$$
and 
$$
I_{Y}^{\tilde{Q}}\left(\tilde{f}\right)=\int_{T\left(F\right)\backslash G\left(F\right)}\tilde{f}\left(g^{-1}\tilde{x}g\right)v\left(g,Y,\tilde{Q}\right)dg
$$
and
$$
\kappa_{n}\left(g,\tilde{Q}\right)=\int_{A_{\tilde{G}}\left(F\right)\backslash T\left(F\right)}\mathbf{1}_{G_{n}}\left(tg\right)\sigma_{\tilde{M}}^{\tilde{Q}}\left(H_{\tilde{M}}\left(t\right),{\mathcal{Z}}\left(g\right)\right)\tau_{\tilde{Q}}\left(H_{\tilde{M}}\left(t\right)-Z\left(g\right)_{\tilde{Q}}\right)dt
$$
and 
$$
I_{n}^{\tilde{Q}}\left(\tilde{f}\right)=\int_{T\left(F\right)\backslash G\left(F\right)}\tilde{f}\left(g^{-1}\tilde{x}g\right)\kappa_{n}\left(g,\tilde{Q}\right)dg.
$$
One has 
$$
I_{Y}\left(\tilde{f}\right)=\underset{\tilde{Q}\in{\mathcal{F}}\left(\tilde{M}\right)}{\sum}I_{Y}^{\tilde{Q}}\left(\tilde{f}\right)\text{ and }I_{n}\left(\tilde{f}\right)=\underset{\tilde{Q}\in{\mathcal{F}}\left(\tilde{M}\right)}{\sum}I_{n}^{\tilde{Q}}\left(\tilde{f}\right).
$$
We prove the following claim.
\begin{claim}\label{3.10}
If $Z$ satisfies the following inequality 
$$
\underset{\alpha\in\Delta}{\sup}\,\alpha\left(Z\right)\leq\min\left\{ \underset{\alpha\in\Delta}{\inf}\alpha\left(Y\right),\left(\log n\right)^{2}\right\} ,
$$
one has $I_{Y}^{\tilde{G}}\left(\tilde{f}\right)=I_{n}^{\tilde{G}}\left(\tilde{f}\right)$
for $n$ sufficiently large.
\end{claim}

\begin{proof}
The above inequality implies the convex hull of $\left(Z_{\tilde{P}}\left(g\right)\right)_{\tilde{P}\subset\tilde{Q}}$
is contained in one of of $\left(Y_{\tilde{P}}\left(g\right)\right)_{\tilde{P}\subset\tilde{Q}}$,
for any $g\in G\left(F\right)$ satisfying $\sigma_{T\backslash G}\left(g\right)<\log n$.
This gives us 
$
v\left(g,Y,\tilde{G}\right)=v\left(g,Z\right)$ whenever $\sigma_{T\backslash G}\left(g\right)<\log n$. Moreover, the hypothesis in Claim \ref{3.10} implies $\sigma\left(tg\right)\ll\left(\log n\right)^{2}$
whenever $\sigma_{\tilde{M}}^{\tilde{Q}}\left(H_{\tilde{M}}\left(t\right),{\mathcal{Z}}\left(g\right)\right)=1$. Since $\sigma\left(tg\right)<c_{1}n$ implies $tg\in G_{n}$,
one has $\kappa_{n}\left(g,\tilde{G}\right)=v\left(g,Z\right)$ for $n$ sufficiently large. This gives us the proof of Claim \ref{3.10}.
\end{proof}
We fix a proper twisted parabolic subgroup $\tilde{Q}=\tilde{L}U_{Q}\in{\mathcal{F}}\left(\tilde{M}\right)$.
We prove the following claim.
\begin{claim}\label{3.11}
There exists $c_{4}>0$ such that if $\underset{\alpha\in\Delta}{\inf}\alpha\left(Z\right)\geq c_{4}\log n$
and $c_{3}$ and $n$ sufficiently large, one has 
$$
I_{n}^{\tilde{Q}}\left(\tilde{f}\right)=I_{Y}^{\tilde{Q}}\left(\tilde{f}\right)=0.
$$
\end{claim}

\begin{proof}
Let $U_{\bar{Q}}$ be the unipotent radical of $\bar{Q}$. We write
$$
I_{n}^{\tilde{Q}}\left(\tilde{f}\right)=\int_{K}\int_{T\left(F\right)\backslash L\left(F\right)}\int_{U_{\bar{Q}}\left(F\right)}\tilde{f}\left(k^{-1}l^{-1}\bar{u}^{-1}\tilde{x}\bar{u}lk\right)\kappa_{n}\left(\bar{u}lk,\tilde{Q}\right)d\bar{u}\delta_{Q}\left(l\right)dldk
$$
and 
$$
I_{Y}^{\tilde{Q}}\left(\tilde{f}\right)=\int_{K}\int_{T\left(F\right)\backslash L\left(F\right)}\int_{U_{\bar{Q}}\left(F\right)}\tilde{f}\left(k^{-1}l^{-1}\bar{u}^{-1}\tilde{x}\bar{u}lk\right)v\left(\bar{u}lk,Y,\tilde{Q}\right)d\bar{u}\delta_{Q}\left(l\right)dldk.
$$
There exists $c^{\prime}>0$ such that for any $\bar{u}\in U_{\bar{Q}}\left(F\right),l\in L\left(F\right)$
and $k\in K$ satisfying $\sigma_{T\backslash G}\left(\bar{u}lk\right)<\log n$,
one has 
$$
\sigma\left(\bar{u}\right),\sigma_{T\backslash G}\left(l\right),\sigma_{T\backslash G}\left(\bar{u}lk\right)\leq\log n.
$$
Suppose for such elements, we have $\kappa_{n}\left(\bar{u}lk,\tilde{Q}\right)=\kappa_{n}\left(lk,\tilde{Q}\right)$
and $v\left(\bar{u}lk,Y,\tilde{Q}\right)=v\left(lk,Y,\tilde{Q}\right)$.
Since $\tilde{f}$ is strongly cuspidal, one has 
$$
\int_{U_{\bar{Q}}\left(F\right)}\tilde{f}\left(k^{-1}l^{-1}\bar{u}^{-1}\tilde{x}\bar{u}lk\right)d\bar{u}=0.
$$
This gives us the proof of Claim \ref{3.11}. Hence, it suffices to prove
$\kappa_{n}\left(\bar{u}lk,\tilde{Q}\right)=\kappa_{n}\left(lk,\tilde{Q}\right)$
and $v\left(\bar{u}lk,Y,\tilde{Q}\right)=v\left(lk,Y,\tilde{Q}\right)$.
To be more precise, we prove there exists $c_{4}>0$ such that if $\underset{\alpha\in\Delta}{\inf}\alpha\left(Z\right)\geq c_{4}\log n$ and $c_{3}$ and $n$ are sufficiently large, one has 
$$
\kappa_{n}\left(\bar{u}g,\tilde{Q}\right)=\kappa_{n}\left(g,\tilde{Q}\right)\text{ and }v\left(\bar{u}g,Y,\tilde{Q}\right)=v\left(g,Y,\tilde{Q}\right),
$$
for any $\bar{u}\in U_{\bar{Q}}\left(F\right)$ and $g\in G\left(F\right)$
satisfying $\sigma\left(\bar{u}\right),\sigma\left(g\right),\sigma\left(\bar{u}g\right)\leq c^{\prime}\log n$.

We first show the statement for $v\left(\cdot,Y,\tilde{Q}\right)$.
By definition, one has 
$$
v\left(g,Y,\tilde{Q}\right)=\int_{A_{\tilde{G}}\left(F\right)\backslash T\left(F\right)}\sigma_{\tilde{M}}^{\tilde{G}}\left(H_{\tilde{M}}\left(t\right),{\mathcal{Y}}\left(g\right)\right)\sigma_{\tilde{M}}^{\tilde{Q}}\left(H_{\tilde{M}}\left(t\right),{\mathcal{Z}}\left(g\right)\right)\tau_{\tilde{Q}}\left(H_{\tilde{M}}\left(t\right)-Z\left(g\right)_{\tilde{Q}}\right)dt.
$$
The functions $\sigma_{\tilde{M}}^{\tilde{Q}}\left(\cdot,{\mathcal{Z}}\left(g\right)\right)$
and $\tau_{\tilde{Q}}\left(\cdot-Z\left(g\right)_{\tilde{Q}}\right)$
only depends on $\left(Z_{\tilde{P}}\left(g\right)\right)_{\tilde{P}\subset\tilde{Q}}$.
Consequently, they are invariant under left translation of $g$ by
$U_{\bar{Q}}\left(F\right)$. Moreover, for any $\lambda\in{\mathcal{A}}_{\tilde{M}}$
satisfying $\sigma_{\tilde{M}}^{\tilde{Q}}\left(\lambda,{\mathcal{Z}}\left(g\right)\right)\tau_{\tilde{Q}}\left(\lambda-Z_{\tilde{Q}}\left(g\right)\right)=1$,
there exists $\tilde{P}\in{\mathcal{P}}\left(\tilde{M}\right)$ such that
$\tilde{P}\subset\tilde{Q}$ and $\lambda\in{\mathcal{A}}_{\tilde{P}}^{+}$.
Therefore, the restriction of $\sigma_{\tilde{M}}^{\tilde{G}}\left(\cdot,{\mathcal{Y}}\left(g\right)\right)$
to ${\mathcal{A}}_{\tilde{P}}^{+}$ only depends on $Y_{\tilde{P}}\left(g\right)$,
which is to say it is invariant under left translation of $g$ by
$U_{\bar{Q}}\left(F\right)$. This gives us $v\left(\bar{u}g,Y,\tilde{Q}\right)=v\left(g,Y,\tilde{Q}\right)$.

We now prove $\kappa_{n}\left(\bar{u}g,\tilde{Q}\right)=\kappa_{n}\left(g,\tilde{Q}\right)$.
The above argument shows that the function 
$$
g\mapsto\sigma_{\tilde{M}}^{\tilde{Q}}\left(\cdot,{\mathcal{Z}}\left(g\right)\right)\tau_{\tilde{Q}}\left(\cdot-Z\left(g\right)_{\tilde{Q}}\right)
$$
defined on $\left\{ g\in G\left(F\right):\,\sigma\left(g\right)\leq c^{\prime}\log n\right\} $
is $U_{\bar{Q}}\left(F\right)$-left invariant. It suffices to prove
that for any $t\in T\left(F\right)$ such that $\sigma_{\tilde{M}}^{\tilde{Q}}\left(H_{\tilde{M}}\left(t\right),{\mathcal{Z}}\left(g\right)\right)\tau_{\tilde{Q}}\left(H_{\tilde{M}}\left(t\right)-Z\left(g\right)_{\tilde{Q}}\right)=1$, one has 
$$
\mathbf{1}_{G_{n}}\left(tg\right)=\mathbf{1}_{G_{n}}\left(t\bar{u}g\right),
$$
for $n$ sufficiently large and $\bar{u}\in U_{\bar{Q}}\left(F\right)$
and $g\in G\left(F\right)$ satisfying $\sigma\left(\bar{u}\right),\sigma\left(g\right),\sigma\left(\bar{u}g\right)\leq c^{\prime}\log n$.
Let $R\left(A_{T},U_{Q}\right)$ be the set of roots of $A_{T}$ in
$U_{Q}$. By the assumption on $g$ and $Z$, there exists $c_{5}>0$
depending on $c_{4}$ such that $\left\langle \beta,H_{\tilde{M}}\left(t\right)\right\rangle \geq c_{5}\log n$, for all $\beta\in R\left(A_{T},U_{Q}\right)$. Let $e$ be a positive
number which is sufficiently large. By the above inequality and the assumption on $\bar{u}$, if $c_{4}$ is sufficiently large, one has
$$
t\bar{u}t^{-1}\in\exp\left(B\left(0,n^{-e}\right)\right).
$$
Let $\tilde{P_{t}}\in{\mathcal{P}}\left(\tilde{M}\right)$ be a twisted
parabolic subgroup such that $H_{\tilde{M}}\left(t\right)\in{\mathcal{A}}_{\tilde{P}_{t}}$.
Let $\mathfrak{p}_{t}$ be the Lie algebra of $P_{t}$. Then there
exists $X_{P}\in B\left(0,n^{-e}\right)\cap\mathfrak{p}_{t}\left(F\right)$
such that $t\bar{u}t^{-1}=e^{X_{P}}$. One has 
$$
\mathbf{1}_{G_{n}}\left(t\bar{u}g\right)=\mathbf{1}_{G_{n}}\left(e^{X_{P}}tg\right).
$$
By choosing $e$ sufficiently large, it follows that $\mathbf{1}_{G_{n}}\left(e^{X_{P}}tg\right)=\mathbf{1}_{G_{n}}\left(tg\right)$, which is to say $\mathbf{1}_{G_{n}}\left(tg\right)=\mathbf{1}_{G_{n}}\left(t\bar{u}g\right)$. We finish our proof for Claim \ref{3.11}.
\end{proof}
By Claim \ref{3.10} and Claim \ref{3.11}, if $c_{3}$ and $n$ are large enough
and $Z$ satisfies conditions in these two claims, then $I_{n}^{\tilde{Q}}\left(\tilde{f}\right)=I_{Y}^{\tilde{Q}}\left(\tilde{f}\right)$, for any $\tilde{Q}\in{\mathcal{F}}\left(\tilde{M}\right)$. This gives
us $I_{n}\left(\tilde{f}\right)=I_{Y}\left(\tilde{f}\right)$ as desired.
\end{proof}
We return to the proof of Proposition \ref{3.8}. By Proposition \ref{3.9}, one has 
$$
I_{n}\left(\tilde{f}\right)=\int_{T\left(F\right)\backslash G\left(F\right)}\tilde{f}\left(g^{-1}\tilde{x}g\right)v\left(g,Y\right)dg,
$$
for $n$ sufficiently large and $Y\in{\mathcal{A}}_{M_{\min}}$ satisfying $\underset{\alpha\in\Delta}{\inf}\alpha\left(Y\right)\gg\log n$. Let ${\mathcal{R}}$ be a lattice in ${\mathcal{A}}_{M_{\min},F}\otimes\mathbb{Q}$. Let ${\mathcal{R}}^{\vee}$ be the co-lattice containing elements $\lambda\in{\mathcal{A}}_{M_{\min}}^{*}$ satisfying $\lambda\left(\zeta\right)\in2\pi\mathbb{Z}$ for any $\zeta\in{\mathcal{R}}$. By \cite[Lemma 4.7]{Wal12b}, there exists a finite subset $\Xi\subset i{\mathcal{A}}_{M_{\min}}^{*}/i{\mathcal{R}}^{\vee}$
and an integer $N\geq1$ as well as a family of functions $g\mapsto c_{{\mathcal{R}}}\left(\Lambda,k,g\right)$
for $\Lambda\in\Xi$ and $0\leq k\leq N$ such that, for any $g\in G\left(F\right)$
and $Y\in{\mathcal{R}}$ satisfying $\alpha\left(Y\right)\gg\sigma\left(g\right)$
for all $\alpha\in\Delta$, one has 
$$
v\left(g,Y\right)=\underset{\Lambda\in\Xi,\,0\leq k\leq N}{\sum}c_{{\mathcal{R}}}\left(\Lambda,k,g\right)e^{\Lambda\left(Y\right)}Y^{k}.
$$
The functions $Y\mapsto e^{\Lambda\left(Y\right)}Y^{k}$ are linearly
independent on the cone
$\left\{ Y\in\mathcal{R}:\ \sigma\left(g\right)\leq c_{3}\alpha\left(Y\right),\,\forall\alpha\in\Delta\right\}$. Since $I_{n}\left(\tilde{f}\right)$ does not depend on $Y$, we can
replace $v\left(g,Y\right)$ by the constant term $c_{{\mathcal{R}}}\left(0,0,g\right)$.
We can also apply the above result for the lattice $\frac{1}{k}{\mathcal{R}}$,
for any $k\in\mathbb{N}^{*}$. By \cite[Lemma 4.7]{Wal12b}, one has
$$
\left|c_{\frac{1}{k}{\mathcal{R}}}\left(0,0,g\right)-\left(-1\right)^{a_{\tilde{M}}}v_{\tilde{M}}\left(g\right)\right|\ll k^{-1}\sigma_{T\backslash G}\left(g\right)^{a_{\tilde{M}}},
$$
for any $g\in G\left(F\right)$ and $k\in\mathbb{N}^{*}$. Since $\int_{T\left(F\right)\backslash G\left(F\right)}\tilde{f}\left(g^{-1}\tilde{x}g\right)\sigma_{T\backslash G}\left(g\right)^{a_{\tilde{M}}}dg$ is absolutely convergent, by taking the limit, it follows that  
$$
\underset{n}{\lim}\,I_{n}\left(\tilde{f}\right)=\left(-1\right)^{a_{\tilde{M}}}\int_{T\left(F\right)\backslash G\left(F\right)}\tilde{f}\left(g^{-1}\tilde{x}g\right)v_{\tilde{M}}\left(g\right)dg=\theta_{\tilde{f}}\left(\tilde{x}\right)
$$
as desired.
\end{proof}

\subsection{Weighted characters of strongly cuspidal functions}\label{sec3.6}

Let $M$ be a Levi subgroup of $G$ and $K$ be the special maximal
compact subgroup of $G\left(F\right)$. Let $\sigma$ be a tempered
representation of $M\left(F\right)$. We fix $P\in{\mathcal{P}}\left(M\right)$.
Following Arthur (cf. \cite{Art94}), for any $f\in{\mathcal{C}}\left(G\left(F\right)\right)$, $L\in{\mathcal{L}}\left(M\right)$
and $Q\in{\mathcal{F}}\left(L\right)$, we can define a weighted character $J_{L}^{Q}\left(\sigma,f\right)$. In particular, when $L=Q=G$, this reduces to the usual character, i.e. 
$
J_{G}^{G}\left(\sigma,f\right)=\text{Trace}_{i_{M}^{G}\left(\sigma\right)}\left(f\right)
$
for any $f\in{\mathcal{C}}\left(G\left(F\right)\right)$.

Let $\tilde{P}=\tilde{M}U$ be a (semi-standard) twisted parabolic subgroup of $\tilde{G}$
and $\tilde{\tau}$ be a tempered representation of $\tilde{M}\left(F\right)$.
The definition of weighted characters can be extended to twisted groups. We denote a weighted character of $\tilde{\tau}$ by $J_{\tilde{M}}^{\tilde{G}}\left(\tilde{\tau},\tilde{f}\right)$, for any $\tilde{f}\in C_{c}^{\infty}\left(\tilde{G}\left(F\right)\right)$.
When $\tilde{M}=\tilde{G}$, it is the character of $\tilde{\tau}$,
which is denoted by $\theta_{\tilde{\tau}}$. By \cite[Theorem 2]{Clo87},
we have $\theta_{\tilde{\tau}}$ is a locally integrable distribution.

In Section \ref{sec2.3}, we have defined a set $\underline{{\mathcal{X}}}\left(G\right)$
of virtual tempered representations of $G\left(F\right)$. Let $\pi\in\underline{{\mathcal{X}}}\left(G\right)$.
There exists a pair $\left(M,\sigma\right)$, where $M$ is a Levi
subgroup of $G$ and $\sigma\in\underline{{\mathcal{X}}}_{\text{ell}}\left(M\right)$,
such that $\pi=i_{M}^{G}\left(\sigma\right)$. For any $f\in{\mathcal{C}}_{\text{scusp}}\left(G\left(F\right)\right)$,
we set 
$$
\hat{\theta}_{f}\left(\pi\right)=\left(-1\right)^{a_{G}-a_{M}}J_{M}^{G}\left(\sigma,f\right).
$$
By \cite[Lemma 5.4.1]{BP20}, this definition is well-defined since the pair $\left(M,\sigma\right)$
is well-defined up to conjugacy. We can also adapt the definition of $\hat{\theta}_f$ to twisted groups.

\subsection{Quasi-characters attached to strongly cuspidal functions}\label{sec3.8}
We extend \cite[Proposition 5.7.1]{BP20} to twisted groups.
\begin{proposition}\label{3.18}
Let $\tilde{x}\in\tilde{G}\left(F\right)_{\text{ell}}$ be an elliptic
element and let $\Omega_{\tilde{x}}\subset G_{\tilde{x}}\left(F\right)$
be a $G$-good open neighborhood of $1$ which is relatively compact
modulo $G_{\tilde{x}}$-conjugation. Set $\Omega=\left(\Omega_{\tilde{x}}\tilde{x}\right)^{G}$.
Then there exists a linear map 
$$
\begin{array}{ccc}
{\mathcal{C}}_{\text{scusp}}\left(\Omega_{\tilde{x}}\right) & \rightarrow & {\mathcal{C}}_{\text{scusp}}\left(\Omega\right)\\
f & \mapsto & \tilde{f}
\end{array}
$$
such that the following properties hold.
\begin{enumerate}
\item For any $f\in{\mathcal{C}}_{\text{scusp}}\left(\Omega_{\tilde{x}}\right)$, we have 
$$
\left(\theta_{\tilde{f}}\right)_{\tilde{x},\Omega_{\tilde{x}}}=\underset{z\in Z_{G}\left(\tilde{x}\right)\left(F\right)/G_{\tilde{x}}\left(F\right)}{\sum}{}^{z}\theta_{f}.
$$
\item There exists a function $\alpha\in C_{c}^{\infty}\left(Z_{G}\left(\tilde{x}\right)\left(F\right)\backslash G\left(F\right)\right)$
satisfying 
$$
\int_{Z_{G}\left(\tilde{x}\right)\left(F\right)\backslash G\left(F\right)}\alpha\left(g\right)dg=1
$$
and such that for any $f\in{\mathcal{C}}_{\text{scusp}}\left(\Omega_{\tilde{x}}\right)$
and $g\in G\left(F\right)$, there exists $z\in Z_{G}\left(\tilde{x}\right)\left(F\right)$
with 
$$
\left(^{zg}\tilde{f}\right)_{\tilde{x},\Omega_{\tilde{x}}}=\alpha\left(g\right)f.
$$
\end{enumerate}
\end{proposition}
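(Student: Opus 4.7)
The plan is to adapt the construction of \cite[Proposition 5.7.1]{BP20} to the $\theta_{\tilde{x}}$-twisted descent of Section \ref{sec3.1}. The essential device is a cutoff function on $G_{\tilde{x}}(F)\backslash G(F)$ that we use to transport $f$ through the $F$-analytic identification $\Omega \cong \Omega_{\tilde{x}} \times^{Z_G(\tilde{x})(F)} G(F)$ recalled in the preliminaries.

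First, I would choose $\tilde{\alpha} \in C_c^\infty(G_{\tilde{x}}(F)\backslash G(F))$ whose left $Z_G(\tilde{x})(F)/G_{\tilde{x}}(F)$-translates have pairwise disjoint supports, which is possible since the finite-group action is free. Setting $\alpha(g) = \sum_{z} \tilde{\alpha}(zg)$ gives a $Z_G(\tilde{x})(F)$-left-invariant function which I rescale to have integral $1$, yielding the $\alpha$ in the statement. I then define $\tilde{f}$ on $\Omega$ by
$$
\tilde{f}\bigl(g^{-1} y\, \theta_{\tilde{x}}(g)\tilde{x}\bigr) = \eta_{\tilde{x}}^{\tilde{G}}(y)^{-1/2} \sum_{z \in Z_G(\tilde{x})(F)/G_{\tilde{x}}(F)} \tilde{\alpha}(zg)\cdot ({}^{z^{-1}} f)(y),
$$
for $(y,g) \in \Omega_{\tilde{x}} \times G(F)$, extended by zero outside $\Omega$. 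Using the identity $\theta_{\tilde{x}}(z) = z$ for $z \in Z_G(\tilde{x})(F)$, one checks directly that the right-hand side is invariant under the parameterizing action $z_1 \cdot (y,g) = (z_1 y z_1^{-1}, z_1 g)$, so $\tilde{f}$ is well-defined on $\Omega$. Its membership in $\mathcal{C}_{\text{scusp}}(\Omega)$ will follow from compactness of $\mathrm{Supp}(\tilde{\alpha})$ modulo $G_{\tilde{x}}(F)$ and the Schwartz/strong cuspidality of $f$: ellipticity of $\tilde{x}$ forces $\Omega$ to meet no proper twisted parabolic, so vanishing of twisted constant terms $\tilde{f}^{\tilde{P}}$ reduces via the descent isomorphism to strong cuspidality of $f$ on $G_{\tilde{x}}(F)$.

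For part (2), I would compute directly from the identity $(zg)^{-1}(y\tilde{x})(zg) = g^{-1}(z^{-1}yz)\theta_{\tilde{x}}(g)\tilde{x}$, giving
$$
({}^{zg}\tilde{f})_{\tilde{x},\Omega_{\tilde{x}}}(y) = \sum_{z_0 \in Z_G(\tilde{x})(F)/G_{\tilde{x}}(F)} \tilde{\alpha}(z_0 g) \cdot ({}^{z z_0^{-1}} f)(y).
$$
By the disjointness of translates of $\tilde{\alpha}$, for each $g$ at most one $z_0 = z_0(g)$ contributes and satisfies $\tilde{\alpha}(z_0(g) g) = \alpha(g)$; choosing $z = z_0(g)$, or any $z$ when $g \notin \mathrm{supp}(\alpha)$, collapses the sum to the single term $\alpha(g) f(y)$. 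For part (1), I plan to invoke Proposition \ref{3.8} extended per Remark \ref{3.4} to express $\theta_{\tilde{f}}(y\tilde{x})$ at a regular $y \in \Omega_{\tilde{x},\text{reg}}$ as a limit of truncated conjugation integrals of $\tilde{f}$, substitute the explicit formula, and separate the $\tilde{\alpha}$-integral via the change of variable $g' \mapsto zg'$. Each inner integral converges to the normalization value, and what remains is, by Proposition \ref{3.8} applied to $f$ on $G_{\tilde{x}}(F)$, the orbit-sum expression $\eta_{\tilde{x}}^{\tilde{G}}(y)^{-1/2}\sum_z ({}^{z}\theta_f)(y)$. Multiplying by $\eta_{\tilde{x}}^{\tilde{G}}(y)^{1/2}$ in accordance with the descent identity $(\theta_{\tilde{f}})_{\tilde{x},\Omega_{\tilde{x}}}(y) = \eta_{\tilde{x}}^{\tilde{G}}(y)^{1/2}\theta_{\tilde{f}}(y\tilde{x})$ yields the claimed formula.

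The main obstacle will be the delicate bookkeeping of the finite sum over $Z_G(\tilde{x})(F)/G_{\tilde{x}}(F)$ through the twisted conjugation: one must carefully track how $\theta_{\tilde{x}}$ interacts with the $Z_G(\tilde{x})(F)$-action on the parameter space and verify that the disjoint-translate property of $\tilde{\alpha}$ propagates correctly through the weighted-orbital-integral computation for $\theta_{\tilde{f}}$ in part (1). A secondary technical point is the Schwartz estimate for $\tilde{f}$ on $\tilde{G}(F)$, which requires combining the decay of $\tilde{\alpha}$ modulo $A_{\tilde{G}}(F)G_{\tilde{x}}(F)$ with the Harish-Chandra descent isomorphism $\mathcal{C}(\Omega)^G \simeq \mathcal{C}(\Omega_{\tilde{x}})^{Z_G(\tilde{x})}$.
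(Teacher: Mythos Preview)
Your construction and the treatment of parts (1) and (2) are essentially the paper's approach: the paper also transports $f$ through the $G$-good identification using a cutoff function, proves (2) directly from the definition, and proves (1) by applying Proposition~\ref{3.8} first on $\tilde{G}$ and then on $G_{\tilde{x}}$. The only cosmetic difference is that the paper chooses $\alpha$ directly as a compactly supported function on a coordinate patch $\mathcal{U}\subset Z_G(\tilde{x})(F)\backslash G(F)$ lifted via a local section $s:\mathcal{U}\to G(F)$, whereas you first pick $\tilde{\alpha}$ on $G_{\tilde{x}}(F)\backslash G(F)$ with disjoint $Z_G(\tilde{x})(F)/G_{\tilde{x}}(F)$-translates and then average; both yield the same $\tilde f$.

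There is, however, a genuine gap in your argument for strong cuspidality. The assertion that ``ellipticity of $\tilde{x}$ forces $\Omega$ to meet no proper twisted parabolic'' is false, and the sentence is internally inconsistent: if $\Omega$ really avoided all proper twisted Levis, the constant terms $\tilde{f}^{\tilde{P}}$ would vanish for support reasons and no reduction to strong cuspidality of $f$ would be needed. In fact $\Omega$ will contain elements conjugate into proper twisted Levis whenever $\Omega_{\tilde{x}}$ contains elements of $G_{\tilde{x}}(F)$ that are not themselves elliptic, and $\tilde{x}$ itself may lie in proper twisted Levis. The correct role of ellipticity, as carried out in the paper, is more delicate. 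Given $\tilde{m}\in\tilde{M}(F)\cap\tilde{G}_{\mathrm{reg}}(F)$ whose conjugacy class meets $\Omega$, one writes $\tilde{m}=g^{-1}m_{\tilde{x}}\tilde{x}g$ and sets $\tilde{M}'=g\tilde{M}g^{-1}$; the $G$-good property gives $A_{\tilde{M}'}\subset G_{m_{\tilde{x}}\tilde{x}}\subset G_{\tilde{x}}$, so $\tilde{x}\in\tilde{M}'(F)$. Ellipticity, namely $A_{G_{\tilde{x}}}=A_{\tilde{G}}$, then forces the split torus $A_{\tilde{M}'}\supsetneq A_{\tilde{G}}$ to be \emph{noncentral} in $G_{\tilde{x}}$, so that $P'_{\tilde{x}}=M'_{\tilde{x}}U'_{\tilde{x}}$ is a \emph{proper} parabolic of $G_{\tilde{x}}$. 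Only with this properness can one write
\[
\int_{U(F)}\tilde{f}(u^{-1}\tilde{m}u)\,du
=\int_{U'_{\tilde{x}}(F)\backslash U'(F)}\int_{U'_{\tilde{x}}(F)}\bigl({}^{u'g}\tilde{f}\bigr)_{\tilde{x},\Omega_{\tilde{x}}}\bigl(u_{\tilde{x}}'^{-1}m_{\tilde{x}}u'_{\tilde{x}}\bigr)\,du'_{\tilde{x}}\,du'
\]
and then invoke part (2) plus the vanishing of the $P'_{\tilde{x}}$-constant term of (a scalar multiple of a $Z_G(\tilde{x})$-conjugate of) $f$. You should replace your one-line justification by this argument.
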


\begin{proof}
We follow the approach in \cite[Proposition 5.7.1]{BP20}. Let $\pi:G\left(F\right)\rightarrow Z_{G}\left(\tilde{x}\right)\left(F\right)\backslash G\left(F\right)$
be the natural projection. It is an $F$-analytic locally trivial
fibration. We fix an open subset ${\mathcal{U}}\subseteq Z_{G}\left(\tilde{x}\right)\left(F\right)\backslash G\left(F\right)$
and an $F$-analytic section $s:{\mathcal{U}}\rightarrow G\left(F\right)$. Since $\Omega_{\tilde{x}}$ is a $G$-good open subset, the map 
$$
\begin{array}{cccc}
\beta: & \Omega_{\tilde{x}}\times{\mathcal{U}} & \rightarrow & \tilde{G}\left(F\right)\\
 & \left(y,g\right) & \mapsto & s\left(g\right)^{-1}y\tilde{x}s\left(g\right)
\end{array}
$$
is an open embedding of $F$-analytic spaces. For all $f\in{\mathcal{S}}\left(\Omega_{\tilde{x}}\right)$
and $\varphi\in C_{c}^{\infty}\left({\mathcal{U}}\right)$, we define a
function $f_{\varphi}$ on $\tilde{G}\left(F\right)$ by 
$$
f_{\varphi}\left(\gamma\right)=\begin{cases}
f\left(y\right)\varphi\left(g\right) & \text{if \ensuremath{\gamma=s\left(g\right)^{-1}y\tilde{x}s\left(g\right)} for some \ensuremath{g\in{\mathcal{U}}} and some \ensuremath{y\in\Omega_{\tilde{x}}}}\\
0 & \text{otherwise.}
\end{cases}
$$
It is clear that $f_{\varphi}\in{\mathcal{C}}\left(\Omega\right)$. We
now construct a linear map satisfying the two conditions in the proposition.
We choose a function $\alpha\in C_{c}^{\infty}\left({\mathcal{U}}\right)$
such that $\int_{Z_{G}\left(\tilde{x}\right)\left(F\right)\backslash G\left(F\right)}\alpha\left(g\right)dg=1$ and set $\tilde{f}=\left(\left(\eta_{\tilde{x}}^{\tilde{G}}\right)^{-1/2}f\right)_{\alpha}$
for all $f\in{\mathcal{C}}_{\text{scusp}}\left(\Omega_{\tilde{x}}\right)$.
The second condition follows. We now prove $\tilde{f}$ is strongly
cuspidal. Let $\tilde{P}=\tilde{M}U$ be a proper twisted parabolic
subgroup of $\tilde{G}$. Let $\tilde{m}\in\tilde{M}\left(F\right)\cap\tilde{G}_{\text{reg}}\left(F\right)$.
We need to show 
$$
\int_{U\left(F\right)}\tilde{f}\left(u^{-1}\tilde{m}u\right)du=0.
$$
If the conjugacy class of $\tilde{m}$ does not meet $\Omega_{\tilde{x}}\tilde{x}$,
then it is easy to see that the above integration is equal to $0$.
Assume $\tilde{m}=g^{-1}m_{\tilde{x}}\tilde{x}g$ for some $m_{\tilde{x}}\in\Omega_{\tilde{x}}$.
Set $\tilde{M}^{\prime}=g\tilde{M}g^{-1}$, $U^{\prime}=gUg^{-1}$
and $\tilde{P}^{\prime}=g\tilde{P}g^{-1}=\tilde{M}^{\prime}U^{\prime}$.
Then $\tilde{x}$ lies in $\tilde{M}^{\prime}\left(F\right)$ and
$P_{\tilde{x}}^{\prime}=M_{\tilde{x}}^{\prime}U_{\tilde{x}}^{\prime}$
is a parabolic subgroup of $G_{\tilde{x}}$ which is proper (the assumption
$\tilde{x}\in\tilde{G}\left(F\right)_{\text{ell}}$ is required here).
One has 
$$
\int_{U\left(F\right)}\tilde{f}\left(u^{-1}\tilde{m}u\right)du=\int_{U^{\prime}\left(F\right)}{}^{g}\tilde{f}\left(u^{\prime^{-1}}m_{\tilde{x}}\tilde{x}u^{\prime}\right)du^{\prime}
=\int_{U_{\tilde{x}}^{\prime}\left(F\right)\backslash U^{\prime}\left(F\right)}\int_{U_{\tilde{x}}^{\prime}\left(F\right)}\left(^{u^{\prime}g}\tilde{f}\right)_{\tilde{x},\Omega_{\tilde{x}}}\left(u_{\tilde{x}}^{\prime^{-1}}m_{\tilde{x}}u_{\tilde{x}}^{\prime}\right)du_{\tilde{x}}^{\prime}du^{\prime}.
$$
For all $u^{\prime}\in U^{\prime}\left(F\right)$, there exists $z\in Z_{G}\left(\tilde{x}\right)\left(F\right)$
such that $\left(^{u^{\prime}g}\tilde{f}\right)_{\tilde{x},\Omega_{\tilde{x}}}$
is a scalar multiple of $^{z}f$. Since $f$ is strongly cuspidal,
it follows that the above integral is zero, which is to say $\tilde{f}$
is strongly cuspidal. It remains to show 
$$
\left(\theta_{\tilde{f}}\right)_{\tilde{x},\Omega_{\tilde{x}}}=\underset{z\in Z_{G}\left(\tilde{x}\right)\left(F\right)/G_{\tilde{x}}\left(F\right)}{\sum}{}^{z}\theta_{f}.
$$
Let $\left(G_{n}\right)_{n}$ be an increasing sequence of compact
subsets modulo $A_{\tilde{G}}\left(F\right)$ of $G\left(F\right)$
which is of moderate growth. Assume further that $\underset{n}{\cup}G_{n}=G\left(F\right)$ and $KG_{n}K=G_{n}$.
Let $G_{\tilde{x},n}=G_{n}\cap G_{\tilde{x}}$. Then $\left(G_{\tilde{x},n}\right)_{n}$
be an increasing sequence of compact subsets modulo $A_{G_{\tilde{x}}}\left(F\right)$
of $G_{\tilde{x}}\left(F\right)$ such that $\underset{n}{\cup}G_{\tilde{x},n}=G\left(F\right)$
and $K_{\tilde{x}}G_{\tilde{x},n}K_{\tilde{x}}=G_{\tilde{x},n}$,
where $K_{\tilde{x}}=K\cap G_{\tilde{x}}$. Moreover, we can easily see that $\left(G_{\tilde{x},n}\right)_{n}$ is of moderate growth. Let $y\in\Omega_{\tilde{x},\text{reg}}$.
By Proposition \ref{3.8}, one has 
$$
\theta_{\tilde{f}}\left(y\tilde{x}\right)=\underset{n}{\lim}\int_{A_{\tilde{G}}\left(F\right)\backslash\tilde{G}_{n}}\tilde{f}\left(g^{-1}y\tilde{x}g\right)dg
$$
$$
=\underset{n}{\lim}\int_{Z_{G}\left(\tilde{x}\right)\left(F\right)\backslash G\left(F\right)}\underset{z\in G_{\tilde{x}}\left(F\right)\backslash Z_{G}\left(\tilde{x}\right)\left(F\right)}{\sum}\int_{A_{G_{\tilde{x}}}\left(F\right)\backslash G_{\tilde{x},n}\left(F\right)}\left(^{g}\tilde{f}\right)_{\tilde{x},\Omega_{\tilde{x}}}\left(z^{-1}g_{\tilde{x}}^{-1}yg_{\tilde{x}}z\right)dg_{\tilde{x}}dg
$$
$$
=\underset{n}{\lim}\int_{Z_{G}\left(\tilde{x}\right)\left(F\right)\backslash G\left(F\right)}\alpha\left(g\right)\underset{z\in G_{\tilde{x}}\left(F\right)\backslash Z_{G}\left(\tilde{x}\right)\left(F\right)}{\sum}\int_{A_{G_{\tilde{x}}}\left(F\right)\backslash G_{\tilde{x},n}\left(F\right)}f\left(z^{-1}g_{\tilde{x}}^{-1}yg_{\tilde{x}}z\right)dg_{\tilde{x}}dg
$$
$$
=\underset{n}{\lim}\underset{z\in G_{\tilde{x}}\left(F\right)\backslash Z_{G}\left(\tilde{x}\right)\left(F\right)}{\sum}\int_{A_{G_{\tilde{x}}}\left(F\right)\backslash G_{\tilde{x},n}\left(F\right)}f\left(z^{-1}g_{\tilde{x}}^{-1}yg_{\tilde{x}}z\right)dg_{\tilde{x}}=\underset{z\in G_{\tilde{x}}\left(F\right)\backslash Z_{G}\left(\tilde{x}\right)\left(F\right)}{\sum}{}^{z}\theta_{f}\left(y\right),
$$
noting that on the last line we use an analog of Proposition \ref{3.8} for
connected groups whose proof is similar to the twisted one. This ends
the proof of Proposition \ref{3.18}.
\end{proof}
We state a twisted analog of \cite[Corollary 5.7.2]{BP20}.

\begin{corollary}\label{3.19}
Let $\chi$ be a character of $A_{\tilde{G}}\left(F\right)$. Then 
\begin{enumerate}
\item There exists $\Omega\subseteq A_{\tilde{G}}\left(F\right)\backslash\tilde{G}\left(F\right)$
a completely $G\left(F\right)$-invariant open subset which is relatively
compact modulo conjugation and contains $A_{\tilde{G}}\left(F\right)\backslash\tilde{G}\left(F\right)_{\text{ell}}$
such that the linear map 
$$
f\in{\mathcal{C}}_{\text{scusp}}\left(\Omega,\chi\right)\mapsto\theta_{f}\in QC_{c}\left(\Omega,\chi\right)
$$
is surjective.
\item For all $\theta\in QC\left(\tilde{G}\left(F\right)\right)$, there
exists a compact subset $\Omega_{\theta}\subseteq{\mathcal{X}}_{\text{ell}}\left(\tilde{G}\right)$
such that 
$$
\int_{\Gamma_{\text{ell}}\left(\tilde{G}\right)}D^{\tilde{G}}\left(x\right)\theta\left(x\right)\theta_{\tilde{\pi}}\left(x\right)dx=0
$$
for all $\tilde{\pi}\in{\mathcal{X}}_{\text{ell}}\left(\tilde{G}\right)-\Omega_{\theta}$,
the integral above being absolutely convergent.
\item For all $\tilde{\pi}\in{\mathcal{X}}_{\text{ell}}\left(\tilde{G}\right)$,
there exists $f\in{\mathcal{C}}_{\text{scusp}}\left(\tilde{G}\left(F\right)\right)$
such that for all $\tilde{\pi}^{\prime}\in{\mathcal{X}}_{\text{ell}}\left(\tilde{G}\right)$,
we have $\hat{\theta}_{f}\left(\tilde{\pi}^{\prime}\right)\neq0$ if and only if $\tilde{\pi}^{\prime}=\tilde{\pi}$.
\end{enumerate}
\end{corollary}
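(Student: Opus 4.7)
The plan is to follow the argument of \cite[Corollary 5.7.2]{BP20}, adapted to the twisted setting, using Proposition \ref{3.18} as the key descent tool linking $\tilde{G}$ to its elliptic centralizers and invoking the twisted versions of Proposition \ref{3.2} and the Weyl integration formula from Section \ref{sec2.1}.

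For part (1), the idea is Harish-Chandra descent plus patching. The set $A_{\tilde{G}}(F)\backslash\tilde{G}(F)_{\text{ell}}$ contains only finitely many semisimple conjugacy classes, say with representatives $\tilde{x}_1,\dots,\tilde{x}_r$. For each $i$, I would pick a $G$-good open neighborhood $\Omega_{\tilde{x}_i}\subset G_{\tilde{x}_i}(F)$ of $1$ that is relatively compact modulo $G_{\tilde{x}_i}$-conjugation, set $\Omega_i=(\Omega_{\tilde{x}_i}\tilde{x}_i)^G$, and let $\Omega$ be the $A_{\tilde{G}}(F)$-saturation of $\bigcup_i \Omega_i$. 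A $G(F)$-invariant partition of unity on $\Omega$ decomposes any $\theta\in QC_c(\Omega,\chi)$ as $\sum_i\theta_i$ with $\theta_i$ supported in $\Omega_i$, and the descent $(\theta_i)_{\tilde{x}_i,\Omega_{\tilde{x}_i}}$ is then a compactly supported quasi-character of $G_{\tilde{x}_i}(F)$ by the twisted analog of Proposition \ref{3.2}(3). Applying the untwisted part (1) of \cite[Corollary 5.7.2]{BP20} to $G_{\tilde{x}_i}$ produces $f_i\in \mathcal{C}_{\text{scusp}}(\Omega_{\tilde{x}_i})$ whose quasi-character matches this descent after symmetrizing over $Z_G(\tilde{x}_i)(F)/G_{\tilde{x}_i}(F)$, and Proposition \ref{3.18} then lifts $f_i$ to $\tilde{f}_i\in \mathcal{C}_{\text{scusp}}(\Omega_i)$ with $(\theta_{\tilde{f}_i})_{\tilde{x}_i,\Omega_{\tilde{x}_i}}$ equal to the same symmetrized expression. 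The sum $\tilde{f}=\sum_i \tilde{f}_i$ is the desired preimage of $\theta$.

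For parts (2) and (3), the Fourier-analytic arguments of \cite[Corollary 5.7.2]{BP20} should carry over essentially without change. In part (2), the pairing is invariant under the dual lattice $i\mathcal{A}_{\tilde{G},F}^\vee$ and thus defines a smooth function on the disjoint union of compact tori that constitutes $\mathcal{X}_{\text{ell}}(\tilde{G})$; the twisted Weyl integration formula, combined with the local boundedness of $(D^{\tilde{G}})^{1/2}c_\theta$ (Proposition \ref{3.2}(2)), identifies the restriction of this function to each orbit with a Fourier transform of a Schwartz object in the unramified twist parameter, so only finitely many orbits contribute by a Paley-Wiener or orthogonality estimate. For part (3), I would write $\tilde{\pi}=i_{\tilde{M}}^{\tilde{G}}(\tilde{\sigma})$ with $\tilde{\sigma}\in \mathcal{X}_{\text{ell}}(\tilde{M})$ and use the matricial Paley-Wiener isomorphism of Theorem \ref{2.2} to produce a smooth compactly supported section $T$ on $\mathcal{X}_{\text{temp}}(\tilde{G})$ concentrated near the orbit of $\tilde{\sigma}$ and orthogonal to all parabolic inductions from other Levi subgroups; the associated $f=f_T$ is then strongly cuspidal and satisfies $\hat{\theta}_f(\tilde{\pi}')\neq 0$ iff $\tilde{\pi}'=\tilde{\pi}$.

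The main obstacle I anticipate is in part (1): reconciling the symmetrized expression $\sum_z {}^z\theta_{f_i}$ produced by Proposition \ref{3.18}(1) with the descent $(\theta_i)_{\tilde{x}_i,\Omega_{\tilde{x}_i}}$ itself amounts to verifying, possibly after shrinking $\Omega_{\tilde{x}_i}$, that the latter is already invariant under the finite group $Z_G(\tilde{x}_i)(F)/G_{\tilde{x}_i}(F)$, and then averaging $f_i$ accordingly. This is essentially a bookkeeping issue but must be handled carefully; once the descent in (1) is in place, parts (2) and (3) are routine Fourier analysis on $\mathcal{X}_{\text{ell}}(\tilde{G})$.
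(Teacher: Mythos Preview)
Your overall strategy tracks the proof in \cite[Corollary 5.7.2]{BP20}, as does the paper, but there are two genuine gaps.

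In part (1), the assertion that $A_{\tilde{G}}(F)\backslash\tilde{G}(F)_{\text{ell}}$ contains only finitely many semisimple conjugacy classes is false: already for untwisted $\mathrm{GL}_2$ there are infinitely many elliptic regular conjugacy classes modulo center. What is true, and what you actually need, is that the image of $\tilde{G}(F)_{\text{ell}}$ in the space of conjugacy classes modulo $A_{\tilde{G}}(F)$ is \emph{compact}, hence admits a finite cover by sets of the form $(\Omega_{\tilde{x}_i}\tilde{x}_i)^G$. With that correction your descent-and-patch argument via Proposition \ref{3.18} is sound and is exactly how the paper proceeds; the symmetrization issue you flag at the end is indeed just bookkeeping, handled by averaging over $Z_G(\tilde x_i)(F)/G_{\tilde x_i}(F)$.

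The more substantive gap is in part (3). First, Theorem \ref{2.2} is stated only for connected groups; but even granting a twisted analogue, your construction does not yield the conclusion. Elements of $\mathcal{X}_{\text{ell}}(\tilde{G})$ are \emph{virtual} representations, and distinct elliptic representations attached to the same component of $\mathcal{X}_{\text{temp}}(\tilde G)$ (i.e.\ differing by a character of the $R$-group) cannot be separated merely by a section $T$ ``concentrated near the orbit of $\tilde\sigma$'': the quantity $\hat\theta_f(\tilde\pi')$ is a signed sum of traces on irreducible constituents, and nothing in your description of $T$ forces the required cancellations. The paper instead follows \cite{BP20} and replaces Arthur's orthogonality relations \cite{Art93} by their twisted analogue \cite[Theorem 7.3]{Wal12b}. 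These furnish a nondegenerate pairing $\int_{\Gamma_{\text{ell}}(\tilde G)} D^{\tilde G}\theta_{\tilde\pi}\overline{\theta_{\tilde\pi'}}$ on $\mathcal{X}_{\text{ell}}(\tilde G)$, and combined with the surjectivity in part (1) (to realize $\overline{\theta_{\tilde\pi}}$ near the elliptic locus as some $\theta_f$) they immediately produce the desired $f$. This orthogonality input is precisely the ingredient your proposal is missing.
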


\begin{proof}
The proof is similar to \cite[Corollary 5.7.2]{BP20}, noting that we
can replace the orthogonality relations of Arthur in \cite{Art93}
with ones in the twisted case in \cite[Theorem 7.3]{Wal12b}.
\end{proof}

\section{The local twisted Gan-Gross-Prasad conjecture}\label{sec4}
\subsection{Weil representation of unitary groups}\label{sec4.1}

Let $F$ be a nonarchimedean local field of characteristic $0$ and
$\left(W,\left\langle ,\right\rangle \right)$ be a nondegenerate
symplectic vector space of dimension $2n$ over $F$. We denote by $\text{Sp}\left(W\right)$
the isometry group of $W$. Let $H\left(W\right)=W\oplus F$ be the Heisenberg group with operation 
$$
\left(w_{1},t_{1}\right)\left(w_{2},t_{2}\right)=\left(w_{1}+w_{2},t_{1}+t_{2}+\frac{1}{2}\left\langle w_{1},w_{2}\right\rangle \right).
$$
The center of $H\left(W\right)$ is $Z=\left\{ \left(0,t\right)\mid\ t\in F\right\} $.
We define an action of $\text{Sp}\left(W\right)$ to $H\left(W\right)$
$$
g\cdot\left(w,t\right)=\left(gw,t\right),\ \text{for }g\in\text{Sp}\left(W\right)\text{ and }\left(w,t\right)\in H\left(W\right).
$$
Let $\psi$ be a nontrivial additive character of $F$. By Stone-von
Neumann theorem, there exists a unique (up to isomorphism) smooth
irreducible representation $\left(\rho_{\psi},S\right)$ of $H\left(W\right)$
with central character $\psi$. Since any $g\in\text{Sp}\left(W\right)$
acts trivially on $Z$, the representation $\left(\rho_{\psi}^{g},S\right)$
given by $\rho_{\psi}^{g}\left(h\right)=\rho_{\psi}\left(h^{g}\right)$
has the central character $\psi$, hence is isomorphic to $\left(\rho_{\psi},S\right)$.
Thus, for each $g\in\text{Sp}\left(W\right)$, there exists an automorphism
$A\left(g\right):S\rightarrow S$ such that $A\left(g^{-1}\right)\rho_{\psi}\left(h\right)A\left(g\right)=\rho_{\psi}\left(h^{g}\right)$. The above automorphism $A\left(g\right)$ is unique up to a scalar in $\mathbb{C}^{\times}$. One can define a central extension $\text{Mp}\left(W\right)$, which is called the metaplectic cover of $\text{Sp}\left(W\right)$, 
$$
1\longrightarrow\mathbb{C}^{\times}\longrightarrow\text{Mp}\left(W\right)\longrightarrow\text{Sp}\left(W\right)\longrightarrow1
$$
such that $A$ can be lifted to a representation $\omega_{\psi}$
of $\text{Mp}\left(W\right)$ by $\omega_{\psi}\left(g,A\left(g\right)\right)=A\left(g\right)$.

Let $E$ be a quadratic extension of $F$ and $\left(V,\left\langle \cdot,\cdot\right\rangle _{V}\right)$ be a skew-Hermitian space over $E$ of dimension $n$. Then $\left(\text{Res}_{E/F}V,\text{Tr}_{E/F}\left\langle \cdot,\cdot\right\rangle _{V}\right)$
is a $2n$-dimensional symplectic space over $F$. We denote by $\text{Sp}\left(\text{Res}_{E/F}V\right)$
the symplectic group associated to the above symplectic space. As
mentioned above, one can define the Weil representation $\omega_{\psi}$
of $\text{Mp}\left(\text{Res}_{E/F}V\right)$. Let $\mu$ be a conjugate-symplectic character
of $E^{\times}$, i.e. $\mu\mid_{F^{\times}}=\omega_{E/F}$, where
$\omega_{E/F}$ is the quadratic character factoring through $F^{\times}/N_{E/F}E^{\times}\overset{\sim}{\longrightarrow}\left\{ \pm1\right\} $.
By \cite{Kud94} and \cite[Section 1]{HKS96}, the character $\mu$
determines an inclusion $\mu:U\left(V\right)\rightarrow\text{Mp}\left(\text{Res}_{E/F}V\right)$
splitting $\text{Mp}\left(\text{Res}_{E/F}V\right)\rightarrow\text{Sp}\left(\text{Res}_{E/F}V\right)$.
This gives us the Weil representation $\omega_{V,\psi,\mu}$ of $U\left(V\right)$.

Let $K$ be a field extension of $F$ not containing $E$ and $L=K\otimes_{F}E$.
Let $\left(M,\left\langle ,\right\rangle \right)$ be a skew-Hermitian
space relative to $L/K$. Then $\left(\text{Res}_{L/E}M,\text{Tr}_{L/E}\left\langle \right\rangle \right)$ is a skew-Hermitian space over $E$ and one has an inclusion $i:\text{Res}_{K/F}\text{U}\left(M\right)\hookrightarrow\text{U}\left(\text{Res}_{L/E}M\right)$ defined over $F$, where $\text{Res}_{K/F}\text{U}\left(M\right)$ is the usual Weil restriction. We prove a functorial property of Weil representations.
\begin{proposition}\label{4.1}
Denote $\psi_{K}=\psi\circ\text{Tr}_{K/F}$ and $\mu_{L}=\mu\circ\text{Nm}_{L/E}$ so that $\mu_L$ is conjugate-symplectic relative to $L/K$.
Let $\omega_{M,\psi_{K},\mu_{L}}$ be the Weil representation of $\text{U}\left(M\right)$
and $\omega_{\text{Res}_{L/E}M,\psi,\mu}$ be the Weil representation
of $\text{U}\left(\text{Res}_{L/E}M\right)$. Then 
$$
\omega_{M,\psi_{K},\mu_{L}}\cong i^{*}\omega_{\text{Res}_{L/E}M,\psi,\mu},
$$
where $i^{*}\omega_{\text{Res}_{L/E}M,\psi,\mu}$ is the pullpack
representation of $\omega_{\text{Res}_{L/E}M,\psi,\mu}$ via the inclusion
$i:\text{U}\left(M\right)\hookrightarrow\text{U}\left(\text{Res}_{L/E}M\right)$.
\end{proposition}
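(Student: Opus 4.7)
The strategy is to realize both Weil representations as pullbacks, via different splittings, of a single ambient Weil representation over $F$, and then to check that the two splittings of $U(M) \to \mathrm{Sp}_F$ into the metaplectic cover coincide. First I would observe that the two constructions share a common underlying symplectic $F$-space: since
$$
\mathrm{Res}_{K/F}(\mathrm{Res}_{L/K}M) \;=\; \mathrm{Res}_{L/F}M \;=\; \mathrm{Res}_{E/F}(\mathrm{Res}_{L/E}M),
$$
and since $\mathrm{Tr}_{K/F}\circ\mathrm{Tr}_{L/K} = \mathrm{Tr}_{L/F} = \mathrm{Tr}_{E/F}\circ\mathrm{Tr}_{L/E}$, the two symplectic forms over $F$ obtained from $\langle\,,\,\rangle_M$ agree, and the associated additive character of $F$ is $\psi$ in both cases. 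Choose a Lagrangian decomposition $M = X\oplus Y$ over $L$; then $X$ is also Lagrangian for $\mathrm{Res}_{L/K}M$ over $K$ and for $\mathrm{Res}_{L/E}M$ over $E$, and all three Schrödinger models are realized on the same Schwartz space $\mathcal{S}(X)$.

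Next I would invoke compatibility of the Weil representation with restriction of scalars. Because the Heisenberg group $H(\mathrm{Res}_{L/K}M)(K)$ with central character $\psi_K$ coincides, as a topological group with prescribed central character, with $H(\mathrm{Res}_{L/F}M)(F)$ with central character $\psi$, Stone--von Neumann uniqueness implies that the $\psi_K$-Weil representation of $\mathrm{Mp}(\mathrm{Res}_{L/K}M)$ is obtained by restricting the $\psi$-Weil representation of $\mathrm{Mp}(\mathrm{Res}_{L/F}M)$ along the natural inclusion $\mathrm{Mp}(\mathrm{Res}_{L/K}M)(K) \hookrightarrow \mathrm{Mp}(\mathrm{Res}_{L/F}M)(F)$ covering $\mathrm{Res}_{K/F}\mathrm{Sp}(\mathrm{Res}_{L/K}M) \hookrightarrow \mathrm{Sp}(\mathrm{Res}_{L/F}M)$; the analogous statement holds for $E/F$. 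Consequently, both $\omega_{M,\psi_K,\mu_L}$ and $i^{*}\omega_{\mathrm{Res}_{L/E}M,\psi,\mu}$ are obtained as the pullback of one and the same representation of $\mathrm{Mp}(\mathrm{Res}_{L/F}M)$ under two a priori different lifts of $U(M)\hookrightarrow\mathrm{Sp}_F(\mathrm{Res}_{L/F}M)$.

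It then remains to show that these two lifts agree. Both are instances of the Kudla--HKS splitting attached to a conjugate-symplectic character: on the $\mathrm{Res}_{L/E}M$ side one uses $\mu$ relative to $E/F$, while on the $M$ side one uses $\mu_L=\mu\circ\mathrm{Nm}_{L/E}$ relative to $L/K$. The Kudla splitting is characterised by explicit formulas on generators of the Siegel parabolic and the Weyl element, the Siegel Levi acting by $\phi\mapsto \mu_*(\det g)\,|\det g|^{1/2}\phi(\cdot\, g)$. Combining the elementary determinant identity $\det_E(g\mid\mathrm{Res}_{L/E}X) = \mathrm{Nm}_{L/E}\det_L(g\mid X)$ for $g\in U(M)$ with the definition $\mu_L=\mu\circ\mathrm{Nm}_{L/E}$, the two formulas on the Siegel Levi coincide. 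For the remaining generators one compares the Weil-index factors produced by restriction of scalars and verifies that they agree thanks to the same compatibility of characters.

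The main obstacle I anticipate is precisely this last step: controlling the metaplectic $2$-cocycle contributions under restriction of scalars. The Weil index $\gamma_\psi$ over $F$ and $\gamma_{\psi_K}$ over $K$ are related by a nontrivial base-change identity, and the Kudla splitting involves these indices through the Leray cocycle; one must verify that the discrepancy between the $E/F$ and $L/K$ splitting formulas is washed out by the identity $\mu_L=\mu\circ\mathrm{Nm}_{L/E}$ together with such norm-compatibilities of Weil indices. An efficient alternative is to use a rigidity argument: any two continuous lifts of a connected algebraic subgroup into the metaplectic cover that agree on a Zariski-dense subset (for instance the Siegel Levi of a complete polarization of $M$) must coincide, which reduces the problem to the Siegel Levi computation already carried out.
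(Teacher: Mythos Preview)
Your proposal is correct and follows essentially the same approach as the paper: both arguments identify the common symplectic $F$-space via transitivity of restriction of scalars, use Stone--von~Neumann to match the Heisenberg/Weil representations under the inclusion of metaplectic covers, and then reduce to checking that the two Kudla splittings of $U(M)$ agree via the determinant identity $\mu_L(\det_L g)=\mu(\det_E i(g))$. The paper is terser on the final step, simply appealing to ``a direct computation by using the construction of the splitting in [Kud94]'' together with that identity, whereas you spell out more carefully where the potential difficulty with Weil-index factors lies and offer a rigidity workaround; but the skeleton is the same.
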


\begin{proof}
Let $W_{1}=\text{Res}_{L/K}M$ and $W_{2}=\text{Res}_{L/F}M=\text{Res}_{K/F}W_{1}$
be two symplectic spaces over $K$ and $F$ respectively. Let $H\left(W_{1}\right)$
and $H\left(W_{2}\right)$ be corresponding Heisenberg groups. We
have the following surjective map 
$$
\begin{array}{ccccc}
j & : & H\left(W_{1}\right) & \longrightarrow & H\left(W_{2}\right)\\
 &  & \left(w,t\right) & \mapsto & \left(\text{Res}_{K/F}w,\text{Tr}_{K/F}t\right)
\end{array}.
$$
Let $\left(\rho_{\psi},S\right)$ be the irreducible representation
of $H\left(W_{2}\right)$ whose central character is $\psi$. Then
$j$ induces an irreducible representation $\left(j^{*}\rho_{\psi},S\right)$
of $H\left(W_{1}\right)$ with central character $\psi_{K}$. Hence,
one can see that $\omega_{M,\psi_{K},\mu_{L}}$ and $\omega_{\text{Res}_{L/E}M,\psi,\mu}$
share the same underlying vector space and the following diagram 
$$
\begin{tikzcd}
Mp(W_1) \arrow[r, hook] \arrow[d] & Mp(W_2) \arrow[d] \\
Sp(W_1) \arrow[r, hook]           & Sp(W_2)          
\end{tikzcd}
$$
commutes. There remains to show the two splittings 
$$
\begin{tikzcd}
                                                    & Sp(W_2) &                                    \\
U(\text{Res}_{L/E}M) \arrow[ru] \arrow[rr, "\mu_L"] &         & Mp(W_2) \arrow[lu]                 \\
U(M) \arrow[u, hook] \arrow[rr, "\mu"] \arrow[rd]   &         & Mp(W_1) \arrow[u, hook] \arrow[ld] \\
                                                    & Sp(W_1) &                                   
\end{tikzcd}
$$
commute. This follows from a direct computation by using the construction
of the splitting in \cite{Kud94}, noting that for any $g\in\text{U}\left(M\right)$,
one has $\mu_{L}\left(\det g\right)=\mu\left(\det (i\left(g\right))\right)$.
\end{proof}

\subsection{The twisted Gan-Gross-Prasad triples}\label{sec4.2}

Let $F$ be a nonarchimedean local field of characteristic $0$ and $E$ be a quadratic field extension over $F$. Let $V$ be a non-degenerate $n$-dimensional skew-Hermitian space over $E$. We denote $U\left(V\right)$ by $H_{V}$ and $\text{Res}_{E/F}GL\left(V\right)$
by $G$. Note that $G$ is isomorphic to $\text{Res}_{E/F} GL_n(E)$ and thus independent of choices of skew-Hermitian spaces $V$.

Let $\psi$ be a nontrivial additive character of $F$ and $\mu$
be a conjugate-symplectic character of $E^{\times}$. Let $\omega_{V,\psi,\mu}$
be the Weil representation associated to $\left(V,\psi,\mu\right)$.
Let $\pi$ be an irreducible generic representation of $G\left(F\right)$.
We consider the problem of determining 
$$
m_{V}\left(\pi\right)=\dim\text{Hom}_{H_{V}}\left(\pi,\omega_{V,\psi,\mu}\right).
$$
We have the following decomposition $\omega_{V,\psi,\mu}=\underset{\eta}{\bigoplus}\,\omega_{V,\psi,\mu,\eta}$, where $\eta$ runs through the set of characters of the center $U\left(1\right)$ and $\omega_{V,\psi,\mu,\eta}$ is the $\eta$-isotypic part of $\omega_{V,\psi,\mu}$. By the Howe duality proved in \cite{GT16}, $\omega_{V,\psi,\mu,\eta}$
is either $0$ or irreducible and admissible. In particular, the later holds when $\dim V \geq 2$. Let $\chi_\pi$ be the central character
of $\pi$. For simplicity, we also denote by $\chi_\pi$ the restriction
of $\chi_\pi$ to $Z_{G}\left(F\right)\cap H\left(F\right)=U\left(1\right)$.
Then 
$$
\text{Hom}_{H_{V}}\left(\pi,\omega_{V,\psi,\mu}\right)=\text{Hom}_{H_{V}}\left(\pi,\omega_{V,\psi,\mu,\chi_\pi}\right).
$$
 We call a triple $\left(G,H_{V},\omega_{V,\psi,\mu}\right)$ a twisted
Gan-Gross-Prasad triple.

\subsection{Transfer of character distributions of dual pairs}\label{sec4.3}

Let $F$ be a local field of characteristic $0$. In this section, we recall some results in \cite{Prz93}. Although these results are proved when $F=\mathbb{R}$, they can be adapted easily in the case of our interest, i.e. the dual pair $(\text{U}_1,\text{U}(V))$ and $(\text{GL}_1,\text{GL}(V))$ over a nonarchimedean field $F$.

Let $(W,\langle,\rangle)$ be a symplectic space of dimension $2n$ over $F$ and $(G,G^\prime)$ be an irreducible dual pair in $\text{Sp}(W)$. Let $\psi$ be a nontrivial additive character of $F$ and $\omega_\psi$ be the Weil representation of $\text{Mp}_{2n}(F)$. Let $\pi$ be an irreducible admissible representation of $G(F)$. We set $\Theta(\pi)=\left(\omega_\psi\otimes\pi^\vee\right)_{G(F)}$, which is a representation of $G^\prime(F)$. We say that $\Theta(\pi)$ is the theta lift of $\pi$ for the dual pair $(G,G^\prime)$.

We recall some estimates of character distributions. By \cite[Theorem 3]{Mil77}, the character distribution $\Theta_\pi$ has the rate of growth $\gamma\geq 0$ if there exists a constant $d\geq 0$ such that
$$
\left|\Theta_\pi(g)\right| \ll \left|D^G(g)\right|^{-1/2}\Xi^G(g)^{-\gamma}(g)\sigma_G(g)^d, \text{ for any }g\in G(F).
$$
When $\pi$ is unitary, we have $\gamma \leq 1$ (c.f. \cite[4.5.1]{BoWa00}). Let $\mathbb{D}$ be a finite dimensional division algebra over $F$ with an involution. Let $(V,\langle,\rangle_V)$ and $(V^\prime,\langle,\rangle_{V^\prime})$ be two nondegenerate forms over $\mathbb{D}$, one hermitian and one skew-hermitian, such that $G$ and $G^\prime$ are isometry groups of $V$ and $V^\prime$ respectively. We set
$$
r=2\dim_{F}\mathfrak{g}/\dim_F V \text{ and } r^\prime=2\dim_{F}\mathfrak{g^\prime}/\dim_F V^\prime.
$$
We recall \cite[Proposition 4.11]{Prz93}.
\begin{proposition}\label{4.2}
    We fix a matrix coefficient $\Omega(g)=\left|\langle \omega_\psi(g)v,v \rangle \right|$ for some $v\in \omega_\psi$.
    \begin{enumerate}
        \item If $\Omega_\pi$ has the rate of growth $\gamma < \dim_\mathbb{D}V^\prime/(r-1)-1$, then 
        $$
        \int_{G(F)}\left|\Theta_\pi(g)\right|\Omega(g)dg < \infty.
        $$
        \item If $\gamma < \dim_\mathbb{D}V^\prime/(r-1)-1$ and the form $(v_1,v_2)=\int_{G(F)}\overline{\Theta_\pi(g)}\langle \omega_\psi(g)v_1,v_2\rangle dg$
        is positive semidefinite and nontrivial, then $\Theta_{\Theta(\pi)}$ has the rate of growth $\gamma^\prime=1-\lambda^\prime$, where
        $$
        \lambda^\prime = \left( 1- \frac{(1+\gamma)(r-1)}{\dim_{\mathbb{D}}V^\prime}\right)\cdot\frac{\dim_{\mathbb{D}}V}{r^\prime-1}.
        $$
    \end{enumerate}
\end{proposition}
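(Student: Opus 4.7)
The plan is to translate Przebinda's arguments from~\cite[Proposition~4.11]{Prz93} to the nonarchimedean setting. The essential tools---Harish-Chandra's $\Xi$-estimates and integrability theorems, the Weyl integration formula, and quantitative decay of matrix coefficients of $\omega_\psi$---all have direct $p$-adic analogues; the main task is to check that the exponents carry over.

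For part (1), I would apply the Weyl integration formula to reduce the integral to a sum over conjugacy classes of maximal tori $T\in\mathcal{T}(G)$. The hypothesis on the rate of growth gives the pointwise bound $|\Theta_\pi(t)|\ll D^G(t)^{-1/2}\Xi^G(t)^{-\gamma}\sigma_G(t)^d$ on $T(F)\cap G_{\mathrm{reg}}(F)$. Matrix coefficients of the Weil representation admit a sharp decay estimate in terms of $\Xi^G$, with exponent governed by $\dim_\mathbb{D} V^\prime$ and $r$, and this is the source of the threshold $\dim_\mathbb{D} V^\prime/(r-1)$. Combining these with Harish-Chandra's integrability of powers of $\Xi^G$ against $\sigma_G^b$ on $T(F)$ produces absolute convergence of the weighted orbital integrals exactly when $\gamma<\dim_\mathbb{D} V^\prime/(r-1)-1$, which establishes (1).

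For part (2), the positivity and nontriviality of the sesquilinear form $(v_1,v_2)\mapsto\int_{G(F)}\overline{\Theta_\pi(g)}\langle\omega_\psi(g)v_1,v_2\rangle\,dg$ realises, up to a positive scalar, the inner product on the Hilbert space completion of $\Theta(\pi)$. Pairing this construction against $f\in C_c^\infty(G^\prime(F))$ yields a distributional identity
$$\Theta_{\Theta(\pi)}(f)=c\int_{G(F)}\overline{\Theta_\pi(g)}\,\theta_{\omega_\psi}(g,f)\,dg,$$
where $\theta_{\omega_\psi}$ is the joint character of $\omega_\psi$ on $G(F)\times G^\prime(F)$ and $c>0$. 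Bounding $|\Theta_{\Theta(\pi)}(g^\prime)|$ for $g^\prime\in G^\prime_{\mathrm{reg}}(F)$ then reduces to applying the estimate of (1) pointwise in $g^\prime$ to the function $g\mapsto|\theta_{\omega_\psi}(g,g^\prime)|$, which satisfies an analogous matrix-coefficient decay with the roles of $(V,r)$ and $(V^\prime,r^\prime)$ interchanged. Unwinding the exponents produces exactly the claimed rate $\gamma^\prime=1-\lambda^\prime$ with $\lambda^\prime$ as in the statement; the boundary check $\gamma=\dim_\mathbb{D} V^\prime/(r-1)-1\Rightarrow\lambda^\prime=0$ is a useful sanity test.

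The principal obstacle is establishing the sharp $p$-adic decay of matrix coefficients and of the joint character of $\omega_\psi$, with the precise exponent $\dim_\mathbb{D} V^\prime/(r-1)$. In~\cite{Prz93} this is obtained via stationary phase on the Schr\"odinger realisation, which has no direct nonarchimedean counterpart; instead one would stratify the Schr\"odinger model by the rank of $g-1$ on $V^\prime$, use Harish-Chandra descent to reduce to Levi subgroups, and control each stratum by an explicit Fourier-analytic argument on the $p$-adic Heisenberg group. Once this quantitative decay is in place, the remaining steps are a routine balancing of exponents in the Weyl integral.
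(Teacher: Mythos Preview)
The paper does not actually prove this proposition: it is stated as a direct citation of \cite[Proposition~4.11]{Prz93}, with only the blanket assertion (at the start of Section~\ref{sec4.3}) that ``these results are proved when $F=\mathbb{R}$, [but] they can be adapted easily in the case of our interest, i.e.\ the dual pair $(\mathrm{U}_1,\mathrm{U}(V))$ and $(\mathrm{GL}_1,\mathrm{GL}(V))$ over a nonarchimedean field $F$.'' There is no argument offered beyond this sentence, so there is no proof in the paper to compare your proposal against.

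Your sketch is a reasonable outline of how one would adapt Przebinda's full argument to the $p$-adic setting, and you correctly flag the main technical obstacle (sharp decay of the joint character of $\omega_\psi$, where the archimedean stationary-phase method has no direct analogue). But note that the paper never needs the proposition in this generality: it is invoked only to deduce Corollary~\ref{4.3}, where $G$ is the one-dimensional group $\mathrm{U}_1$ or $\mathrm{GL}_1$ and $\pi=\chi$ is a character. In that case $\Theta_\pi$ has rate of growth $\gamma=0$, the Weyl integration formula is vacuous, and the matrix-coefficient estimate for $\omega_\psi$ restricted to $G$ is elementary (it reduces to the decay of a single Schwartz integral in the Schr\"odinger model). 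So the ``easy adaptation'' the paper has in mind is almost certainly this trivial special case rather than the general transfer you outline; your identified obstacle, while genuine for the full statement, does not arise for the paper's actual application.
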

By the above proposition, we can deduce the following corollary.
\begin{corollary}\label{4.3}
    Let $\chi$ be a character of the center $U_1$ (resp. $GL_1$) of $U(V)$ (resp. $GL(V)$) and $\omega_{V,\psi,\mu,\chi}$ (resp. $\omega_{\mu,\chi}$) be the $\chi$-isotypic subspace of $\omega_{V,\psi,\mu}$ (resp. $\omega_{\mu}$). Then the character distribution $\Theta_{\omega_{V,\psi,\mu,\chi}}$ (resp. $\Theta_{\omega_{\mu,\chi}}$) has the rate of growth $\frac{n-2}{n-1}$.
\end{corollary}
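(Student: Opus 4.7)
The plan is to realize $\omega_{V,\psi,\mu,\chi}$ and $\omega_{\mu,\chi}$ as theta lifts of the character $\chi$ from the one-dimensional member of the relevant dual pair, and then apply Proposition \ref{4.2}(2). First I would fix the dual pair $(U_{1},U(V))$ inside $\mathrm{Sp}(\mathrm{Res}_{E/F}V)$ (respectively the Type II pair $(\mathrm{GL}_{1},\mathrm{GL}(V))$), with splittings determined by $\mu$ as in Section \ref{sec4.1}. By Howe duality and the explicit description of the joint action of $U_{1}\times U(V)$ on $\omega_{V,\psi,\mu}$, the decomposition $\omega_{V,\psi,\mu}=\bigoplus_{\chi}\omega_{V,\psi,\mu,\chi}$ over characters of $U_{1}$ matches the theta correspondence: the $\chi$-isotypic subspace is exactly $\Theta(\chi)$, viewed as a representation of $U(V)$. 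An analogous identification holds for $\omega_{\mu}$ under $\mathrm{GL}_{1}\subset \mathrm{GL}(V)$.

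Next I would verify the hypotheses of Proposition \ref{4.2}(2) applied with $G$ the one-dimensional group and $\pi=\chi$. Since $\chi$ is unitary and the Harish-Chandra invariants $D^{G}$ and $\Xi^{G}$ are bounded on $G(F)$ in these rank-one situations, $\chi$ has rate of growth $\gamma=0$; moreover, because $\dim_{\mathbb{D}} V=1$, the parameter $r$ attached to $G$ satisfies $r-1\leq 1$, so the required upper bound on $\gamma$ in part (1) is easily seen to hold. For the positivity and nonvanishing requirements in (2), the sesquilinear form $(v_{1},v_{2})=\int_{G(F)}\overline{\chi(g)}\langle\omega_{\psi}(g)v_{1},v_{2}\rangle\,dg$ is, up to a positive normalizing constant, equal to $\langle Pv_{1},v_{2}\rangle$, where $P$ is the orthogonal projector onto the $\chi$-isotypic subspace of $\omega_{\psi}$; it is therefore positive semidefinite, and is nontrivial precisely when $\omega_{V,\psi,\mu,\chi}\neq 0$ (otherwise the corollary is vacuous).

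Plugging into the formula of Proposition \ref{4.2}(2) for the unitary case, with $\dim_{\mathbb{D}}V=1$, $\dim_{\mathbb{D}}V'=n$, $r=1$, and $r'=2\dim_{F}\mathfrak{u}(V)/\dim_{F}V=n$, one obtains
$$
\lambda' = \left(1-\tfrac{(1+0)(r-1)}{\dim_{\mathbb{D}}V'}\right)\cdot\tfrac{\dim_{\mathbb{D}}V}{r'-1} = \tfrac{1}{n-1},
$$
and therefore $\gamma' = 1-\lambda' = \tfrac{n-2}{n-1}$. The same arithmetic, carried out with the dimensions and Lie-algebra sizes attached to the Type II dual pair $(\mathrm{GL}_{1},\mathrm{GL}(V))$, produces the same exponent.

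The main obstacle will be transporting Przebinda's proof of Proposition \ref{4.2} from the archimedean setting of \cite{Prz93} to the nonarchimedean one. Przebinda's argument uses wavefront-set and Gelfand--Kirillov-style estimates that are real-analytic in flavor, so one has to replace them by their $p$-adic analogues---bounds on $\Xi^{G}$, on matrix coefficients of the Weil representation, and integrability of orbital integrals near nilpotent support. A secondary, more bookkeeping-style point is to invoke Proposition \ref{4.1} to confirm that the splittings on the two sides of the restriction $U_{1}\times U(V)\hookrightarrow \mathrm{Mp}(\mathrm{Res}_{E/F}V)$ are compatible, so that the character $\chi$ whose theta lift is taken via Proposition \ref{4.2} is indeed the same $\chi$ indexing the isotypic decomposition of $\omega_{V,\psi,\mu}$.
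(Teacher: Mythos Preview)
Your proposal is correct and follows exactly the approach the paper intends: the paper's own proof consists solely of the sentence ``By the above proposition, we can deduce the following corollary,'' so your explicit identification of $\omega_{V,\psi,\mu,\chi}$ with $\Theta(\chi)$ for the dual pair $(U_1,U(V))$, verification of the hypotheses of Proposition~\ref{4.2}(2) with $\gamma=0$, and the computation $r=1$, $r'=n$, $\lambda'=1/(n-1)$, $\gamma'=(n-2)/(n-1)$ supply precisely the details the paper omits. Your caveat about transporting Przebinda's archimedean argument to the $p$-adic setting is also exactly the point the paper flags just before Proposition~\ref{4.2}.
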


We recall the notion of wavefront set of character distributions. Since $\Theta_\pi$ is a quasi-character, there exists a good neighborhood $\omega$ of $0$ in $\mathfrak{g}(F)$ such that $\Theta_\pi(\exp(X))=\underset{\mathcal{O}\in \text{Nil}(g(F))}{\sum}c_{\Theta_\pi,\mathcal{O}}(1)\hat{j}(\mathcal{O},X)$, for any $X\in \omega_{\text{rss}}$. The wavefront set $\text{WF}(\pi)$ of $\pi$ is the closure of the union of nilpotent orbits $\mathcal{O}$ in $\mathfrak{g}(F)$ such that $c_{\Theta_\pi,\mathcal{O}}$ is nonzero.

We identify $W$ with $\text{Hom}_\mathbb{D}(V,V^\prime)$. We define two canonical moment maps
$$
\tau_G:w\in W \mapsto w^*w \in \mathfrak{g}\ \ \  \text{ and } \ \ \ \tau_{G^\prime}:w\in W \mapsto ww^* \in \mathfrak{g}^\prime,
$$
where $w^*\in \text{Hom}_\mathbb{D}(V^\prime,V)$ is the adjoint of $w$ with respect to $\langle,\rangle$. We recall \cite[Corollary 7.10]{Prz93}.
\begin{proposition}\label{4.4}
    Suppose the pair $(G,G^\prime)$ is in the stable range with $G$ the smaller member. For any finite-dimensional irreducible unitary representation $\pi$ of $G(F)$, we have 
    $$
    WF(\Theta(\pi))=\tau_{G^\prime}\left(\tau_{G}^{-1}(WF(\pi))\right)=\tau_{G^\prime}\left(\tau_{G}^{-1}(0)\right).
    $$
\end{proposition}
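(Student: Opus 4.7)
The plan is as follows. First observe that since $\pi$ is finite-dimensional, its character $\Theta_\pi$ extends to a smooth function on all of $G(F)$, so the local character expansion of $\Theta_\pi$ near the identity collapses to the constant term, i.e.\ to the coefficient of $\hat j(\{0\},\cdot)$. All other coefficients $c_{\Theta_\pi,\mathcal{O}}(1)$ vanish, whence $\text{WF}(\pi)=\{0\}$. This yields the second equality $\tau_{G'}(\tau_G^{-1}(\text{WF}(\pi)))=\tau_{G'}(\tau_G^{-1}(0))$ for free, and in particular reduces the problem to computing $\text{WF}(\Theta(\pi))$.

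For the first equality, I would follow the strategy behind \cite[Corollary 7.10]{Prz93}. The starting point is the distributional identity
$$
\Theta_{\Theta(\pi)}(g')=\int_{G(F)}\overline{\Theta_\pi(g)}\,\Theta_{\omega_\psi}(gg')\,dg,
$$
whose absolute convergence is guaranteed by the stable-range hypothesis together with the rate-of-growth bounds of Proposition \ref{4.2}. One then Cayley-transforms both sides to the Lie algebras $\mathfrak{g}(F)$ and $\mathfrak{g}'(F)$. On $\mathfrak{g}(F)\times\mathfrak{g}'(F)$ the character of $\omega_\psi$ admits an explicit Gaussian-type formula as an oscillatory integral over $W\simeq\text{Hom}_{\mathbb{D}}(V,V')$, whose quadratic phase couples $X\in\mathfrak{g}(F)$ and $Y\in\mathfrak{g}'(F)$ precisely through the pairing induced by the two moment maps $\tau_G$ and $\tau_{G'}$. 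Computing the local character expansion of $\Theta_{\Theta(\pi)}$ near $1\in G'(F)$ then becomes a stationary-phase analysis of the pushforward along $\tau_{G'}$ of the pullback along $\tau_G$ of the local expansion of $\Theta_\pi$.

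Identifying the coefficients of $\hat j(\mathcal{O}',\cdot)$ in the resulting expansion shows that the nilpotent orbits $\mathcal{O}'\subset\mathfrak{g}'(F)$ carrying nonzero coefficient are exactly those lying in $\tau_{G'}(\tau_G^{-1}(\text{WF}(\pi)))$, which is the desired equality. The hardest part, and the reason for invoking the stable range assumption, is to upgrade the a priori containment $\text{WF}(\Theta(\pi))\subseteq\tau_{G'}(\tau_G^{-1}(\text{WF}(\pi)))$ to an equality: one must rule out unexpected cancellations of orbit contributions in the pushforward, which uses both the nonvanishing of $\Theta(\pi)$ in the stable range and the transversality of the composite moment map. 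A final ingredient is the well-known fact that in the stable range $\tau_{G'}(\tau_G^{-1}(0))$ coincides with the closure of a single nilpotent $G'(F)$-orbit in $\mathfrak{g}'(F)$, which pins down $\text{WF}(\Theta(\pi))$ explicitly.
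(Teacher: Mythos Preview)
The paper does not give a proof of this proposition; it is stated as a direct citation of \cite[Corollary 7.10]{Prz93}, prefaced by the remark that Przebinda's results (proved for $F=\mathbb{R}$) ``can be adapted easily'' to the nonarchimedean dual pairs $(\mathrm{U}_1,\mathrm{U}(V))$ and $(\mathrm{GL}_1,\mathrm{GL}(V))$ of interest. Your sketch therefore goes further than the paper itself by outlining Przebinda's actual argument: the character identity relating $\Theta_{\Theta(\pi)}$ to an integral of $\overline{\Theta_\pi}$ against the distribution character of $\omega_\psi$, passage to the Lie algebra, the explicit oscillatory-integral formula for $\Theta_{\omega_\psi}$, and the moment-map interpretation of the resulting stationary-phase computation. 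This is indeed the correct strategy from \cite{Prz93}, and your observation that $\mathrm{WF}(\pi)=\{0\}$ for finite-dimensional $\pi$ (making the second equality trivial) is right.

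One point of adaptation: your sketch invokes the Cayley transform, which is the natural tool over $\mathbb{R}$; in the $p$-adic setting one uses instead the exponential map on a small neighborhood of $0$ together with the Harish-Chandra--Howe local character expansion. Also, the distributional identity you wrote requires care---$\Theta_{\omega_\psi}$ is a genuine distribution on $\mathrm{Sp}(W)$, and the convergence of the integral against $\overline{\Theta_\pi}$ is precisely what the rate-of-growth bounds in Proposition~\ref{4.2} are for. These are details of the $p$-adic adaptation the paper alludes to, not gaps in your overall approach.
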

We deduce the following corollary, whose proof follows easily from the above proposition.
\begin{corollary}\label{4.5}
    Assume that $V$ is not the two-dimensional nonsplit skew-hermitian space over $E$. Let $\chi$ be a character of the center $U_1$ (resp. $GL_1$) of $U(V)$ (resp. $GL(V)$) and $\omega_{V,\psi,\mu,\chi}$ (resp. $\omega_{\mu,\chi}$) be the $\chi$-isotypic subspace of $\omega_{V,\psi,\mu}$ (resp. $\omega_{\mu}$). Then the wavefront set of $\omega_{V,\psi,\mu,\chi}$ (resp. $\omega_{\mu,\chi}$) is the closure of a minimal nilpotent orbit $\mathcal{O}_m$ (which is independent of $\chi$) in $\mathfrak{u}(V)$ (resp. $\mathfrak{gl}(V)$). Moreover, we have the leading coefficient $c_{\theta_{\omega_{V,\psi,\mu,\chi}},\mathcal{O}_m}(1)=1$.
\end{corollary}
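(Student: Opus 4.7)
The plan is to deduce both assertions of Corollary \ref{4.5} from Proposition \ref{4.4}, applied to the dual pair $(U_1, U(V))$ in the unitary case (and to the analogous $(\mathrm{GL}_1, \mathrm{GL}(V))$ setup in the general linear case, treated in parallel).

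First, I would check that, under the hypothesis on $V$, the pair $(U_1, U(V))$ lies in the stable range with $U_1$ the smaller member. Stable range here is equivalent to $V$ admitting a nonzero isotropic vector; since the only skew-hermitian space over $E/F$ of dimension $\geq 2$ without isotropic vectors is the $2$-dimensional anisotropic one, the hypothesis of the corollary is precisely what is needed. The character $\chi^{-1}$ is a $1$-dimensional irreducible unitary representation of $U_1$ whose wavefront set is $\{0\}$, so Proposition \ref{4.4} applied with $\pi = \chi^{-1}$ gives
$$
\mathrm{WF}(\omega_{V,\psi,\mu,\chi}) = \tau_{U(V)}\bigl(\tau_{U_1}^{-1}(0)\bigr),
$$
which is manifestly independent of $\chi$.

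Second, I would perform the moment map computation explicitly. Identifying the symplectic space $W$ underlying the dual pair with $V$ (via a generator of the $1$-dimensional hermitian space defining $U_1$), the map $\tau_{U_1}\colon w \mapsto w^{*}w$ becomes $w \mapsto \langle w, w\rangle_V$, so $\tau_{U_1}^{-1}(0)$ is the isotropic cone of $V$. For isotropic $w$, $\tau_{U(V)}(w) = ww^{*}$ is the rank-one nilpotent endomorphism $v \mapsto \langle v, w\rangle_V w$ of $V$; as $w$ varies over nonzero isotropic vectors these fill out a single $U(V)(F)$-orbit of nilpotent elements, namely the minimal nilpotent orbit $\mathcal{O}_m$, and including $w = 0$ yields $\overline{\mathcal{O}_m}$. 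The $\mathrm{GL}$ analogue is entirely parallel and produces the rank-one nilpotent orbit of $\mathfrak{gl}(V)$.

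Finally, for the normalization $c_{\theta_{\omega_{V,\psi,\mu,\chi}}, \mathcal{O}_m}(1) = 1$, I would combine the explicit Weil-index formula for $\theta_{\omega_{V,\psi,\mu}}$ on the regular semisimple locus with Harish-Chandra's local character expansion at the identity. Integrating against $\chi^{-1}$ over $U_1(F)$ to isolate the $\chi$-isotypic subspace and matching the resulting germ against the Shalika expansion
$$
\theta_{\omega_{V,\psi,\mu,\chi}}(e^X) = \sum_{\mathcal{O} \in \mathrm{Nil}(\mathfrak{u}(V))} c_{\theta,\mathcal{O}}(1)\,\hat{j}(\mathcal{O}, X),
$$
the wavefront set determination already forces all terms outside $\overline{\mathcal{O}_m}$ to vanish, leaving a single top-degree contribution from $\mathcal{O}_m$. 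The main obstacle lies here: verifying that, with the measure conventions fixed in Sections \ref{sec2.1} and \ref{sec4.1}, this coefficient equals exactly $1$ requires careful bookkeeping of Haar measure normalizations on $U_1(F)$, $\mathfrak{u}(V)(F)$, and the orbit $\mathcal{O}_m$; all other parts of the corollary are immediate consequences of Proposition \ref{4.4}.
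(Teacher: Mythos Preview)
Your argument for the wavefront set is essentially the paper's: apply Proposition \ref{4.4} to $(U_1,U(V))$ with $\pi=\chi^{-1}$, identify $\tau_{U_1}^{-1}(0)$ with the isotropic cone, and observe that $\tau_{U(V)}$ sends the nonzero isotropic vectors onto the minimal nilpotent orbit $\mathcal{O}_m$. The stable-range check you give (existence of an isotropic vector, equivalent to $V$ not being the anisotropic skew-hermitian plane) is exactly the content of the hypothesis.

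Where you diverge is the coefficient $c_{\theta_{\omega_{V,\psi,\mu,\chi}},\mathcal{O}_m}(1)=1$. You propose a direct germ computation: write the character of $\omega_{V,\psi,\mu}$ explicitly, project to the $\chi$-isotypic piece, and match against the Shalika expansion. You correctly flag the measure normalizations as the obstacle, and in practice this route is delicate. The paper bypasses the computation entirely by invoking M\oe glin--Waldspurger \cite{MW87} together with \cite[Theorem 1.1]{GZ14}: the leading coefficient $c_{\theta,\mathcal{O}_m}(1)$ equals the dimension of the degenerate Whittaker space $\mathrm{Wh}_{\mathcal{O}_m}(\omega_{V,\psi,\mu,\chi})$, and the Gomez--Zhu result on preservation of generalized Whittaker models under theta lifting identifies this with $\dim\mathrm{Wh}_0(\chi)=1$ on the $U_1$-side. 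This structural transfer avoids all bookkeeping and is worth knowing; your direct approach would likely succeed but is more laborious.
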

\begin{proof}
    We consider the case $(G,G^\prime)=(U_1,U(V))$, as the case $(\text{GL}_1,\text{GL}(V))$ is similar. Let $\langle,\rangle$ be the skew-hermitian form of $V$. We also fix the hermitian form $(x,y)=\bar{x}y$ for $U_1$. In this case, we have $\tau_{G}(v)=\langle v,v \rangle$, for any $v\in V$. Therefore, we obtain $\tau_G^{-1}(0)=V^0=\{v\in V\mid\,\langle v,v \rangle=0\}$. By Witt theorem, there are two $U(V)$-orbits in $V^0$, which are $\{0\}$ and $V^0\backslash\{0\}$. We set
    $$
    \mathcal{O}_0=\tau_{G^\prime}(\{0\})=\{0\}\ \ \ \text{and}\ \ \ \mathcal{O}_m=\tau_{G^\prime}(V^0\backslash\{0\}).
    $$
    There remains to show $\mathcal{O}_m$ is a minimal nilpotent orbit in $\mathfrak{u}(V)$. We may fix an $E$-basis $\{e_1,\ldots,e_n\}$ for $V$ such that $\langle v,v^\prime \rangle = \bar{v}^TXv^\prime$ with an anti-diagonal matrix $X$. In this case, we have $e_1\in V_0\backslash \{0\}$ and
    $$
    \tau_{G^\prime}(e_1)=\left(\begin{array}{cccc}
    0 & \cdots & 0 & *\\
    \vdots & \ddots & \vdots & 0\\
    0 & \ldots & 0 & 0
    \end{array}\right).
    $$
    This gives us $\mathcal{O}_m$ is a minimal nilpotent orbit in $\mathfrak{u}(V)$. By Proposition \ref{4.4}, it follows that $WF(\omega_{V,\psi,\mu,\chi})=\overline{\mathcal{O}_m}$. By \cite{MW87} and \cite[Theorem 1.1]{GZ14}, we have 
    $$
    c_{\theta_{\omega_{V,\psi,\mu,\chi},\mathcal{O}_m}}(1)
    =\dim \text{Wh}_{\mathcal{O}_m}(\omega_{V,\psi,\mu,\chi})=\dim \text{Wh}_{0}(\chi)=1.
    $$
\end{proof}

\subsection{Spherical variety structure and some estimates}\label{sec4.4}

Let $\theta$ be an involution of $G$ defined over $F$ such that $H_{V}=G^{\theta}$.
We choose a norm on $Z_G\backslash G$ and denote it by $\bar{\sigma}:=\sigma_{Z_{G}\backslash G}$. Let $T\subset G$
be a (not necessarily maximal) torus. We say $T$ is $\theta$-split if $\theta\left(t\right)=t^{-1}$ for all $t\in T$, and we say it is $\left(\theta,F\right)$-split if it is $\theta$-split and split as a torus over $F$. 

Let $A_{j}$, where $j\in J$, be representatives of the $H_{V}\left(F\right)$-conjugacy classes of maximal $\left(\theta,F\right)$-split tori of $G$. There are finitely many of them and by \cite{BO07,DS11}, one has a weak Cartan decomposition, i.e. there exists a compact subset ${\mathcal{K}}_{G}\subset G\left(F\right)$ such that 
$$
G\left(F\right)=\underset{j\in J}{\bigcup}H_{V}\left(F\right)A_{j}\left(F\right){\mathcal{K}}_{G}.
$$
Let $X=HZ_{G}\backslash G$. Let $C\subset G\left(F\right)$ be a
compact subset with nonempty interior and set $\Xi_{C}^{X}\left(x\right)=\text{vol}_{X}\left(xC\right)^{-1/2}$, for all $x\in X(F)$, where $\text{vol}_{X}$ refers to a $G\left(F\right)$-invariant
measure on $X(F)$. If $C^{\prime}\subset G\left(F\right)$ is an another
compact subset with nonempty interior, then $\Xi_{C}^{X}\sim\Xi_{C^{\prime}}^{X}$.
Hence, we can fix a choice of $C$ and denote $\Xi_{C}^{X}$ by $\Xi^{X}$. We prove some estimates for later uses.
\begin{proposition}\label{4.6}
$ $
\begin{enumerate}
\item For every compact subset ${\mathcal{K}}\subset G\left(F\right)$, we have
the following equivalences 
$$
\Xi^{X}\left(xk\right)\sim\Xi^{X}\left(x\right)\text{ and }\sigma_{X}\left(xk\right)\sim\sigma_{X}\left(x\right)
$$
for all $x\in Z_{G}\left(F\right)H\left(F\right)\backslash G\left(F\right)$
and $k\in{\mathcal{K}}$.
\item Let $A_{0}$ be a $\left(\theta,F\right)$-split subtorus of $G$.
Then there exists $d>0$ such that 
$$
\Xi^{G}\left(a\right)\bar{\sigma}\left(a\right)^{-d}\ll\Xi^{X}\left(a\right)\ll\Xi^{G}\left(a\right)
\text{ and }
\sigma_{X}\left(a\right)\sim\bar{\sigma}\left(a\right),\ \forall a\in A_{0}\left(F\right).
$$
\item There exists $d>0$ such that the integral 
$$
\int_{Z_{G}\left(F\right)H\left(F\right)\backslash G\left(F\right)}\Xi^{X}\left(x\right)^{2}\sigma_{X}\left(x\right)^{-d}dx
$$
is absolutely convergent.
\item For all $d>0$, there exists $d^{\prime}>0$ such that 
$$
\int_{Z_{H}\left(F\right)\backslash H\left(F\right)}\Xi^{G}\left(hx\right)\bar{\sigma}\left(hx\right)^{-d^{\prime}}dh\ll\Xi^{X}\left(x\right)\sigma_{X}\left(x\right)^{-d}
$$
for all $x\in X$.
\item For all $d>0$, there exists $d^{\prime}>0$ such that 
$$
\int_{Z_{G}\left(F\right)H\left(F\right)\backslash G\left(F\right)}{\bf 1}_{\sigma_{X}\leq c}\left(x\right)\Xi^{X}\left(x\right)^{2}\sigma_{X}\left(x\right)^{d}dx\ll c^{d^{\prime}}
$$
for all $c\geq1$.
\item For any $\phi,\phi^\prime \in \omega_{V,\psi,\mu}$, there exists $\epsilon>0$ such that 
$$
\int_{Z_H(F)\backslash H(F)}\Xi^G(h)e^{\epsilon \sigma(h)}\langle \phi,\omega_{V,\psi,\mu}(h)\phi^\prime\rangle dh
$$
is absolutely convergent.
\item Let $P_{\min}=M_{\min} U_{\min}$ be a good minimal parabolic subgroup of $G$. For any $\phi,\phi^\prime \in \omega_{V,\psi,\mu}$, there exists $\epsilon >0$ such that the following integral 
$$
\int_{H(F)}\Xi^G(hm)e^{\epsilon \sigma(h)}\langle \phi,\omega_{V,\psi,\mu}(h)\phi^\prime\rangle dh
$$
is absolutely convergent for all $m\in M_{\min}(F)$ and there exists $d>0$ such that the above integral is essentially bounded above by $\delta_{P_{\min}}(m)^{-1/2}\sigma(m)^d$, for all $m\in M_{\min}(F)$.
\item Let $\phi,\phi^{\prime}\in\omega_{V,\psi,\mu}$ and $d>0$. Then the
integral 
$$
I_{\phi,\phi^{\prime},d}\left(c,x\right)=\underset{Z_{H}\left(F\right)\backslash H\left(F\right)}{\iint}{\bf 1}_{\sigma\geq c}\left(h^{\prime}\right)\Xi^{G}\left(hx\right)\Xi^{G}\left(h^{\prime}hx\right)\sigma\left(hx\right)^{d}\sigma\left(h^{\prime}hx\right)^{d}
\left\langle \phi,\omega_{V,\psi,\mu}\left(h^\prime\right)\phi^{\prime}\right\rangle dh^{\prime}dh$$
is absolutely convergent for all $x\in Z_{G}\left(F\right)H\left(F\right)\backslash G\left(F\right)$
and $c\geq1$. Moreover, there exists $\epsilon>0$ and $d^{\prime}>0$
such that 
$$
I_{\phi,\phi^{\prime},d}\left(c,x\right)\ll\Xi^{X}\left(x\right)^{2}\sigma_{X}\left(x\right)^{d^{\prime}}e^{-\epsilon c}
$$
for all $x\in Z_{G}\left(F\right)H\left(F\right)\backslash G\left(F\right)$
and $c\geq1$.
\end{enumerate}
\end{proposition}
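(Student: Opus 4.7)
The plan is to split the eight statements into two groups. Parts (1)--(5) are purely geometric estimates about the spherical variety $X = HZ_{G}\backslash G$, for which I would adapt the arguments used by Beuzart-Plessis for Bessel models in \cite{BP20}. Parts (6)--(8) are representation-theoretic and, in addition to (1)--(5), rely on the sharp decay of matrix coefficients of the Weil representation obtained from Corollary \ref{4.3}.

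For (1), I would unfold $\Xi_{C}^{X}(x) = \text{vol}_{X}(xC)^{-1/2}$, absorb the compact set $\mathcal{K}$ into a slightly enlarged compact $C'$, and use the equivalence-class freedom in the choice of $C$; the corresponding statement for $\sigma_{X}$ is immediate from the definition of log-norms. Part (2) is the torus comparison: restricted to a $(\theta,F)$-split torus $A_{0}$, the quotient map $G \to X$ has fibres of polynomial volume growth in $\bar{\sigma}$, so $\Xi^{X}(a)$ differs from $\Xi^{G}(a)$ by at most a polynomial factor, and $\sigma_{X}(a) \sim \bar{\sigma}(a)$ is standard. Parts (3) and (5) then follow by reducing via the weak Cartan decomposition $G(F) = \bigcup_{j} H(F) A_{j}(F) \mathcal{K}_{G}$ to integrals over the positive Weyl chambers of the $A_{j}$ and applying the standard integrability estimate for $\Xi^{G}$ against the modulus character. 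Part (4) is the trickiest of this block: I would start from the Harish-Chandra integral representation of $\Xi^{G}$, use the Iwasawa decomposition on $H$ together with the inequality $\sigma_{X}(x) \ll \sigma(hx) + \sigma(h)$, and majorize the $H$-integral of $\Xi^{G}(hx)$ with polynomial weight by $\Xi^{X}(x)\sigma_{X}(x)^{-d}$.

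For (6)--(8), the key new input is matrix-coefficient decay. By Corollary \ref{4.3}, each component $\omega_{V,\psi,\mu,\chi}$ has rate of growth $\gamma = (n-2)/(n-1) < 1$, so after restricting to $K\cap H$-finite vectors, which only involve finitely many central characters, there exists $\epsilon > 0$ with $|\langle \phi, \omega_{V,\psi,\mu}(h)\phi'\rangle| \ll \Xi^{H}(h)^{1+2\epsilon}$. Combined with the standard estimate $\Xi^{H}(h) \ll \Xi^{G}(h)\sigma(h)^{d_{0}}$ for the spherical subgroup $H \subset G$, this yields (6). For (7), I would decompose $h$ according to the Iwasawa decomposition of $H$ and use the matrix-coefficient decay to bound the unipotent integral uniformly in $m \in M_{\min}(F)$; the factor $\delta_{P_{\min}}(m)^{-1/2} \sigma(m)^{d}$ arises from the corresponding change of variables. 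For (8), I would apply the matrix-coefficient bound in the $h'$-variable to produce the factor $e^{-\epsilon c}$ after integrating against $\mathbf{1}_{\sigma \geq c}$, and then invoke (4) to handle the $h$-integration, obtaining the desired bound in terms of $\Xi^{X}$ and $\sigma_{X}$.

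The main obstacle I expect lies in part (4) and in the uniform matrix-coefficient estimates required for (6)--(8). For (4), one must control the interaction between the Harish-Chandra function on $G$ and the $X$-norm along the $(\theta,F)$-split tori with sufficient precision, which requires a detailed analysis of the open $(H,P_{\min})$-orbits in $G$ and of how $\bar{\sigma}$ and $\sigma_{X}$ compare on each orbit. For (6)--(8), the difficulty is that the rate-of-growth bound of Corollary \ref{4.3} is formulated per central-character component, whereas an arbitrary smooth vector $\phi \in \omega_{V,\psi,\mu}$ decomposes into infinitely many such components; one must therefore verify that the bound $|\langle \phi, \omega_{V,\psi,\mu}(h)\phi'\rangle| \ll \Xi^{H}(h)^{1+2\epsilon}$ holds uniformly after a suitable finiteness reduction, typically by passing to $K\cap H$-finite vectors and invoking density together with the continuity of matrix coefficients in the Schwartz topology on $\omega_{V,\psi,\mu}$.
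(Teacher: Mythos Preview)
Your plan for parts (1)--(5) is essentially what the paper does: it cites \cite[Proposition 2.3.1]{BP18} for (1)--(4) and points to \cite[Proposition 6.7.1(iv)]{BP20} for (5), so your sketch there is fine.

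The problem is in parts (6)--(8). You propose to deduce a matrix-coefficient bound $|\langle \phi,\omega_{V,\psi,\mu}(h)\phi'\rangle|\ll \Xi^{H}(h)^{1+2\epsilon}$ from Corollary~\ref{4.3}, but that corollary is a bound on the \emph{character distribution} $\Theta_{\omega_{V,\psi,\mu,\chi}}$, namely $|\Theta(h)|\ll |D^{H}(h)|^{-1/2}\Xi^{H}(h)^{-\gamma}\sigma(h)^{d}$ with $\gamma=(n-2)/(n-1)$. A character growth rate $\gamma<1$ does not, by any standard argument, imply pointwise decay of individual matrix coefficients faster than $\Xi^{H}$; the two notions are related only indirectly (through integrability exponents, wave-front sets, etc.), and you give no mechanism to pass from one to the other. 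So as written, your route to (6) has a genuine gap, and since (7) and (8) are built on (6), they inherit it.

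The paper takes a completely different, and much more elementary, route for (6): it uses the Cartan decomposition $H(F)=KA_{0}^{+}K$ together with the explicit Schr\"odinger model of the Weil representation to get the concrete bound $|\langle \phi,\omega_{V,\psi,\mu}(a)\phi'\rangle|\ll |a_{1}\cdots a_{r}|^{1/2}$ on the split torus (as in \cite[Appendix D]{Xue16}). Combining this with $\nu(a)\sim\delta_{P_{0}}(a)^{-1}$ and $\delta_{P_{1}}(a)\ll\delta_{P_{0}}(a)^{2}$ reduces the integral to $\int_{|x_{1}|\leq\cdots\leq|x_{r}|\leq1}|x_{1}\cdots x_{r}|^{1/2-\epsilon}\,dx$, which converges for small $\epsilon$. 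Part (7) is then reduced to (6) by bounding $\Xi^{G}(mh)$ via the Iwasawa decomposition of $G$ (not $H$), and part (8) is handled by the weak Cartan decomposition, the change of variable $h'\mapsto h'h^{-1}$, an application of (7), and the doubling principle $\int_{K}\Xi^{G}(hka)\,dk=\Xi^{G}(h)\Xi^{G}(a)$. None of this uses Corollary~\ref{4.3}; the explicit model does all the work.
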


\begin{proof}
Part 1, 2, 3, 4 are established in \cite[Proposition 2.3.1]{BP18}.
The proof of part 5 is exactly the same as for \cite[Proposition 6.7.1 (iv)]{BP20}.

We prove part 6. We mimic the proof of Proposition 1.1.1(1) in \cite{Xue16}. Let $A_{0}$
be a maximal split subtorus of $H$ and $M_{0}$ be its centralizer
in $H$. We fix the standard minimal parabolic subgroup $P_{0}$ of
$H$ with the Levi decomposition $P_{0}=M_{0}N_{0}$. Let $\Delta$
be the set of simple roots with respect to $\left(P_{0},A_{0}\right)$.
Let $\delta_{P_{0}}$ be the modulus character of $P_{0}$. Let 
$$
A_{0}^{+}=\left\{ a\in A_{0}\mid\left|\alpha\left(a\right)\right|\leq1\text{ for all }\alpha\in\Delta\right\}
$$
$$
=\left\{ \text{diag}\left[a_{1},\ldots,a_{r},1,\ldots,1,a_{r}^{\tau,-1},\ldots,a_{1}^{\tau,-1}\right]\mid\ \left|a_{1}\right|\leq\ldots\leq\left|a_{r}\right|\leq1\right\} ,
$$
where $r$ is the split rank of $H\left(F\right)$. We fix a special
maximal compact subgroup $K$ of $H$. We have the Cartan decomposition
$H\left(F\right)=KA_{0}^{+}\left(F\right)K$. We recall some basic
estimates in Appendix B in \cite{Xue16}. One has 
$$
\int_{H\left(F\right)}f\left(g\right)dg=\int_{A_{0}^{+}\left(F\right)}\nu\left(m\right)\underset{K\times K}{\iint}f\left(k_{1}mk_{2}\right)dk_{1}dk_{2}dm,
$$
where $\nu\left(m\right)$ is a positive-valued function on $A_{0}^{+}$
such that there is a constant $A>0$ with 
$$
A^{-1}\delta_{P_{0}}\left(m\right)^{-1}\leq\nu\left(m\right)\leq A\delta_{P_{0}}\left(m\right)^{-1}.
$$
We also fix a minimal parabolic subgroup $P_{1}=M_{1}N_{1}$ for $G$
such that $P_{0}\subset P_{1}$ and $M_{0}\subset M_{1}$. There exists
a constant $d$ such that $\delta_{P_{1}}\left(m\right)^{\frac{1}{2}}\ll\Xi^{G}\left(m\right)\ll\delta_{P_{1}}\left(m\right)^{\frac{1}{2}}\sigma\left(m\right)^{d}$. Since $\delta_{P_{1}}\left(a\right)\ll\delta_{P_{0}}\left(a\right)^{2}$, for any $a\in A_{0}^{+}\left(F\right)$, it suffices to show that there exists $\epsilon >0$ such that 
$$
\int_{A_{0}^{+}\left(F\right)} \sigma\left(a\right)^{d}e^{\epsilon \sigma(a)}\left\langle \phi,\omega_{V,\psi,\mu}\left(a\right)\phi^\prime\right\rangle da\text{ is absolutely convergent.}
$$
Moreover, we have $\sigma\left(a\right)^{d}e^{\epsilon \sigma(a)} \ll e^{\epsilon^\prime \sigma(a)}$, for any $\epsilon^\prime > \epsilon$. Hence, we only need to show that there exists $\epsilon >0$ such that 
$$
\int_{A_{0}^{+}\left(F\right)} e^{\epsilon \sigma(a)}\left\langle \phi,\omega_{V,\psi,\mu}\left(a\right)\phi^\prime\right\rangle da\text{ is absolutely convergent.}
$$
Let $a=\text{diag}\left[a_{1},\ldots,a_{r},1,\ldots,1,a_{r}^{\tau,-1},\ldots,a_{1}^{\tau,-1}\right]$,
where $\left|a_{1}\right|\leq\ldots\leq\left|a_{r}\right|\leq1$.
As in \cite[Appendix D]{Xue16}, we have $\left|\left\langle \phi,\omega_{V,\psi,\mu}\left(a\right)\phi^\prime\right\rangle \right|\ll_{\phi,\phi^\prime}\left|a_{1}\ldots a_{r}\right|^{\frac{1}{2}}$. Therefore
$$
\int_{A_{0}^{+}\left(F\right)} e^{\epsilon \sigma(a)}\left|\left\langle \phi,\omega_{V,\psi,\mu}\left(a\right)\phi^\prime\right\rangle\right| da \ll
\int_{\left|x_{1}\right|\leq\ldots\leq\left|x_{r}\right|\leq1}\left|x_{1}\ldots x_{r}\right|^{\frac{1}{2}-\epsilon}dx,
$$
and the integral on the right hand side converges when $\epsilon$ is sufficiently small. This gives us the absolute convergence in part 6.

We now prove part 7. Since $\Xi^G(g) \sim \Xi^G(g^{-1})$ and $\sigma(h) \sim \sigma(h^{-1})$, it is equivalent to show the following

For sufficiently small $\epsilon >0$, the integral
$$
\int_{H(F)}\Xi^G(mh)e^{\epsilon \sigma(h)}\langle \phi,\omega_{V,\psi,\mu}(h)\phi^\prime\rangle dh
$$
is absolutely convergent for all $m\in M_{\min}(F)$ and there exists $d>0$ such that the above integral is essentially bounded above by
$\delta_{P_{\min}}(m)^{1/2}\sigma(m)^d$ for all $m\in M_{\min}(F)$.
Let $K$ be the special maximal compact subgroup of $G(F)$. We fix a map $m_{P_{\min}}:G(F)\rightarrow M_{\min}(F)$ such that $g\in m_{P_{\min}}(g)U_{\min}(F)K$ for all $g\in G(F)$. There exists $d>0$ such that $\Xi^G(g)\ll \delta_{P_{\min}}(m_{P_{\min}}(g))^{1/2}\sigma(m_{P_{\min}}(g))^d$. This gives us
$$
\int_{H(F)}\Xi^G(mh)e^{\epsilon \sigma(h)}\left|\langle \phi,\omega_{V,\psi,\mu}(h)\phi^\prime\rangle\right| dh
$$
$$
\ll \delta_{P_{\min}}(m)^{1/2}\sigma(m)^d \int_{H(F)}\delta_{P_{\min}}(m_{P_{\min}}(h))^{1/2}\sigma(m_{P_{\min}}(h))^de^{\epsilon \sigma(h)}\left|\langle \phi,\omega_{V,\psi,\mu}(h)\phi^\prime\rangle\right| dh.
$$
Since $\sigma(m_{P_{\min}}(h))^de^{\epsilon \sigma(h)} \ll e^{\epsilon^\prime \sigma(h)}$, for any $\epsilon^\prime > \epsilon$, it suffices to show that when $\epsilon>0$ is sufficiently small, the integral
$$
\int_{H(F)}\delta_{P_{\min}}(m_{P_{\min}}(h))^{1/2}e^{\epsilon \sigma(h)}\left|\langle \phi,\omega_{V,\psi,\mu}(h)\phi^\prime\rangle\right| dh
$$
is absolutely convergent. This essentially follows from the proof of part 6.

There remains to show part 8. Observe ${\bf 1}_{\sigma\geq c}\left(h^{\prime}\right)\sigma\left(h^{\prime}\right)^{d}\ll e^{\epsilon\sigma\left(h^{\prime}\right)}e^{-\epsilon c/2}$.
By part 1 and 2 and the weak Cartan decomposition mentioned at the beginning of this subsection, it suffices to show there exists $d^\prime>0$ and $\epsilon>0$ such that 
$$
\underset{Z_{H}\left(F\right)\backslash H\left(F\right)}{\iint}\Xi^{G}\left(ha\right)\Xi^{G}\left(h^{\prime}ha\right) e^{\epsilon\sigma\left(h\right)}e^{\epsilon\sigma\left(h^{\prime}\right)}\left|\left\langle \phi,\omega_{V,\psi,\mu}\left(h^\prime\right)\phi^{\prime}\right\rangle\right| dh^{\prime}dh\ll\Xi^{G}\left(a\right)^{2}\sigma(a)^{d^\prime},
$$
for any $a\in A_{j}^{+}$. By changing of variable $h^{\prime}\mapsto h^{\prime}h^{-1}$,
we need to show 
$$
\underset{Z_{H}\left(F\right)\backslash H\left(F\right)}{\iint}\Xi^{G}\left(ha\right)\Xi^{G}\left(h^{\prime}a\right)e^{\epsilon\sigma\left(h\right)}e^{\epsilon\sigma\left(h^{\prime}\right)}\left|\left\langle \phi,\omega_{V,\psi,\mu}\left(h^\prime h^{-1}\right)\phi^{\prime}\right\rangle\right| dh^{\prime}dh\ll\Xi^{G}\left(a\right)^{2}\sigma(a)^{d^\prime},
$$
for any $a\in A_{j}^{+}$. By part 7, there exists $d^\prime$ such that 
$$
\int_{Z_{H}\left(F\right)\backslash H\left(F\right)}\Xi^{G}\left(h^{\prime}a\right)e^{\epsilon\sigma\left(h^{\prime}\right)}\left|\left\langle \phi,\omega_{V,\psi,\mu}\left(h^\prime h^{-1}\right)\phi^{\prime}\right\rangle\right| dh^{\prime} \ll \Xi^G(a)\sigma(a)^{d^\prime},
$$
noting that the above inequality only depends on $||\phi||$ and $||\phi^\prime||$, and hence holds for all $h$. There remains to show 
$$
\int_{Z_{H}\left(F\right)\backslash H\left(F\right)}\Xi^{G}\left(ha\right)e^{\epsilon\sigma\left(h\right)}dh \ll \Xi^G(a).
$$
Let $K$ be a maximal compact subgroup
of $G\left(F\right)$ by which $\Xi^{G}$ is right invariant. By the
doubling principle in \cite[Proposition 1.5.1(vi)]{BP20}, one has 
$$
\int_{Z_{H}\left(F\right)\backslash H\left(F\right)}\Xi^{G}\left(ha\right)e^{\epsilon\sigma\left(h\right)}dh\ll\int_{Z_{H}\left(F\right)\backslash H\left(F\right)}\int_K\Xi^{G}\left(hka\right)dke^{\epsilon\sigma\left(h\right)}dh^{\prime}
$$
$$
=\Xi^{G}\left(a\right)\int_{Z_{H}\left(F\right)\backslash H\left(F\right)}\Xi^{G}\left(h\right)e^{\epsilon\sigma\left(h\right)}dh\ll\Xi^{G}\left(a\right),
$$
noting that $\int_{Z_{H}\left(F\right)\backslash H\left(F\right)}\Xi^{G}\left(h\right)e^{\epsilon\sigma\left(h\right)}dh$
is absolutely convergent if $\epsilon$ is sufficiently small.
\end{proof}

\section{Explicit tempered intertwinings}\label{sec5}

From now to section \ref{sec7}, we prove part (i) of Theorem \ref{thm1.2} by induction. In \cite{GGP23},
they have shown the conjecture for $n=1,2$. We now consider $n\geq3$.
\textbf{The proof under the induction hypothesis only finishes at the end
of section \ref{sec7}.}

Let $\pi\in\text{Temp}\left(G,\chi^{-1}\right)$. Let $T\in\text{End}\left(\pi\right)^\infty$
and $\phi_{1},\phi_{2}\in\omega_{V,\psi,\mu,\chi}$ be Schwartz functions.
We define a linear form 
$$
{\mathcal{L}}_{\pi}^{\phi_{1},\phi_{2}}\left(T\right)=\int_{Z_{H}\left(F\right)\backslash H\left(F\right)}\text{Tr}\left(\pi\left(h^{-1}\right)T\right)\left\langle \phi_{1},\omega_{V,\psi,\mu,\chi}\left(h\right)\phi_{2}\right\rangle dh.
$$
In this section, we prove the following theorem.
\begin{theorem}\label{5.1}
For any $\pi\in\text{Temp}\left(G,\chi^{-1}\right)$, 
$$
m_{V}\left(\bar{\pi}\right)\neq0\text{ if and only if }\text{there exists }\phi_{1},\phi_{2}\in\omega_{V,\psi,\mu,\chi}\text{ such that }{\mathcal{L}}_{\pi}^{\phi_{1},\phi_{2}}\not\equiv0.
$$
\end{theorem}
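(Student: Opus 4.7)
The plan is to follow the strategy of Beuzart-Plessis for Bessel models (cf.\ \cite[Theorem 8.2.1]{BP20}), with the one-dimensional character $\xi$ replaced by matrix coefficients of the Weil representation. The argument splits into (a) convergence, (b) the direction $(\Leftarrow)$, and (c) the harder direction $(\Rightarrow)$.

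\textbf{Step 1: Absolute convergence.} Since $\pi$ is tempered and $T\in\mathrm{End}(\pi)^{\infty}$, the standard estimate gives $|\mathrm{Tr}(\pi(h^{-1})T)|\ll \Xi^{G}(h)\sigma(h)^{d}$ for some $d\geq 0$. Combined with the integrability statements in Proposition \ref{4.6}(6)-(7), which provide exponential decay of the matrix coefficient $h\mapsto\langle\phi_{1},\omega_{V,\psi,\mu,\chi}(h)\phi_{2}\rangle$ against $\Xi^{G}$ on $Z_{H}(F)\backslash H(F)$, the integrand defining $\mathcal{L}_{\pi}^{\phi_{1},\phi_{2}}(T)$ is absolutely integrable.

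\textbf{Step 2: Direction $(\Leftarrow)$.} Suppose $\mathcal{L}_{\pi}^{\phi_{1},\phi_{2}}(T_{0})\neq 0$ for some choice. Specializing to smooth rank-one operators $T=v\otimes v^{*}$ with $v\in\pi^{\infty}$ and $v^{*}\in\bar\pi^{\infty}$, the integral takes the matrix-coefficient form
$$\int_{Z_{H}(F)\backslash H(F)}\langle\pi(h^{-1})v,v^{*}\rangle\,\langle\phi_{1},\omega_{V,\psi,\mu,\chi}(h)\phi_{2}\rangle\,dh.$$
A change of variables $h\mapsto hh_{0}$ for $h_{0}\in H(F)$ together with unitarity of $\omega_{V,\psi,\mu,\chi}$ shows that $(v^{*},\phi_{2})\mapsto\mathcal{L}_{\pi}^{\phi_{1},\phi_{2}}(v\otimes v^{*})$ descends to an $H(F)$-equivariant nonzero pairing, hence produces a nonzero element of $\mathrm{Hom}_{H}(\bar\pi,\omega_{V,\psi,\mu,\chi})$, so $m_{V}(\bar\pi)\neq 0$.

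\textbf{Step 3: Direction $(\Rightarrow)$.} Assume a nonzero $\ell\in\mathrm{Hom}_{H}(\bar\pi,\omega_{V,\psi,\mu,\chi})$ exists. The goal is to produce data $(T,\phi_{1},\phi_{2})$ making $\mathcal{L}_{\pi}^{\phi_{1},\phi_{2}}(T)\neq 0$. The strategy is to introduce, for $T=v\otimes v^{*}$, the $H$-truncated operator
$$\mathcal{P}^{N}_{v^{*},\phi_{2}}=\int_{Z_{H}(F)\backslash H(F)}\kappa_{N}(h)\,\bar\pi(h)v^{*}\otimes\omega_{V,\psi,\mu,\chi}(h)\phi_{2}\,dh$$
for an appropriate truncation $\kappa_{N}$, then pass to the limit $N\to\infty$ using the convergence of Step 1. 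Applying $\ell$ in the $\bar\pi$-factor and pairing with $\phi_{1}$ in the $\omega$-factor transforms the limit of $\langle\phi_{1},(\mathrm{id}\otimes\ell)\mathcal{P}^{N}_{v^{*},\phi_{2}}\rangle$ into $\mathcal{L}_{\pi}^{\phi_{1},\phi_{2}}(v\otimes v^{*})\cdot\langle\phi_{1},\ell(v^{*})\rangle$ up to a non-vanishing scalar, after a Schur-type averaging. Picking $v^{*}$ with $\ell(v^{*})\neq 0$ and $\phi_{1}$ with $\langle\phi_{1},\ell(v^{*})\rangle\neq 0$ yields the non-vanishing.

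\textbf{Main obstacle.} The delicate point is Step 3: passing from the existence of an abstract $\ell$ to the non-vanishing of a concrete integral. The truncation-and-limit argument must be justified using the sharp estimates of Proposition \ref{4.6}(5)-(8) on the spherical variety $X=H Z_{G}\backslash G$, where the exponential decay of Weil representation matrix coefficients (specific to the Fourier-Jacobi setting) replaces the triviality of $\xi$ on the unipotent radical in the Bessel case. This is the first place in the paper where the Weil representation plays an essential analytic role, and the estimates of Proposition \ref{4.6} are tailored precisely to make this argument go through.
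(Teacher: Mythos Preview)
Steps 1 and 2 are essentially correct and match the paper. The problem is Step 3: the claimed factorization
\[
\lim_{N}\ \langle\phi_{1},(\mathrm{id}\otimes\ell)\mathcal{P}^{N}_{v^{*},\phi_{2}}\rangle \;\leadsto\; \mathcal{L}_{\pi}^{\phi_{1},\phi_{2}}(v\otimes v^{*})\cdot\langle\phi_{1},\ell(v^{*})\rangle
\]
is not a real argument. There is no Schur orthogonality available here: you do not yet know multiplicity one for $\pi$ itself, and without it the $H$-average of a rank-one tensor cannot be split into a product involving $\ell$. Concretely, for fixed $(v,\phi_{1})$ the assignment $(v^{*},\phi_{2})\mapsto \mathcal{L}_{\pi}^{\phi_{1},\phi_{2}}(v,v^{*})$ is an element of $\mathrm{Hom}_{H}(\bar\pi,\omega_{V,\psi,\mu,\chi})$, and the content of $(\Rightarrow)$ is precisely that this map $(v,\phi_{1})\mapsto[\text{functional}]$ hits something nonzero. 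Nothing in your averaging produces that.

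The paper's route is genuinely different and relies on two ingredients you omit. First, one proves (Lemma~\ref{5.7}) that for $f\in\mathcal{C}(Z_{G}\backslash G,\chi)$ the integral $\int_{Z_{G}\backslash G} f(g)\langle\phi,\ell(\pi_{0}(g)e)\rangle\,dg$ converges absolutely, and computes it in two ways: directly as $\langle\phi,\ell(\pi_{0}(\bar f)e)\rangle$, and by decomposing $G=H\cdot(H\backslash G)$ and applying the Plancherel identity of Lemma~\ref{5.3}(iii). This yields the key equation~(\ref{eqn4.1}),
\[
\langle\phi,\ell(T_{\bar\pi_{0}}e)\rangle=\int_{Z_{G}H\backslash G}\int_{\mathcal{X}_{\mathrm{temp}}(G,\chi^{-1})}\mathcal{L}_{\pi}^{\phi,\ell(\pi_{0}(x)e)}\bigl(T_{\pi}\pi(x^{-1})\bigr)\mu(\pi)\,d\pi\,dx,
\]
so the nonvanishing of the left side forces some $\mathcal{L}_{\bar\pi_{0,\rho}}^{\phi_{2},\ell(\pi_{0}(x_{0})e)}\neq 0$ for some $\rho\in i\mathcal{A}_{M}^{*}$ in the connected component of $\bar\pi_{0}$. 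Second, and this is the point you miss entirely, one needs Proposition~\ref{5.4} to pass from $\bar\pi_{0,\rho}$ back to $\bar\pi_{0}$: nonvanishing of $\mathcal{L}^{\phi_{1},\phi_{2}}$ is invariant along the unramified orbit. The proof of Proposition~\ref{5.4} unfolds along good $H$-orbits on $G/P$ and uses the \emph{induction hypothesis} (multiplicity one in lower rank) to isolate a single orbit contribution. Without this step the Plancherel argument only gives nonvanishing somewhere on the component, not at $\pi$ itself.
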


\subsection{Absolute convergence of the linear form ${\mathcal{L}}_{\pi}^{\phi_{1},\phi_{2}}$}\label{sec5.1}

Let $\pi$ be a tempered representation of $G\left(F\right)$ whose
central character is $\chi^{-1}$. For all $T\in\text{End}\left(\pi\right)$,
the function $g\in G\left(F\right)\mapsto\text{Trace}\left(\pi\left(g^{-1}\right)T\right)$ is in the weak Harish-Chandra Schwartz space ${\mathcal{C}}^{w}\left(Z_{G}\left(F\right)\backslash G\left(F\right),\chi\right)$ (defined in Section \ref{sec2.1}).
In order to prove ${\mathcal{L}}_{\pi}^{\phi_{1},\phi_{2}}$ is absolutely
convergent, we consider the following lemma.
\begin{lemma}\label{5.2}
One has 
$$
\int_{Z_{H}\left(F\right)\backslash H\left(F\right)}f\left(h\right)\left\langle \phi_{1},\omega_{V,\psi,\mu,\chi}\left(h\right)\phi_{2}\right\rangle dh\text{ is absolutely convergent,}
$$
for all $f\in{\mathcal{C}}^{w}\left(Z_{G}\left(F\right)\backslash G\left(F\right),\chi\right)$.
Moreover, given $\phi_{1}$ and $\phi_{2}$, the above integral defines
a continuous linear form on $\mathcal{C}^{w}\left(Z_{G}\left(F\right)\backslash G\left(F\right),\chi\right)$. 
\end{lemma}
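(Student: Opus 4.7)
The plan is to reduce the claim to the exponentially weighted integrability statement already furnished by Proposition \ref{4.6}(6). Recall that the weak Harish-Chandra Schwartz space $\mathcal{C}^{w}\left(Z_{G}\left(F\right)\backslash G\left(F\right),\chi\right)$ is defined by the family of semi-norms
$$
p_{d}\left(f\right)=\underset{g\in G\left(F\right)}{\sup}\,\Xi^{G}\left(g\right)^{-1}\sigma\left(g\right)^{-d}\left|f\left(g\right)\right|,
$$
so each $f$ in this space satisfies $\left|f\left(g\right)\right|\leq p_{d}(f)\,\Xi^{G}\left(g\right)\sigma\left(g\right)^{d}$ for some $d>0$, and the topology is given by these semi-norms.

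The first step is to observe the elementary inequality: for every $\epsilon>0$ and $d\geq 0$, there is a constant $C_{\epsilon,d}>0$ such that $\sigma\left(h\right)^{d}\leq C_{\epsilon,d}\,e^{\epsilon\sigma\left(h\right)}$ for all $h\in H\left(F\right)$. Combined with the bound on $|f|$, this yields
$$
\left|f\left(h\right)\right|\ll p_{d}(f)\,\Xi^{G}\left(h\right)e^{\epsilon\sigma\left(h\right)}
$$
for any prescribed $\epsilon>0$.

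Next, by Proposition \ref{4.6}(6), applied to the Schwartz vectors $\phi_{1},\phi_{2}\in\omega_{V,\psi,\mu,\chi}$, there exists $\epsilon_{0}>0$ such that
$$
\int_{Z_{H}\left(F\right)\backslash H\left(F\right)}\Xi^{G}\left(h\right)e^{\epsilon_{0}\sigma\left(h\right)}\left|\left\langle \phi_{1},\omega_{V,\psi,\mu,\chi}\left(h\right)\phi_{2}\right\rangle \right|dh<\infty.
$$
Choosing $\epsilon=\epsilon_{0}$ in the previous step, the integrand in the lemma is dominated by $p_{d}(f)$ times an integrable function independent of $f$. This establishes absolute convergence simultaneously for all $f\in\mathcal{C}^{w}\left(Z_{G}\left(F\right)\backslash G\left(F\right),\chi\right)$.

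For the continuity statement, the same domination shows that the linear form is bounded in absolute value by a constant (depending only on $\phi_{1},\phi_{2}$, and the integral above) times $p_{d}(f)$. Since $p_{d}$ is one of the defining semi-norms of the weak Harish-Chandra Schwartz topology, this gives continuity and completes the proof. The only nontrivial input is Proposition \ref{4.6}(6), whose exponentially decaying majoration is strong enough to absorb any polynomial factor $\sigma\left(h\right)^{d}$ coming from the weak Schwartz condition.
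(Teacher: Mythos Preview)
Your proof is correct and follows essentially the same route as the paper. The only cosmetic difference is that the paper replays the Cartan-decomposition computation from the proof of Proposition~\ref{4.6}(6) with the polynomial weight $\sigma(h)^{d}$ directly (arriving at the convergent integral $\int_{|x_1|\le\cdots\le|x_r|\le 1}|x_1\cdots x_r|^{1/2}\sum_i(1-\log|x_i|)^{d}\,dx$), whereas you invoke the statement of Proposition~\ref{4.6}(6) as a black box and absorb the polynomial via $\sigma(h)^{d}\ll e^{\epsilon\sigma(h)}$; both arguments rest on the same matrix-coefficient estimate for the Weil representation.
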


\begin{proof}
The absolute convergence of this integral essentially follows from the proof of Proposition \ref{4.6}(vi), noting that here we have
$$
\int_{Z_{H}\left(F\right)\backslash H\left(F\right)}|f\left(h\right)||\left\langle \phi_{1},\omega_{V,\psi,\mu,\chi}\left(h\right)\phi_{2}\right\rangle| dh
\ll \int_{|x_1|\leq \ldots \leq |x_r|\leq 1}|x_1\ldots x_r|^{1/2}\sum_{i=1}^{r}(1-\log |x_i|)^d dx.
$$
The continuity argument is straightforward from the above estimate.
\end{proof}

\subsection{Some preparation}\label{sec5.2}

Let $\pi$ be a tempered irreducible representation of $G\left(F\right)$ whose
central character is $\chi^{-1}$. Recall that $\pi\otimes\bar{\pi}\cong\text{End}\left(\pi\right)^\infty$
as $G\left(F\right)\times G\left(F\right)$-representations via the map $e\otimes e^{\prime}\mapsto T_{e,e^{\prime}}:=\left(\cdot,e^{\prime}\right)e$.
We identify ${\mathcal{L}}_{\pi}^{\phi_{1},\phi_{2}}$ with the continuous
sesquilinear form on $\pi$ given by 
$$
{\mathcal{L}}_{\pi}^{\phi_{1},\phi_{2}}\left(e,e^{\prime}\right):={\mathcal{L}}_{\pi}^{\phi_{1},\phi_{2}}\left(T_{e,e^{\prime}}\right),\text{ for any }e,e^{\prime}\in\pi.
$$
If ${\mathcal{L}}_{\pi}^{\phi_{1},\phi_{2}}\not\equiv0$ for some $\phi_{1},\phi_{2}\in\omega_{V,\psi,\mu,\chi}$,
one has $m_{V}\left(\bar{\pi}\right)\neq0$. Thus, in order to prove
Theorem \ref{5.1}, it suffices to show the opposite direction.

We define a continuous linear map 
$$
\begin{array}{ccccc}
L_{\pi}^{\phi_{1},\phi_{2}} & : & \pi & \rightarrow & \bar{\pi}^*\\
 &  & e & \mapsto & \left(e^{\prime}\mapsto{\mathcal{L}}_{\pi}^{\phi_{1},\phi_{2}}\left(e,e^{\prime}\right)\right)
\end{array},
$$
Let $T\in\text{End}\left(\pi\right)^\infty$. By Theorem \ref{A.1}, we have $L_{\pi}^{\phi_{1},\phi_{2}}$
is of finite rank. We can define its traces and hence $\text{Trace}\left(TL_{\pi}^{\phi_{1},\phi_{2}}\right)=\text{Trace}\left(L_{\pi}^{\phi_{1},\phi_{2}}T\right)={\mathcal{L}}_{\pi}^{\phi_{1},\phi_{2}}\left(T\right)$. We prove some properties of the above linear forms.
\begin{lemma}\label{5.3}
We have the following
\begin{enumerate}
\item Let $\phi_{1},\phi_{2}\in\omega_{V,\psi,\mu,\chi}$. The maps 
$$
\pi\in{\mathcal{X}}_{\text{temp}}\left(G,\chi^{-1}\right)\mapsto L_{\pi}^{\phi_{1},\phi_{2}}\in\text{Hom}\left(\pi,\bar{\pi}\right)
$$
$$
\pi\in{\mathcal{X}}_{\text{temp}}\left(G,\chi^{-1}\right)\mapsto{\mathcal{L}}_{\pi}^{\phi_{1},\phi_{2}}\in\text{End}\left(\pi\right)^{*}
$$
are smooth in the following sense: Let $K$ be the special maximal
compact subgroup of $G\left(F\right)$. For every parabolic subgroup
$Q=LU_{Q}$ of $G$ and $\sigma\in\Pi_{2}\left(Z_{G}\backslash L,\chi^{-1}\right)$,
the maps 
$$
\lambda\in i{\mathcal{A}}_{L}^{*}\mapsto{\mathcal{L}}_{\pi_{\lambda}}^{\phi_{1},\phi_{2}}\in\text{End}\left(\pi_{\lambda}\right)^{*}\simeq\text{End}\left(\pi_{K}\right)^{*}
$$
$$
\lambda\in i{\mathcal{A}}_{L}^{*}\mapsto L_{\pi_{\lambda}}^{\phi_{1},\phi_{2}}\in\text{End}\left(\pi_{\lambda},\bar{\pi}_{\lambda}\right)\simeq\text{End}\left(\pi_{K},\bar{\pi}_{K}\right)
$$
are smooth, where we set $\pi_{\lambda}=i_{Q}^{G}\left(\sigma_{\lambda}\right)$
and $\pi_{K}=i_{Q\cap K}^{K}\left(\sigma\right)$.
\item Suppose $\pi\in\text{Temp}\left(G,\chi^{-1}\right)$.
If $S,T\in\text{End}\left(\pi\right)^\infty$, then $SL_{\pi}^{\phi_{1},\phi_{2}}T\in\text{End}\left(\pi\right)^\infty$.
If $m_{V}\left(\bar{\pi}\right)\leq1$, one has 
$$
{\mathcal{L}}_{\pi}^{\phi_{1},\phi_{2}}\left(SL_{\pi}^{\phi_{3},\phi_{4}}T\right)={\mathcal{L}}_{\pi}^{\phi_{1},\phi_{4}}\left(S\right){\mathcal{L}}_{\pi}^{\phi_{3},\phi_{2}}\left(T\right).
$$
\item Let $f\in{\mathcal{C}}\left(Z_{G}\left(F\right)\backslash G\left(F\right),\chi\right)$
and $\phi_{1},\phi_{2}\in\omega_{V,\psi,\mu,\chi}$. Then 
$$
\int_{Z_{H}\left(F\right)\backslash H\left(F\right)}f\left(h\right)\left\langle \phi_{1},\omega_{V,\psi,\mu,\chi}\left(h\right)\phi_{2}\right\rangle dh=\int_{{\mathcal{X}}_{\text{temp}}\left(G,\chi^{-1}\right)}{\mathcal{L}}_{\pi}^{\phi_{1},\phi_{2}}\left(\pi\left(f\right)\right)\mu\left(\pi\right)d\pi
$$
and both integrals are absolutely convergent.
\item Let $f\in{\mathcal{C}}\left(Z_{G}\left(F\right)\backslash G\left(F\right),\chi\right)$
and $f^{\prime}\in{\mathcal{C}}\left(Z_{G}\left(F\right)\backslash G\left(F\right),\chi^{-1}\right)$
and $\phi_{1},\phi_{2},\phi_{3},\phi_{4}\in\omega_{V,\psi,\mu,\chi}$.
Then we have the following equality 
$$
\int_{{\mathcal{X}}_{\text{temp}}\left(G,\chi^{-1}\right)}{\mathcal{L}}_{\pi}^{\phi_{1},\phi_{2}}\left(\pi\left(f\right)\right){\mathcal{L}}_{\pi}^{\phi_{3},\phi_{4}}\left(\pi\left(f^{\prime,\vee}\right)\right)\mu\left(\pi\right)d\pi
$$
$$
=\underset{Z_{H}\left(F\right)\backslash H\left(F\right)}{\iint}\int_{Z_{G}\left(F\right)\backslash G\left(F\right)}f\left(hgh^{\prime}\right)f^{\prime}\left(g\right)\left\langle \phi_{3},\omega_{V,\psi,\mu,\chi}\left(h^{\prime}\right)\phi_{2}\right\rangle \left\langle \phi_{1},\omega_{V,\psi,\mu,\chi}\left(h\right)\phi_{4}\right\rangle dgdh^{\prime}dh,
$$
where the first integral is absolutely convergent and the second integral
is convergent in that order but not necessarily as a triple integral.
\end{enumerate}
\end{lemma}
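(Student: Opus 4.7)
The plan is to follow the template of \cite[Proposition 8.1.1]{BP20} in the Bessel setting, with the Weil matrix coefficient $\langle\phi_1,\omega_{V,\psi,\mu,\chi}(h)\phi_2\rangle$ replacing the one-dimensional character of the Bessel subgroup throughout. For part (1), realize the induced representation $\pi_\lambda$ on the $\lambda$-independent $K$-space $\pi_K$; then the integrand $h\mapsto\text{Tr}(\pi_\lambda(h^{-1})T_\lambda)\langle\phi_1,\omega_{V,\psi,\mu,\chi}(h)\phi_2\rangle$ is jointly smooth in $\lambda$, and the convergence estimate underlying Lemma \ref{5.2} is uniform in $\lambda$ on compact subsets of $i\mathcal{A}_L^*$. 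Differentiation under the integral sign then yields the smoothness of both $\mathcal{L}_{\pi_\lambda}^{\phi_1,\phi_2}$ and $L_{\pi_\lambda}^{\phi_1,\phi_2}$.

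For part (2), I would first check that $L_\pi^{\phi_1,\phi_2}:\pi\to\bar\pi^*$ has finite rank: its image consists of $H$-equivariant functionals on $\pi$ twisted by $\omega_{V,\psi,\mu,\chi}$, whose dimension is bounded by the finite multiplicity $m_V(\bar\pi)$ furnished by Theorem \ref{A.1} in the Appendix. Once finite-rankness is known, $SL_\pi^{\phi_1,\phi_2}T$ is a finite-rank operator whose image lies in the smooth subspace of $\pi$ (because $S$ is a smooth endomorphism) and which factors through the smooth subspace (because $T$ is), hence lies in $\text{End}(\pi)^\infty$. The factorization identity under $m_V(\bar\pi)\leq 1$ follows from uniqueness of the $H$-invariant form on $\bar\pi\otimes\omega_{V,\psi,\mu,\chi}$: with $S,T$ fixed, both sides are $H$-invariant and sesquilinear in $(\phi_1,\phi_2,\phi_3,\phi_4)$ in the appropriate variables, and must therefore be proportional to $\mathcal{L}_\pi^{\phi_1,\phi_4}(S)\mathcal{L}_\pi^{\phi_3,\phi_2}(T)$, with the constant pinned down by specialization.

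Part (3) is then a direct application of the matricial Paley--Wiener isomorphism (Theorem \ref{2.2}), which expands $f(h)=\int_{\mathcal{X}_{\text{temp}}(G,\chi^{-1})}\text{Tr}(\pi(h^{-1})\pi(f))\mu(\pi)\,d\pi$. Substituting into the left-hand side of (3) and exchanging the order of integration by Fubini yields the right-hand side, both sides being absolutely convergent by Lemma \ref{5.2} together with the Schwartz decay of the section $\pi\mapsto\pi(f)$ in $\mathcal{C}(\mathcal{X}_{\text{temp}}(G),\mathcal{E}(G))$.

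Part (4) is obtained by applying part (3) in tandem for $f$ and its dual counterpart after unfolding the pairing with $\pi(f'^{,\vee})$. The chief obstacle is the convergence of the iterated triple integral over $(h',h,g)$ in the prescribed order (which is not absolute), together with the justification of swapping this iterated integral with the Plancherel integral over $\mathcal{X}_{\text{temp}}$. Both steps must be controlled by combining the Harish--Chandra Schwartz bounds on $f$ and $f'$ with the refined Weil matrix coefficient decay supplied by Proposition \ref{4.6}(6)--(8). This iterated convergence, which is absent from the Bessel setting, is the genuinely new Fourier--Jacobi input, and I expect it to be the most delicate step of the proof.
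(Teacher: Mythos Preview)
Your outline for parts (1)--(3) matches the paper's approach: parts (1) and (3) are handled there by citing the Bessel-model analogs \cite[Lemma 7.2.2]{BP20} and \cite[Lemma 8.2.1]{BP20b}, and for part (2) the paper reduces by density to rank-one operators $S=e_1\otimes e_1^\vee$, $T=e_2\otimes e_2^\vee$, computes both sides explicitly as products of the bilinear forms $\alpha(e,e^\vee,\phi,\phi^\vee)=\int\langle e,\pi(h)e^\vee\rangle\langle\phi,\omega(h)\phi^\vee\rangle\,dh$, and then invokes the multiplicity-one hypothesis to identify two elements of $\text{Hom}_H(\bar\pi\otimes\bar\omega_{V,\psi,\mu,\chi},\mathbb{C})$ up to scalar. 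Your ``proportionality plus specialization'' sketch is in the same spirit but less explicit; the rank-one reduction is what makes the constant transparent.

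For part (4), however, the paper's route is cleaner than the convergence battle you anticipate, and your sketch omits a key final step. The paper first rewrites the inner $g$- and $h'$-integrals on the right-hand side as $\int_{Z_H\backslash H}\bigl(f^{\prime,\vee}*(L(h^{-1})f)\bigr)(h')\langle\phi_3,\omega(h')\phi_2\rangle\,dh'$, which is exactly an instance of part (3) applied to the Schwartz function $f^{\prime,\vee}*(L(h^{-1})f)$; this yields $\int_{\mathcal{X}_{\text{temp}}}\text{Trace}\bigl(\pi(h^{-1})\pi(f)L_\pi^{\phi_3,\phi_2}\pi(f^{\prime,\vee})\bigr)\mu(\pi)\,d\pi$. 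The section $\pi\mapsto\pi(f)L_\pi^{\phi_3,\phi_2}\pi(f^{\prime,\vee})$ is smooth (by part (1)) and compactly supported (by the support of $\pi(f)$), so by the matricial Paley--Wiener theorem it is the Fourier transform of some Schwartz function, to which part (3) applies \emph{again} for the remaining $h$-integral. This two-step use of part (3) via Paley--Wiener entirely sidesteps the delicate iterated-convergence analysis you flag as the main obstacle; Proposition \ref{4.6}(vi)--(viii) is not needed here. Finally, the resulting spectral integrand is $\mathcal{L}_\pi^{\phi_1,\phi_4}\bigl(\pi(f)L_\pi^{\phi_3,\phi_2}\pi(f^{\prime,\vee})\bigr)$, and the factorization into $\mathcal{L}_\pi^{\phi_1,\phi_2}(\pi(f))\,\mathcal{L}_\pi^{\phi_3,\phi_4}(\pi(f^{\prime,\vee}))$ is precisely an application of part (2) --- a step your sketch does not mention.
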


\begin{proof}
Part 1 and 3 are analogs of \cite[Lemma 7.2.2]{BP20} and \cite[Lemma 8.2.1]{BP20b}.
We prove the second and the forth part.

2. For $e,e^{\vee}\in\pi$ and $\phi,\phi^{\vee}\in\omega_{V,\psi,\mu.\chi}$,
we define 
$$
\alpha\left(e,e^{\vee},\phi,\phi^{\vee}\right)=\int_{Z_{H}\left(F\right)\backslash H\left(F\right)}\left\langle e,\pi\left(h\right)e^{\vee}\right\rangle \left\langle \phi,\omega_{V,\psi,\mu.\chi}\left(h\right)\phi^{\vee}\right\rangle dh.
$$
Let $e_{1},e_{1}^{\vee},e_{2},e_{2}^{\vee}\in\pi$. By continuity,
we only need to prove the statement for $S=e_{1}\otimes e_{1}^{\vee}$
and $T=e_{2}\otimes e_{2}^{\vee}$. A direct computation shows that
$$
SL_{\pi}^{\phi_{3},\phi_{4}}T\left(e\right)=\left\langle e,e_{2}^{\vee}\right\rangle \left(\int_{Z_{H}\left(F\right)\backslash H\left(F\right)}\left\langle e_{2},\pi\left(h\right)e_{1}^{\vee}\right\rangle \left\langle \phi_{3},\omega_{V,\psi,\mu,\chi}\left(h\right)\phi_{4}\right\rangle dh\right)e_{1}.
$$
Hence, we have 
$$
\begin{array}{ccc}
{\mathcal{L}}_{\pi}^{\phi_{1},\phi_{2}}\left(SL_{\pi}^{\phi_{3},\phi_{4}}T\right) & = & \left(\int_{Z_{H}\left(F\right)\backslash H\left(F\right)}\left\langle e_{2},\pi\left(h\right)e_{1}^{\vee}\right\rangle \left\langle \phi_{3},\omega_{V,\psi,\mu,\chi}\left(h\right)\phi_{4}\right\rangle dh\right)\\
 &  & \times\left(\int_{Z_{H}\left(F\right)\backslash H\left(F\right)}\left\langle e_{1},\pi\left(h\right)e_{2}^{\vee}\right\rangle \left\langle \phi_{1},\omega_{V,\psi,\mu,\chi}\left(h\right)\phi_{2}\right\rangle dh\right)\\
 & = & \alpha\left(e_{2},e_{1}^{\vee},\phi_{3},\phi_{4}\right)\alpha\left(e_{1},e_{2}^{\vee},\phi_{1},\phi_{2}\right).
\end{array}
$$
On the other hand, we can see that 
$$
{\mathcal{L}}_{\pi}^{\phi_{1},\phi_{4}}\left(S\right){\mathcal{L}}_{\pi}^{\phi_{3},\phi_{2}}\left(T\right)=\alpha\left(e_{1},e_{1}^{\vee},\phi_{1},\phi_{4}\right)\alpha\left(e_{2},e_{2}^{\vee},\phi_{3},\phi_{2}\right).
$$
The following maps 
$$
\left(e^{\vee},\phi\right)\mapsto\alpha\left(e_{1},e^\vee,\phi_{1},\phi\right)\ \ \ \text{and}\ \ \ \left(e^{\vee},\phi\right)\mapsto\alpha\left(e_{2},e^\vee,\phi_{3},\phi\right)
$$
are propotional, as they belong to $\text{Hom}_{H\left(F\right)}\left(\bar{\pi}\otimes\bar{\omega}_{V,\psi,\mu,\chi},\mathbb{C}\right)$
and $m_{V}\left(\bar{\pi}\right)\leq1$. Thus, one has 
$$
{\mathcal{L}}_{\pi}^{\phi_{1},\phi_{2}}\left(SL_{\pi}^{\phi_{3},\phi_{4}}T\right)={\mathcal{L}}_{\pi}^{\phi_{1},\phi_{4}}\left(S\right){\mathcal{L}}_{\pi}^{\phi_{3},\phi_{2}}\left(T\right).
$$

4. The right hand side of the equality in part 4 is equal to
$$
\iint_{Z_{H}\left(F\right)\backslash H\left(F\right)}\left(f^{\prime,\vee}*\left(L\left(h^{-1}\right)f\right)\right)\left(h^{\prime}\right)\left\langle \phi_{3},\omega_{V,\psi,\mu,\chi}\left(h^{\prime}\right)\phi_{2}\right\rangle \left\langle \phi_{1},\omega_{V,\psi,\mu,\chi}\left(h\right)\phi_{4}\right\rangle dh^{\prime}dh.
$$
The Fourier transform of $f^{\prime,\vee}*\left(L\left(h^{-1}\right)f\right)$
is given by $\pi\in{\mathcal{X}}_{\text{temp}}\left(G,\chi^{-1}\right)\mapsto\pi\left(f^{\prime,\vee}\right)\pi\left(h^{-1}\right)\pi\left(f\right)$, which is compactly supported. Part 3 gives us 
$$
\int_{Z_{H}\left(F\right)\backslash H\left(F\right)}\left(f^{\prime,\vee}*\left(L\left(h^{-1}\right)f\right)\right)\left(h^{\prime}\right)\left\langle \phi_{3},\omega_{V,\psi,\mu,\chi}\left(h^{\prime}\right)\phi_{2}\right\rangle dh^{\prime}
$$
is absolutely convergent and equal to 
$$
\int_{{\mathcal{X}}_{\text{temp}}\left(G,\chi^{-1}\right)}{\mathcal{L}}_{\pi}^{\phi_{3},\phi_{2}}\left(\pi\left(f^{\prime,\vee}\right)\pi\left(h^{-1}\right)\pi\left(f\right)\right)\mu\left(\pi\right)d\pi
$$
$$
=\int_{{\mathcal{X}}_{\text{temp}}\left(G,\chi^{-1}\right)}\text{Trace}\left(\pi\left(h^{-1}\right)\pi\left(f\right)L_{\pi}^{\phi_{3},\phi_{2}}\pi\left(f^{\prime,\vee}\right)\right)\mu\left(\pi\right)d\pi.
$$
The section $\pi\in{\mathcal{X}}_{\text{temp}}\left(G,\chi^{-1}\right)\mapsto\pi\left(h^{-1}\right)\pi\left(f\right)L_{\pi}^{\phi_{3},\phi_{2}}\pi\left(f^{\prime,\vee}\right)$
is smooth and compactly supported, hence by the matricial Paley-Wiener
theorem, it is the Fourier transform of a Harish-Chandra Schwartz
function. Applying part 3 to this function, one has 
$$
\underset{Z_{H}\left(F\right)\backslash H\left(F\right)}{\iint}\left(f^{\prime,\vee}*\left(L\left(h^{-1}\right)f\right)\right)\left(h^{\prime}\right)\left\langle \phi_{3},\omega_{V,\psi,\mu,\chi}\left(h^{\prime}\right)\phi_{2}\right\rangle \left\langle \phi_{1},\omega_{V,\psi,\mu,\chi}\left(h\right)\phi_{4}\right\rangle dh^{\prime}dh
$$
is absolutely convergent and equal to the following absolutely convergent
integral 
$$
\int_{{\mathcal{X}}_{\text{temp}}\left(G,\chi^{-1}\right)}{\mathcal{L}}_{\pi}^{\phi_{1},\phi_{4}}\left(\pi\left(f\right)L_{\pi}^{\phi_{3},\phi_{2}}\pi\left(f^{\prime,\vee}\right)\right)\mu\left(\pi\right)d\pi
=\int_{{\mathcal{X}}_{\text{temp}}\left(G,\chi^{-1}\right)}{\mathcal{L}}_{\pi}^{\phi_{1},\phi_{2}}\left(\pi\left(f\right)\right){\mathcal{L}}_{\pi}^{\phi_{3},\phi_{4}}\left(\pi\left(f^{\prime,\vee}\right)\right)\mu\left(\pi\right)d\pi.
$$
\end{proof}

\subsection{Explicit intertwinings and parabolic induction}\label{sec5.3}

Let $\pi=i_{P}^{G}\left(\sigma\right)$ with central character
$\chi$, where $P=MN$ and 
$$
M=\text{Res}_{E/F}\text{GL}\left(W_{1}\right)\times\ldots\times\text{Res}_{E/F}\text{GL}\left(W_{a}\right)
$$
and $\sigma$ is an irreducible tempered representation of $M\left(F\right)$.
In this section, we prove the following proposition.
\begin{proposition}\label{5.4}
Let $\phi_{1},\phi_{2}\in\omega_{V,\psi,\mu,\chi}$. If ${\mathcal{L}}_{\pi}^{\phi_{1},\phi_{2}}\neq0$,
then for any $\lambda\in i{\mathcal{A}}_{M}^{*}$, one has 
$$
{\mathcal{L}}_{\pi_{\lambda}}^{\phi_{1},\phi_{2}}\neq0,
$$
where $\pi_{\lambda}:= i_{P}^{G}\left(\sigma_{\lambda}\right)$.
\end{proposition}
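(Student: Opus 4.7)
The plan is to show that the set $Z = \{\lambda \in i\mathcal{A}_M^* : \mathcal{L}_{\pi_\lambda}^{\phi_1,\phi_2} \equiv 0\}$ is both open and closed in the connected manifold $i\mathcal{A}_M^*$; combined with the hypothesis $0 \notin Z$, this forces $Z = \emptyset$. Closedness of $Z$ is immediate from Lemma \ref{5.3}(1): passing to the compact realization, in which every $\pi_\lambda$ acts on the common Hilbert space $\pi_K = i_{K \cap P}^K(\sigma|_{K \cap M})$, the map $\lambda \mapsto \mathcal{L}_{\pi_\lambda}^{\phi_1, \phi_2}$ is smooth (and in fact real-analytic, since matrix coefficients of $\pi_\lambda$ depend holomorphically on $\lambda$), so its vanishing locus is closed.

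The heart of the argument will be openness of $Z$, for which I aim to establish a factorization of the shape
$$
\mathcal{L}_{\pi_\lambda}^{\phi_1,\phi_2}(e, e') = \alpha(\lambda) \cdot \widetilde{\mathcal{L}}_\sigma(e, e'; \phi_1, \phi_2),
$$
where $\widetilde{\mathcal{L}}_\sigma$ is a sesquilinear form on $\pi_K \otimes \bar{\pi}_K$ depending on $\sigma, \phi_1, \phi_2$ but \emph{not} on $\lambda$, and $\alpha(\lambda)$ is a real-analytic, nowhere-vanishing function on $i\mathcal{A}_M^*$. Granting such a factorization, the vanishing of $\mathcal{L}_{\pi_{\lambda_0}}^{\phi_1,\phi_2}$ at a single $\lambda_0$ forces $\widetilde{\mathcal{L}}_\sigma \equiv 0$ (since $\alpha(\lambda_0) \neq 0$), whence $\mathcal{L}_{\pi_\lambda}^{\phi_1,\phi_2} \equiv 0$ for every $\lambda$, so $Z$ is open. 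I plan to produce the factorization by unfolding the defining integral of $\mathcal{L}_{\pi_\lambda}^{\phi_1,\phi_2}$ via Frobenius reciprocity applied to the open $(P,H)$-orbit: writing the matrix coefficient $\langle e, \pi_\lambda(h)e'\rangle$ through the Iwasawa decomposition $h = m_P(h)u_P(h)k_P(h)$, substituting into the $Z_H \backslash H$-integral, and exchanging the order of integration, one isolates the $\lambda$-dependence entirely in the exponential factor $e^{\lambda(H_P(h))}$. Absolute convergence of the rearranged integral on all of $i\mathcal{A}_M^*$ (in fact on a tube around it) will come from the decay estimates of Proposition \ref{4.6}, in particular parts (6)--(8) controlling the interaction between $\Xi^G$ and matrix coefficients of $\omega_{V,\psi,\mu}$. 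After performing the inner matrix-coefficient integral over the Levi of $H$ — which yields the $\lambda$-independent form $\widetilde{\mathcal{L}}_\sigma$ — the remaining torus integration of $e^{\lambda(H_P(h))}$ against a tempered-convergent kernel produces $\alpha(\lambda)$ as a Plancherel-type factor of normalized intertwining operators, well known to be real-analytic and nonvanishing on the unitary axis for tempered $\sigma$.

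The main obstacle will be carrying out this Frobenius unfolding rigorously in the Fourier-Jacobi setting: because $\omega_{V,\psi,\mu,\chi}$ is infinite-dimensional and acts nontrivially on $P \cap H$, one cannot invoke a categorical Frobenius isomorphism and must instead justify the exchange of integrals analytically, with Proposition \ref{4.6} controlling each step. A related subtlety is that $P \backslash G / H$ is not a single orbit, so one must handle contributions from the closed orbits separately and show that they are negligible; I expect the same decay estimates to dominate these boundary terms. Verifying that the resulting $\alpha(\lambda)$ is actually \emph{nowhere} zero on $i\mathcal{A}_M^*$ (rather than merely meromorphic) will require a careful inspection of how the normalization of the intertwining operator is twisted by $\mu$ and interacts with the temperedness of $\sigma$.
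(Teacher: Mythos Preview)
Your clopen strategy identifies a correct target but your mechanism for openness — the factorization $\mathcal{L}_{\pi_\lambda}^{\phi_1,\phi_2}(e,e') = \alpha(\lambda)\cdot\widetilde{\mathcal{L}}_\sigma(e,e';\phi_1,\phi_2)$ with a scalar $\alpha(\lambda)$ coming from intertwining operators — is not what actually happens and is unlikely to exist as you describe it. After unfolding over the open $(H,P)$-double cosets, there are \emph{several} open orbits indexed by the finitely many good $H(F)$-conjugates $P_1,\dots,P_t$ of $P$, so the expression is a sum over $i$ rather than a single term; there is no reason for these summands to combine into a scalar factor times a $\lambda$-independent form. Moreover, no ``torus integration of $e^{\lambda(H_P(h))}$'' remains after unfolding: once you integrate over $H_{P_i}\backslash H$ (which sits inside the compact quotient $P\backslash G$) and over $Z_H\backslash H_{P_i}$, there is no residual noncompact torus to produce a Plancherel-type $\alpha(\lambda)$. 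So the plan as written does not get off the ground.

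The paper's argument bypasses the factorization entirely by exploiting two inputs you omit. First, the \emph{induction hypothesis} stated at the start of Section~\ref{sec5} (strong multiplicity one in lower rank) implies that among the open orbits $P_1,\dots,P_t$ exactly one, say $P$, has $\text{Hom}_{H_P}(\sigma,\omega_{V,\psi,\mu,\chi})\neq 0$; this kills all but one summand and yields the single formula $(*)$. Second, and this is the decisive structural point you missed, $H_P = H\cap P$ is a product of unitary groups $U(W_1)\times\cdots\times U(W_a)$, so any unramified character of $M(F)$ is \emph{trivial} on $H_P(F)$; hence $\sigma_\lambda|_{H_P} = \sigma|_{H_P}$ and the Levi form $\mathcal{L}_{\sigma_\lambda,P}^{\phi_1,\phi_2} = \mathcal{L}_{\sigma,P}^{\phi_1,\phi_2}$ is literally $\lambda$-independent. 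Combined with Claim~\ref{5.6} (which characterizes $\mathcal{L}_{\pi}^{\phi_1,\phi_2}\neq 0$ purely in terms of nonvanishing of some $\mathcal{L}_{\sigma,P}^{\omega(h')\phi_1,\omega(h)\phi_2}$), this gives a criterion for nonvanishing that does not see $\lambda$ at all. In effect, your hoped-for $\alpha(\lambda)$ is identically $1$, but seeing this requires knowing why $H_P$ is a product of unitary groups and invoking the inductive multiplicity-one to isolate a single orbit — neither of which appears in your plan.
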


\begin{proof}
We consider the following simple claim.
\begin{claim}\label{5.5}
For any $h_{1},h_{2}\in H\left(F\right)$, one has 
$$
{\mathcal{L}}_{\pi}^{\phi_{1},\phi_{2}}\neq0\Leftrightarrow{\mathcal{L}}_{\pi}^{\omega_{V,\psi,\mu,\chi}\left(h_{1}\right)\phi_{1},\omega_{V,\psi,\mu,\chi}\left(h_{2}\right)\phi_{2}}\neq0.
$$
\end{claim}

\begin{proof}
Let $T\in\text{End}\left(\pi\right)^\infty$ such that ${\mathcal{L}}_{\pi}^{\phi_{1},\phi_{2}}\left(T\right)\neq0$.
Then 
$$
{\mathcal{L}}_{\pi}^{\omega_{\psi,\mu}\left(h\right)\phi_{1},\omega_{\psi,\mu}\left(h^{\prime}\right)\phi_{2}}\left(\pi\left(h_{1}\right)T\pi\left(h_{2}^{-1}\right)\right)={\mathcal{L}}_{\pi}^{\phi_{1},\phi_{2}}\left(T\right)\neq0.
$$
\end{proof}
Assume $P$ is a good parabolic subgroup of $G$, i.e. $HP$ is open
in $G$. The invariant scalar product of $\pi$ is given by 
$$
\left(e,e^{\prime}\right)=\int_{P\left(F\right)\backslash G\left(F\right)}\left(e\left(g\right),e^{\prime}\left(g\right)\right)dg,\text{ for any }e,e^{\prime}\in\pi,
$$
where the scalar product inside the above integral is the scalar product
of $\sigma$. We have 
$$
{\mathcal{L}}_{\pi}^{\phi_{1},\phi_{2}}\left(e,e^{\prime}\right)=\int_{Z_{H}\left(F\right)\backslash H\left(F\right)}\int_{P\left(F\right)\backslash G\left(F\right)}\left(e\left(g\right),e^{\prime}\left(gh\right)\right)dg\left\langle \phi_{1},\omega_{V,\psi,\mu,\chi}\left(h\right)\phi_{2}\right\rangle dh,
$$
for $e,e^{\prime}\in\pi$. We prove the above expression is absolutely
convergent. Let $K$ be the special maximal compact subgroup of $G\left(F\right)$.
By choosing a suitable Haar measure on $K$, one has 
$$
\int_{P\left(F\right)\backslash G\left(F\right)}\left|\left(e\left(g\right),e^{\prime}\left(gh\right)\right)\right|dg=\int_{K}\left|\left(e\left(k\right),e^{\prime}\left(k_P(kh)\right)\right)\right|dk
$$
$$
=\int_{K}\delta_{P}\left(l_{P}\left(kh\right)\right)^{1/2}\left|\left(e\left(k\right),\sigma\left(l_{P}\left(k_P(kh)\right)\right)e^{\prime}\left(k_P(kh)\right)\right)\right|dk,
$$
for all $h\in H\left(F\right)$, where $l_{P}:G\left(F\right)\rightarrow M\left(F\right)$
and $k_{P}:G\left(F\right)\rightarrow K$ are maps such that $l_{P}\left(g\right)^{-1}gk_{P}\left(g\right)^{-1}\in N\left(F\right)$
for all $g\in G\left(F\right)$. Since $\sigma$ is tempered, one
has 
$$
\int_{P\left(F\right)\backslash G\left(F\right)}\left|\left(e\left(g\right),e^{\prime}\left(gh\right)\right)\right|dg\ll\int_{K}\delta_{P}\left(l_{P}\left(kh\right)\right)^{1/2}\Xi^{M}\left(l_{P}\left(kh\right)\right)dk=\Xi^{G}\left(h\right).
$$
Then the absolute convergence comes from Lemma \ref{5.2}.

Let $P_{1},\ldots,P_{t}$ be representatives of good $H\left(F\right)$-conjugacy
classes of $P$, i.e. some $P^\prime$ such that $H(F)P^\prime(F)$ is open in $G(F)$, inside the $G\left(F\right)$-conjugacy class of $P$.
We can choose $P_{1},\ldots,P_{t}$ to be good parabolic subgroups
of $G$. These conjugacy classes parametrizes open $H\left(F\right)$-orbits
in $G\left(F\right)/P\left(F\right)$. Let $H_{P_{i}}=H\cap P_{i}$,
for $i=\overline{1,t}$. By a statement about
open orbits in Lemma 4.2 in \cite{GGP23},
it follows that $H_{P_i}=U\left(W_{i,1}\right)\times\ldots\times U\left(W_{i,a}\right)$, where $W_{i,j}$ is a nondegenerate skew-hermitian space, for any $(i,j)\in [1,t]\times [1,a]$. Note that $\dim W_{i,j}=\dim W_{i^\prime,j}$ for any $i,i^\prime$. We can choose Haar measures such that 
$$
\int_{P\left(F\right)\backslash G\left(F\right)}\phi\left(g\right)dg=\sum_{i=1}^t\int_{H_{P_{i}}\left(F\right)\backslash H\left(F\right)}\phi\left(g_{i}h\right)dh,\text{ for any }\phi\in L^{1}\left(P\left(F\right)\backslash G\left(F\right),\delta_{P}\right),
$$
where $g_{i}\in G\left(F\right)$ satisfying $P_{i}=g_i^{-1}Pg_i$. Then
$$
{\mathcal{L}}_{\pi}^{\phi_{1},\phi_{2}}\left(e,e^{\prime}\right)=\sum_{i=1}^t\int_{Z_{H}\left(F\right)\backslash H\left(F\right)}\int_{H_{P_{i}}\left(F\right)\backslash H\left(F\right)}\left(e\left(g_{i}h^{\prime}\right),e^{\prime}\left(g_{i}h^{\prime}h\right)\right)dh^{\prime}\left\langle \phi_{1},\omega_{V,\psi,\mu,\chi}\left(h\right)\phi_{2}\right\rangle dh,
$$
which is absolutely convergent. Observe 
$$
{\mathcal{L}}_{\pi}^{\phi_{1},\phi_{2}}\left(e,e^{\prime}\right)=\sum_{i=1}^t\int_{\left(H_{P_{i}}\left(F\right)\backslash H\left(F\right)\right)^{2}}{\mathcal{L}}_{\sigma,P_{i}}^{\omega_{V,\psi,\mu,\chi}\left(h^{\prime}\right)\phi_{1},\omega_{V,\psi,\mu,\chi}\left(h\right)\phi_{2}}\left(e\left(g_{i}h^{\prime}\right),e^{\prime}\left(g_{i}h\right)\right)dhdh^{\prime},
$$
where 
$$
{\mathcal{L}}_{\sigma,P_{i}}^{\phi_{1},\phi_{2}}\left(v,v^{\prime}\right)=\int_{Z_{H}\left(F\right)\backslash H_{P_{i}}\left(F\right)}\left(v,\sigma\left(h_{P_{i}}\right)v^{\prime}\right)\left\langle \phi_{1},\omega_{V,\psi,\mu,\chi}\left(h_{P_{i}}\right)\phi_{2}\right\rangle d_{M}h_{P_{i}},
$$
for all $v,v^{\prime}\in\sigma$. By the induction hypothesis stated at the beginning of Section \ref{sec5}, there
is a unique $P_{i}$ such that $\dim\text{Hom}_{H_{P_{i}}}\left(\sigma,\omega_{V,\psi,\mu,\chi}\right)\neq0$. For simplicity, we can assume such $P_{i}$ is $P$. Then for any $P_{i}\neq P$, one has ${\mathcal{L}}_{\sigma,P_{i}}^{\phi_{1},\phi_{2}}\equiv0$.
This gives us 
$$
{\mathcal{L}}_{\pi}^{\phi_{1},\phi_{2}}\left(e,e^{\prime}\right)=\int_{\left(H_{P}\left(F\right)\backslash H\left(F\right)\right)^{2}}{\mathcal{L}}_{\sigma,P}^{\omega_{V,\psi,\mu,\chi}\left(h^{\prime}\right)\phi_{1},\omega_{V,\psi,\mu,\chi}\left(h\right)\phi_{2}}\left(e\left(h^{\prime}\right),e^{\prime}\left(h\right)\right)dhdh^{\prime}.\ \ \left(*\right)
$$
We set 
$$
{\mathcal{L}}_{\sigma,P}^{\phi_{1},\phi_{2}}\left(T_{\sigma}\right)=\int_{Z_{H}\left(F\right)\backslash H_{P}\left(F\right)}\text{Trace}\left(\sigma\left(h_{P}^{-1}\right)T_{\sigma}\right)\left\langle \phi_{1},\omega_{V,\psi,\mu,\chi}\left(h_{P}\right)\phi_{2}\right\rangle d_{M}h_{P},
$$
for $T_{\sigma}\in\text{End}\left(\sigma\right)^\infty$. This integral is
absolutely convergent since $\sigma$ is tempered (we only need to
follow the approach in Lemma \ref{5.2}). The following claim reduces the
study of ${\mathcal{L}}_{\pi}^{\phi_{1},\phi_{2}}$ to ${\mathcal{L}}_{\sigma,P}^{\phi_{1},\phi_{2}}$.
\begin{claim}\label{5.6}
Let $\phi_{1},\phi_{2}\in\omega_{V,\psi,\mu,\chi}$. Then ${\mathcal{L}}_{\pi}^{\phi_{1},\phi_{2}}\neq0$
if and only if there exists $h,h^{\prime}\in H\left(F\right)$ such
that ${\mathcal{L}}_{\sigma,P}^{\omega_{V,\psi,\mu,\chi}\left(h^{\prime}\right)\phi_{1},\omega_{V,\psi,\mu,\chi}\left(h\right)\phi_{2}}\neq0$.
\end{claim}

\begin{proof}
By $\left(*\right)$, if ${\mathcal{L}}_{\pi}^{\phi_{1},\phi_{2}}\neq0$,
there exists $h,h^{\prime}\in H\left(F\right)$ such that ${\mathcal{L}}_{\sigma,P}^{\omega_{V,\psi,\mu,\chi}\left(h^{\prime}\right)\phi_{1},\omega_{V,\psi,\mu,\chi}\left(h\right)\phi_{2}}$ is nonzero.
There remains to prove the reverse direction. Assume ${\mathcal{L}}_{\sigma}^{\phi_{1},\phi_{2}}\neq0$.
Let $s:{\mathcal{U}}\rightarrow H\left(F\right)$ be an analytic section
over an open subset ${\mathcal{U}}$ of $H_{P}\left(F\right)\backslash H\left(F\right)$
whose image contained in an open compact subgroup $K$ of $H\left(F\right)$
fixing $\phi_{1}$ and $\phi_{2}$. For a smooth compactly supported
function $\phi\in C_{c}^{\infty}\left({\mathcal{U}},\sigma\right)$, we
set 
$$
e_{\phi}\left(g\right)=\begin{cases}
\delta_{P}\left(m\right)^{1/2}\sigma\left(m\right)\phi\left(h\right) & \text{if }g=mns\left(h\right)\text{ with }m\in M\left(F\right),\ n\in N\left(F\right),\ h\in{\mathcal{U}}\\
0 & \text{otherwise.}
\end{cases}
$$
This defines an element of $\pi$. One has 
$$
{\mathcal{L}}_{\pi}^{\phi_{1},\phi_{2}}\left(e_{\phi},e_{\phi^{\prime}}\right)=\int_{\left(H_{P}\left(F\right)\backslash H\left(F\right)\right)^{2}}{\mathcal{L}}_{\sigma,P}^{\omega_{V,\psi,\mu,\chi}\left(h^{\prime}\right)\phi_{1},\omega_{V,\psi,\mu,\chi}\left(h\right)\phi_{2}}\left(\phi\left(h^{\prime}\right),\phi^{\prime}\left(h\right)\right)dhdh^{\prime},
$$
for any $\phi,\phi^{\prime}\in C_{c}^{\infty}\left({\mathcal{U}},\sigma\right)$.
Assume ${\mathcal{L}}_{\sigma}^{\phi_{1},\phi_{2}}\left(v_{0},v_{0}^{\prime}\right)\neq0$.
We set $\phi\left(h\right)=f\left(h\right)v_{0}$ and $\phi^{\prime}\left(h\right)=\overline{f\left(h\right)}v_{0}^{\prime}$,
where $f\in C_{c}^{\infty}\left({\mathcal{U}}\right)$ is nonzero. Then
${\mathcal{L}}_{\pi}^{\phi_{1},\phi_{2}}\left(e_{\phi},e_{\phi^{\prime}}\right)\neq0$.
We finish our proof for this claim.
\end{proof}
By Claim \ref{5.5} and Claim \ref{5.6}, it suffices to show that ${\mathcal{L}}_{\sigma}^{\phi_{1},\phi_{2}}$
is nonzero if and only if ${\mathcal{L}}_{\sigma_{\lambda}}^{\phi_{1},\phi_{2}}$
is nonzero, where $\lambda$ is an unramified character of $M\left(F\right)$.
Since $P$ is a good parabolic subgroup of $G$, we can set $H_{P}=U\left(W_{1}\right)\times\ldots\times U\left(W_{a}\right)$. Therefore, $\sigma$ and $\sigma_{\lambda}$ coincide when restricting
to $H_{P}\left(F\right)$. This gives us ${\mathcal{L}}_{\sigma}^{\phi_{1},\phi_{2}}\left(v,v^{\prime}\right)={\mathcal{L}}_{\sigma_{\lambda}}^{\phi_{1},\phi_{2}}\left(v,v^{\prime}\right)$, for any $v,v^{\prime}\in\sigma$, noting that $\sigma$ and $\sigma_{\lambda}$ share the same underlying space. We finish our proof for Proposition \ref{5.4}.
\end{proof}

\subsection{Proof of Theorem \ref{5.1}}\label{sec5.4}

We prove the following lemma, which is a consequence of the Cartan
decomposition.
\begin{lemma}\label{5.7}
Let $\pi$ be a tempered representation of $G\left(F\right)$ with central character $\chi$ and $l\in\text{Hom}_{H}\left(\pi,\omega_{V,\psi,\mu,\chi}\right)$.Then for any $f\in \mathcal{C}(Z_G(F)\backslash G(F),\chi)$ and $e\in \pi$ and $\phi \in \omega_{V,\psi,\mu,\chi}$, we have  
$$
\int_{Z_G(F)\backslash G(F)} f(g) \left\langle \phi,l\left(\pi\left(g\right)e\right)\right\rangle dg
$$
is absolutely convergent. 
\end{lemma}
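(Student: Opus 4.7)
The approach is to interpret the integrand as a product of two ``tempered-type'' functions on $Z_G(F)\backslash G(F)$ and apply a standard Harish-Chandra integrability estimate, without needing to exploit any spherical geometry.

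First, the map $l \in \text{Hom}_H(\pi, \omega_{V,\psi,\mu,\chi})$ induces a linear functional
$$\tilde\phi : v \mapsto \langle \phi, l(v)\rangle$$
on $\pi^\infty$, which is continuous in the canonical topology on smooth vectors. Consequently
$$\langle \phi, l(\pi(g)e)\rangle = \tilde\phi(\pi(g)e)$$
is a (possibly distributional) matrix coefficient of $\pi$. Since $\pi$ is tempered with unitary central character $\chi^{-1}$, this generalized matrix coefficient satisfies a tempered-type bound
$$|\tilde\phi(\pi(g)e)| \leq C\,\Xi^G(g)\,\bar\sigma(g)^d$$
for some $C, d > 0$. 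To see this, note that for any $R > 0$ the orbit $\{\pi(g)e : \bar\sigma(g) \leq R\}$ lies in a single finite-dimensional subspace $\pi^{K(R)}$, where $K(R) \subset K_0$ is a compact open subgroup shrinking at most polynomially in $R$; the restriction $\tilde\phi|_{\pi^{K(R)}}$ is represented via the inner product by an honest vector $v_R \in \pi^{K(R)}$ with $\|v_R\|$ growing at most polynomially in $R$, and the classical Harish-Chandra bound on ordinary matrix coefficients gives $|\tilde\phi(\pi(g)e)| = |\langle \pi(g)e, v_R\rangle_\pi| \leq \|v_R\|\,\Xi^G(g)$.

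Second, by the Schwartz condition on $f \in \mathcal{C}(Z_G(F)\backslash G(F), \chi)$, for every $N > 0$ there exists $C_N > 0$ such that $|f(g)| \leq C_N\,\Xi^G(g)\,\bar\sigma(g)^{-N}$. Combining these two bounds yields
$$\int_{Z_G(F)\backslash G(F)} |f(g)|\,|\langle \phi, l(\pi(g)e)\rangle|\,dg \;\leq\; CC_N \int_{Z_G(F)\backslash G(F)} \Xi^G(g)^2\,\bar\sigma(g)^{d-N}\,dg,$$
and for $N$ sufficiently large the right-hand side is finite by the classical Harish-Chandra estimate
$$\int_{Z_G(F)\backslash G(F)}\Xi^G(g)^2\,\bar\sigma(g)^{-M}\,dg < \infty \quad \text{for } M \text{ large enough}.$$

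The main delicacy is the polynomial-growth bound on the distributional matrix coefficient, i.e. the claim that $g \mapsto \tilde\phi(\pi(g)e)$ lies in the weak Harish-Chandra Schwartz space $\mathcal{C}^w(Z_G(F)\backslash G(F), \chi)$. This is the analogue in our setting of the remark preceding Lemma \ref{5.2}, and can be verified directly by the finite-level reduction sketched above together with the temperedness of $\pi$; alternatively, one can invoke Lemma \ref{5.2} itself by first observing that $\tilde\phi$ extends continuously to $\pi$ equipped with its Harish-Chandra Schwartz structure.
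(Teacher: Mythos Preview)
Your argument has a genuine gap at the crucial step. You claim that the vector $v_R\in\pi^{K(R)}$ representing $\tilde\phi|_{\pi^{K(R)}}$ satisfies $\|v_R\|\ll R^d$, but this is not a formal consequence of $\pi$ being tempered. The norm $\|v_R\|$ equals $\sup\{|\langle\phi,l(v)\rangle| : v\in\pi^{K(R)},\ \|v\|=1\}$, and nothing prevents an abstract intertwiner $l\in\text{Hom}_H(\pi,\omega_{V,\psi,\mu,\chi})$ from being unbounded on the increasing family $\pi^{K(R)}$; temperedness of $\pi$ constrains ordinary matrix coefficients $\langle\pi(g)e,e'\rangle$ with $e'\in\pi$ smooth, not pairings against arbitrary elements of the algebraic dual. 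Your proposed fixes do not close the gap either: the remark before Lemma~\ref{5.2} concerns $T\in\text{End}(\pi)^\infty$, i.e.\ \emph{smooth} endomorphisms, which is exactly the hypothesis you lack on $\tilde\phi$; and ``$\tilde\phi$ extends continuously to $\pi$ in its Schwartz topology'' is a restatement of the bound you are trying to prove, not an independent input.

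What the paper does instead is use the $H$-equivariance of $l$ in an essential way. Since a maximal $(\theta,F)$-split torus $A_0$ is also a maximal $F$-split torus of $G$ here, the usual Cartan decomposition reduces the pointwise estimate to $a\in A_{P_0}^+$. The key trick is then to pick a compact open $J'\subset K_H K_{P_0}$ with $K_{P_0}\subset aJa^{-1}$ for all $a\in A_{P_0}^+$ (possible because $P_0$ is $\theta$-split, so $H(F)P_0(F)$ is open). Writing $k=k_Hk_P\in J'$, one has $\pi(k_Pa)e=\pi(a)e$ and $\langle\phi,l(\pi(k_H)\,\cdot)\rangle=\langle\omega(k_H^{-1})\phi,l(\cdot)\rangle=\langle\phi,l(\cdot)\rangle$; hence the $J'$-average $T_{J',\phi}\in\pi^\vee$ is a \emph{single smooth} vector satisfying $\langle T_{J',\phi},\pi(a)e\rangle=\langle\phi,l(\pi(a)e)\rangle$ for every $a\in A_{P_0}^+$ simultaneously. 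Now the ordinary tempered bound $|\langle T_{J',\phi},\pi(a)e\rangle|\ll\Xi^G(a)$ applies. This is precisely the device that replaces your family $\{v_R\}$ by one fixed smooth vector, and it hinges on the spherical structure you chose to bypass.
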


\begin{proof}
Let $A_{0}$ be a maximal $\left(\theta,F\right)$-split torus of
$G$. Then $M_{0}=\text{Cent}_{G}\left(A_{0}\right)$ is a minimal
$\theta$-split Levi subgroup of $G$. We fix $P_{0}=M_{0}N_{0}$
to be a $\theta$-split parabolic subgroup of $G$. We set 
$$
A_{P_{0}}^{+}=\left\{ a\in A_{0}\left(F\right)\mid\ \left|\alpha\left(a\right)\right|\leq 1,\ \forall\alpha\in\Delta_{P_{0}}\right\} .
$$
By the Cartan decomposition and since there exists $d>0$ such that $\int_{Z_G(F)\backslash G(F)} \Xi^G(g)^2 \sigma(g)^{-d} dg$ converges, it suffices to show $\left|\langle \phi, l(\pi(a)e \rangle \right| \ll \Xi^G(a)$, for any $a\in A_{P_0}^+(F)$. Let $K_H$ be a compact open subgroup of $H(F)$ such that $\phi \in \omega_{V,\psi,\mu,\chi}^{K_H}$. Let $J\subset G\left(F\right)$ be a compact open subgroup which fixes
$e$. There exists a compact open subgroup $K_{P_0} \subset P_0\left(F\right)$ such that 
$
K_{P_0}\subset aJa^{-1}$, for any $a\in A_{P_{0}}^{+}(F)$. Since $P_0$ is $\theta$-split, we have $H(F)P_0(F)$ is open in $G(F)$. Therefore, we can choose a compact open subgroup $J^\prime$ of $G(F)$ such that $J^\prime$ is contained in $K_HK_{P_0}$. Let $T_{J^{\prime},\phi}$ be a linear form of $\pi$ defined by $T_{J^{\prime},\phi}(v)=\frac{1}{\text{vol}(J^\prime)}\int_{J^\prime}\langle\phi,l(\pi(k)v)\rangle dk$, for $v\in \pi$. We can see that $T_{J^{\prime},\phi}\in\pi^{\vee}$. Moreover, for any $a\in A_{P_{0}}^{+}$,
we have 
$$
\left\langle T_{J^{\prime},\phi},\pi\left(a\right)e\right\rangle=\frac{1}{\text{vol}(J^\prime)}\int_{J^\prime}\langle\phi,l(\pi(ka)e)\rangle dk=\langle \phi, l(\pi(a)e)\rangle.
$$
Since $\pi$ is tempered, we have $\left\langle T_{J^{\prime},\phi},\pi\left(a\right)e\right\rangle\ll\Xi^{G}\left(a\right)$, which is to say
$$
\left|\langle \phi, l(\pi(a)e \rangle \right|\ll\Xi^{G}\left(a\right),\,\forall a\in A_{P_{0}}^{+}(F).
$$
\end{proof}
We now prove Theorem \ref{5.1}.
\begin{proof}
Let $\sigma\in\Pi_{2}\left(Z_{G}\backslash M,\chi\right)$
such that $\pi_0=i_{P}^{G}\left(\sigma\right)$,
where $P=MN$ is a parabolic subgroup of $G$. Assume $m_{V}\left(\pi_{0}\right)\neq0$
and let $l$ be a nonzero element in $\text{Hom}_{H}\left(\pi_{0},\omega_{V,\psi,\mu,\chi}\right)$.
Let $e\in\pi_{0}$ and $\phi\in\omega_{V,\psi,\mu,\chi}$ and $f\in{\mathcal{C}}\left(Z_{G}\left(F\right)\backslash G\left(F\right),\chi\right)$.
By Lemma \ref{5.7}, we have  
$$
\int_{Z_{G}\left(F\right)\backslash G\left(F\right)}\left\langle \phi,l\left(\pi_{0}\left(g\right)e\right)\right\rangle f\left(g\right)dg\text{ is absolutely convergent.}
$$
Next, we compute $\int_{Z_{G}\left(F\right)\backslash G\left(F\right)}\left\langle \phi,l\left(\pi_{0}\left(g\right)e\right)\right\rangle f\left(g\right)dg$
in two ways. We choose a compact open subgroup $J$ of $G\left(F\right)$
such that $f$ is $J$-left invariant. We have  
$$
\int_{Z_{G}\left(F\right)\backslash G\left(F\right)}\left\langle \phi,l\left(\pi_{0}\left(g\right)e\right)\right\rangle f\left(g\right)dg
=\frac{1}{\text{vol}\left(J\right)}\int_{Z_{G}\left(F\right)\backslash G\left(F\right)}f\left(g\right)\int_{J}\left\langle \phi,l\left(\pi_{0}\left(kg\right)e\right)\right\rangle dkdg
$$
$$
=\int_{Z_{G}\left(F\right)\backslash G\left(F\right)}f\left(g\right)\left\langle T_{J,\phi},\pi_{0}\left(g\right)e\right\rangle dg=\left\langle T_{J,\phi},\pi_{0}\left(\bar{f}\right)e\right\rangle =\left\langle \phi,l\left(\pi_{0}\left(\bar{f}\right)e\right)\right\rangle .
$$
On the other hand,
$$
\int_{Z_{G}\left(F\right)\backslash G\left(F\right)}\left\langle \phi,l\left(\pi_{0}\left(g\right)e\right)\right\rangle f\left(g\right)dg
=\int_{Z_{G}\left(F\right)H\left(F\right)\backslash G\left(F\right)}\int_{Z_{H}\left(F\right)\backslash H\left(F\right)}\left\langle \phi,\omega_{V,\psi,\mu,\chi}\left(h\right)l\left(\pi_{0}\left(x\right)e\right)\right\rangle f\left(hx\right)dhdx.
$$
By Lemma \ref{5.3}(iii), it follows that 
$$
\int_{Z_{G}\left(F\right)\backslash G\left(F\right)}\left\langle \phi,l\left(\pi_{0}\left(g\right)e\right)\right\rangle f\left(g\right)dg
=\int_{Z_{G}\left(F\right)H\left(F\right)\backslash G\left(F\right)}\int_{{\mathcal{X}}_{\text{temp}}\left(G\left(F\right),\chi^{-1}\right)}{\mathcal{L}}_{\pi}^{\phi,l\left(\pi_{0}\left(x\right)e\right)}\left(\pi\left(f\right)\pi\left(x^{-1}\right)\right)\mu\left(\pi\right)d\pi dx.
$$
Let $T\in C_{c}^{\infty}\left({\mathcal{X}}_{\text{temp}}\left(G,\chi^{-1}\right),{\mathcal{E}}\left(G,\chi^{-1}\right)\right)$.
By substituting $f=f_{T}$ (see in Theorem \ref{2.2}) to these above equations, one has 
\begin{equation}\label{eqn4.1}
    \left\langle \phi,l\left(T_{\bar{\pi}_{0}}e\right)\right\rangle =\int_{Z_{G}\left(F\right)H\left(F\right)\backslash G\left(F\right)}\int_{{\mathcal{X}}_{\text{temp}}\left(G\left(F\right),\chi^{-1}\right)}{\mathcal{L}}_{\pi}^{\phi,l\left(\pi_{0}\left(x\right)e\right)}\left(T_{\pi}\pi\left(x^{-1}\right)\right)\mu\left(\pi\right)d\pi dx.
\end{equation}
Since $l\neq0$, we can choose $e\in\pi$ such that $l\left(e\right)\neq0$.
Let $T_{0}\in\text{End}\left(\pi\right)^\infty$ satisfying $T_{0}e=e$.
We choose $T^{0}\in C_{c}^{\infty}\left({\mathcal{X}}_{\text{temp}}\left(G,\chi^{-1}\right),{\mathcal{E}}\left(G,\chi^{-1}\right)\right)$
such that $T_{\bar{\pi}_{0}}^{0}=T_{0}$ and $\text{Supp}\left(T^{0}\right)\subset{\mathcal{O}}$, where ${\mathcal{O}}$ is the connected component containing $\bar{\pi}_0$.
Since $l\left(e\right)\neq0$, we can choose $\phi_{2}\in\omega_{V,\psi,\mu,\chi}$
satisfying $\left\langle \phi_{2},l\left(e\right)\right\rangle \neq0$.
Applying the equation ($\ref{eqn4.1}$) to $T^{0}$ and $\phi_{2}$, one has 
$$
\left\langle \phi_{2},l\left(e\right)\right\rangle =\int_{Z_{G}\left(F\right)H\left(F\right)\backslash G\left(F\right)}\int_{{\mathcal{X}}_{\text{temp}}\left(G\left(F\right),\chi^{-1}\right)}{\mathcal{L}}_{\pi}^{\phi_{2},l\left(\pi_{0}\left(x\right)e\right)}\left(T_{\pi}\pi\left(x^{-1}\right)\right)\mu\left(\pi\right)d\pi dx.
$$
Since the LHS is not zero, there exists $x_{0}\in Z_{G}\left(F\right)H\left(F\right)\backslash G\left(F\right)$
and $\rho\in i{\mathcal{A}}_{M}^{*}$ such that ${\mathcal{L}}_{\bar{\pi}_{0,\rho}}^{\phi_{2},l\left(\pi_{0}\left(x_{0}\right)e\right)}\neq0$.
By Proposition \ref{5.4}, it follows that ${\mathcal{L}}_{\bar{\pi}_0}^{\phi_{2},l\left(\pi_{0}\left(x_{0}\right)e\right)}\neq0$.
We finish our proof for Theorem \ref{5.1}.
\end{proof}

\subsection{Tempered intertwinings and the multiplicity}\label{sec5.5}

In this section, we prove the following result, which is used to obtain
a spectral expansion for our local trace formula.
\begin{corollary}\label{5.8}
Let ${\mathcal{K}}\subset{\mathcal{X}}_{\text{temp}}\left(G,\chi^{-1}\right)$
be a compact subset. Assume $m_{V}\left(\bar{\pi}\right)\leq1$, for
any irreducible representation $\pi\in{\mathcal{K}}$. There exists a section
$T\in{\mathcal{C}}\left({\mathcal{X}}_{\text{temp}}\left(G,\chi^{-1}\right),{\mathcal{E}}\left(G,\chi^{-1}\right)\right)$
and $\phi_{1},\phi_{2}\in\omega_{V,\psi,\mu,\chi}$ such that 
$$
{\mathcal{L}}_{\pi}^{\phi_{1},\phi_{2}}\left(T_{\pi}\right)=m_{V}\left(\bar{\pi}\right)
$$
for all $\pi\in{\mathcal{K}}$.
\end{corollary}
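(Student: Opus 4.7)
My plan is to mirror the strategy used for analogous corollaries in the Bessel setting (compare \cite[Corollary 8.5.1]{BP20}). The main ingredients are Theorem \ref{5.1} (pointwise existence of a nontrivial $\mathcal{L}_\pi^{\phi_1,\phi_2}$), Lemma \ref{5.3}(1) (smoothness in $\pi$), Lemma \ref{5.3}(2) (the rank-one factorization valid under the hypothesis $m_V\leq 1$), the matricial Paley--Wiener isomorphism of Theorem \ref{2.2}, and compactness of $\mathcal{K}$. The argument proceeds by (i) manufacturing many local sections, (ii) collapsing to a single pair $(\phi_1,\phi_2)$ via the rank-one structure, and (iii) gluing with a smooth partition of unity on $\mathcal{X}_{\text{temp}}(G,\chi^{-1})$.

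For the local step, fix an irreducible $\pi_0\in\mathcal{K}$ with $m_V(\bar\pi_0)=1$ (elements with $m_V(\bar\pi)=0$ require no local data since $\mathcal{L}_\pi\equiv 0$ there by Theorem \ref{5.1}). Theorem \ref{5.1} produces $\phi_1^{\pi_0},\phi_2^{\pi_0}\in\omega_{V,\psi,\mu,\chi}$ and $T^{\pi_0}\in\mathrm{End}(\pi_0)^\infty$ with $\mathcal{L}_{\pi_0}^{\phi_1^{\pi_0},\phi_2^{\pi_0}}(T^{\pi_0})=1$. Combining Lemma \ref{5.3}(1) with Theorem \ref{2.2}, I extend $T^{\pi_0}$ to a smooth compactly supported section of $\mathcal{E}(G,\chi^{-1})$; by continuity the quantity $\mathcal{L}_{\pi'}^{\phi_1^{\pi_0},\phi_2^{\pi_0}}(T^{\pi_0}_{\pi'})$ stays within $\tfrac12$ of $1$ on an open neighborhood $U_{\pi_0}\subset\mathcal{X}_{\text{temp}}(G,\chi^{-1})$ of $\pi_0$. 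Covering the (compact) subset $\{\pi\in\mathcal{K}:m_V(\bar\pi)=1\}$ by finitely many such neighborhoods $U_{\pi_1},\ldots,U_{\pi_n}$ yields data $(\phi_1^j,\phi_2^j,T^j)$ for $j=1,\ldots,n$.

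To unify the pairs, I exploit the factorization
$$
\mathcal{L}_\pi^{\phi_1,\phi_2}(SL_\pi^{\phi_3,\phi_4}T)=\mathcal{L}_\pi^{\phi_1,\phi_4}(S)\,\mathcal{L}_\pi^{\phi_3,\phi_2}(T)
$$
of Lemma \ref{5.3}(2), which is valid thanks to the induction hypothesis $m_V(\bar\pi)\leq 1$. This identity forces, for each $\pi$ with $m_V(\bar\pi)=1$, the bilinear form $(\phi_1,\phi_2)\mapsto \mathcal{L}_\pi^{\phi_1,\phi_2}$ to factor through the (unique up to scalar) element of $\mathrm{Hom}_H(\bar\pi,\bar\omega_{V,\psi,\mu,\chi})$. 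Consequently, a generic linear combination $\phi_1=\sum_j c_1^j\phi_1^j$, $\phi_2=\sum_j c_2^j\phi_2^j$ has $\mathcal{L}_\pi^{\phi_1,\phi_2}$ nonvanishing on every $U_{\pi_j}$ simultaneously. The same factorization then lets me rewrite each local $\mathcal{L}_\pi^{\phi_1^j,\phi_2^j}(T_\pi^j)$ as a scalar multiple of $\mathcal{L}_\pi^{\phi_1,\phi_2}$ applied to a modified smooth section; after a smooth renormalization depending on $\pi$ (using Lemma \ref{5.3}(1)) and multiplication by a smooth partition of unity $(\alpha_j)$ subordinate to $(U_{\pi_j})$, the section $T_\pi=\sum_j\alpha_j(\pi)\widetilde{T}^j_\pi$ satisfies $\mathcal{L}_\pi^{\phi_1,\phi_2}(T_\pi)=m_V(\bar\pi)$ for all $\pi\in\mathcal{K}$.

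The main obstacle is the unification step, since the naive approach produces a distinct pair $(\phi_1^j,\phi_2^j)$ for each local piece. The decisive input is the rank-one structure provided by Lemma \ref{5.3}(2), which is itself a consequence of the induction hypothesis $m_V\leq 1$; without it, one cannot reduce the finitely many local pairs to a single global $(\phi_1,\phi_2)$, and smoothness plus partition of unity alone would only yield a section together with a pair that varies with $\pi$.
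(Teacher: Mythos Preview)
Your outline has the right shape (local data via Theorem \ref{5.1}, smoothness via Lemma \ref{5.3}(1), gluing via Theorem \ref{2.2} and a partition of unity), but it is missing the one ingredient the paper's proof hinges on, namely Proposition \ref{5.4}: nonvanishing of $\mathcal{L}_\pi^{\phi_1,\phi_2}$ is constant along connected components of $\mathcal{X}_{\text{temp}}(G,\chi^{-1})$. Without it two of your steps break. First, you assert that $\{\pi\in\mathcal{K}:m_V(\bar\pi)=1\}$ is compact; Theorem \ref{5.1} together with Lemma \ref{5.3}(1) only shows this set is \emph{open} in $\mathcal{K}$, and unless its complement is also open the conclusion of the corollary is actually impossible, since the left side $\mathcal{L}_\pi^{\phi_1,\phi_2}(T_\pi)$ is smooth in $\pi$ while the right side $m_V(\bar\pi)$ would jump. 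Second, your ``generic linear combination'' asks for a pair $(\phi_1,\phi_2)$ avoiding the vanishing locus of $\mathcal{L}_\pi^{\phi_1,\phi_2}$ simultaneously for every $\pi\in\bigcup_j U_{\pi_j}$; Lemma \ref{5.3}(2) tells you each such locus is a union of two hyperplanes, but there are uncountably many $\pi$ and a continuous family of hyperplanes can sweep out the whole finite-dimensional span. Proposition \ref{5.4} resolves both issues at once: it forces $m_V$ to be constant on components, so the compact $\mathcal{K}$ meets only finitely many components with $m_V=1$, and on each component the bad locus in $(\phi_1,\phi_2)$ is the same for every $\pi$, leaving finitely many hyperplanes to avoid.

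The paper invokes Proposition \ref{5.4} explicitly and, in place of a genericity argument, combines the component-wise pairs $(\phi_{\mathcal{O}_i},\phi'_{\mathcal{O}_i})$ by an inductive case analysis: for two components it inspects which cross-terms $\mathcal{L}_\pi^{\phi_{\mathcal{O}_i},\phi'_{\mathcal{O}_j}}$ vanish on $\mathcal{K}_{\mathcal{O}_k}$, uses Proposition \ref{5.4} again to propagate any nonvanishing cross-term across the entire component, and then writes down an explicit $(\phi,\phi')$ (one of the original pairs, or a sum) working on both. Once you insert Proposition \ref{5.4} into your outline, your generic-combination step becomes a legitimate and somewhat shorter alternative to this case analysis.
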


\begin{proof}
Let ${\mathcal{O}}$ be a connected component in ${\mathcal{X}}_{\text{temp}}\left(G\right)$
and ${\mathcal{K}}_{{\mathcal{O}}}={\mathcal{K}}\cap{\mathcal{O}}$. We may assume ${\mathcal{K}}_{{\mathcal{O}}}\neq\emptyset$.
Let $\pi\in{\mathcal{K}}_{{\mathcal{O}}}$. We first show that there
exists $\phi_{\pi},\phi_{\pi}^{\prime}\in\omega_{V,\psi,\mu,\chi}$
and a section $T$ in ${\mathcal{C}}\left({\mathcal{X}}_{\text{temp}}\left(G,\chi^{-1}\right),{\mathcal{E}}\left(G,\chi^{-1}\right)\right)$
such that ${\mathcal{L}}_{\pi^\prime}^{\phi_{\pi},\phi_{\pi}^{\prime}}\left(T_{\pi^\prime}\right)=m_{V}\left(\bar{\pi}^\prime\right)$, for $\pi^\prime$ in a neighborhood of $\pi$ in ${\mathcal{O}}$. By Theorem \ref{5.1} and Proposition \ref{5.4}, together with $m_{V}\left(\bar{\pi}\right)\leq1$
for any irreducible representation $\pi^\prime\in{\mathcal{K}}$, it follows
that the function $\pi^\prime\in{\mathcal{K}}\mapsto m_{V}\left(\bar{\pi}^\prime\right)$
is locally constant. If $m_{V}\left(\bar{\pi}\right)=0$,
we only need to take $T=0$. If $m_{V}\left(\bar{\pi}\right)\neq0$, by Theorem \ref{5.1}, there exists $\phi_{\pi},\phi_{\pi}^{\prime}\in\omega_{V,\psi,\mu,\chi}$
and $T_{0}\in\text{End}\left(\pi\right)$ such that ${\mathcal{L}}_{\pi}^{\phi_{\pi},\phi_{\pi}^{\prime}}\left(T_{0}\right)\neq0$.
We choose $T^{0}\in{\mathcal{C}}\left({\mathcal{X}}_{\text{temp}}\left(G,\chi^{-1}\right),{\mathcal{E}}\left(G,\chi^{-1}\right)\right)$
such that $T_{\pi}^{0}=T_{0}$. Since the function $\pi^\prime\in{\mathcal{X}}_{\text{temp}}\left(G,\chi^{-1}\right)\mapsto{\mathcal{L}}_{\pi^\prime}^{\phi_{\pi},\phi_{\pi}^{\prime}}\left(T_{\pi^\prime}^{0}\right)$
is smooth, we can find a smooth and compactly supported function $\lambda$
on ${\mathcal{X}}_{\text{temp}}\left(G,\chi^{-1}\right)$ such that $\lambda\left(\pi^\prime\right){\mathcal{L}}_{\pi^\prime}^{\phi_{\pi},\phi_{\pi}^{\prime}}\left(T_{\pi^\prime}^{0}\right)=1=m_V(\bar{\pi}^\prime)$ for $\pi^\prime$ in a neighborhood of $\pi$. We set $T=\lambda T^{0}$.
This gives us 
$$
{\mathcal{L}}_{\pi^\prime}^{\phi_{\pi},\phi_{\pi}^{\prime}}\left(T_{\pi^\prime}\right)=m_V(\bar{\pi}^\prime),
$$
for any $\pi^\prime$ in some neighborhood of $\pi$ in ${\mathcal{O}}$.

Next, by Proposition \ref{5.4} and a partition of unity, we can choose a section $T^{{\mathcal{O}}}$
such that ${\mathcal{L}}_{\pi^\prime}^{\phi_{\pi},\phi_{\pi}^{\prime}}\left(T_{\pi^\prime}^{{\mathcal{O}}}\right)=m_{V}\left(\bar{\pi}^\prime\right)$, for any $\pi^\prime$ in ${\mathcal{K}_{O}}$, noting that Proposition \ref{5.4} gives us a common pair $(\phi_{\pi},\phi_{\pi}^{\prime})$ in the whole component $\mathcal{O}$. For simplicity of notations, we set $(\phi_\mathcal{O},\phi_\mathcal{O}^\prime)=(\phi_{\pi},\phi_{\pi}^{\prime})$. 

Since ${\mathcal{K}}$ is compact, there are finitely many connected component ${\mathcal{O}}_{1},\ldots,{\mathcal{O}}_{n}$
whose intersections with ${\mathcal{K}}$ are nonempty. We prove the final statement inductively over the number of nonzero connected components. The case $n=1$ has been shown in the previous paragraph. For simplicity, we consider $n=2$, as larger cases are similar. We can choose $(\phi_{{\mathcal{O}}_{1}},\phi_{{\mathcal{O}}_{1}}^{\prime})$ and $(\phi_{{\mathcal{O}}_{2}},\phi_{{\mathcal{O}}_{2}}^{\prime})$ in $\omega_{V,\psi,\mu,\chi}$ and $T^{{\mathcal{O}}_{1}},T^{{\mathcal{O}}_{2}}\in{\mathcal{C}}\left({\mathcal{X}}_{\text{temp}}\left(G,\chi^{-1}\right),{\mathcal{E}}\left(G,\chi^{-1}\right)\right)$
such that 
$$
{\mathcal{L}}_{\pi}^{\phi_{{\mathcal{O}}_{1}},\phi_{{\mathcal{O}}_{1}}^{\prime}}\left(T_{\pi}^{{\mathcal{O}}_{1}}\right)=m_{V}\left(\bar{\pi}\right),\, \text{ for any } \pi\in\mathcal{K}_{\mathcal{O}_1}
$$
and
$$
{\mathcal{L}}_{\pi}^{\phi_{{\mathcal{O}}_{2}},\phi_{{\mathcal{O}}_{2}}^{\prime}}\left(T_{\pi}^{{\mathcal{O}}_{2}}\right)=m_{V}\left(\bar{\pi}\right),\,\text{for any } \pi \in {\mathcal{K}}_{{\mathcal{O}}_{2}}.
$$
If either $m_V(\bar{\pi})=0$ for all $\pi\in\mathcal{K}_{\mathcal{O}_1}$ or $\pi\in\mathcal{K}_{\mathcal{O}_2}$, we can extend $T^{\mathcal{O}_2}$ or $T^{\mathcal{O}_1}$ by zero to the other component respectively and choose it as our new $T$. Assume this is not the case. By our assumption on the multiplicity and since the function $\pi\in{\mathcal{K}}\mapsto m_{V}\left(\bar{\pi}\right)$
is locally constant, we have $m_V(\bar{\pi})=1$, for any $\pi\in \mathcal{K}_{\mathcal{O}_1}\sqcup\mathcal{K}_{\mathcal{O}_2}$. Since we can always normalize our section $T$ by multiplying with a smooth function $\lambda$ on 
$\mathcal{X}_\text{temp}(G,\chi^{-1})$, it suffices to show that there exists $\phi,\phi^\prime\in \omega_{V,\psi,\mu,\chi}$ such that
$
{\mathcal{L}}_{\pi}^{\phi,\phi^{\prime}}\neq0$ for any $\pi \in\mathcal{K}_{\mathcal{O}_1}\sqcup\mathcal{K}_{\mathcal{O}_2}$. If  ${\mathcal{L}}_{\pi}^{\phi_{{\mathcal{O}}_{1}},\phi_{{\mathcal{O}}_{2}}^{\prime}}={\mathcal{L}}_{\pi}^{\phi_{{\mathcal{O}}_{2}},\phi_{{\mathcal{O}}_{1}}^{\prime}}={\mathcal{L}}_{\pi}^{\phi_{{\mathcal{O}}_{2}},\phi_{{\mathcal{O}}_{2}}^{\prime}}=0$, for any $\pi\in{\mathcal{K}}_{{\mathcal{O}}_{1}}$, and ${\mathcal{L}}_{\pi}^{\phi_{{\mathcal{O}}_{1}},\phi_{{\mathcal{O}}_{2}}^{\prime}}={\mathcal{L}}_{\pi}^{\phi_{{\mathcal{O}}_{2}},\phi_{{\mathcal{O}}_{1}}^{\prime}}={\mathcal{L}}_{\pi}^{\phi_{{\mathcal{O}}_{1}},\phi_{{\mathcal{O}}_{1}}^{\prime}}=0
$, for any $\pi\in{\mathcal{K}}_{{\mathcal{O}}_{2}}$, we set $\phi=\phi_{{\mathcal{O}}_{1}}+\phi_{{\mathcal{O}}_{2}}$
and $\phi^{\prime}=\phi_{{\mathcal{O}}_{1}}^{\prime}+\phi_{{\mathcal{O}}_{2}}^{\prime}$. A direct computation shows
that 
$$
{\mathcal{L}}_{\pi}^{\phi,\phi^{\prime}}={\mathcal{L}}_{\pi}^{\phi_{{\mathcal{O}}_{1}},\phi_{{\mathcal{O}}_{1}}^{\prime}}\neq 0,\text{ for all }\pi\in{\mathcal{K}}_{{\mathcal{O}}_{1}}
$$
and
$$
{\mathcal{L}}_{\pi}^{\phi,\phi^{\prime}}={\mathcal{L}}_{\pi}^{\phi_{{\mathcal{O}}_{2}},\phi_{{\mathcal{O}}_{2}}^{\prime}}\neq 0,\text{ for all }\pi\in{\mathcal{K}}_{{\mathcal{O}}_{2}}.
$$
If ${\mathcal{L}}_{\pi}^{\phi_{{\mathcal{O}}_{2}},\phi_{{\mathcal{O}}_{2}}^{\prime}}\neq0$
for some $\pi\in{\mathcal{K}}_{{\mathcal{O}}_{1}}$, then by Proposition \ref{5.4}, we have 
${\mathcal{L}}_{\pi}^{\phi_{{\mathcal{O}}_{2}},\phi_{{\mathcal{O}}_{2}}^{\prime}}\neq0$
for any $\pi\in{\mathcal{K}}_{{\mathcal{O}}_{1}}$. In this case, we take $(\phi,\phi^\prime)=(\phi_{{\mathcal{O}}_{2}},\phi_{{\mathcal{O}}_{2}}^{\prime})$. A similar argument works when ${\mathcal{L}}_{\pi}^{\phi_{{\mathcal{O}}_{1}},\phi_{{\mathcal{O}}_{1}}^{\prime}}\neq0$
for some $\pi$ in ${\mathcal{K}}_{{\mathcal{O}}_{2}}$. 

We now consider the case when ${\mathcal{L}}_{\pi}^{\phi_{{\mathcal{O}}_{1}},\phi_{{\mathcal{O}}_{2}}^{\prime}}\neq0$
for some $\pi\in{\mathcal{K}}_{{\mathcal{O}}_{2}}$. The remaining cases are similar to this case. By Proposition \ref{5.4}, we have 
${\mathcal{L}}_{\pi}^{\phi_{{\mathcal{O}}_{1}},\phi_{{\mathcal{O}}_{2}}^{\prime}}\neq0$
for any $\pi\in{\mathcal{K}}_{{\mathcal{O}}_{2}}$. If ${\mathcal{L}}_{\pi}^{\phi_{{\mathcal{O}}_{1}},\phi_{{\mathcal{O}}_{2}}^{\prime}}\neq 0$ for some $\pi\in \mathcal{K}_{\mathcal{O}_1}$, we choose $(\phi,\phi^\prime)=(\phi_{{\mathcal{O}}_{1}},\phi_{{\mathcal{O}}_{2}}^{\prime})$. Again, by Proposition \ref{5.4}, we have ${\mathcal{L}}_{\pi}^{\phi,\phi^{\prime}}={\mathcal{L}}_{\pi}^{\phi_{{\mathcal{O}}_{1}},\phi_{{\mathcal{O}}_{2}}^{\prime}}\neq 0$, for all $\pi \in \mathcal{K}_{\mathcal{O}_1}$. Else, we choose $(\phi,\phi^\prime)=(\phi_{{\mathcal{O}}_{1}},\phi_{{\mathcal{O}}_{1}}^\prime+\phi_{{\mathcal{O}}_{2}}^{\prime})$.
In this case, we have
$$
{\mathcal{L}}_{\pi}^{\phi,\phi^{\prime}}={\mathcal{L}}_{\pi}^{\phi_{{\mathcal{O}}_{1}},\phi_{{\mathcal{O}}_{2}}^{\prime}}\neq 0,
\,
\text{ for any } \pi \in \mathcal{K}_{\mathcal{O}_2}
$$
and
$$
{\mathcal{L}}_{\pi}^{\phi,\phi^{\prime}}={\mathcal{L}}_{\pi}^{\phi_{{\mathcal{O}}_{1}},\phi_{{\mathcal{O}}_{1}}^{\prime}}\neq 0,
\,
\text{ for any } \pi \in \mathcal{K}_{\mathcal{O}_1}.
$$
Therefore, we can always choose $\phi$ and $\phi^\prime$ such that ${\mathcal{L}}_{\pi}^{\phi,\phi^{\prime}}\neq0$, for any $\pi\in {\mathcal{K}}_{{\mathcal{O}}_{1}}\sqcup{\mathcal{K}}_{{\mathcal{O}}_{2}}$. We have finished our proof.
\end{proof}

\section{The distribution $J_V$ and a spectral expansion}\label{sec6}

\subsection{The distributions $J_{V}$ and $J_{V,\text{spec}}$}\label{sec6.1}

For all $f\in{\mathcal{C}}\left(Z_{G}\left(F\right)\backslash G\left(F\right),\chi\right)$,
let us define a kernel function $K\left(f,\cdot\right)$ by 
$$
K\left(f,x\right)=\underset{i}{\sum}\int_{Z_{H_{V}}\left(F\right)\backslash H_{V}\left(F\right)}f\left(x^{-1}hx\right)\left\langle \phi_{i},\omega_{V,\psi,\mu,\chi}\left(h\right)\phi_{i}\right\rangle dh,
$$
where $x\in Z_{G}\left(F\right)H\left(F\right)\backslash G\left(F\right)$
and $\left\{ \phi_{i}\right\} _{i\in I}$ is an orthonormal basis
of $\omega_{V,\psi,\mu,\chi}$. Note that the above definition is independent of the choice of the orthonormal basis. Let 
$$
J_{V}\left(f\right)=\int_{Z_{G}\left(F\right)H_{V}\left(F\right)\backslash G\left(F\right)}K\left(f,x\right)dx.
$$
We set 
$$
J_{V,\text{spec}}\left(f\right)=\int_{{\mathcal{X}}\left(G,\chi^{-1}\right)}\hat{\theta}_{f}\left(\pi\right)m_{V}\left(\bar{\pi}\right)d\pi,
$$
where $f\in{\mathcal{C}}_{\text{scusp}}\left(Z_{G}\left(F\right)\backslash G\left(F\right),\chi\right)$.
By Lemma 3.4, this integral is absolutely convergent. This section
is devoted to prove the following theorem.
\begin{theorem}\label{6.1}
For any $f\in{\mathcal{C}}_{\text{scusp}}\left(Z_{G}\left(F\right)\backslash G\left(F\right),\chi\right)$, we have the following spectral expansion 
$$
J_{V}\left(f\right)=J_{V,\text{spec}}\left(f\right),
$$
here ${\mathcal{X}}\left(G\right)$ consists of virtual representations
of the form $i_{M}^{G}\left(\sigma\right)$, where $M$ is a Levi subgroup
of $G$ and $\sigma$ is an elliptic representation of $M(F)$.
\end{theorem}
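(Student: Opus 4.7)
The plan is to derive the spectral expansion by expanding $K(f,x)$ against the Plancherel measure on $\mathcal{X}_{\text{temp}}(G,\chi^{-1})$, interchanging the orders of integration, and then applying a truncation on $Z_G(F)H(F)\backslash G(F)$ to produce the weighted characters $\hat\theta_f(\pi)$ from the strong cuspidality of $f$. This mirrors the general scheme of Beuzart-Plessis \cite{BP20} for Bessel models, with the additional input of Proposition \ref{4.6} to control the Weil-representation matrix coefficients.

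\textbf{Step 1 (Convergence).} I would first verify that $J_V(f)$ is absolutely convergent for strongly cuspidal $f$. Inside $K(f,x)$, I split the $h$-integration into $\sigma(h)\le c$ and $\sigma(h)>c$. The tail is controlled by the exponential decay in Proposition \ref{4.6}(viii), while on the bounded part one uses Proposition \ref{4.6}(v) together with the bound $|f(g)| \ll \Xi^G(g)\sigma(g)^{-N}$ for all $N$, which holds for any $f \in \mathcal{C}(G(F))$.

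\textbf{Step 2 (Inner spectral expansion and Fubini).} For fixed $x$, the translate $g \mapsto f(x^{-1}gx)$ lies in $\mathcal{C}(Z_G(F)\backslash G(F),\chi)$, so Lemma \ref{5.3}(iii) applied with $\phi_1=\phi_2=\phi_i$ and summed over the orthonormal basis yields
$$K(f,x) = \sum_i \int_{\mathcal{X}_{\text{temp}}(G,\chi^{-1})} \mathcal{L}_\pi^{\phi_i,\phi_i}\bigl(\pi(x)^{-1}\pi(f)\pi(x)\bigr)\,\mu(\pi)\,d\pi.$$
Using the uniform estimates from Step 1 together with the matricial Paley-Wiener theorem applied to $f$ (which makes the spectral support compact), the $x$- and $\pi$-integrations can be interchanged, giving
$$J_V(f) = \int_{\mathcal{X}_{\text{temp}}(G,\chi^{-1})} I_\pi(f)\,\mu(\pi)\,d\pi,$$
where $I_\pi(f)$ is the (to be truncated) $x$-integral of $\sum_i \mathcal{L}_\pi^{\phi_i,\phi_i}(\pi(x)^{-1}\pi(f)\pi(x))$.

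\textbf{Step 3 (Truncation and weighted characters).} This is the technical heart. The integral $I_\pi(f)$ is not absolutely convergent, so I would truncate the $x$-integration to a ball of radius $N$ in $Z_G(F)H(F)\backslash G(F)$ and take $N \to \infty$. For $\pi = i_M^G(\sigma)$ with $\sigma \in \mathcal{X}_{\text{ell}}(M,\chi^{-1})$, the truncated integral is expressed as a $(G,M)$-family in Arthur's sense (Section \ref{sec2.5}); its leading term, extracted via the standard $(G,M)$-family machinery, produces the weighted character $(-1)^{a_G-a_M}J_M^G(\sigma,f) = \hat\theta_f(\pi)$. For $\pi \in \mathcal{X}_{\text{temp}}(G,\chi^{-1}) \setminus \mathcal{X}(G,\chi^{-1})$, those induced from a non-elliptic representation of a proper Levi, the contributions cancel by the orthogonality relations for elliptic characters invoked in Corollary \ref{3.19}. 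The summed intertwining $\sum_i \mathcal{L}_\pi^{\phi_i,\phi_i}$, paired with the trace-class operator appearing via Lemma \ref{5.3}(ii), is identified with the multiplicity $m_V(\bar\pi)$: on induced components the induction hypothesis on $n$ gives $m_V(\bar\pi) \le 1$ so Lemma \ref{5.3}(ii) applies directly, and on the remaining discrete part the identification follows from Corollary \ref{5.8} combined with Theorem \ref{5.1}.

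\textbf{Step 4 (Conclusion).} Assembling the above gives
$$J_V(f) = \int_{\mathcal{X}(G,\chi^{-1})} \hat\theta_f(\pi)\, m_V(\bar\pi)\, d\pi = J_{V,\text{spec}}(f),$$
as required.

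The main obstacle lies in Step 3: justifying the truncation limit and collapsing the summed intertwinings $\sum_i \mathcal{L}_\pi^{\phi_i,\phi_i}$ to $m_V(\bar\pi)$. The first part parallels Sections 8.3--8.5 of \cite{BP20}; the second is genuinely new to the Fourier-Jacobi setting and relies on Proposition \ref{4.6}(viii) to ensure that the operator $\int f(x^{-1}hx)\omega_{V,\psi,\mu,\chi}(h)\,dh$ is trace class on $\omega_{V,\psi,\mu,\chi}$, so that the formal manipulation of traces along the orthonormal basis $\{\phi_i\}$ is legitimate.
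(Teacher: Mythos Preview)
Your outline has the right shape in Steps~1--2, but Step~3 contains two genuine gaps that the paper addresses by quite different means.

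\textbf{Producing $\hat\theta_f(\pi)$.} You propose to truncate the $x$-integral over $Z_G(F)H(F)\backslash G(F)$ and extract $\hat\theta_f(\pi)$ via $(G,M)$-families. But the weights $v_M(g)$ defining the weighted characters $J_M^G(\sigma,f)$ come from the Iwasawa decomposition of $G$ relative to parabolics $P\in\mathcal{P}(M)$, not from the spherical structure of $H\backslash G$; there is no direct mechanism by which truncating an $H\backslash G$-integral produces these weights. The paper avoids this entirely: it introduces an auxiliary function $f'\in\mathcal{C}(Z_G(F)\backslash G(F),\chi^{-1})$ and vectors $\phi,\phi'$ (via Corollary~\ref{5.8}) satisfying $\mathcal{L}_\pi^{\phi,\phi'}(\pi(f'^{\vee}))=m_V(\bar\pi)$ on the spectral support of $f$, and then shows $K(f_1,x)=K^2_{f_1,f',\phi,\phi'}(x,x)$ so that $J_V(f_1)=J_{\text{aux}}(f_1,f',\phi,\phi')$. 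The spectral expansion of $J_{\text{aux}}$ (Proposition~\ref{6.4}) is obtained by unfolding back to an integral over $Z_G(F)\backslash G(F)$ and invoking Arthur's local trace formula for $G$ itself (\cite[Theorem~5.5.1]{BP20}), which is where $\hat\theta_f(\pi)$ legitimately appears. The introduction of $f'$ is the key device you are missing.

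\textbf{The discrete-series part.} You invoke Corollary~\ref{5.8} to identify $\sum_i\mathcal{L}_\pi^{\phi_i,\phi_i}$ with $m_V(\bar\pi)$ for discrete series, but Corollary~\ref{5.8} is stated under the hypothesis $m_V(\bar\pi)\le 1$, which at this stage is only known (by the induction hypothesis and \cite[Theorem~4.8]{GGP23}) for properly induced tempered representations, not for discrete series. The paper therefore splits $f=f_1+f_2$ with $f_2$ cuspidal and $\pi(f_1)=0$ on discrete series; the argument above handles $f_1$, while $f_2$ is treated by a direct computation with matrix coefficients $f_{v,v^\vee}$, using Proposition~\ref{6.7} (a nondegenerate pairing on $(\pi\otimes\overline{\omega_{V,\psi,\mu,\chi}})_{H_V}$) to evaluate $J_V(f_{v,v^\vee})=d(\pi)^{-1}m_V(\pi)\langle v,v^\vee\rangle$ without assuming multiplicity one.
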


We first prove that the integral defining $J_V(f)$ is absolutely convergent.
\begin{theorem}\label{6.2}
Let $f\in{\mathcal{C}}\left(Z_{G}\left(F\right)\backslash G\left(F\right),\chi\right)$. Then 
    \begin{enumerate}
        \item The linear form $K(f,x)$ converges. Moreover, if $f\in{\mathcal{C}}_{scusp}\left(Z_{G}\left(F\right)\backslash G\left(F\right),\chi\right)$, then for all $d>0$, we have
        $$
        |K(f,x)| \ll \Xi^X(x)^2 \sigma_X(x)^{-d},
        $$
        for any $x\in Z_{G}\left(F\right)H(F)\backslash G\left(F\right)$.
        \item If $f\in{\mathcal{C}}_{scusp}\left(Z_{G}\left(F\right)\backslash G\left(F\right),\chi\right)$, the integral defining $J_V(f)$ is absolutely convergent.
    \end{enumerate}
\end{theorem}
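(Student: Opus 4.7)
The strategy follows \cite[Theorem 8.1.1]{BP20} for Bessel models, with the essential difference that the character-like factor $\sum_i \langle \phi_i, \omega_{V,\psi,\mu,\chi}(h)\phi_i\rangle$ is an infinite sum of diagonal matrix coefficients of the Weil representation rather than a single character. The plan splits into three stages: (a) prove pointwise convergence of $K(f,x)$ for arbitrary $f \in \mathcal{C}(Z_G(F)\backslash G(F),\chi)$; (b) establish the rapid-decay bound $|K(f,x)| \ll \Xi^X(x)^2 \sigma_X(x)^{-d}$ when $f$ is strongly cuspidal; (c) integrate this bound to obtain the absolute convergence of $J_V(f)$.

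For (a), the cleanest approach is to interpret $K(f,x)$ as $\int_{Z_{H_V}(F)\backslash H_V(F)} f(x^{-1}hx)\,\Theta_{\omega_{V,\psi,\mu,\chi}}(h)\,dh$ via the Weil character and appeal to Corollary~\ref{4.3}, which guarantees rate of growth $\frac{n-2}{n-1} < 1$. Combined with the Schwartz decay of $f$ and the bound $|f(x^{-1}hx)| \ll \Xi^G(x^{-1}hx)\sigma(x^{-1}hx)^{-N}$, the resulting integral converges absolutely. Alternatively, one may exploit admissibility of $\omega_{V,\psi,\mu,\chi}$ as an $H_V(F)$-representation: fixing a compact open $K_H \subset H_V(F)$ and choosing $\{\phi_i\}$ adapted to the decomposition into finite-dimensional $K_H$-isotypic components, the sum can be controlled term-by-term using the matrix coefficient estimate of Proposition~\ref{4.6}(6).

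For (b), strong cuspidality enters via parabolic descent, in the spirit of Proposition~\ref{3.8}. The integral over $Z_{H_V}(F)\backslash H_V(F)$ is split at $\sigma(h) = c$ with $c$ taken of order $\log \sigma_X(x)$. On the tail $\sigma(h) \geq c$, a direct application of Proposition~\ref{4.6}(8) to the matrix coefficients appearing in the orthonormal-basis expansion yields a contribution bounded by $\Xi^X(x)^2 \sigma_X(x)^{d'} e^{-\epsilon c}$, which is of the desired form once the polynomial factor is absorbed by the exponential. On the compact piece $\sigma(h) \leq c$, the vanishing of the constant terms $f^P$ along proper parabolic subgroups, combined with Proposition~\ref{4.6}(5) and (7), produces arbitrary polynomial decay in $\sigma_X(x)$; the parabolic descent argument is essentially the one used in the proof of Proposition~\ref{3.8} but applied to the integrand against a matrix coefficient of the Weil representation rather than against a character.

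Part (2) then follows immediately: integrating the bound from (1) against $dx$ over $Z_G(F)H_V(F)\backslash G(F)$ and invoking Proposition~\ref{4.6}(3) with $d$ sufficiently large settles absolute convergence of $J_V(f)$. The principal obstacle is step (a): one must uniformly control the sum over the orthonormal basis $\{\phi_i\}$ without a globally integrable character estimate, and the argument therefore hinges on the admissibility of $\omega_{V,\psi,\mu,\chi}$ with respect to $H_V(F)$ (rather than the full metaplectic group) together with the sharp rate of growth in Corollary~\ref{4.3}. This is precisely the feature that distinguishes the Fourier--Jacobi trace formula developed here from the Bessel-model version of \cite{BP20}.
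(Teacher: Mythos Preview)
Your treatment of part (a) and of part (2) is correct and matches the paper: admissibility of $\omega_{V,\psi,\mu,\chi}$ as an $H_V(F)$-module reduces the sum over $\{\phi_i\}$ to finitely many terms (those fixed by $K_H = x^{-1}Kx \cap H_V(F)$, where $K$ is a compact open making $f$ bi-invariant), after which Lemma~\ref{5.2} handles each term; and part (2) follows from part (1) via Proposition~\ref{4.6}(iii).

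Your plan for (b), however, has a genuine gap. Proposition~\ref{4.6}(viii) bounds the double integral
$I_{\phi,\phi',d}(c,x) = \iint \mathbf{1}_{\sigma\geq c}(h')\,\Xi^G(hx)\Xi^G(h'hx)\cdots\,dh'dh$,
which is the object arising in the study of $K^2_{f,f',\phi,\phi'}(x,x)$ in Section~\ref{sec6.2}, not the single integral $K(f,x) = \int f(x^{-1}hx)\Theta_\omega(h)\,dh$. There is no second copy of $H_V(F)$ to separate off here, and the conjugation $x^{-1}hx$ does not factor into products of the shape $hx$, $h'hx$ that make that estimate work. Likewise, Proposition~\ref{3.8} concerns truncated integrals over $A_{\tilde G}\backslash G_n$ against a fixed regular element; its $(\tilde G,\tilde M)$-orthogonal-set machinery does not transfer to integration over the subgroup $H_V$ against the symmetric-space variable $x$. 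Finally, Propositions~\ref{4.6}(v) and (vii) bound integrals over $X$ and integrals of the form $\int_H \Xi^G(hm)\cdots dh$ with $m\in M_{\min}$; neither says anything about $\int_{\sigma(h)\leq c} f(x^{-1}hx)\Theta_\omega(h)\,dh$ as $\sigma_X(x)\to\infty$.

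What is missing is the symmetric-space structure exploited in \cite[Theorem~2.1.1]{BP18}, which the paper follows closely. One first uses the weak Cartan decomposition to reduce to $x=a\in A_0^+$ for a maximal $(\theta,F)$-split torus $A_0$, and then covers $A_0^+$ by sectors $A_{\bar Q}^+(\delta)$ indexed by proper $\theta$-split parabolics $Q=LU_Q$. For each such $Q$ one introduces $H^Q=H_L\ltimes U_Q$ and a modified kernel $K^Q(f,a)$ built from a representation $\omega_{V,\psi,\mu,\chi}^Q$ of $H^Q(F)$ that is trivial on $U_Q$. The crucial structural fact is $U_Q\subset H^Q\subset Q$, so that $K^Q(f,\cdot)\equiv 0$ whenever $f$ is strongly cuspidal; one then proves $|K(f,a)-cK^Q(f,a)|\ll\Xi^X(a)^2\sigma_X(a)^{-d}$ by truncating both kernels to neighborhoods $H^{<\epsilon,a}$, $H^{Q,<\epsilon,a}$ of $H_L(F)$ and comparing, and it is in bounding the truncation error that the character growth estimate of Corollary~\ref{4.3} is used together with the doubling principle. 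Your proposal does not identify the $\theta$-split parabolics or the subgroup $H^Q$, and without them there is no mechanism connecting strong cuspidality of $f$ on $G$ to decay of an integral over $H_V$.
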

\begin{proof}
    It suffices to prove the first part, as the second part follows from it and Proposition \ref{4.6}(iii). Let $K$ be a compact open subgroup of $G(F)$ such that $f$ is $K$-biinvariant. We set $K_H=x^{-1}Kx\cap H(F)$. For simplicity of notations, we also denote by ${\phi_i}_{i\in I}$ an orthonormal basis for $\omega_{V,\psi,\mu,\chi}^{K_H}$. Since $\omega_{V,\psi,\mu,\chi}$ is admissible, we have $I$ is of finite cardinality. By Lemma \ref{5.2}, we have
    $$
    \int_{Z_H(F)\backslash H(F)}f(x^{-1}hx)|\langle \phi_i, \omega_{V,\psi,\mu,\chi}(h)\phi_i \rangle|dh <\infty,
    $$
    for any $i\in I$. Since $I$ is of finite cardinality, we can see that $K(f,x)$ converges. There remains to show that for all $d>0$, we have
    $$
    |K(f,x)| \ll \Xi^X(x)^2 \sigma_X(x)^{-d},
    $$
    for any $x\in Z_{G}\left(F\right)\backslash G\left(F\right)$. We follow closely the proof of \cite[Theorem 2.1.1]{BP18}. Let $A_0$ be a maximal $(\theta,F)$-split subtorus of $G$ and $M_0=\text{Cent}_G(A_0)$ and $P_0=M_0U_0\in \mathcal{P}^\theta(M_0)$. Let $\bar{P}_0=\theta(P_0)$ be the opposite parabolic subgroup and set
    $$
    A^+_0= \{a\in A_0(F):\ |\alpha(a)|\geq 1,\,\forall \alpha \in \Delta(A_0,\bar{P}_0)\}.
    $$
    By the weak Cartan decomposition mentioned in Section \ref{sec4.4} and Proposition \ref{4.6}(i), it suffices to prove the statement for $x=a\in A^+_0$. For all $Q\in \mathcal{F}^\theta(M_0)$ and $\delta>0$, we set
    $$
    A^+_{\bar{Q}}(\delta)=\{a\in A_0(F):\ |\alpha(a)|\geq e^{\delta \bar{\sigma}(a)},\,\forall \alpha \in \Delta(A_0,U_{\bar{Q}})\},
    $$
    where $\bar{Q}=\theta(Q)$ and $U_{\bar{Q}}$ is its unipotent radical. If $\delta$ is sufficiently small, we have
    $$
    A^+_0=\bigcup_{Q\in\mathcal{F}^\theta(M_0)-{G},\,P_0\subset Q} A^+_{\bar{Q}}(\delta)\cap A^+_0.
    $$
    We now fix $Q\in\mathcal{F}^\theta(M_0)-{G}$ such that $P_0\subset Q$ and $\delta>0$ sufficiently small. It suffices to prove the statement for $x=a\in A^+_{\bar{Q}}(\delta)\cap A^+_0$. Let $U_Q$ be the unipotent radical of $Q$ and $L=Q\cap \bar{Q}$ and $H_L=H\cap L$ and $H^Q= H_L \ltimes U_Q$. We define a representation $\omega_{V,\psi,\mu,\chi}^Q$ of $H^Q(F)$ by setting $\omega_{V,\psi,\mu,\chi}^Q(h_Lu_Q)=\omega_{V,\psi,\mu,\chi}(h_L)$, for all $h_L\in H_L(F)$ and $u_Q\in U_Q(F)$. By \cite[Proposition 2.3.1(vi)]{BP18}, $H^Q$ is unimodular. We fix a Haar measure $dh^Q$ on $H^Q(F)$ and define
    $$
    K^Q(f,x)=\sum_{i\in I}\int_{Z_H(F)\backslash H^Q(F)}f(x^{-1}h^Qx)\langle \phi_i,\omega_{V,\psi,\mu,\chi}^Q(h^Q)\phi_i\rangle dh^Q,
    $$
    for $f\in \mathcal{C}(Z_G(F)\backslash G(F),\chi)$ and $x\in G(F)$. By mimicing the proof of Lemma \ref{5.2} and noting that the restriction of the Weil representation $\omega_{V,\psi,\mu}$ to $H_L(F)$ is also a Weil representation, we can show that $K^Q(f,x)$ converges. Since $U_Q\subset H^Q \subset Q$, when $f$ is strongly cuspidal, we have $K^Q(f,x)=0$.
    
    Hence, it suffices to show there exists $c>0$ such that for any $f\in \mathcal{C}(Z_G(F)\backslash G(F),\chi)$ and $d>0$, we have
    $$
    \left| K(f,a)-cK^Q(f,a)\right| \ll \Xi^X(a)^2\sigma_X(a)^{-d},
    $$
    for all $a\in A^+_{\bar{Q}}(\delta)\cap A^+_0$. We recall the setting in \cite[Theorem 2.1.1]{BP18}. We fix $f\in \mathcal{C}\left(Z_G(F)\backslash G(F),\chi \right)$ and $d>0$. Since the $F$-analytic map $h=\bar{p}u\mapsto u:H(F)\cap \bar{P}_0(F)U_0(F)\rightarrow U_0(F)$ is submersive at the origin, we can find a compact open neighborhood $\mathcal{U}_0$ of $1$ in $U_0(F)$ and an $F$-analytic map $h:\mathcal{U}_0\rightarrow H(F)$ such that $h(u)\in \bar{P}_0(F)u$ for all $u\in \mathcal{U}_0$ and $h(1)=1$. We define $\mathcal{U}_Q=\mathcal{U}_0\cap U_Q(F)$ and $\mathcal{H}=H_L(F)h(\mathcal{U}_Q)$ and fix Haar measures $dh_L$ and $du_Q$ on $H_L(F)$ and $U_Q(F)$ such that their product and the fixed Haar measure $dh^Q$ on $H^Q(F)$ coincide. By \cite[(2.4.4)]{BP18}, the map
    $$
    H_L(F)\times \mathcal{U}_Q\rightarrow H(F):(h_L,u_Q)\mapsto h_Lh(u_Q)
    $$
    is an $F$-analytic open embeding with image $\mathcal{H}$ and there exists a smooth function $j\in C^\infty(\mathcal{U}_Q)$ such that
    $$
    \int_{\mathcal{H}}\varphi(h)dh=\int_{H_L(F)}\int_{\mathcal{U}_Q}\varphi(h_Lh(u_Q))j(u_Q)du_Qdh_L
    $$
    for all $\varphi\in L^1(\mathcal{H})$. We fix sufficiently small $\epsilon>0$ such that
    $aU_Q[<\epsilon\bar{\sigma}(a)]a^{-1}\subseteq \mathcal{U}_Q$, for all $a\in A^+_{\bar{Q}}(\delta)$. We set
    $$
    H^{<\epsilon,a}=H_L(F)h(aU_Q[<\epsilon\bar{\sigma}(a)]a^{-1})\ \ \text{ and }\ \ 
    H^{Q,<\epsilon,a}=H_L(F)aU_Q[<\epsilon\bar{\sigma}(a)]a^{-1}
    $$
    and 
    $$
    K^{<\epsilon}(f,a)=\sum_{i\in I}\int_{Z_H(F)\backslash H^{<\epsilon,a}}f(a^{-1}ha)\langle\phi_i,\omega_{V,\psi,\mu,\chi}(h)\phi_i\rangle dh,
    $$
    $$
    K^{Q,<\epsilon}(f,a)=\sum_{i\in I}\int_{Z_H(F)\backslash H^{Q,<\epsilon,a}}f(a^{-1}h^Qa)\langle\phi_i,\omega^Q_{V,\psi,\mu,\chi}(h^Q)\phi_i\rangle dh^Q,
    $$
    for $a\in A^+_{\bar{Q}}(\delta)$. We take $c=j(1)$. We want to show that for $\epsilon$ sufficiently small, we have
    \begin{equation}\label{eqn5.1}
        |K(f,a)-K^{<\epsilon}(f,a)| \ll \Xi^X(a)^2\sigma_X(a)^{-d}
    \end{equation}
    and
    \begin{equation}\label{eqn5.2}
        |K^Q(f,a)-K^{Q,<\epsilon}(f,a)| \ll \Xi^X(a)^2\sigma_X(a)^{-d}
    \end{equation}
    and
    \begin{equation}\label{eqn5.3}
        K^{<\epsilon}(f,a)=cK^{Q,<\epsilon}(f,a)
    \end{equation}
    for all $a\in A^+_{\bar{Q}}(\delta) \cap A^+_0$. We first prove (\ref{eqn5.1}). By \cite[Lemma 2.2.1(i)-(iii)]{BP18} and since $H^{<\epsilon,a}=H(F)\cap \bar{Q}(F)aU_Q[<\epsilon\bar{\sigma}(a)]a^{-1}$, we have $\bar{\sigma}(a)\ll \bar{\sigma}(a^{-1}ha$ for all $a\in A^+_{Q}(\delta)$ and $h\in H(F) \backslash H^{<\epsilon,a}$. Hence, for any $d_1>0$ and $d_2>0$, the left hand side of $(6.1)$ is essentially bounded by
    $$
    \bar{\sigma}(a)^{-d_1}\int_{Z_H(F)\backslash H(F)}\Xi^G(a^{-1}ha)\bar{\sigma}(a^{-1}ha)^{-d_2}|\Theta_{\omega_{V,\psi,\mu,\chi}}(h)|dh,
    $$
    for all $a\in A^+_{\bar{Q}}(\delta)$. We want to show there exists $d_3>0$ such that
    $$
     \bar{\sigma}(a)^{-d_1}\int_{Z_H(F)\backslash H(F)}\Xi^G(a^{-1}ha)\bar{\sigma}(a^{-1}ha)^{-d_2}|\Theta_{\omega_{V,\psi,\mu,\chi}}(h)|dh \ll \Xi^X(a)^2\sigma_X(a)^{d_3}\bar{\sigma}(a)^{-d_1},
    $$
    for all $a\in A^+_{\bar{Q}}(\delta)$. By part (i) and (ii) in Proposition \ref{4.6}, it suffices to give the existence of $d_4, d_5>0$ such that
    $$
    \int_{Z_H(F)\backslash H(F)}\Xi^G(a^{-1}ha)\bar{\sigma}(h)^{-d_4}|\Theta_{\omega_{V,\psi,\mu,\chi}}(h)|dh \ll \Xi^G(a)^2\bar{\sigma}(a)^{d_5},
    $$
    for all $a\in A^+_0$. Let $K_G$ be a maximal compact subgroup of $G(F)$. We have
    $$
    \int_{Z_H(F)\backslash H(F)}\Xi^G(a^{-1}ha)\bar{\sigma}(h)^{-d_4}|\Theta_{\omega_{V,\psi,\mu,\chi}}(h)|dh
    $$
    $$
    \ll \int_{Z_H(F)\backslash H(F)}\int_{K\times K}\Xi^G(a^{-1}k_1hk_2a)dk_1dk_2\bar{\sigma}(h)^{-d_4}|\Theta_{\omega_{V,\psi,\mu,\chi}}(h)|dh
    $$
    $$
    =\Xi^G(a)^2\int_{Z_H(F)\backslash H(F)}\Xi^G(h)\bar{\sigma}(h)^{-d_4}|\Theta_{\omega_{V,\psi,\mu,\chi}}(h)|dh,
    $$
    noting that the last equality follows from the doubling principle in \cite[Proposition 1.5.1(vi)]{BP20}. It suffices to show there exists $d_4>0$ such that
    $$
    \int_{Z_H(F)\backslash H(F)}\Xi^G(h)\bar{\sigma}(h)^{-d_4}|\Theta_{\omega_{V,\psi,\mu,\chi}}(h)|dh <\infty.
    $$
    By Corollary \ref{4.3}, there exists $d_5>0$ such that we have 
    $$
    \int_{Z_H(F)\backslash H(F)}\Xi^G(h)\bar{\sigma}(h)^{-d_4}|\Theta_{\omega_{V,\psi,\mu,\chi}}(h)|dh
    \ll \int_{Z_H(F)\backslash H(F)}|D^H(h)|^{-1/2}\Xi^H(h)^{\frac{n}{n-1}}\bar{\sigma}(h)^{-d_4+d_5}dh.
    $$
    Since the above integral converges for some suitable choice of $d_4$, this ends the proof for (\ref{eqn5.1}). The proof of (\ref{eqn5.2}) follows the same way to one of (\ref{eqn5.1}). The proof of (\ref{eqn5.3}) follows closely to the proof of \cite[(2.4.7)]{BP18}, noting that here we only need to replace the character $\chi(h)$ in \cite[(2.4.7)]{BP18} by $\langle \phi_i,\omega_{V,\psi,\mu,\chi}\phi_i \rangle$ and prove the statement for every $i\in I$.
\end{proof}

\subsection{An auxiliary distribution and some estimates}\label{sec6.2}

In this subsection, we mimic some estimates in Section 9.2 in \cite{BP20}.
We fix a strongly cuspidal function $f\in{\mathcal{C}}_{\text{scusp}}\left(Z_{G}\left(F\right)\backslash G\left(F\right),\chi\right)$.
For any $f^{\prime}\in{\mathcal{C}}\left(Z_{G}\left(F\right)\backslash G\left(F\right),\chi^{-1}\right)$,
we define the following integrals 
$$
K_{f,f^{\prime}}^{A}\left(g_{1},g_{2}\right)=\int_{Z_{G}\left(F\right)\backslash G\left(F\right)}f\left(g_{1}^{-1}gg_{2}\right)f^{\prime}\left(g\right)dg,\ \ g_{1},g_{2}\in G\left(F\right)
$$
$$
K_{f,f^{\prime},\phi,\phi^{\prime}}^{1}\left(g,x\right)=\int_{Z_{H}\left(F\right)\backslash H\left(F\right)}K_{f,f^{\prime}}^{A}\left(g,hx\right)\left\langle \phi,\omega_{V,\psi,\mu,\chi}\left(h\right)\phi^{\prime}\right\rangle dh,\ \ g,x\in G\left(F\right)
$$
$$
K_{f,f^{\prime},\phi,\phi^{\prime}}^{2}\left(x,y\right)=\int_{Z_{H}\left(F\right)\backslash H\left(F\right)}K_{f,f^{\prime},\phi,\phi^{\prime}}^{1}\left(h^{-1}x,h^{-1}y\right)dh,\ \ x,y\in G\left(F\right)
$$
$$
J_{\text{aux}}\left(f,f^{\prime},\phi,\phi^{\prime}\right)=\int_{Z_{G}\left(F\right)H\left(F\right)\backslash G\left(F\right)}K_{f,f^{\prime},\phi,\phi^{\prime}}^{2}\left(x,x\right)dx.
$$

\begin{proposition}\label{6.3}
We fix a strongly cuspidal function $f\in{\mathcal{C}}_{\text{scusp}}\left(Z_{G}\left(F\right)\backslash G\left(F\right),\chi\right)$.
\begin{enumerate}
\item \label{5.3.1}The integral defining $K_{f,f^{\prime}}^{A}\left(g_{1},g_{2}\right)$
is absolutely convergent. For all $g_{1}\in G\left(F\right)$, the
map 
$$
g_{2}\in G\left(F\right)\mapsto K_{f,f^{\prime}}^{A}\left(g_{1},g_{2}\right)
$$
belongs to ${\mathcal{C}}\left(Z_{G}\left(F\right)\backslash G\left(F\right),\chi\right)$.
Moreover, for any $d>0$, there exists $d^{\prime}>0$ such that for
every continuous semi-norm $\nu$ on ${\mathcal{C}}_{d^{\prime}}^{w}\left(Z_{G}\left(F\right)\backslash G\left(F\right),\chi\right)$,
there exists a continuous semi-norm $\mu$ on ${\mathcal{C}}\left(G\left(F\right)\right)$
satisfying $\nu\left(K_{f,f^{\prime}}^{A}\left(g,\cdot\right)\right)\leq\mu\left(f^{\prime}\right)\Xi^{G}\left(g\right)\sigma\left(g\right)^{-d}$, for all $f^{\prime}\in{\mathcal{C}}\left(Z_{G}\left(F\right)\backslash G\left(F\right),\chi^{-1}\right)$ and all $g\in G\left(F\right)$.
\item \label{5.3.2}The integral defining $K_{f,f^{\prime},\phi,\phi^{\prime}}^{1}$ is
absolutely convergent. Moreover, for all $d>0$ there exists $d^{\prime}>0$
and a continuous semi-norm $\nu_{d,d^{\prime}}$ on ${\mathcal{C}}\left(Z_G(F)\backslash G\left(F\right),\chi^{-1}\right)$
such that 
$$
\left|K_{f,f^{\prime},\phi,\phi^{\prime}}^{1}\left(g,x\right)\right|\leq\nu_{d,d^{\prime}}\left(f^{\prime}\right)\Xi^{G}\left(g\right)\sigma\left(g\right)^{-d}\Xi^{X}\left(x\right)\sigma_{X}\left(x\right)^{d^{\prime}}
$$
 for all $f^{\prime}\in{\mathcal{C}}\left(Z_{G}\left(F\right)\backslash G\left(F\right),\chi^{-1}\right)$
and all $g,x\in G\left(F\right)$.
\item \label{5.3.3}The integral defining $K_{f,f^{\prime},\phi,\phi^{\prime}}^{2}\left(x,y\right)$
is absolutely convergent. More precisely, for every $d>0$ there exists
a continuous semi-norm $\nu_{d}$ on ${\mathcal{C}}\left(Z_G(F)\backslash G\left(F\right),\chi^{-1}\right)$
such that 
$$
\left|K_{f,f^{\prime},\phi,\phi^{\prime}}^{2}\left(x,x\right)\right|\leq\nu_{d}\left(f^{\prime}\right)\Xi^{X}\left(x\right)^{2}\sigma_{X}\left(x\right)^{-d}
$$
for all $f^{\prime}\in{\mathcal{C}}\left(Z_{G}\left(F\right)\backslash G\left(F\right),\chi^{-1}\right)$
and all $x\in Z_{G}\left(F\right)H\left(F\right)\backslash G\left(F\right)$.
\item \label{5.3.4}The integral defining $J_{\text{aux}}\left(f,f^{\prime}\right)$ is
absolutely convergent. Moreover, the linear form 
$$
f^{\prime}\in{\mathcal{C}}\left(Z_{G}\left(F\right)\backslash G\left(F\right),\chi^{-1}\right)\mapsto J_{\text{aux}}\left(f,f^{\prime}\right)
$$
is continuous.
\end{enumerate}
\end{proposition}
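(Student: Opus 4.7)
The plan is to establish the four parts sequentially, each bootstrapping from the previous one together with the estimates of Proposition~\ref{4.6}.

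For part~(\ref{5.3.1}), the change of variables $g\mapsto g_1 g$ yields
$$K^A_{f,f'}(g_1,g_2)=\int_{Z_G(F)\backslash G(F)}f(gg_2)f'(g_1g)\,dg,$$
exhibiting $g_2\mapsto K^A_{f,f'}(g_1,g_2)$ as a convolution-type expression built from $f$ and the left translate of $f'$ by $g_1$. Standard convolution estimates in the Harish-Chandra Schwartz space, in the spirit of \cite[Proposition 1.5.1(v)]{BP20}, produce both the Harish-Chandra Schwartz regularity and the quantitative bound, the factor $\Xi^G(g_1)\sigma(g_1)^{-d}$ arising from the decay of the translated copy of $f'$.

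For part~(\ref{5.3.2}), substituting the bound of part~(\ref{5.3.1}) into the defining integral of $K^1$ reduces the problem to estimating
$$\int_{Z_H(F)\backslash H(F)}\Xi^G(hx)\sigma(hx)^{d'}|\langle\phi,\omega_{V,\psi,\mu,\chi}(h)\phi'\rangle|\,dh$$
for some $d'>0$. Using $\sigma(hx)\ll\sigma(h)\sigma(x)$ and the weak Cartan decomposition together with Proposition~\ref{4.6}(1) and (2), we may reduce to the case $x=a$ with $a$ in a maximal $(\theta,F)$-split torus. Since $\sigma(h)^{d'}\ll e^{\epsilon\sigma(h)}$ for any $\epsilon>0$, the doubling-principle argument used in the proof of Proposition~\ref{4.6}(8) combined with the absolute convergence from Proposition~\ref{4.6}(6) bounds the integral by a constant multiple of $\Xi^G(a)$, which is in turn controlled by $\Xi^X(a)\sigma_X(a)^{d''}$ via Proposition~\ref{4.6}(2).

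For part~(\ref{5.3.3}), the $H(F)$-invariance of $\Xi^X$ and $\sigma_X$ together with the bound of part~(\ref{5.3.2}) gives
$$|K^2_{f,f',\phi,\phi'}(x,x)|\leq\nu(f')\,\Xi^X(x)\sigma_X(x)^{d'}\int_{Z_H(F)\backslash H(F)}\Xi^G(h^{-1}x)\sigma(h^{-1}x)^{-d}\,dh,$$
and Proposition~\ref{4.6}(4) dominates the remaining integral by $\Xi^X(x)\sigma_X(x)^{-d''}$, where $d''$ can be made arbitrarily large provided $d$ is taken sufficiently large in part~(\ref{5.3.2}). Part~(\ref{5.3.4}) then follows by integrating this bound and invoking Proposition~\ref{4.6}(3), the continuity in $f'$ being transparent from the construction. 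The delicate step is part~(\ref{5.3.2}): the polynomial factor $\sigma(hx)^{d'}$ coming from the weak Schwartz seminorm estimate of part~(\ref{5.3.1}) must be absorbed against the matrix coefficient of the Weil representation, and this is possible precisely because Proposition~\ref{4.6}(6)-(7) supply the exponential decay of $\langle\phi,\omega_{V,\psi,\mu,\chi}(h)\phi'\rangle$ along $Z_H(F)\backslash H(F)$. It is here that the Weil-representation structure of the model enters in an essential way, replacing the role played by the unitary character in the Bessel setting.
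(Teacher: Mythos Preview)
Your treatment of parts~(\ref{5.3.1}), (\ref{5.3.2}) and (\ref{5.3.4}) is essentially the same as the paper's (the paper cites \cite[Theorem~5.5.1(i)]{BP20} directly for~(\ref{5.3.1}), and your doubling-principle argument for~(\ref{5.3.2}) is exactly what the paper does).

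For part~(\ref{5.3.3}) you take a genuinely different route. The paper does \emph{not} estimate $K^{2}_{f,f',\phi,\phi'}(x,x)$ directly. Instead it unfolds the expression spectrally using Lemma~\ref{5.3}(iv), recognises that
\[
K^{2}_{f,f',\phi,\phi'}(x,x)=K(\varphi_{f'},x)
\]
for an auxiliary strongly cuspidal function $\varphi_{f'}\in\mathcal C_{\mathrm{scusp}}(Z_G(F)\backslash G(F),\chi^{-1})$ with $\pi(\varphi_{f'})=\mathcal L_\pi^{\phi,\phi'}(\pi(f'^{,\vee}))\pi(f)$, and then invokes Theorem~\ref{6.2} together with the continuity of $f'\mapsto\varphi_{f'}$ (via the matricial Paley--Wiener theorem). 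This reuses the hard work already packaged in Theorem~\ref{6.2}, where the strong cuspidality of the test function is what kills the non-elliptic contributions. Your approach, by contrast, feeds the bound of~(\ref{5.3.2}) straight into the $H(F)$-integral and applies Proposition~\ref{4.6}(4).

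Your direct argument does work, but it hides one point you should make explicit: it requires the exponent $d'$ in~(\ref{5.3.2}) to be bounded \emph{independently of $d$}. Otherwise there is a circular dependency: to get $\sigma_X(x)^{-d''}$ from Proposition~\ref{4.6}(4) you must first choose $d$ large, which in principle could force $d'$ large and leave $d''-d'$ bounded. In fact your own proof of~(\ref{5.3.2}) shows that $d'$ arises solely from the inequality $\Xi^G(a)\ll\Xi^X(a)\sigma_X(a)^{d'}$ of Proposition~\ref{4.6}(2), hence is a fixed structural constant; once you say this, the chain of estimates closes. The paper's spectral detour avoids having to keep track of this dependence at all, at the cost of invoking the Plancherel machinery and Theorem~\ref{6.2}.
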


\begin{proof}
Part (\ref{5.3.1}) follows from \cite[Theorem 5.5.1(i)]{BP20}. We prove (\ref{5.3.2}). The first statement follows from (\ref{5.3.1}) and Lemma \ref{5.2}. By Proposition \ref{4.6} (part (i) and (ii)) and the weak Cartan decomposition, it suffices to prove the following statement: for any $d>0$, we have 
$$
\int_{Z_{H}\left(F\right)\backslash H\left(F\right)}\Xi^{G}\left(ha\right)\sigma\left(h\right)^{d}\left\langle \phi,\omega_{V,\psi,\mu,\chi}\left(h\right)\phi^{\prime}\right\rangle dh\ll\Xi^{G}\left(a\right),\ \forall a\in A_{P_{0}}^{+}.
$$
We follow the method in \cite[Proposition 2.3.1(iv)]{BP18}. If $K$
is a maximal compact subgroup of $G\left(F\right)$ by which $\Xi^{G}$
is right invariant, there exists compact open subgroups $J\subset G\left(F\right)$
and $J_{H}\subset H\left(F\right)$ such that $J\subset J_{H}aKa^{-1}$,
for all $a\in A_{P_{0}}^{+}$. Hence, for all $d>0$ and $k\in J$
and $a\in A_{P_{0}}^{+}$, by writing $k=k_{H}ak_{G}a^{-1}$ with
$k_{H}\in J_{H}$ and $k_{G}\in K$, we have 
$$
\int_{Z_{H}\left(F\right)\backslash H\left(F\right)}\Xi^{G}\left(hka\right)\sigma\left(h\right)^{d}\left\langle \phi,\omega_{V,\psi,\mu,\chi}\left(h\right)\phi^{\prime}\right\rangle dh
$$
$$
=\int_{Z_{H}\left(F\right)\backslash H\left(F\right)}\Xi^{G}\left(ha\right)\sigma\left(hk_{H}^{-1}\right)^{d}\left\langle \phi,\omega_{V,\psi,\mu,\chi}\left(hk_{H}^{-1}\right)\phi^{\prime}\right\rangle dh.
$$
Hence, for all $d>0$, we have 
$$
\int_{Z_{H}\left(F\right)\backslash H\left(F\right)}\Xi^{G}\left(ha\right)\sigma\left(h\right)^{d}\left\langle \phi,\omega_{V,\psi,\mu,\chi}\left(h\right)\phi^{\prime}\right\rangle dh
$$
$$
\ll\int_{Z_{H}\left(F\right)\backslash H\left(F\right)}\int_{K}\Xi^{G}\left(hka\right)dk\sigma\left(h\right)^{d}\left\langle \phi,\omega_{V,\psi,\mu,\chi}\left(h\right)\phi^{\prime}\right\rangle dh
$$
$$
=\Xi^{G}\left(a\right)\int_{Z_{H}\left(F\right)\backslash H\left(F\right)}\Xi^{G}\left(h\right)\sigma\left(h\right)^{d}\left\langle \phi,\omega_{V,\psi,\mu,\chi}\left(h\right)\phi^{\prime}\right\rangle dh\ll\Xi^{G}\left(a\right),
$$
noting that the last inequality follows from Lemma \ref{5.2}. This finishes the proof of (\ref{5.3.2}).

By part (\ref{5.3.2}) and Proposition \ref{4.6}(iv), the integral defining $K_{f,f^{\prime},\phi,\phi^{\prime}}^{2}\left(x,y\right)$
is absolutely convergent. Let $\left\{ \phi_{i}\right\} _{i\in I}$ be an orthonormal basis for $\omega_{V,\psi,\mu,\chi}$.
We have  
$$
K_{f,f^{\prime},\phi,\phi^{\prime}}^{2}\left(x,x\right)=\iint_{Z_{H}\left(F\right)\backslash H\left(F\right)}\int_{Z_{G}\left(F\right)\backslash G\left(F\right)}f\left(x^{-1}h_{1}gh_{2}h_{1}^{-1}x\right)f^{\prime}\left(g\right)dg\left\langle \phi,\omega_{V,\psi,\mu,\chi}\left(h_{2}\right)\phi^{\prime}\right\rangle dh_{2}dh_{1}
$$
$$
=\iint_{Z_{H}\left(F\right)\backslash H\left(F\right)}\int_{Z_{G}\left(F\right)\backslash G\left(F\right)}f\left(x^{-1}h_{1}gh_{2}x\right)f^{\prime}\left(g\right)dg\left\langle \phi,\omega_{V,\psi,\mu,\chi}\left(h_{2}h_{1}\right)\phi^{\prime}\right\rangle dh_{2}dh_{1}
$$
$$
=\underset{i\in I}{\sum}\iint_{Z_{H}\left(F\right)\backslash H\left(F\right)}\int_{Z_{G}\left(F\right)\backslash G\left(F\right)}f\left(x^{-1}h_{1}gh_{2}x\right)f^{\prime}\left(g\right)dg
\left\langle \phi,\omega_{V,\psi,\mu,\chi}\left(h_{2}\right)\phi_{i}\right\rangle \left\langle \phi_{i},\omega_{V,\psi,\mu,\chi}\left(h_{1}\right)\phi^{\prime}\right\rangle dh_{2}dh_{1}
$$
$$
=\underset{i\in I}{\sum}\int_{{\mathcal{X}}_{\text{temp}}\left(G,\chi^{-1}\right)}{\mathcal{L}}_{\pi}^{\phi_{i},\phi_{i}}\left(\pi\left(x\right)\pi\left(f\right)\pi\left(x\right)^{-1}\right){\mathcal{L}}_{\pi}^{\phi,\phi^{\prime}}\left(\pi\left(f^{\prime,\vee}\right)\right)\mu\left(\pi\right)d\pi,
$$
noting that the last line follows from Lemma \ref{5.3}(iv). For $f^{\prime}\in{\mathcal{C}}\left(G\left(F\right)\right)$ and $\phi,\phi^\prime\in\omega_{V,\psi,\mu,\chi}$,
the section 
$$
T\left(f^{\prime},\phi,\phi^\prime\right):\pi\in{\mathcal{X}}_{\text{temp}}\left(G\right)\mapsto{\mathcal{L}}_{\pi}^{\phi,\phi^{\prime}}\left(\pi\left(f^{\prime,\vee}\right)\right)\pi\left(f\right)\in\text{End}\left(\pi\right)^\infty
$$
is smooth and compactly supported, i.e. it belongs to ${\mathcal{C}}\left({\mathcal{X}}_{\text{temp}}\left(G\right),{\mathcal{E}}\left(G\right)\right)$.
By Theorem \ref{2.2} and \cite[Lemma 5.3.1(i)]{BP20}, there exists a unique strongly cuspidal function $\varphi_{f^{\prime}}\in{\mathcal{C}}_{\text{scusp}}\left(Z_G(F)\backslash G\left(F\right),\chi^{-1}\right)$
such that $\pi\left(\varphi_{f^{\prime}}\right)={\mathcal{L}}_{\pi}^{\phi,\phi^{\prime}}\left(\pi\left(f^{\prime,\vee}\right)\right)\pi\left(f\right),\ \forall\pi\in{\mathcal{X}}_{\text{temp}}\left(G\right)$. By Lemma \ref{5.3}(iii), it follows that 
$$
K_{f,f^{\prime},\phi,\phi^{\prime}}^{2}\left(x,x\right)=\underset{i\in I}{\sum}\int_{Z_{H}\left(F\right)\backslash H\left(F\right)}\varphi_{f^{\prime}}\left(x^{-1}hx\right)\left\langle \phi_{i},\omega_{V,\psi,\mu,\chi}\left(h\right)\phi_{i}\right\rangle dh=K(\varphi_{f^\prime},x).
$$
By Theorem \ref{6.2}, for all $d>0$, there exists a continuous seminorm $\nu_d$ on $\mathcal{C}(Z_G(F)\backslash G(F),\chi^{-1})$ such that $|K^2_{f,f^\prime,\phi,\phi^\prime}(x,x)|\leq \nu_d(f^\prime)\Xi^X(x)^2\sigma_X(x)^{-d}$, for all $f^\prime \in \mathcal{C}(Z_G(F)\backslash G(F),\chi^{-1})$ and $x\in Z_G(F)H(F)\backslash G(F)$. We need to show that the linear map $f^\prime \mapsto \varphi_{f^\prime}$ is continuous on $\mathcal{C}(Z_G(F)\backslash G(F),\chi^{-1})$, which follows from Theorem \ref{2.2} and the fact that
$$
f^\prime\in \mathcal{C}(Z_G(F)\backslash G(F),\chi^{-1}) \mapsto 
\left(\pi\in \mathcal{X}_\text{temp}(G,\chi) \mapsto \overline{\mathcal{L}_\pi^{\phi,\phi^\prime}(\pi(\bar{f}^\prime))}\right)\in C^\infty(\mathcal{X}_\text{temp}(G,\chi))
$$
is continuous. The last part follows from part (\ref{5.3.3}) and Proposition \ref{4.6}(iii).
\end{proof}
\begin{proposition}\label{6.4}
For $f\in{\mathcal{C}}_{\text{scusp}}\left(Z_{G}\left(F\right)\backslash G\left(F\right),\chi\right)$, we have  
$$
J_{\text{aux}}\left(f,f^{\prime},\phi,\phi^{\prime}\right)=\int_{{\mathcal{X}}\left(G,\chi^{-1}\right)}\hat{\theta}_{f}\left(\pi\right){\mathcal{L}}_{\pi}^{\phi,\phi^{\prime}}\left(\pi\left(f^{\prime,\vee}\right)\right)d\pi
$$
for any $f^{\prime}\in{\mathcal{C}}\left(Z_{G}\left(F\right)\backslash G\left(F\right),\chi^{-1}\right)$.
\end{proposition}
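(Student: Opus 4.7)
The plan is to start from the spectral expansion of $K^2_{f,f',\phi,\phi'}(x,x)$ already obtained during the proof of Proposition~\ref{6.3}(iii):
$$
K^2_{f,f',\phi,\phi'}(x,x) = \underset{i\in I}{\sum}\int_{{\mathcal{X}}_{\text{temp}}\left(G,\chi^{-1}\right)} {\mathcal{L}}_\pi^{\phi_i,\phi_i}\!\left(\pi(x)\pi(f)\pi(x)^{-1}\right) {\mathcal{L}}_\pi^{\phi,\phi'}\!\left(\pi(f^{\prime,\vee})\right) \mu(\pi)\,d\pi.
$$
Integrating both sides over $x \in Z_G(F)H(F)\backslash G(F)$, and swapping the order of integration (justified by the uniform bounds of Proposition~\ref{6.3}(iii) together with Proposition~\ref{4.6}(iii)), one obtains
$$
J_{\text{aux}}(f,f',\phi,\phi') = \int_{{\mathcal{X}}_{\text{temp}}\left(G,\chi^{-1}\right)} {\mathcal{L}}_\pi^{\phi,\phi'}\!\left(\pi(f^{\prime,\vee})\right) {\mathcal{J}}_\pi(f)\,\mu(\pi)\,d\pi,
$$
where ${\mathcal{J}}_\pi(f) := \int_{Z_G(F)H(F)\backslash G(F)} \underset{i}{\sum} {\mathcal{L}}_\pi^{\phi_i,\phi_i}(\pi(x)\pi(f)\pi(x)^{-1})\,dx$.

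The core step is then to prove a Plancherel-type identity specific to strongly cuspidal functions: for any smooth section $\varphi$ on ${\mathcal{X}}_{\text{temp}}\left(G,\chi^{-1}\right)$ arising as $\pi\mapsto {\mathcal{L}}_\pi^{\phi,\phi'}(\pi(f^{\prime,\vee}))$ (smooth by Lemma~\ref{5.3}(i) and effectively localized through the compact support of the Fourier transform of $f^{\prime,\vee}$), one has
$$
\int_{{\mathcal{X}}_{\text{temp}}\left(G,\chi^{-1}\right)} \varphi(\pi)\,{\mathcal{J}}_\pi(f)\,\mu(\pi)\,d\pi = \int_{{\mathcal{X}}\left(G,\chi^{-1}\right)} \varphi(\pi)\,\hat{\theta}_f(\pi)\,d\pi.
$$
To establish this, I would unfold ${\mathcal{J}}_\pi(f)$ by recognizing the sum $\underset{i}{\sum}\left\langle \phi_i, \omega_{V,\psi,\mu,\chi}(h)\phi_i\right\rangle$ as the distributional character of $\omega_{V,\psi,\mu,\chi}|_{H(F)}$, rewrite the resulting expression against the Harish-Chandra character $\theta_\pi$, and then use the strong cuspidality of $f$ to restrict the spectral support to ${\mathcal{X}}\left(G,\chi^{-1}\right)$: the trace against $f$ on properly parabolically induced data vanishes in the non-weighted sense, while the conjugation-integration over $x$ combined with the $(G,M)$-family machinery of Section~\ref{sec2.5} reproduces Arthur's weighted character, giving $(-1)^{a_G-a_M}J_M^G(\sigma,f) = \hat{\theta}_f(i_M^G(\sigma))$ for $\pi = i_M^G(\sigma)$ with $\sigma \in {\mathcal{X}}_{\text{ell}}(M,\chi^{-1})$. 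The Jacobian comparison between the measures $\mu(\pi)\,d\pi$ on ${\mathcal{X}}_{\text{temp}}$ and $d\pi$ on ${\mathcal{X}}$ is then matched directly via the definitions recalled in Section~\ref{sec2.4}.

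The main obstacle lies in this Plancherel-type identity: rigorously identifying the $x$-integrated sum of tempered intertwinings with the weighted character $\hat{\theta}_f$, and carefully matching the spectral measures on ${\mathcal{X}}_{\text{temp}}$ and ${\mathcal{X}}$. The approach mirrors the strategy used in \cite[Section~9]{BP20} for Bessel models, but the essential new ingredient here is handling the Weil representation $\omega_{V,\psi,\mu,\chi}$ in place of a unitary character on $H(F)$. The convergence estimates of Proposition~\ref{4.6}(vi)--(viii), combined with the matricial Paley--Wiener theorem~\ref{2.2} applied to $f^{\prime,\vee}$, provide the analytic input needed to legitimize the unfolding of ${\mathcal{J}}_\pi(f)$ and its reduction to elliptic conjugacy classes via the Weyl integration formula.
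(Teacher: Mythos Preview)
Your approach diverges from the paper's at the crucial step, and the divergence contains a genuine gap.

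The paper does \emph{not} try to identify the $x$-integrated quantity $\mathcal{J}_\pi(f)$ with $\hat\theta_f(\pi)$. Instead it proceeds by a truncation argument: introduce cutoffs $\alpha_N$ on $Z_GH\backslash G$ and $\beta_{C\log N}$ on the $h'$-variable, show (Claims~\ref{6.5} and~\ref{6.6}) that these truncations cost $O(N^{-1})$, and then \emph{switch the $h'$- and $x$-integrals} so that the inner integral becomes $\int_{Z_G\backslash G} K^A_{f,f'}(g,hg)\,dg$ over \emph{all} of $Z_G\backslash G$. At that point the paper invokes \cite[Theorem~5.5.1]{BP20}, which states directly that
\[
\int_{Z_G\backslash G} K^A_{f,f'}(g,hg)\,dg \;=\; \int_{\mathcal{X}(G,\chi^{-1})} \hat\theta_f(\pi)\,\theta_{\bar\pi}\bigl(R(h^{-1})f'\bigr)\,d\pi,
\]
and then integrates this against $\langle\phi,\omega(h)\phi'\rangle$ over $h$, applying Lemma~\ref{5.3}(iii). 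The identification with $\hat\theta_f$ is thus imported wholesale from \cite{BP20}; the new work is only the truncation control, which is where the Weil-representation estimates of Proposition~\ref{4.6}(viii) enter.

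Your proposed ``core step'' is exactly the proposition itself (your displayed identity with $\varphi(\pi)=\mathcal{L}_\pi^{\phi,\phi'}(\pi(f'^{\vee}))$ is a restatement of the conclusion), and the method you sketch to prove it does not work. The quantity $\mathcal{J}_\pi(f)$ is an integral over $Z_GH\backslash G$ involving the character of $\omega_{V,\psi,\mu,\chi}$; it depends on $H$ and on the Weil representation, not just on $\pi$ and $f$. Arthur's weighted character $J_M^G(\sigma,f)$, by contrast, is defined via logarithmic derivatives of intertwining operators and has no $H$-dependence. There is no $(G,M)$-family mechanism that converts a conjugation integral over $Z_GH\backslash G$ into $J_M^G(\sigma,f)$; the $(G,M)$-family machinery in \cite[Theorem~5.5.1]{BP20} operates on the full integral $\int_{Z_G\backslash G} K^A_{f,f'}(g,hg)\,dg$, where the presence of the second test function $f'$ is essential to produce the weighted trace. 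By collapsing the $h'$-integral first into $\mathcal{L}_\pi$, you have discarded precisely the structure that makes the reduction to $\hat\theta_f$ possible. The paper's order of operations---truncate, swap so that the full $G$-integral is innermost, then apply \cite[Theorem~5.5.1]{BP20}---is not optional.
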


\begin{proof}
For $N>0$ and $M>0$, let us denote by $\alpha_{N}$ and $\beta_{M}$
to be characteristic functions of $$\left\{ x\in Z_{G}\left(F\right)H\left(F\right)\backslash G\left(F\right):\ \sigma_{Z_{G}H\backslash G}\left(x\right)\leq N\right\}\text{ and }\left\{ g\in Z_{G}\left(F\right)\backslash G\left(F\right):\ \sigma_{Z_{G}\backslash G}\left(g\right)\leq M\right\},$$
respectively. For all $N\geq1$ and $C>0$, we set 
$$
J_{\text{aux},N}\left(f,f^{\prime},\phi,\phi^{\prime}\right)=\int_{Z_{G}\left(F\right)H\left(F\right)\backslash G\left(F\right)}\alpha_{N}\left(x\right)\iint_{Z_{H}\left(F\right)\backslash H\left(F\right)}K_{f,f^{\prime}}^{A}\left(hx,h^{\prime}hx\right)
\left\langle \phi,\omega_{V,\psi,\mu,\chi}\left(h^{\prime}\right)\phi^{\prime}\right\rangle dh^{\prime}dhdx
$$
and 
$$
J_{\text{aux},N,C}\left(f,f^{\prime},\phi,\phi^{\prime}\right)=\int_{Z_{G}\left(F\right)H\left(F\right)\backslash G\left(F\right)}\alpha_{N}\left(x\right)\underset{Z_{H}\left(F\right)\backslash H\left(F\right)}{\iint}\beta_{C\log\left(N\right)}\left(h^{\prime}\right)K_{f,f^{\prime}}^{A}\left(hx,h^{\prime}hx\right)
$$
$$
\left\langle \phi,\omega_{V,\psi,\mu,\chi}\left(h^{\prime}\right)\phi^{\prime}\right\rangle dh^{\prime}dhdx.
$$
Observe $J_{\text{aux}}\left(f,f^{\prime},\phi,\phi^{\prime}\right)=\underset{N\rightarrow\infty}{\lim}J_{\text{aux},N}\left(f,f^{\prime},\phi,\phi^{\prime}\right)$. We prove the following claim.
\begin{claim}\label{6.5}
The triple integrals defining $J_{\text{aux},N}\left(f,f^{\prime},\phi,\phi^{\prime}\right)$
and $J_{\text{aux},N,C}\left(f,f^{\prime},\phi,\phi^{\prime}\right)$
are absolutely convergent and there exists $C>0$ such that 
$$
\left|J_{\text{aux},N}\left(f,f^{\prime},\phi,\phi^{\prime}\right)-J_{\text{aux},N,C}\left(f,f^{\prime},\phi,\phi^{\prime}\right)\right|\ll N^{-1}
$$
for all $N\geq1$.
\end{claim}

\begin{proof}
By \cite[Theorem 5.5.1(i)]{BP20}, there exists $d>0$ such that 
$$
K_{f,f^{\prime}}^{A}\left(hx,h^{\prime}hx\right)\ll\Xi^{G}\left(hx\right)\Xi^{G}\left(h^{\prime}hx\right)\sigma\left(hx\right)^{d}\sigma\left(h^{\prime}hx\right)^{d}.
$$
This gives us 
$$
\left|J_{\text{aux},N}\left(f,f^{\prime},\phi,\phi^{\prime}\right)\right|\ll\int_{Z_{G}\left(F\right)H\left(F\right)\backslash G\left(F\right)}\alpha_{N}\left(x\right)\underset{Z_{H}\left(F\right)\backslash H\left(F\right)}{\iint}\Xi^{G}\left(hx\right)\Xi^{G}\left(h^{\prime}hx\right)
$$
$$
\sigma\left(hx\right)^{d}\sigma\left(h^{\prime}hx\right)^{d}\left\langle \phi,\omega_{V,\psi,\mu,\chi}\left(h^{\prime}\right)\phi^{\prime}\right\rangle dh^{\prime}dhdx
$$
and 
$$
\left|J_{\text{aux},N,C}\left(f,f^{\prime},\phi,\phi^{\prime}\right)\right|\ll\int_{Z_{G}\left(F\right)H\left(F\right)\backslash G\left(F\right)}\alpha_{N}\left(x\right)\underset{Z_{H}\left(F\right)\backslash H\left(F\right)}{\iint}\beta_{C\log\left(N\right)}\left(h^{\prime}\right)\Xi^{G}\left(hx\right)
$$
$$
\Xi^{G}\left(h^{\prime}hx\right)\sigma\left(hx\right)^{d}\sigma\left(h^{\prime}hx\right)^{d}\left\langle \phi,\omega_{V,\psi,\mu,\chi}\left(h^{\prime}\right)\phi^{\prime}\right\rangle dh^{\prime}dhdx
$$
and 
$$
\left|J_{\text{aux},N}\left(f,f^{\prime},\phi,\phi^{\prime}\right)-J_{\text{aux},N,C}\left(f,f^{\prime},\phi,\phi^{\prime}\right)\right|\ll\int_{Z_{G}\left(F\right)H\left(F\right)\backslash G\left(F\right)}\alpha_{N}\left(x\right)\underset{Z_{H}\left(F\right)\backslash H\left(F\right)}{\iint}{\bf 1}_{\sigma\geq C\log\left(N\right)}\left(h^{\prime}\right)
$$
$$
\Xi^{G}\left(hx\right)\Xi^{G}\left(h^{\prime}hx\right)\sigma\left(hx\right)^{d}\sigma\left(h^{\prime}hx\right)^{d}\left\langle \phi,\omega_{V,\psi,\mu,\chi}\left(h^{\prime}\right)\phi^{\prime}\right\rangle dh^{\prime}dhdx,
$$
for any $N\geq1$ and $C\geq1$. By Proposition \ref{4.6}(viii), there exists
$d^{\prime}$ such that 
$$
\underset{Z_{H}\left(F\right)\backslash H\left(F\right)}{\iint}\Xi^{G}\left(hx\right)\Xi^{G}\left(h^{\prime}hx\right)\sigma\left(hx\right)^{d}\sigma\left(h^{\prime}hx\right)^{d}\left\langle \phi,\omega_{V,\psi,\mu,\chi}\left(h^{\prime}\right)\phi^{\prime}\right\rangle dh^{\prime}dh
\ll\Xi^{X}\left(x\right)^{2}\sigma_{X}\left(x\right)^{d^{\prime}}dx,
$$
which is to say $J_{\text{aux},N}\left(f,f^{\prime},\phi,\phi^{\prime}\right)$
and $J_{\text{aux},N,C}\left(f,f^{\prime},\phi,\phi^{\prime}\right)$
are absolutely convergent. Moreover, 
$$
\underset{Z_{H}\left(F\right)\backslash H\left(F\right)}{\iint}{\bf 1}_{\sigma\geq C\log\left(N\right)}\left(h^{\prime}\right)\Xi^{G}\left(hx\right)\Xi^{G}\left(h^{\prime}hx\right)\sigma\left(hx\right)^{d}\sigma\left(h^{\prime}hx\right)^{d}\left\langle \phi,\omega_{V,\psi,\mu,\chi}\left(h^{\prime}\right)\phi^{\prime}\right\rangle dh^{\prime}dh
$$
$$
\ll e^{-\epsilon C\log\left(N\right)}\Xi^{X}\left(x\right)^{2}\sigma_{X}\left(x\right)^{d^{\prime}}
$$
for all $N\geq1$ and $C>0$. By Proposition \ref{4.6}(v), there exists $d^{\prime\prime}>0$
such that 
$$
\int_{Z_{G}\left(F\right)H\left(F\right)\backslash G\left(F\right)}\alpha_{N}\left(x\right)\Xi^{X}\left(x\right)^{2}\sigma_{X}\left(x\right)^{d^{\prime}}dx\ll e^{-\epsilon C\log\left(N\right)}N^{d^{\prime\prime}},
$$
for all $N\geq1$ and $C>0$. Choosing $C$ big enough, one can prove
Claim \ref{6.5}.
\end{proof}
Let us fix $C>0$ which satisfies the above claim. Then $J_{\text{aux}}\left(f,f^{\prime},\phi,\phi^{\prime}\right)=\underset{N\rightarrow\infty}{\lim}J_{\text{aux},N,C}\left(f,f^{\prime},\phi,\phi^{\prime}\right)$. Since the triple integral defining $J_{\text{aux},N,C}\left(f,f^{\prime},\phi,\phi^{\prime}\right)$
is absolutely convergent, one can write 
$$
J_{\text{aux},N,C}\left(f,f^{\prime},\phi,\phi^{\prime}\right)=\int_{Z_{H}\left(F\right)\backslash H\left(F\right)}\beta_{C\log\left(N\right)}\left(h^{\prime}\right)\left\langle \phi,\omega_{V,\psi,\mu,\chi}\left(h^{\prime}\right)\phi^{\prime}\right\rangle 
\int_{Z_{G}\left(F\right)\backslash G\left(F\right)}\alpha_{N}\left(g\right)K_{f,f^{\prime}}^{A}\left(g,h^{\prime}g\right)dgdh^{\prime}.$$
We prove the following estimate.
\begin{claim}\label{6.6}
$ $
$$
\left|J_{\text{aux},N,C}\left(f,f^{\prime},\phi,\phi^{\prime}\right)-\begin{array}{c}
\int_{Z_{H}\left(F\right)\backslash H\left(F\right)}\beta_{C\log\left(N\right)}\left(h^{\prime}\right)\left\langle \phi,\omega_{V,\psi,\mu,\chi}\left(h^{\prime}\right)\phi^{\prime}\right\rangle \\
\int_{Z_{G}\left(F\right)\backslash G\left(F\right)}K_{f,f^{\prime}}^{A}\left(g,h^{\prime}g\right)dgdh^{\prime}
\end{array}\right|\ll N^{-1}
$$
for all $N\geq1$.
\end{claim}

\begin{proof}
By \cite[Theorem 5.5.1(iii)]{BP20}, there exists $c_{1}>0$ such that for all $d>0$, there exists $d^{\prime}>0$ such that $\left|K_{f,f^{\prime}}^{A}\left(g,h^{\prime}g\right)\right|\ll\Xi^{G}\left(g\right)^{2}\sigma\left(g\right)^{-d}e^{c_{1}\sigma\left(h\right)}\sigma\left(h^{\prime}\right)^{d^{\prime}}$, for all $g\in G\left(F\right)$ and $h\in H\left(F\right)$. By \cite[Proposition 1.5.1(v)]{BP20},
we can choose $d_{0}>0$ such that the function $g\mapsto\Xi^{G}\left(g\right)^{2}\sigma\left(g\right)^{-d_{0}}$ is integrable over $G\left(F\right)$. Hence, for all $d>d_{0}$, there exists $d^{\prime}>0$ such that 
$$
\left|J_{\text{aux},N,C}\left(f,f^{\prime},\phi,\phi^{\prime}\right)-\int_{Z_{H}\left(F\right)\backslash H\left(F\right)}\beta_{C\log\left(N\right)}\left(h^{\prime}\right)\left\langle \phi,\omega_{V,\psi,\mu,\chi}\left(h^{\prime}\right)\phi^{\prime}\right\rangle \int_{Z_{G}\left(F\right)\backslash G\left(F\right)}K_{f,f^{\prime}}^{A}\left(g,h^{\prime}g\right)dgdh^{\prime}\right|
$$
$$
\ll N^{c_{1}C-d+d_{0}}\log\left(N\right)^{d^{\prime}}\int_{Z_{H}\left(F\right)\backslash H\left(F\right)}\beta_{C\log\left(N\right)}\left(h^{\prime}\right)\left\langle \phi,\omega_{V,\psi,\mu,\chi}\left(h^{\prime}\right)\phi^{\prime}\right\rangle dh^{\prime}
$$
for all $N\geq1$. Moreover, there exists $c_{2}>0$ such that 
$$
\int_{Z_{H}\left(F\right)\backslash H\left(F\right)}\beta_{C\log\left(N\right)}\left(h^{\prime}\right)\left\langle \phi,\omega_{V,\psi,\mu,\chi}\left(h^{\prime}\right)\phi^{\prime}\right\rangle dh^{\prime}\ll N^{c_{2}},
$$
for all $N\geq1$. Therefore, if one let $d$ be sufficiently large,
one can prove Claim \ref{6.6}.
\end{proof}
By Claim \ref{6.6}, it follows that  
$$
J_{\text{aux}}\left(f,f^{\prime},\phi,\phi^{\prime}\right)=\underset{N\rightarrow\infty}{\lim}\int_{Z_{H}\left(F\right)\backslash H\left(F\right)}\beta_{C\log\left(N\right)}\left(h\right)\left\langle \phi,\omega_{V,\psi,\mu,\chi}\left(h^{\prime}\right)\phi^{\prime}\right\rangle\int_{Z_{G}\left(F\right)\backslash G\left(F\right)}K_{f,f^{\prime}}^{A}\left(g,hg\right)dgdh.
$$
Since $f$ is strongly cuspidal, by \cite[Theorem 5.5.1]{BP20}, we have
$$
\int_{Z_{G}\left(F\right)\backslash G\left(F\right)}K_{f,f^{\prime}}^{A}\left(g,hg\right)dg=\int_{{\mathcal{X}}\left(G,\chi^{-1}\right)}\hat{\theta}_{f}\left(\pi\right)\theta_{\bar{\pi}}\left(R\left(h^{-1}\right)f^{\prime}\right)d\pi.
$$
Observe $\left|\theta_{\bar{\pi}}\left(R\left(h^{-1}\right)f^{\prime}\right)\right|\ll\Xi^{G}\left(h\right)$, for all $h\in H\left(F\right)$. Thus, the double integral 
$$
\int_{Z_{H}\left(F\right)\backslash H\left(F\right)}\left\langle \phi,\omega_{V,\psi,\mu,\chi}\left(h^{\prime}\right)\phi^{\prime}\right\rangle \int_{{\mathcal{X}}\left(G,\chi^{-1}\right)}\hat{\theta}_{f}\left(\pi\right)\theta_{\bar{\pi}}\left(R\left(h^{-1}\right)f^{\prime}\right)d\pi dh
$$
is absolutely convergent and equal to $J_{\text{aux}}\left(f,f^{\prime},\phi,\phi^{\prime}\right)$.
By switching the two integrals and Lemma \ref{5.3}(iii), it follows that 
$$
J_{\text{aux}}\left(f,f^{\prime},\phi,\phi^{\prime}\right)=\int_{{\mathcal{X}}\left(G,\chi^{-1}\right)}\hat{\theta}_{f}\left(\pi\right){\mathcal{L}}_{\pi}^{\phi,\phi^{\prime}}\left(\pi\left(f^{\prime,\vee}\right)\right)d\pi.
$$
\end{proof}

\subsection{Proof of Theorem \ref{6.1}}\label{sec6.3}

By the induction hypothesis stated at the beginning of Section \ref{sec5} and Theorem 4.8 in \cite{GGP23},
we have $m_{V}\left(\pi\right)\leq1$, for any irreducible tempered generalized 
principal series $\pi=i_{P}^{G}\,\sigma$. In order to treat the discrete
series part, we need the following proposition.
\begin{proposition}\label{6.7}
Let $\pi$ be a discrete series representation of $G\left(F\right)$
whose central character is $\chi$. Let 
$$
\left(\pi\otimes\overline{\omega_{V,\psi,\mu,\chi}}\right)_{H_{V}}=\left(\pi\otimes\overline{\omega_{V,\psi,\mu,\chi}}\right)/\left\langle \begin{array}{c}
\pi\left(h\right)v\otimes\omega_{V,\psi,\mu,\chi}\left(h\right)\phi-v\otimes\phi,\\
v\in\pi,\,\phi\in\omega_{V,\psi,\mu,\chi},\,h\in H_{V}\left(F\right)
\end{array}\right\rangle .
$$
Assume $\left(\pi\otimes\overline{\omega_{V,\psi,\mu,\chi}}\right)_{H_{V}}$
is finite dimensional. The following scalar product 
$$
\begin{array}{cccc}
{\mathcal{B}}_{\pi}: & \left(\pi\otimes\overline{\omega_{V,\psi,\mu,\chi}}\right)\times\left(\bar{\pi}\otimes\omega_{V,\psi,\mu,\chi}\right) & \longrightarrow & \mathbb{C}\\
 & \left(v\otimes\phi_{2},v^{\vee}\otimes\phi_{1}\right) & \mapsto & \underset{Z_{H_{V}}\left(F\right)\backslash H_{V}\left(F\right)}{\int}\left\langle \pi\left(h\right)v,v^{\vee}\right\rangle \left\langle \phi_{1},\omega_{V,\psi,\mu,\chi}\left(h\right)\phi_{2}\right\rangle dh
\end{array}
$$
induces a nondegenerate scalar product of $\left(\pi\otimes\overline{\omega_{V,\psi,\mu,\chi}}\right)_{H_{V}}\times\left(\bar{\pi}\otimes\omega_{V,\psi,\mu,\chi}\right)_{H_{V}}$.
\end{proposition}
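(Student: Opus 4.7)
The plan is to break the proof into three steps: absolute convergence of the defining integral, descent to the coinvariant quotients, and nondegeneracy of the induced pairing. The first two steps are routine; the third relies crucially on the explicit tempered intertwinings developed in Section \ref{sec5}.

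For absolute convergence, I would use that $\pi$ is tempered (indeed square-integrable modulo the center), so its matrix coefficients satisfy $|\langle \pi(h)v, v^\vee \rangle| \ll \Xi^G(h) \sigma(h)^d$ for some $d>0$ depending on $v$ and $v^\vee$. Combined with Proposition \ref{4.6}(vi), which gives the existence of $\epsilon > 0$ such that $\int_{Z_{H_V}(F)\backslash H_V(F)} \Xi^G(h) e^{\epsilon \sigma(h)} |\langle \phi_1, \omega_{V,\psi,\mu,\chi}(h)\phi_2\rangle|\,dh < \infty$, and the elementary bound $\sigma(h)^d \ll e^{\epsilon'\sigma(h)}$ for any $\epsilon' > 0$, this yields absolute convergence. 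The $H_V$-invariance (simultaneous in each tensor factor) then follows from the unitarity of $\pi$ and $\omega_{V,\psi,\mu,\chi}$ together with the change of variable $h \mapsto h h_0^{-1}$ (respectively $h \mapsto h_0^{-1} h$), so that $\mathcal{B}_\pi$ descends to a pairing on the coinvariant spaces.

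For nondegeneracy, the key observation is the identification of $\mathcal{B}_\pi$ with the tempered intertwining of Section \ref{sec5}. Writing $T_{v,v^\vee}$ for the rank-one operator associated to $(v, v^\vee)$ under $\pi \otimes \bar{\pi} \cong \text{End}(\pi)^\infty$, one checks directly from the definitions that
$$\mathcal{B}_\pi(v \otimes \phi_2, v^\vee \otimes \phi_1) = \mathcal{L}_\pi^{\phi_1, \phi_2}(T_{v, v^\vee}).$$
By the induction hypothesis stated at the beginning of Section \ref{sec5} together with \cite[Theorem 4.8]{GGP23}, we have $m_V(\bar{\pi}) \leq 1$. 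If $m_V(\bar{\pi}) = 0$, Theorem \ref{5.1} forces $\mathcal{L}_\pi^{\phi_1,\phi_2} \equiv 0$ for all $\phi_1, \phi_2$, so the pairing is identically zero; the finite-dimensionality hypothesis, combined with the identification of the dual of $(\bar{\pi} \otimes \omega_{V,\psi,\mu,\chi})_{H_V}$ with $\text{Hom}_{H_V}(\bar{\pi}, \omega_{V,\psi,\mu,\chi})$, then shows both coinvariant spaces vanish, making nondegeneracy automatic. If $m_V(\bar{\pi}) = 1$, both coinvariant spaces are one-dimensional, and Theorem \ref{5.1} furnishes $\phi_1, \phi_2 \in \omega_{V,\psi,\mu,\chi}$ together with some $T \in \text{End}(\pi)^\infty$ with $\mathcal{L}_\pi^{\phi_1,\phi_2}(T) \neq 0$; by the continuity of $\mathcal{L}_\pi^{\phi_1,\phi_2}$ (Lemma \ref{5.2}) and the density of finite-rank operators in $\text{End}(\pi)^\infty$ equipped with its finest locally convex topology, we may replace $T$ by some $T_{v,v^\vee}$ without losing nonvanishing, yielding the required nonzero value of $\mathcal{B}_\pi$.

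The main obstacle will be the reduction of the nonvanishing of $\mathcal{L}_\pi^{\phi_1,\phi_2}$ on $\text{End}(\pi)^\infty$ to nonvanishing on rank-one operators, which must be executed carefully using the topological structure on $\text{End}(\pi)^\infty$ and the continuity properties established in Lemma \ref{5.3}. The analogous statement in the Bessel case appears as \cite[Proposition 8.3.1]{BP20}, and the present situation admits a parallel treatment; the only new ingredients are the convergence estimates for Weil-representation matrix coefficients supplied by Proposition \ref{4.6}(vi)--(viii), which replace the trivial bounds available for characters in the Bessel setting.
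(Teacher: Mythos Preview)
Your argument has a genuine circularity. You invoke the induction hypothesis together with \cite[Theorem 4.8]{GGP23} to conclude $m_V(\bar\pi)\leq 1$, but that combination only yields multiplicity one for \emph{properly} parabolically induced tempered representations (see the first paragraph of Section~\ref{sec6.3}). For a discrete series representation $\pi$ of $G(F)$, the bound $m_V(\bar\pi)\leq 1$ is not available at this stage of the paper; it is precisely the content of Theorem~\ref{thm1.2}(i), whose proof \emph{uses} Theorem~\ref{6.1}, which in turn \emph{uses} Proposition~\ref{6.7} to handle the cuspidal part $f_2$. Your case split into $m_V(\bar\pi)=0$ and $m_V(\bar\pi)=1$ therefore omits the case $m_V(\bar\pi)\geq 2$, and this is exactly the case for which the proposition is needed.

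The paper's proof avoids this by never appealing to any multiplicity bound. Instead it identifies the dual of the coinvariant space with $\operatorname{Hom}_{H_V}(\pi,\omega_{V,\psi,\mu,\chi})$, defines a positive scalar product on this Hom-space via
\[
(\ell,\ell')\,\langle v,v'\rangle \;=\; \int_{Z_G(F)H_V(F)\backslash G(F)}\langle \ell(\pi(x)v),\ell'(\pi(x)v')\rangle\,dx,
\]
and then shows that any $l$ orthogonal to all the linear forms $\mathcal B_\pi(\cdot, v^\vee\otimes\phi)$ must vanish, by unfolding the integral over $Z_G(F)\backslash G(F)$ and applying the Schur orthogonality relation for the discrete series $\pi$ (producing the factor $d(\pi)^{-1}$). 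This argument is valid for any finite value of $m_V(\pi)$; the square-integrability of $\pi$ (not merely temperedness) is essential here, and Theorem~\ref{5.1} plays no role.
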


\begin{proof}
Since 
$$
{\mathcal{B}}_{\pi}\left(\pi\left(h_{1}\right)v\otimes\omega_{V,\psi,\mu,\chi}\left(h_{1}\right)\phi_{2},\pi\left(h_{2}\right)v^{\vee}\otimes\omega_{V,\psi,\mu,\chi}\left(h_{2}\right)\phi_{1}\right)={\mathcal{B}}_{\pi}\left(v\otimes\phi_{2},v^{\vee}\otimes\phi_{1}\right),
$$
for any $v\otimes\phi_{2}\in\pi\otimes\overline{\omega_{V,\psi,\mu,\chi}}$
and $v^{\vee}\otimes\phi_{1}\in\bar{\pi}\otimes\omega_{V,\psi,\mu,\chi}$,
it follows that ${\mathcal{B}}_{\pi}$ factors through $\left(\pi\otimes\overline{\omega_{V,\psi,\mu,\chi}}\right)_{H_{V}}\times\left(\bar{\pi}\otimes\omega_{V,\psi,\mu,\chi}\right)_{H_{V}}$.
It suffices to show this pairing is nondegenerate. By Theorem \ref{A.1},
$\left(\pi\otimes\overline{\omega_{V,\psi,\mu,\chi}}\right)_{H_{V}}$
is finite dimensional. Hence, we need to show 
$$
v^{\vee}\otimes\phi\mapsto{\mathcal{B}}_{\pi}\left(\cdot,v^{\vee}\otimes\phi\right)\in\text{Hom}_{H_{V}}\left(\pi\otimes\overline{\omega_{V,\psi,\mu,\chi}},\mathbb{C}\right)
$$
is surjective. One has the following isomorphism 
$$
\begin{array}{ccc}
\text{Hom}_{H_{V}}\left(\pi,\omega_{V,\psi,\mu,\chi}\right) & \overset{\sim}{\longrightarrow} & \text{Hom}_{H_{V}}\left(\pi\otimes\overline{\omega_{V,\psi,\mu,\chi}},\mathbb{C}\right)\\
l & \mapsto & v\otimes\phi\mapsto\left\langle l\left(v\right),\phi\right\rangle 
\end{array}
$$
We construct the inverse of the above map as follows. Let $\ell\in\text{Hom}_{H_{V}}\left(\pi\otimes\overline{\omega_{V,\psi,\mu,\chi}},\mathbb{C}\right)$.
Let $l\in\text{Hom}_{H_{V}}\left(\pi,\omega_{V,\psi,\mu,\chi}\right)$
such that for any $v\otimes\phi\in\pi\otimes\overline{\omega_{V,\psi,\mu,\chi}}$,
one has $\left\langle l\left(v\right),\phi\right\rangle =\ell\left(v\otimes\phi\right)$.
This gives us the inverse of the above map. Moreover, by using a similar argument as in the proof of Lemma \ref{5.7}, for any $d>0$, we have $|l(\pi(a)v)| \ll \Xi^G(a)\sigma(a)^{-d},$ for $a\in A_{P_0}^+(F)$, noting that here $\pi$ is a discrete series. By Proposition \ref{4.6}(iii), we have
\begin{equation}\label{eqn5.4}
    \int_{Z_G(F)H(F)\backslash G(F)}|l(\pi(x)v)|^2dx < \infty.
\end{equation}
One can regard ${\mathcal{B}}_{\pi}\left(\cdot,v^{\vee}\otimes\phi\right)$
as an element in $\text{Hom}_{H_{V}}\left(\pi,\omega_{V,\psi,\mu,\chi}\right)$
by this way. This gives us a scalar product on $\text{Hom}_{H_{V}}\left(\pi,\omega_{V,\psi,\mu,\chi}\right)$
characterized by 
$$
\int_{Z_{G}\left(F\right)H_{V}\left(F\right)\backslash G\left(F\right)}\left\langle \ell\left(\pi\left(x\right)v\right),\ell^{\prime}\left(\pi\left(x\right)v^{\prime}\right)\right\rangle dx=\left(\ell,\ell^{\prime}\right)\left(v,v^{\prime}\right),
$$
where $\ell,\ell^{\prime}\in\text{Hom}_{H_{V}}\left(\pi,\omega_{V,\psi,\mu,\chi}\right)$
and $v,v^{\prime}\in\pi$, noting that $\left\langle ,\right\rangle $
is the scalar product of the space of Schwartz functions realising the Weil representation. Moreover, the abosolute convergence of the LHS follows from Cauchy-Schwarz inequality and (\ref{eqn5.4}).

Let $l\in\text{Hom}_{H_{V}}\left(\pi,\omega_{V,\psi,\mu,\chi}\right)$
which is orthogonal to all the forms ${\mathcal{B}}_{\pi}\left(\cdot,v\otimes\phi\right)$,
where $v\otimes\phi\in\pi\otimes\overline{\omega_{V,\psi,\mu,\chi}}$.
We need to show $l=0$. One has 
$$
0=\int_{Z_{G}\left(F\right)H\left(F\right)\backslash G\left(F\right)}\left\langle l\left(\pi\left(x\right)v^{\prime}\right),{\mathcal{B}}_{\pi}\left(\cdot,v\otimes\phi\right)\left(\pi\left(x\right)v^{\prime}\right)\right\rangle dx
$$
$$
=\int_{Z_{G}\left(F\right)H\left(F\right)\backslash G\left(F\right)}\int_{Z_{H}\left(F\right)\backslash H\left(F\right)}\left\langle v,\pi\left(hx\right)v^{\prime}\right\rangle \left\langle \omega_{V,\psi,\mu,\chi}\left(h\right)l\left(\pi\left(x\right)v^{\prime}\right),\phi\right\rangle dhdx
$$
$$
=\int_{Z_{G}\left(F\right)\backslash G\left(F\right)}\left\langle v,\pi\left(g\right)v^{\prime}\right\rangle \left\langle l\left(\pi\left(g\right)v^{\prime}\right),\phi\right\rangle dg=\frac{1}{d\left(\pi\right)}\left\langle v^{\prime},v^{\prime}\right\rangle  \left\langle l\left(v\right),\phi\right\rangle.
$$
Since we can choose $v^{\prime}$ and $\phi$ arbitrarily, it follows
that $l=0$ as desired.
\end{proof}
We now prove Theorem \ref{6.1}.
\begin{proof}
Let $f\in{\mathcal{C}}_{\text{scusp}}\left(Z_{G}\left(F\right)\backslash G\left(F\right),\chi\right)$.
By Theorem \ref{2.2}(i), there exists a cusp form $f_{2}$ in ${\mathcal{C}}_{\text{scusp}}\left(Z_{G}\left(F\right)\backslash G\left(F\right),\chi\right)$ such that $\pi\left(f_{2}\right)=\pi\left(f\right)$, for any discrete
series $\pi$. Let $f_{1}=f-f_{2}$. Then $\pi\left(f_{1}\right)=0$,
for any discrete series $\pi$. Thus, one has $m_{V}\left(\pi\right)\leq1$
whenever $\pi\left(f_{1}\right)\neq0$. Let $\left\{ \phi_{i}\right\} _{i}$
be an orthonormal basis for $\omega_{V,\psi,\mu,\chi}$. We consider
$$
J_{V}\left(f\right)=\int_{Z_{G}\left(F\right)H\left(F\right)\backslash G\left(F\right)}\underset{i}{\sum}\left(\int_{Z_{H_{V}}\left(F\right)\backslash H_{V}\left(F\right)}f\left(x^{-1}hx\right)\left\langle \phi_{i},\omega_{V,\psi,\mu,\chi}\left(h\right)\phi_{i}\right\rangle dh\right)dx.
$$
Let $x\in G\left(F\right)$. Assume both $^{x}f_{1}$ and $^{x}f_{2}$
are fixed by a compact open subgroup $K$. Let $K_{H}=K\cap H_{V}\left(F\right)$
and $\left\{ \phi_{i}\right\} _{i\in I}$ be an orthonormal basis for $\omega_{V,\psi,\mu,\chi}^{K_{H}}$. Then
$$
K\left(f,x\right)=\underset{i\in I}{\sum}\int_{Z_{H_{V}}\left(F\right)\backslash H_{V}\left(F\right)}f\left(x^{-1}hx\right)\left\langle \phi_{i},\omega_{V,\psi,\mu,\chi}\left(h\right)\phi_{i}\right\rangle dh.
$$
By Lemma \ref{5.2}, we have 
$$
\int_{Z_{H_{V}}\left(F\right)\backslash H_{V}\left(F\right)}f\left(x^{-1}hx\right)\left\langle \phi_{i},\omega_{V,\psi,\mu,\chi}\left(h\right)\phi_{i}\right\rangle dh\text{ is absolutely convergent.}
$$
This gives us $K\left(f,x\right)$ is absolutely convergent. We now
consider $K\left(f_{1},x\right)$ and $J_{V}\left(f_{1}\right)$.
Since $\pi\mapsto\pi\left(f_{1}\right)$ lies in $C_{c}^{\infty}\left({\mathcal{X}}_{\text{temp}}\left(G,\chi^{-1}\right),{\mathcal{E}}\left(G,\chi^{-1}\right)\right)$,
by Corollary \ref{5.8}(ii), there exists a function $f^{\prime}\in{\mathcal{C}}\left(Z_{G}\left(F\right)\backslash G\left(F\right),\chi^{-1}\right)$
and $\phi,\phi^{\prime}\in\omega_{V,\psi,\mu,\chi}$ such that ${\mathcal{L}}_{\pi}^{\phi,\phi^{\prime}}\left(\pi\left(f^{\prime,\vee}\right)\right)=m_{V}\left(\bar{\pi}\right)$, for any $\pi\in{\mathcal{X}}_{\text{temp}}\left(G,\chi^{-1}\right)$ such
that $\pi\left(f\right)\neq0$. By Theorem \ref{5.1} and Corollary \ref{5.8}(i) and since we have $m_{V}\left(\pi\right)\leq1$ whenever $\pi\left(f_{1}\right)\neq0$,
one can see that 
$$
\int_{Z_{H_{V}}\left(F\right)\backslash H_{V}\left(F\right)}f_{1}\left(x^{-1}hx\right)\left\langle \phi_{i},\omega_{V,\psi,\mu,\chi}\left(h\right)\phi_{i}\right\rangle dh
=\int_{{\mathcal{X}}_{\text{temp}}\left(G\left(F\right),\chi^{-1}\right)}{\mathcal{L}}_{\pi}^{\phi_{i},\phi_{i}}\left(\pi\left(x\right)\pi\left(f_{1}\right)\pi\left(x^{-1}\right)\right)\mu\left(\pi\right)d\pi
$$
$$
=\int_{{\mathcal{X}}_{\text{temp}}\left(G\left(F\right),\chi^{-1}\right)}{\mathcal{L}}_{\pi}^{\phi_{i},\phi_{i}}\left(\pi\left(x\right)\pi\left(f_{1}\right)\pi\left(x^{-1}\right)\right){\mathcal{L}}_{\pi}^{\phi,\phi^{\prime}}\left(\pi\left(f^{\prime,\vee}\right)\right)\mu\left(\pi\right)d\pi.
$$
By Lemma \ref{5.3}(iv), the above integral is absolutely convergent and equal
to
$$
\underset{Z_{H_{V}}\left(F\right)\backslash H_{V}\left(F\right)}{\iint}\int_{Z_{G}\left(F\right)\backslash G\left(F\right)}f^{\prime}\left(g\right)f_{1}\left(x^{-1}h_{1}gh_{2}x\right)\left\langle \phi,\omega_{V,\psi,\mu,\chi}\left(h_{2}\right)\phi_{i}\right\rangle \left\langle \phi_{i},\omega_{V,\psi,\mu,\chi}\left(h_{1}\right)\phi^{\prime}\right\rangle dgdh_{2}dh_{1}.
$$
By the orthogonality of $\left\{ \phi_{i}\right\} _{i\in I}$, it
follows that 
$$
K\left(f_{1},x\right)=\underset{i\in I}{\sum}\underset{Z_{H_{V}}\left(F\right)\backslash H_{V}\left(F\right)}{\iint}\underset{Z_{G}\left(F\right)\backslash G\left(F\right)}{\int}f^{\prime}\left(g\right)f_{1}\left(x^{-1}h_{1}gh_{2}x\right)
\left\langle \phi,\omega_{V,\psi,\mu,\chi}\left(h_{2}\right)\phi_{i}\right\rangle \left\langle \phi_{i},\omega_{V,\psi,\mu,\chi}\left(h_{1}\right)\phi^{\prime}\right\rangle dgdh_{2}dh_{1}
$$
$$
=\underset{Z_{H_{V}}\left(F\right)\backslash H_{V}\left(F\right)}{\iint}\underset{Z_{G}\left(F\right)\backslash G\left(F\right)}{\int}f^{\prime}\left(g\right)f_{1}\left(x^{-1}h_{1}gh_{2}x\right)dg\left\langle \phi,\omega_{V,\psi,\mu,\chi}\left(h_{2}h_{1}\right)\phi^{\prime}\right\rangle dh_{2}dh_{1}.
$$
This gives us $K\left(f_{1},x\right)=K_{f_{1},f^{\prime},\phi,\phi^{\prime}}^{2}\left(x,x\right)$. We now consider the outer integral. We have 
$$
J_{V}\left(f_{1}\right)=\int_{Z_{G}\left(F\right)H_{V}\left(F\right)\backslash G\left(F\right)}K_{f_{1},f^{\prime},\phi,\phi^{\prime}}^{2}\left(x,x\right)dx=J_{\text{aux}}\left(f_{1},f^{\prime},\phi,\phi^{\prime}\right).
$$
By Proposition \ref{6.4}, it follows that 
$$
J_{V}\left(f_{1}\right)=\int_{{\mathcal{X}}_{\text{temp}}\left(G\left(F\right),\chi^{-1}\right)}\hat{\theta}_{f_{1}}\left(\pi\right){\mathcal{L}}_{\pi}^{\phi,\phi^{\prime}}\left(\pi\left(f^{\prime,\vee}\right)\right)d\pi
=\int_{{\mathcal{X}}_{\text{temp}}\left(G\left(F\right),\chi^{-1}\right)}\hat{\theta}_{f_{1}}\left(\pi\right)m_{V}\left(\bar{\pi}\right)d\pi.
$$
There remains to consider $J_{V}\left(f_{2}\right)$. For any $\pi\in\text{Irr}_{\text{sqr}}\left(G,\chi\right)$, we define $
f_{2,\pi}\left(g\right)=\text{Trace}\left(\bar{\pi}\left(g^{-1}\right)\bar{\pi}\left(f_{2}\right)\right)$. By the Harish-Chandra Plancherel formula, we have $J_{V}\left(f_{2}\right)=\underset{\pi\in\text{Irr}_{\text{sqr}}\left(G,\chi\right)}{\sum}d\left(\pi\right)J_{V}\left(f_{2,\pi}\right)$, where $d\left(\pi\right)$ is the formal degree of $\pi$. We fix
$\pi\in\text{Irr}_{\text{sqr}}\left(G,\chi\right)$. It suffices to
show 
$$
J_{V}\left(f_{2,\pi}\right)=J_{V,spec}\left(f_{2,\pi}\right)=d\left(\pi\right)^{-1}m_{V}\left(\pi\right)\text{Trace}\left(\bar{\pi}\left(f_{2,\pi}\right)\right).
$$
Since $f_{2,\pi}$ is a sum of matrix coefficients of $\pi$, we only
need to prove 
$$
J_{V}\left(f_{v,v^{\vee}}\right)=d\left(\pi\right)^{-1}m_{V}\left(\pi\right)f_{v,v^{\vee}},
$$
for all $\left(v,v^{\vee}\right)\in\pi\times\bar{\pi}$, where $f_{v,v^{\vee}}\left(g\right)=\left\langle \pi\left(g\right)v,v^{\vee}\right\rangle $.
We fix $\left(v,v^{\vee}\right)\in\pi\times\bar{\pi}$. Then  
$$
K\left(f_{v,v^{\vee}},x\right)=\underset{i}{\sum}\ {\mathcal{B}}_{\pi}\left(\pi\left(x\right)v\otimes\phi_{i},\pi\left(x\right)v^{\vee}\otimes\phi_{i}\right).
$$
To simplify our computation, suppose that we can choose a basis $\left(v_{1}\otimes w_{1}^{\vee},\ldots,v_{N}\otimes w_{N}^{\vee}\right)$
of $\left(\pi\otimes\overline{\omega_{V,\psi,\mu,\chi}}\right)_{H_{V}}$
and let $\left(v_{1}^{\vee}\otimes w_{1},\ldots,v_{N}^{\vee}\otimes w_{N}\right)$
be its dual in $\left(\bar{\pi}\otimes\omega_{V,\psi,\mu,\chi}\right)_{H_{V}}$.
We lift the two bases to $\pi\otimes\overline{\omega_{V,\psi,\mu,\chi}}$
and $\bar{\pi}\otimes\omega_{V,\psi,\mu,\chi}$. For any $x\in Z_{G}\left(F\right)H\left(F\right)\backslash G\left(F\right)$,
we have  
$$
{\mathcal{B}}_{\pi}\left(\pi\left(x\right)v\otimes\phi_{i},\pi\left(x\right)v^{\vee}\otimes\phi_{i}\right)=\sum_{j=1}^N{\mathcal{B}}_{\pi}\left(\pi\left(x\right)v\otimes\phi_{i},v_{j}^{\vee}\otimes w_{j}\right){\mathcal{B}}_{\pi}\left(v_{j}\otimes w_{j}^{\vee},\pi\left(x\right)v^{\vee}\otimes\phi_{i}\right)
$$
$$
=\sum_{j=1}^N\iint_{Z_{H_{V}}\left(F\right)\backslash H_{V}\left(F\right)}\left\langle \pi\left(h_{1}x\right)v,v_{j}^{\vee}\right\rangle \left\langle w_{j},\omega_{V,\psi,\mu,\chi}\left(h_{1}\right)\phi_{i}\right\rangle 
\left\langle \pi\left(h_{2}\right)v_{j},\pi\left(x\right)v^{\vee}\right\rangle \left\langle \phi_{i},\omega_{V,\psi,\mu,\chi}\left(h_{2}\right)w_{j}^{\vee}\right\rangle dh_{1}dh_{2}.
$$
This gives us 
$$
{\mathcal{B}}_{\pi}\left(\pi\left(x\right)v\otimes\phi_{i},\pi\left(x\right)v^{\vee}\otimes\phi_{i}\right)=\sum_{j=1}^N\underset{i\in I}{\sum}\iint_{Z_{H_{V}}\left(F\right)\backslash H_{V}\left(F\right)}\left\langle \pi\left(h_{1}x\right)v,v_{j}^{\vee}\right\rangle\left\langle w_{j},\omega_{V,\psi,\mu,\chi}\left(h_{1}\right)\phi_{i}\right\rangle
$$
$$
 \left\langle \pi\left(h_{2}\right)v_{j},\pi\left(x\right)v^{\vee}\right\rangle \left\langle \phi_{i},\omega_{V,\psi,\mu,\chi}\left(h_{2}\right)w_{j}^{\vee}\right\rangle dh_{1}dh_{2}
$$
$$
=\sum_{j=1}^N\iint_{Z_{H_{V}}\left(F\right)\backslash H_{V}\left(F\right)}\left\langle \pi\left(h_{1}x\right)v,v_{j}^{\vee}\right\rangle \left\langle \pi\left(h_{2}\right)v_{j},\pi\left(x\right)v^{\vee}\right\rangle \left\langle w_{j},\omega_{V,\psi,\mu,\chi}\left(h_{1}h_{2}\right)w_{j}^{\vee}\right\rangle dh_{1}dh_{2}.
$$
Therefore 
$$
J_{V}\left(f_{v,v^{\vee}}\right)=\sum_{j=1}^N\int_{Z_{G}\left(F\right)H_{V}\left(F\right)\backslash G\left(F\right)}\iint_{Z_{H_{V}}\left(F\right)\backslash H_{V}\left(F\right)}\left\langle \pi\left(h_{1}x\right)v,v_{j}^{\vee}\right\rangle \left\langle \pi\left(h_{2}\right)v_{j},\pi\left(x\right)v^{\vee}\right\rangle
$$
$$
\left\langle w_{j},\omega_{V,\psi,\mu,\chi}\left(h_{1}h_{2}\right)w_{j}^{\vee}\right\rangle dh_{1}dh_{2}dx
$$
$$
=\sum_{j=1}^N\int_{Z_{G}\left(F\right)H_{V}\left(F\right)\backslash G\left(F\right)}\iint_{Z_{H_{V}}\left(F\right)\backslash H_{V}\left(F\right)}\left\langle \pi\left(h_{1}x\right)v,v_{j}^{\vee}\right\rangle \left\langle \pi\left(h_{2}\right)v_{j},\pi\left(x\right)v^{\vee}\right\rangle
$$
$$
\left\langle w_{j},\omega_{V,\psi,\mu,\chi}\left(h_{1}h_{2}\right)w_{j}^{\vee}\right\rangle dh_{1}dh_{2}dx
$$
$$
=\sum_{j=1}^N\int_{Z_{G}\left(F\right)\backslash G\left(F\right)}\left\langle v_{j},\pi\left(x\right)v^{\vee}\right\rangle {\mathcal{B}}_{\pi}\left(\pi\left(x\right)v\otimes w_{j}^{\vee},v_{j}^{\vee}\otimes w_{j}^{\vee}\right)dg
$$
$$
=\frac{1}{d\left(\pi\right)}\sum_{j=1}^N\left\langle v,v^{\vee}\right\rangle {\mathcal{B}}_{\pi}\left(v_{j}\otimes w_{j}^{\vee},v_{j}^{\vee}\otimes w_{j}\right)=\frac{1}{d\left(\pi\right)}m_{V}\left(\pi\right)f_{v,v^{\vee}}\left(1\right).
$$
\end{proof}

\section{Geometric side and the strong multiplicity one property}\label{sec7}

\subsection{Linearization of trace formulas}\label{sec7.1}

Let $V$ and $V^{\prime}$ be the two non-isomorphic $n$-dimensional skew-Hermitian spaces over $E$. Let $H_{V}=U\left(V\right)$ and $H_{V^{\prime}}=U\left(V^{\prime}\right)$. We set $J\left(f\right)=J_{V}\left(f\right)+J_{V^{\prime}}\left(f\right)$, for $f\in{\mathcal{C}}_{\text{scusp}}\left(Z_{G}\left(F\right)\backslash G\left(F\right),\chi\right)$.
Let $m\left(\pi\right)=m_{V}\left(\pi\right)+m_{V^{\prime}}\left(\pi\right)$.
By Theorem \ref{6.1}, it follows that 
$$
J\left(f\right)=\int_{{\mathcal{X}}\left(G,\chi^{-1}\right)}\hat{\theta}_{f}\left(\pi\right)m\left(\bar{\pi}\right)d\pi.
$$
We define the following linear form
$$
J_{\text{qc}}\left(\theta\right)=c_\theta(1)+\underset{\pi\in\Pi_2\left(G,\chi^{-1}\right)}{\sum}(m\left(\bar{\pi}\right)-1)\int_{\Gamma_{\text{ell}}\left(Z_{G}\backslash G\right)}D^{G}\left(x\right)\theta\left(x\right)\theta_{\pi}\left(x\right)dx,
$$
for $\theta\in QC\left(Z_{G}\left(F\right)\backslash G\left(F\right),\chi\right)$.
The remaining of this section is devoted to prove the following theorem
and then prove Theorem \ref{thm1.2}(i).
\begin{theorem}\label{7.1}
For any $\theta\in QC\left(Z_{G}\left(F\right)\backslash G\left(F\right),\chi\right)$, we have 
$$
J_{\text{qc}}\left(\theta\right)=c_{\theta}\left(1\right).
$$
\end{theorem}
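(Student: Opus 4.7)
My plan is to prove Theorem \ref{7.1} by induction on $n$, combining the spectral expansion of Theorem \ref{6.1} with Harish-Chandra descent to smaller unitary groups. Set $D(\theta) := J_{\text{qc}}(\theta) - c_\theta(1) = \sum_{\pi \in \Pi_2(G, \chi^{-1})} (m(\bar\pi) - 1) \int_{\Gamma_{\text{ell}}(Z_G \backslash G)} D^G(x) \theta(x) \theta_\pi(x) dx$; I want to show $D \equiv 0$ on $QC(Z_G(F)\backslash G(F), \chi)$. The first observation is that $D(\theta)$ depends only on the germ of $\theta$ near the elliptic regular set (through the sum) and near $1$ (through $c_\theta(1)$), so by Corollary \ref{3.19}(i) I may assume $\theta = \theta_f$ for some strongly cuspidal $f$.

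For such $\theta_f$, Theorem \ref{6.1} yields $J(f) := J_V(f) + J_{V'}(f) = \int_{\mathcal{X}(G, \chi^{-1})} \hat\theta_f(\pi) m(\bar\pi) d\pi$, and I split this integral along the decomposition of $\mathcal{X}(G, \chi^{-1})$ into the discrete series part $\Pi_2(G,\chi^{-1})$ and the properly induced part. For $\pi = i_P^G(\sigma)$ with $P$ proper, Proposition \ref{5.4} together with the induction hypothesis (multiplicity one for Levi factors of rank $< n$) gives $m(\bar\pi) = 1$, so the induced part collapses to $\int_{\mathcal{X} \setminus \Pi_2} \hat\theta_f(\pi) d\pi$. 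Arthur's orthogonality relations for elliptic tempered characters then identify $\hat\theta_f(\pi) = \int_{\Gamma_{\text{ell}}} D^G \theta_f \theta_\pi dx$ for every discrete series $\pi$, and the matricial Paley--Wiener theorem (Theorem \ref{2.2}) produces a Plancherel-type identity $\int_{\mathcal{X}_{\text{temp}}(G, \chi^{-1})} \hat\theta_f(\pi) d\pi = c_{\theta_f}(1)$. Combining these two inputs rearranges $J(f)$ into the form $c_{\theta_f}(1) + \sum_{\pi \in \Pi_2}(m(\bar\pi) - 1) \int_{\Gamma_{\text{ell}}} D^G \theta_f \theta_\pi dx = J_{\text{qc}}(\theta_f)$, so Theorem \ref{7.1} reduces to proving the geometric identity $J(f) = c_{\theta_f}(1)$ for every strongly cuspidal $f$.

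The final and most delicate step, where I anticipate the main difficulty, is to establish $J(f) = c_{\theta_f}(1)$ by Harish-Chandra descent around each semisimple elliptic $x$. The centralizer $G_x$ is a product of smaller $\text{GL}$ and unitary groups, and the Weil representation $\omega_{V,\psi,\mu}$ restricts, via Proposition \ref{4.1}, to a product of Weil representations attached to the descended skew-hermitian data; applying the inductive version of Theorem \ref{7.1} on each factor should annihilate every local contribution to $J(f)$ supported away from the identity. Controlling the Jacobian $\eta_x^G$, the modulus characters, the Weyl integration measures, and especially the interaction between the central character $\chi$ and the descended characters $\chi_x$ on $G_x$ is the principal technical hurdle. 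Once the descent succeeds at all $x \neq 1$, the contribution at the identity is identified by a Shalika germ expansion $\theta(e^Y) = \sum_{\mathcal{O}} c_{\theta, \mathcal{O}}(1) \hat j(\mathcal{O}, Y)$ combined with the homogeneity and linear independence of the functions $\hat j(\mathcal{O}, \cdot)$, isolating the leading coefficient $c_{\theta_f}(1)$, which closes the argument.
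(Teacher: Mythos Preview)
Your overall architecture matches the paper's: establish $J(f) = J_{\text{qc}}(\theta_f)$ via the spectral expansion and the induction hypothesis on proper Levi subgroups (this is Lemma \ref{7.2}), then use Harish-Chandra descent to kill the support of $J_{\text{qc}}$ away from the center (Proposition \ref{7.3}), and finally pass to the Lie algebra and invoke homogeneity of the local character expansion of the Weil representation near the identity (Proposition \ref{7.4}). So the skeleton is right.

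However, there is a genuine gap in your final step. The homogeneity argument does \emph{not} ``isolate the leading coefficient $c_{\theta_f}(1)$'' in the sense of pinning down the value; it only shows that $J_{\text{qc}}(\theta) = c_\chi \cdot c_\theta(1)$ for some constant $c_\chi \in \mathbb{C}$ depending on $\chi$. Concretely, after descent to $\mathfrak{g}_0(F)$ and the local expansion of $\Theta_{\omega_{V,\psi,\mu,\chi}}$, scaling $f \mapsto f_\lambda$ for $\lambda \in \mathcal{O}_F^\times$ and comparing degrees of homogeneity kills all $c_{\theta,\mathcal{O}}(1)$ for non-regular $\mathcal{O}$, leaving $J_{\text{qc}}(\theta)$ proportional to $c_{\theta,\mathcal{O}_{\text{reg}}}(1)$. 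But the constant of proportionality is an explicit geometric integral (see (\ref{eqn6.2})) whose value is not a priori $1$; nothing in your outline determines it.

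The paper closes this gap in two stages (Section \ref{sec7.4}). For trivial central character $\chi = 1$, it evaluates both sides on the Steinberg representation: substituting $\theta = \theta_{\text{St}}$ into the defining sum for $J_{\text{qc}}$ and using the a priori result $m(\text{St}) = 1$ from \cite[Corollary 5.8]{GGP23} forces $c_1 = 1$. For general $\chi$, it constructs from any $f \in C_c^\infty(Z_G\backslash X, \chi)$ a companion $f' \in C_c^\infty(Z_G\backslash X, 1)$ with $c_{\theta_f}(1) = c_{\theta_{f'}}(1)$, checks that $f'$ inherits strong cuspidality, and then shows $J_\chi(f) = J_1(f')$ directly via the Lie-algebra formula (\ref{eqn6.2}), which is visibly independent of $\chi$. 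You need to supply some input of this type---a single representation or test function for which both sides are known---to finish.
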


We prove Theorem \ref{7.1} by induction on $n$. When $n=1$, it is clear that Theorem \ref{7.1} holds. From now, assume Theorem \ref{7.1} holds for lower rank cases.

\subsection{Semisimple descent and the support of $J_{\text{qc}}$}\label{sec7.2}

For simplicity, we denote $H_{V}$ and $H_{V^{\prime}}$ by $H$,
depending on whether we are considering $J_{V}$ or $J_{V^{\prime}}$.
Let $\Gamma\left(H\right)$ and $\Gamma\left(G\right)$ be the
sets of semisimple conjugacy classes in $H\left(F\right)$ and $G\left(F\right)$
respectively. Moreover, we denote by $\Gamma_{\text{stab}}\left(H\right)$ the set of semisimple stable conjugacy classes in $H\left(F\right)$. The following lemma gives us a way to estimate the linear form $J_{\text{qc}}$ using the linear form $J(f)$.
\begin{lemma}\label{7.2}
For any $f\in \mathcal{C}(Z_G(F)\backslash G(F),\chi)$, we have 
$$
J_{qc}(\theta_f)=J(f).
$$
\end{lemma}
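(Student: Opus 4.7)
The plan is to combine the spectral expansion of $J(f)$ given by Theorem \ref{6.1} (applied to both $V$ and $V^\prime$) with the induction hypothesis, and then match the result term by term against the definition of $J_{\mathrm{qc}}(\theta_f)$ using two Plancherel-type identities. Concretely, summing the spectral expansions for $V$ and $V^\prime$ gives
$$
J(f) = \int_{{\mathcal{X}}(G,\chi^{-1})} \hat{\theta}_f(\pi)\, m(\bar{\pi})\, d\pi,
$$
and we decompose the space ${\mathcal{X}}(G,\chi^{-1})$ into its elliptic part ${\mathcal{X}}_{\mathrm{ell}}(G,\chi^{-1})$ and the part coming from proper Levi subgroups $M$. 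On the latter, each virtual representation is a generalized principal series $i_M^G(\sigma)$, so by the induction hypothesis stated at the beginning of Section \ref{sec5} together with \cite[Theorem 4.8]{GGP23} we have $m_V(\bar{\pi}) + m_{V^\prime}(\bar{\pi}) = 1$, i.e.\ $m(\bar{\pi}) - 1 = 0$ there. This lets us rewrite
$$
J(f) = \int_{{\mathcal{X}}(G,\chi^{-1})} \hat{\theta}_f(\pi)\, d\pi \;+\; \int_{{\mathcal{X}}_{\mathrm{ell}}(G,\chi^{-1})} \hat{\theta}_f(\pi)\,(m(\bar{\pi})-1)\, d\pi.
$$

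Two identities must then be established. The first is that for strongly cuspidal $f$,
$$
c_{\theta_f}(1) = \int_{{\mathcal{X}}(G,\chi^{-1})} \hat{\theta}_f(\pi)\, d\pi,
$$
which I plan to deduce from the spectral decomposition of the quasi-character $\theta_f$ in terms of weighted characters $J_M^G(\sigma,f)$, together with the matricial Paley-Wiener isomorphism of Theorem \ref{2.2} specialized near the origin via the Harish-Chandra descent machinery of Section \ref{sec3.1}. The second is that for any discrete series $\pi \in \Pi_2(G,\chi^{-1})$,
$$
\hat{\theta}_f(\pi) = \int_{\Gamma_{\mathrm{ell}}(Z_G\backslash G)} D^G(x)\,\theta_f(x)\,\theta_\pi(x)\, dx;
$$
this is the twisted Arthur orthogonality relation between elliptic characters and weighted orbital integrals of strongly cuspidal functions (cf.\ \cite[Theorem 7.3]{Wal12b} and Corollary \ref{3.19}(ii)). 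Since $G = \mathrm{Res}_{E/F}\mathrm{GL}_n$ has no nonstandard endoscopic complications, ${\mathcal{X}}_{\mathrm{ell}}(G)$ consists precisely of discrete series (up to unramified twist), and fixing the central character $\chi^{-1}$ kills all unramified twists, so the second integral reduces to a discrete sum over $\Pi_2(G,\chi^{-1})$ with the right normalization.

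Combining the two identities with the decomposition above yields
$$
J(f) = c_{\theta_f}(1) + \sum_{\pi\in\Pi_2(G,\chi^{-1})} (m(\bar{\pi})-1) \int_{\Gamma_{\mathrm{ell}}(Z_G\backslash G)} D^G(x)\,\theta_f(x)\,\theta_\pi(x)\, dx = J_{\mathrm{qc}}(\theta_f),
$$
as desired. The main obstacle is the first Plancherel-type identity $c_{\theta_f}(1) = \int \hat{\theta}_f(\pi)\, d\pi$; unlike the orthogonality relation, it requires a careful analysis near the identity, matching the leading coefficient of the local Harish-Chandra-Howe expansion of $\theta_f$ to the integrated weighted characters, which is where Arthur's local trace formula techniques must be invoked. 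The second identity, while essentially standard for $\mathrm{GL}_n$, nevertheless relies on the strong cuspidality of $f$ to ensure that only elliptic conjugacy classes contribute.
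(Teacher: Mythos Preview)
Your proposal is correct and follows essentially the same route as the paper: apply Theorem~\ref{6.1} to both $V$ and $V'$, use the induction hypothesis (together with \cite[Theorem 4.8]{GGP23}) to set $m(\bar\pi)=1$ on the properly induced part of $\mathcal{X}(G,\chi^{-1})$, and then match the two remaining pieces against the definition of $J_{\mathrm{qc}}$.

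The only point worth noting is that you overstate the difficulty of the first identity. The formula
\[
c_{\theta_f}(1)=\int_{\mathcal{X}(G,\chi^{-1})}\hat\theta_f(\pi)\,c_{\theta_{\bar\pi}}(1)\,d\pi
\]
is precisely what the paper means by the ``Fourier inverse formula'': it follows directly from the spectral expansion of the quasi-character $\theta_f$ (see \cite[\S5.6]{BP20}), and one simply applies the continuous linear functional $\theta\mapsto c_\theta(1)$. No separate Harish-Chandra descent argument near the identity is required. Since every irreducible tempered representation of $\mathrm{Res}_{E/F}\mathrm{GL}_n$ is generic, one has $c_{\theta_{\bar\pi}}(1)=1$ throughout, so your version $c_{\theta_f}(1)=\int\hat\theta_f(\pi)\,d\pi$ is equivalent. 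Your treatment of the second identity (via Weyl integration and the vanishing of ordinary orbital integrals of strongly cuspidal functions off the elliptic set) is exactly right, and your observation that $\mathcal{X}_{\mathrm{ell}}(G)=\Pi_2(G)$ for $\mathrm{GL}_n$ is the reason the elliptic integral collapses to a discrete sum, just as in the paper.
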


\begin{proof}
By Theorem \ref{6.1}, we have 
$$
J\left(f\right)=\int_{{\mathcal{X}}\left(G,\chi^{-1}\right)}\hat{\theta}_{f}\left(\pi\right)m\left(\bar{\pi}\right)d\pi
$$
$$
=\int_{{\mathcal{X}}_{\text{ind}}\left(G,\chi^{-1}\right)}\hat{\theta}_{f}\left(\pi\right)m\left(\bar{\pi}\right)d\pi
+\underset{\pi\in\Pi_2\left(G,\chi^{-1}\right)}{\sum}m\left(\bar{\pi}\right)\int_{\Gamma\left(Z_{G}\backslash G\right)}D^{G}\left(x\right)\theta_{f}\left(x\right)\theta_{\pi}\left(x\right)dx.
$$
By the induction hypothesis, we have $m(\pi) = 1 = c_{\theta_\pi}(1)$, for any $\pi \in\mathcal{X}_\text{ind}(G,\chi)$. Hence, by Fourier inverse formula, we obtain
$$
J(f)=\int_{{\mathcal{X}}\left(G,\chi^{-1}\right)}\hat{\theta}_{f}\left(\pi\right)c_{\theta_{\bar{\pi}}}(1)d\pi
+\underset{\pi\in\Pi_2\left(G,\chi^{-1}\right)}{\sum}(m\left(\bar{\pi}\right)-1)\int_{\Gamma\left(Z_{G}\backslash G\right)}D^{G}\left(x\right)\theta_{f}\left(x\right)\theta_{\pi}\left(x\right)dx.
$$
$$
=c_{\theta_f}(1)+\underset{\pi\in\Pi_2\left(G,\chi^{-1}\right)}{\sum}(m\left(\bar{\pi}\right)-1)\int_{\Gamma\left(Z_{G}\backslash G\right)}D^{G}\left(x\right)\theta_{f}\left(x\right)\theta_{\pi}\left(x\right)dx=J_{qc}(\theta_f).
$$
\end{proof}
The following proposition shows that $\text{Supp}\left(J_{\text{qc}}\right)\subseteq Z_{G}\left(F\right)$.
\begin{proposition}\label{7.3}
Let $\theta\in QC\left(Z_{G}\left(F\right)\backslash G\left(F\right),\chi\right)$
and assume that $Z_{G}\left(F\right)\,\cap\,\text{Supp}\left(\theta\right)$ is empty. Then 
$J_{\text{qc}}\left(\theta\right)=0$.
\end{proposition}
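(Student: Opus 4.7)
The plan is to use Harish-Chandra semisimple descent at a non-central semisimple point, combined with the induction hypothesis on $n$ that is in force throughout Sections \ref{sec5}--\ref{sec7}.

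First, the term $c_\theta(1)$ vanishes automatically: since $1 \in Z_G(F)$ does not lie in $\text{Supp}(\theta)$, the quasi-character $\theta$ is identically zero on a neighborhood of $1$, so every germ coefficient $c_{\theta,\mathcal{O}}(1)$ is zero, and hence so is $c_\theta(1)$. It remains to show that the discrete-series sum
\[
\sum_{\pi \in \Pi_{2}(G, \chi^{-1})} (m(\bar\pi) - 1) \int_{\Gamma_{\text{ell}}(Z_G \backslash G)} D^{G}(x)\, \theta(x)\, \theta_\pi(x)\, dx
\]
vanishes. Since only the elliptic regular locus contributes to each inner integral, using a $G(F)$-invariant partition of unity on the semisimple conjugacy classes outside $Z_G(F)$ I would reduce to the case where $\theta$ is supported in $(\Omega_{x_{0}})^{G}$ for some elliptic semisimple $x_0 \in G(F) \setminus Z_G(F)$ and a small $G$-good neighborhood $\Omega_{x_{0}} \subset G_{x_{0}}(F)$ of $x_0$.

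Next, by the integration formula of Section \ref{sec3.1} and Proposition \ref{3.2}(iii), each inner integral descends, up to an explicit combinatorial constant, to
\[
\int_{\Gamma_{\text{ell}}(G_{x_{0}}) \cap \Omega_{x_{0}}} D^{G_{x_{0}}}(y)\, \theta_{x_{0}, \Omega_{x_{0}}}(y)\, (\theta_\pi)_{x_{0}, \Omega_{x_{0}}}(y)\, dy,
\]
where both $\theta_{x_{0}, \Omega_{x_{0}}}$ and $(\theta_\pi)_{x_{0}, \Omega_{x_{0}}}$ are quasi-characters on $G_{x_{0}}(F)$. Since $x_0$ is elliptic and $G = \text{Res}_{E/F}\,GL_n$, the centralizer $G_{x_{0}}$ is a product of Weil restrictions of general linear groups of strictly smaller $F$-dimension. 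By Proposition \ref{4.1}, the Weil representation $\omega_{V,\psi,\mu}$ decomposes compatibly under this descent, so that the GGP-type data descends from $G$ to $G_{x_{0}}$ in a controlled way.

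Using this descent compatibility together with a character computation based on Arthur's local expansion of $\theta_\pi$ at $x_0$, the sum $\sum_\pi (m(\bar\pi) - 1)(\theta_\pi)_{x_{0}, \Omega_{x_{0}}}$ should rewrite as the analogous spectral sum for $G_{x_{0}}$; the original discrete sum then becomes the value at $\theta_{x_{0}, \Omega_{x_{0}}}$ of the analog $J^{G_{x_{0}}}_{\text{qc}}$ attached to the smaller group. By the induction hypothesis applied to $G_{x_{0}}$, this equals $c_{\theta_{x_{0}, \Omega_{x_{0}}}}(1)$, which by Proposition \ref{3.2}(iii) equals $c_\theta(x_0)$, and the latter vanishes since $x_0 \notin \text{Supp}(\theta)$. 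The main obstacle is this matching step: verifying that the HC descent of the specific combination $\sum_\pi (m(\bar\pi) - 1)\theta_\pi$ at $x_0$ reproduces precisely the analog combination on $G_{x_{0}}$ with no spurious contributions. This requires careful tracking of the multiplicities attached to both spaces $V$ and $V'$ under the descent, combined with Proposition \ref{4.1} and the local germ expansion of $\theta_\pi$ at $x_0$.
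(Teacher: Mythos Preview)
Your descent-plus-induction strategy is the right overall shape, but the route you take through the spectral side has a genuine gap at exactly the step you flag as ``the main obstacle,'' and that gap is not closable in the way you suggest.

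The issue is that there is no direct mechanism to match $\sum_{\pi\in\Pi_2(G)} (m(\bar\pi)-1)\,(\theta_\pi)_{x_0,\Omega_{x_0}}$ with the analogous discrete-series sum on $G_{x_0}$: the germ of $\theta_\pi$ at $x_0$ bears no simple relation to a character of any discrete series of $G_{x_0}$, and the multiplicities $m(\bar\pi)$ are quantities attached to $G$, not $G_{x_0}$. Proposition~\ref{4.1} tells you how the Weil representation restricts, but it does not tell you how the Harish-Chandra characters of discrete series of $G$ descend. The paper circumvents this entirely by passing through the \emph{geometric} side: by Lemma~\ref{7.2} one has $J_{\text{qc}}(\theta_f)=J(f)=J_V(f)+J_{V'}(f)$ for a strongly cuspidal $f$ with $\theta_f=\theta$; the integrals $J_V(f)$, $J_{V'}(f)$ are concrete double integrals that descend cleanly to $G_{x_0}$ via the semisimple descent of Section~\ref{sec3.1} and the lifting map of Proposition~\ref{3.18}; and then Lemma~\ref{7.2} applied \emph{again} on $G_{x_0}$ converts the resulting $J^{H_x}(f_x)+J^{(H_{V'})_{x'}}(f_x)$ back into $J_{\text{qc}}^{G_{x_0}}(\theta_{x_0,\Omega_{x_0}})$, to which the induction hypothesis applies. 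The spectral matching you cannot do by hand is achieved by sandwiching the descent between two applications of the bridge $J_{\text{qc}}\leftrightarrow J$.

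Two further errors. First, you omit the case in which $x_0$ is not $G(F)$-conjugate to any element of $H_V(F)$ or $H_{V'}(F)$; the paper handles this separately by shrinking $\Omega$ so that $\Omega\cap H=\emptyset$, whence $J(f)=0$ directly. Second, your final sentence is wrong: after localizing you have $x_0\in\text{Supp}(\theta)$, not $x_0\notin\text{Supp}(\theta)$, so $c_\theta(x_0)$ has no reason to vanish; and Proposition~\ref{3.2}(iii) only relates $c_{\theta_{x_0,\Omega_{x_0}}}(y)$ to $c_\theta(y)$ for $y\in\Omega_{x_0,\text{ss}}$, which does not include $y=1$.
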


\begin{proof}
Since $J_{\text{qc}}$ is supported in $\Gamma_{\text{ell}}\left(G\right)$,
by a process of partition of unity, we only need to prove the equality
for $\theta\in QC\left(Z_{G}\left(F\right)\backslash\Omega,\chi^{-1}\right)$,
where $\Omega$ is a completely $G\left(F\right)$-invariant
open subset of $G\left(F\right)$ of the form $\Omega_{x}^{G}$ for some noncentral element $x\in G\left(F\right)_{\text{ell}}$ and $G$-good open neighborhood $\Omega_{x}\subseteq G_{x}\left(F\right)$ of $x$. Moreover, we are able to shrink $\Omega_x$ as small as we want. Hence, we assume $\Omega_{x}$ is relatively compact modulo conjugation and $Z_{G}\left(F\right)$.

Assume that $x_{i}$ is not $G\left(F\right)$-conjugate to any element
of $H_{ss}\left(F\right)$. Since $\Gamma\left(H\right)_{\text{stab}}$
is closed in $\Gamma\left(G\right)$, if $\Omega_{x}$
is sufficiently small, then $\Omega\cap\Gamma\left(H\right)=\emptyset$.
Let $f\in \mathcal{C}_{\text{scusp}}(Z_G(F)\backslash\Omega,\chi)$
such that $\theta_{f_n}=\theta$. By
Lemma \ref{7.2}, $J_{qc}\left(\theta\right)=J_{qc}(\theta_f)=J(f)=0$. Hence, $\Omega$ has no contribution in $J_{qc}$.

Assume that $x$ is $G\left(F\right)$-conjugate to some elements
of $H_{ss}\left(F\right)$. We may assume $x\in H_{ss}\left(F\right)$.
Since $x$ is elliptic in $G(F)$, we can see that $(G_x,H_x,\omega_{V,\psi,\mu}\mid_{H_x})$ forms a twisted GGP triple. We choose a function $f_x$ in
${\mathcal{C}}_{\text{scusp}}\left(Z_G(F)\backslash\Omega_x,\chi\right)$ such that $\theta_{f_x}=\theta_{x,\Omega_x}$. Let $f=\widetilde{f_{x}}\in{\mathcal{C}}_{\text{scusp}}\left(Z_G(F)\backslash\Omega,\chi\right)$
be a lift defined in Proposition \ref{3.18}. Then we have $\theta_{f}=\theta$. By Lemma \ref{7.2}, it follows that 
$$
J_{\text{qc}}\left(\theta\right)=J\left(f\right)=J_V(f)+J_{V^\prime}(f).
$$  
By Section \ref{sec3.1}, we have 
$$
J_{V}\left(f\right)=\int_{Z_{G}\left(F\right)H\left(F\right)\backslash G\left(F\right)}\underset{i}{\sum}\int_{Z_{H_{V}}\left(F\right)\backslash H\left(F\right)}f\left(g^{-1}hg\right)\left\langle \phi_{i},\omega_{V,\psi,\mu,\chi}\left(h\right)\phi_{i}\right\rangle dhdg
$$
$$
=\int_{Z_{G}\left(F\right)H\left(F\right)\backslash G\left(F\right)}\underset{i}{\sum}\int_{H_{x}\left(F\right)\backslash H\left(F\right)}\int_{Z_{H}\left(F\right)\backslash H_{x}\left(F\right)}\left(^{hg}f\right)_{x,\Omega_{x}}\left(h_{x}\right)\left\langle \phi_{i},\omega_{V,\psi,\mu,\chi}\left(h_{x}\right)\phi_{i}\right\rangle dh_{x}dhdg.
$$
Assume one moment that the exterior double integral and the sum above
are absolutely convergent. Then $J_{V}\left(f\right)$ is equal to
\begin{equation}\label{eqn6.1}
    \int_{G_{x}\left(F\right)\backslash G\left(F\right)}\int_{Z_{G}\left(F\right)H_{x}\left(F\right)\backslash G_{x}\left(F\right)}\underset{i}{\sum}\int_{Z_{H}\left(F\right)\backslash H_{x}\left(F\right)}\left(^{g}f\right)_{x,\Omega_{x}}\left(g_{x}^{-1}h_{x}g_{x}\right)\left\langle \phi_{i},\omega_{V,\psi,\mu,\chi}\left(h_{x}\right)\phi_{i}\right\rangle dh_{x}dg_{x}dg
\end{equation}
We introduce a function $\alpha$ on $G_{x}\left(F\right)\backslash G\left(F\right)$
as in \cite[Proposition 5.7.1]{BP20}. Up to translating $g$ by an
element in $G_{x}\left(F\right)$, we may assume that $\left(^{g}f\right){}_{x,\Omega_{x}}=\alpha\left(g\right)f_x$.
We have  
$$
\int_{Z_{G}\left(F\right)H_{x}\left(F\right)\backslash G_{x}\left(F\right)}\underset{i}{\sum}\int_{Z_{H}\left(F\right)\backslash H_{x}\left(F\right)}\left(^{g}f\right)_{x,\Omega_{x}}\left(g_{x}^{-1}h_{x}g_{x}\right)\left\langle \phi_{i},\omega_{V,\psi,\mu,\chi}\left(h_{x}\right)\phi_{i}\right\rangle dh_{x}dg_{x}
$$
$$
=\alpha\left(g\right)\int_{Z_{G}\left(F\right)H_{x}\left(F\right)\backslash G_{x}\left(F\right)}\underset{i}{\sum}\int_{Z_{H}\left(F\right)\backslash H_{x}\left(F\right)}f_x\left(g_{x}^{-1}h_{x}g_{x}\right)\left\langle \phi_{i},\omega_{V,\psi,\mu,\chi}\left(h_{x}\right)\phi_{i}\right\rangle dh_{x}dg_{x}
$$
$$
=\alpha\left(g\right)\text{vol}\left(Z_{G_{x}}\left(F\right)/Z_{G}\left(F\right)\right)\text{vol}\left(Z_{H_{x}}\left(F\right)/Z_{H}\left(F\right)\right)^{-1}J^{H_x}\left(f_x\right),
$$
where $J^{H_{x}}$ is the linear form in style of $J_{V}$ for the twisted GGP triple $\left(G_x,H_x,\omega_{H_x,\psi,\mu}\right)$.
By Section \ref{sec6.3} and since the function $\alpha$ is compactly supported, the exterior double integral of (\ref{eqn6.1}) is absolutely convergent.
Moreover, as $\int_{G_{x}\left(F\right)\backslash G\left(F\right)}\alpha\left(g\right)dg=1$, we have 
$$
J_{V}\left(f\right)=\text{vol}\left(Z_{G_{x}}\left(F\right)/Z_{G}\left(F\right)\right)\text{vol}\left(Z_{H_{x}}\left(F\right)/Z_{H}\left(F\right)\right)^{-1}J^{H_x}\left(f_x\right).
$$
Since $x\in H\left(F\right)_{\text{ell}}$, it follows that $x$ is
stably conjugate to an element $x^{\prime}\in H_{V^{\prime}}\left(F\right)_{\text{ell}}$.
By doing similarly to above, we obtain 
$$
J_{V^{\prime}}\left(f\right)=\text{vol}\left(Z_{G_{x}}\left(F\right)/Z_{G}\left(F\right)\right)\text{vol}\left(Z_{\left(H_{V^{\prime}}\right)_{x^{\prime}}}\left(F\right)/Z_{H_{V^{\prime}}}\left(F\right)\right)^{-1}J^{(H_{V^\prime})_{x^\prime}}\left(f_x\right).
$$
By the induction hypothesis and Lemma \ref{7.2}, we have 
$$
J^{H_x}(f_x)+J^{(H_{V^\prime})_{x^\prime}}\left(f_x\right)=J_{\text{qc}}^{G_x}\left(\theta_{x,\Omega_x}\right)=c_{\theta_{x,\Omega_x}}\left(1\right)=0,
$$
where $J_{\text{qc}}^{G_{x}}$ is an analog of the linear form $J_{\text{qc}}$
for $G_{x}$. This gives us $J_{\text{qc}}\left(\theta\right)=0$, for any quasi-character $\theta$ supported outside $Z_{G}\left(F\right)$.
\end{proof}

\subsection{Descent to the Lie algebra and homogeneity}\label{sec7.3}

We identify $\mathfrak{g}_{0}$ and $\mathfrak{h}_{V,0}$ (i.e. the sets containing trace-$0$ elements in $\mathfrak{g}$ and $\mathfrak{h}_{V}$) with the
Lie algebras of $G/Z_{G}$ and $H_{V}/Z_{H_{V}}$, respectively. For
a function $f\in C_{c}^{\infty}\left(\mathfrak{g}_{0}\left(F\right)\right)$,
let us define a function $K^{\text{Lie}}\left(f,\cdot\right)$ on
$Z_{G}\left(F\right)H_{V}\left(F\right)\backslash G\left(F\right)$
by 
$$
K_{V}^{\text{Lie}}\left(f,g\right)=\underset{i}{\sum}\int_{\mathfrak{h}_{V,0}\left(F\right)}f\left(g^{-1}Xg\right)\left\langle \phi_{i},\omega_{V,\psi,\mu,\chi}\left(\exp X\right)\phi_{i}\right\rangle dX,
$$
where $g\in Z_{G}\left(F\right)H_{V}\left(F\right)\backslash G\left(F\right)$
and $\left\{ \phi_{i}\right\} _{i\in I}$ is an orthonormal basis
of $\omega_{V,\psi,\mu,\chi}$. Let 
$$
J_{V}^{\text{Lie}}\left(f\right)=\int_{Z_{G}\left(F\right)H_{V}\left(F\right)\backslash G\left(F\right)}K_{V}^{\text{Lie}}\left(f,g\right)dg.
$$
Similar to the group case, we consider the linear form $J^{\text{Lie}}\left(f\right)=J_{V}^{\text{Lie}}\left(f\right)+J_{V^{\prime}}^{\text{Lie}}
\left(f\right)$, for strongly cuspidal function $f$ on $\mathfrak{g}_{0}\left(F\right)$. By Theorem \ref{6.2}, there exists a neighborhood $\omega_{0}\subseteq\mathfrak{g}_{0}\left(F\right)$
of $0$ such that $J^{\text{Lie}}\left(f\right)$ is absolutely convergent
when $f\in{\mathcal{C}}_{\text{scusp}}\left(\omega_{0}\right)$.
\begin{proposition}\label{7.4}
\begin{enumerate}
\item Let $\omega\subseteq\omega_{0}$ be a $G$-excellent open neighborhood
of $0$ and we set $\Omega=Z_{G}\left(F\right)\exp\left(\omega\right)$.
For any $f\in{\mathcal{C}}_{\text{scusp}}\left(Z_{G}\left(F\right)\backslash\Omega,\chi\right)$,
we have  
$$
J_{V}\left(f\right)=J_{V}^{\text{Lie}}\left(f_{\omega}\right),
$$
where $f_{\omega}$ is defined in Section \ref{sec3.1}.
\item There exists $c_{\chi}\in\mathbb{C}$ such that for any $\theta\in QC\left(Z_{G}\left(F\right)\backslash G\left(F\right),\chi\right)$,
we have 
$$
J_{\text{qc}}\left(\theta\right)=c_{\chi}\cdot c_{\theta}\left(1\right).
$$
\end{enumerate}
\end{proposition}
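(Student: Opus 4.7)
For part (1), the proof is a direct change-of-variables computation via the exponential map. Applying $h = \exp(X)$ in the inner integral defining $K_V(f,g)$, with Jacobian $j^{H_V}(X)$, and substituting $f(g^{-1}e^X g) = j^G(X)^{-1/2} f_\omega(g^{-1}Xg)$ (using the $G$-invariance of $j^G$), the identity $J_V(f) = J_V^{\text{Lie}}(f_\omega)$ reduces to the Jacobian comparison
\begin{equation*}
j^G(X) = j^{H_V}(X)^2 \quad \text{for all } X \in \mathfrak{h}_{V,0}(F).
\end{equation*}
This holds because $\mathfrak{g}/\mathfrak{h}_V \cong \mathfrak{h}_V$ as $H_V$-modules: multiplication by any nonzero trace-zero element $\alpha \in E$ sends skew-adjoint matrices to self-adjoint ones $H_V$-equivariantly (conjugation commutes with scalar multiplication). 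Decomposing $\mathfrak{g}_X = \mathfrak{h}_{V,X} \oplus \alpha\cdot\mathfrak{h}_{V,X}$ for regular semisimple $X \in \mathfrak{h}_V$, we obtain $D^G(e^X) = D^{H_V}(e^X)^2$ and $D^G(X) = D^{H_V}(X)^2$, whence the identity. The Fubini-type interchange is justified by the estimates of Theorem \ref{6.2} and Proposition \ref{4.6}.

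For part (2), the plan is to combine Proposition \ref{7.3} with the Lie algebra descent from (1) and a homogeneity argument. By Proposition \ref{7.3}, $J_{\text{qc}}$ is supported on $Z_G(F)$, so $J_{\text{qc}}(\theta)$ depends only on the germ of $\theta$ along $Z_G(F)$. Shrinking support, we may assume $\theta$ is supported in $\Omega = Z_G(F)\exp(\omega)$ for some small $G$-excellent neighborhood $\omega$ of $0$. By Corollary \ref{3.19}, choose $f \in \mathcal{C}_{\text{scusp}}(Z_G(F)\backslash\Omega,\chi)$ with $\theta_f = \theta$; Lemma \ref{7.2} together with part (1) then yields $J_{\text{qc}}(\theta) = J(f) = J^{\text{Lie}}(f_\omega)$. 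This defines a linear form $\mathcal{J}^{\text{Lie}}$ on quasi-characters $\theta_\omega$ near $0 \in \mathfrak{g}_0(F)$, well-defined via a Lie-algebra analog of Lemma \ref{7.2}.

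I next invoke the local germ expansion: near $0$, $\theta_\omega(X) = \sum_{\mathcal{O}\in\text{Nil}(\mathfrak{g})} c_{\theta_\omega,\mathcal{O}}(0)\hat{j}(\mathcal{O},X)$, so by linearity $\mathcal{J}^{\text{Lie}}(\theta_\omega) = \sum_\mathcal{O} c_{\theta_\omega,\mathcal{O}}(0)\mathcal{J}^{\text{Lie}}(\hat{j}(\mathcal{O},\cdot))$. To isolate a single nilpotent type, I apply the dilation $X \mapsto \lambda^2 X$ for $\lambda \in F^\times$: each $\hat{j}(\mathcal{O},\cdot)$ is homogeneous of a degree determined by $\dim\mathcal{O}$, while $\mathcal{J}^{\text{Lie}}$ itself transforms under this dilation with a single definite exponent obtained from the volume change on $\mathfrak{h}_{V,0}(F)$, the compatible change on $Z_GH_V\backslash G$, and the scaling of the Weil representation under the central subgroup of the metaplectic cover (where $\lambda \in F^\times$ acts on $\omega_{V,\psi,\mu}$ essentially by $|\lambda|^{n/2}\mu(\lambda)$). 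Matching these exponents forces only the orbits of maximal dimension --- the regular nilpotent orbits --- to contribute, and $F$-rationality of $J_{\text{qc}}$ enforces the Galois-invariant average. This gives $J_{\text{qc}}(\theta) = c_\chi \cdot c_\theta(1)$ with $c_\chi$ independent of $\theta$.

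The main obstacle is the precise homogeneity bookkeeping in the presence of the Weil representation. Unlike the Bessel case --- where the equivariant character $\xi$ of $H$ scales trivially --- here the matrix coefficients $\langle \phi_i,\omega_{V,\psi,\mu,\chi}(\exp X)\phi_i\rangle$ are nontrivial functions of $X$ and interact with the dilation through the metaplectic center in a $\mu$-twisted fashion. A clean way to handle this should be to replace $\omega_{V,\psi,\mu,\chi}$ by its character germ expansion near the identity (Corollary \ref{4.5}), whose leading contribution is supported on a single minimal nilpotent orbit in $\mathfrak{h}_V$ with leading coefficient $1$; this reduces the scaling analysis to a finite combinatorial identity relating the dimensions of nilpotent orbits in $\mathfrak{g}$ and in $\mathfrak{h}_V$, pinning down exactly the regular nilpotent contribution.
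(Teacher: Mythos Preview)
Your overall strategy for both parts matches the paper's, and part~(1) is fine --- indeed more careful than the paper, which leaves the Jacobian identity $j^G(X)=j^{H_V}(X)^2$ on $\mathfrak{h}_{V,0}(F)$ implicit, whereas you supply the reason via $\mathfrak{g}/\mathfrak{h}_V\cong\mathfrak{h}_V$ as $H_V$-modules.

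For part~(2), however, your middle paragraph contains a genuine error that your ``obstacle'' paragraph already fixes. The claim that $\mathcal{J}^{\mathrm{Lie}}$ transforms under $X\mapsto\lambda X$ with a \emph{single} definite exponent, coming in part from ``the scaling of the Weil representation under the central subgroup of the metaplectic cover,'' is wrong: dilating the Lie-algebra variable has nothing to do with the metaplectic center, and $\langle\phi_i,\omega_{V,\psi,\mu,\chi}(\exp(\lambda X))\phi_i\rangle$ is not homogeneous in $\lambda$. What actually works --- and what the paper does, and what your final paragraph correctly proposes --- is to first replace the Weil character by its germ expansion from Corollary~\ref{4.5}: on a small enough $\omega$ one has $\Theta_{\omega_{V,\psi,\mu,\chi}}(\exp X)=c_{0,V}+\hat{j}(\mathcal{O}_m,X)$ with $\mathcal{O}_m$ the minimal nilpotent orbit in $\mathfrak{h}_{V,0}$. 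Substituting and rewriting the $\mathcal{O}_m$-term as an integral of $\widehat{f_\omega}$ over the affine slice $\Sigma_V+S_V$ (with $\Sigma_V=\mathfrak{h}_{V,0}^{\perp}$), one obtains \emph{two} homogeneous pieces in $J_V^{\mathrm{Lie}}(f_\lambda)$, scaling as $|\lambda|^{n^2-1}$ and $|\lambda|^{n^2-n}$ respectively. Comparing with $J_{\mathrm{qc}}(\theta_\lambda)=\sum_{\mathcal{O}}|\lambda|^{\dim(\mathcal{O})/2}c_{\mathcal{O}}\,c_{\theta,\mathcal{O}}(1)$ and using $\dim(\mathcal{O})\le\dim(\mathcal{O}_{\mathrm{reg}})=2(n^2-n)$ kills all non-regular $c_{\mathcal{O}}$. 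Finally, $G=\mathrm{Res}_{E/F}\mathrm{GL}(V)$ has a \emph{unique} regular nilpotent orbit over $F$, so the surviving term is a single constant $c_\chi$; no separate Galois-averaging is needed. In short: discard the middle paragraph and promote your obstacle paragraph to the actual argument --- that is precisely the paper's proof.
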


\begin{proof}
Let $\omega_{\mathfrak{h}}=\omega\cap\mathfrak{h}_{0}\left(F\right)\subseteq\mathfrak{h}_{0}\left(F\right)$.
Then $\omega_{\mathfrak{h}}$ is an $H$-excellent open neighborhood
of $0$. We have  
$$
\int_{Z_{H}\left(F\right)\backslash H\left(F\right)}f\left(g^{-1}hg\right)\left\langle \phi_{i},\omega_{V,\psi,\mu,\chi}\left(h\right)\phi_{i}\right\rangle dh
=\int_{\omega_{\mathfrak{h}}}j^{H}\left(X\right)f\left(g^{-1}e^{X}g\right)\left\langle \phi_{i},\omega_{V,\psi,\mu,\chi}\left(\exp X\right)\phi_{i}\right\rangle dh
$$
$$
=\int_{\mathfrak{h}_{0}\left(F\right)}f_{\omega}\left(g^{-1}e^{X}g\right)\left\langle \phi_{i},\omega_{V,\psi,\mu,\chi}\left(\exp X\right)\phi_{i}\right\rangle dh,
$$
for any $f\in{\mathcal{C}}_{\text{scusp}}\left(Z_{G}\left(F\right)\backslash\Omega,\chi\right)$
and $g\in G\left(F\right)$. Therefore, we obtain $J_{V}\left(f\right)=J_{V}^{\text{Lie}}\left(f_{\omega}\right)$.

There remains to prove part (ii). Let $\omega\subseteq\omega_{0}$ be a $G$-excellent open neighborhood
of $0$ and set $\Omega=Z_{G}\left(F\right)\exp\left(\omega\right)$.
By Proposition \ref{7.3}, we only need to consider $\theta\in QC\left(Z_{G}\left(F\right)\backslash\Omega,\chi\right)$.
Observe $J_{\text{qc}}\left(\theta\right)=\underset{{\mathcal{O}}\in\text{Nil}\left(g_{0}\right)}{\sum}c_{{\mathcal{O}}}\cdot c_{\theta,{\mathcal{O}}}\left(1\right)$. Let $f\in{\mathcal{C}}_{\text{scusp}}\left(Z_{G}\left(F\right)\backslash\Omega,\chi\right)$
such that $\theta_f=\theta$. By Lemma \ref{7.2} and part (i),
we have $J_{\text{qc}}\left(\theta\right)=J(f)=J^{\text{Lie}}\left(f_{\omega}\right)$. By Corollary \ref{4.5}, there is a minimal nilpotent orbit ${\mathcal{O}}_{m}$ in $\mathfrak{h}_{V,0}(F)$ such that $\text{WF}(\omega_{V,\psi,\mu,\chi})=\overline{\mathcal{O}_m}$. Let $S_{V}$ be an element in ${\mathcal{O}}_{m}$. We can shrink $\omega$ so that for any
$f\in{\mathcal{C}}_{\text{scusp}}\left(\omega\right)$, we have the following
local character expansion 
$$
\underset{i}{\sum}\int_{\mathfrak{h}_{V,0}\left(F\right)}f\left(g^{-1}Xg\right)\left\langle \phi_{i},\omega_{V,\psi,\mu,\chi}\left(\exp X\right)\phi_{i}\right\rangle dX
=c_{0,V}\int_{\mathfrak{h}_{V,0}\left(F\right)}f\left(g^{-1}Xg\right)dX+\int_{\mathcal{O}_{m}\left(F\right)}\widehat{^{g}f\mid_{\mathfrak{h}_{0}\left(F\right)}}\left(X\right)dX
$$
$$
=c_{0,V}\int_{\mathfrak{h}_{V,0}\left(F\right)}f\left(g^{-1}Xg\right)dX+D^{H_{V}}\left(S_{V}\right)^{1/2}\int_{\left(H_{V}\right)_{S_{V}}\left(F\right)\backslash H\left(F\right)}\widehat{^{g}f\mid_{\mathfrak{h}_{0}\left(F\right)}}\left(x^{-1}S_{V}x\right)dx,
$$
where the above Fourier transform is over $\mathfrak{h}_{0}\left(F\right)$.
Let $B\left(\cdot,\cdot\right)$ be the canonical nondegenerate bilinear
form on $\mathfrak{g}_{0}\left(F\right)$. Let $\Sigma_{V}\left(F\right)=\mathfrak{h}_{V,0}\left(F\right)^{\perp}$
in $\mathfrak{g}_{0}\left(F\right)$. For any $\phi\in C_{c}^{\infty}\left(\mathfrak{g}_{0}\left(F\right)\right)$,
we have  
$$
\int_{\mathfrak{h}_{V,0}\left(F\right)}\phi\left(X\right)\psi(B\left(X,Y\right))dX=\int_{\Sigma_{V}\left(F\right)}\widehat{\phi}\left(X+Y\right)d\mu_{\Sigma_{V}}\left(X\right).
$$
This gives us 
$$
J_{V}^{\text{Lie}}\left(f\right)=c_{0,V}\int_{Z_{G}\left(F\right)H_{V}\left(F\right)\backslash G\left(F\right)}\int_{\mathfrak{h}_{V,0}\left(F\right)}f\left(g^{-1}Xg\right)dXdg+D^{H_{V}}\left(S_{V}\right)^{1/2}
$$
$$
\int_{Z_{G}\left(F\right)H_{V}\left(F\right)\backslash G\left(F\right)}\int_{\left(H_{V}\right)_{S_{V}}\left(F\right)\backslash H\left(F\right)}\int_{\Sigma_{V}\left(F\right)}\widehat{^{g}f}\left(X+x^{-1}S_{V}x\right)d\mu_{\Sigma_{V}}Xdxdg
$$
$$
=c_{0,V}\int_{Z_{G}\left(F\right)H_{V}\left(F\right)\backslash G\left(F\right)}\int_{\mathfrak{h}_{V,0}\left(F\right)}f\left(g^{-1}Xg\right)dXdg
$$
$$
+D^{H_{V}}\left(S_{V}\right)^{1/2}\int_{Z_{G}\left(F\right)\left(H_{V}\right)_{S_{V}}\left(F\right)\backslash G\left(F\right)}\int_{\Sigma_{V}\left(F\right)+S_{V}}\widehat{f}\left(g^{-1}Xg\right)d\mu_{\Sigma_{V}}Xdg.
$$
For $\lambda\in F^{\times}$, we denote $f_{\lambda}\left(X\right)=f\left(\lambda^{-1}X\right)$. If we take $\lambda\in{\mathcal{O}}_{F}^{\times}$, then $\text{Supp}\left(f_{\lambda}\right)$ is still contained in $\omega$. We have  
$$
J_{V}^{\text{Lie}}\left(f_{\lambda}\right)=\left|\lambda\right|^{n^{2}-1}c_{0,V}\int_{Z_{G}\left(F\right)H_{V}\left(F\right)\backslash G\left(F\right)}\int_{\mathfrak{h}_{V,0}\left(F\right)}f\left(g^{-1}Xg\right)dXdg
$$
$$
+\left|\lambda\right|^{n^{2}-n}D^{H_{V}}\left(S_{V}\right)^{1/2}\int_{Z_{G}\left(F\right)\left(H_{V}\right)_{S_{V}}\left(F\right)\backslash G\left(F\right)}\int_{\Sigma_{V}\left(F\right)+S_{V}}\widehat{f}\left(g^{-1}Xg\right)d\mu_{\Sigma_{V}}Xdg.
$$
Similarly, for $J_{V^{\prime}}^{\text{Lie}}$, we can also obtain
$$
J_{V^\prime}^{\text{Lie}}\left(f_{\lambda}\right)=\left|\lambda\right|^{n^{2}-1}c_{0,V^{\prime}}\int_{Z_{G}\left(F\right)H_{V^{\prime}}\left(F\right)\backslash G\left(F\right)}\int_{\mathfrak{h}_{V^{\prime},0}\left(F\right)}f\left(g^{-1}Xg\right)dXdg
$$
$$
+\left|\lambda\right|^{n^{2}-n}D^{H_{V^{\prime}}}\left(S_{V^{\prime}}\right)^{1/2}\int_{Z_{G}\left(F\right)\left(H_{V^{\prime}}\right)_{S_{V^{\prime}}}\left(F\right)\backslash G\left(F\right)}\int_{\Sigma_{V^{\prime}}\left(F\right)+S_{V^{\prime}}}\widehat{f}\left(g^{-1}Xg\right)d\mu_{\Sigma_{V^{\prime}}}Xdg.
$$
Let $\theta_{0}\in QC_{c}\left(\omega\right)$ such that $\theta=\theta_{0}\circ\exp^{-1}$
in $QC\left(Z_{G}\left(F\right)\backslash \Omega,\chi\right)$. It follows that $J_{\text{qc}}\left(\theta\right)=\underset{{\mathcal{O}}\in\text{Nil}\left(g_{0}\right)}{\sum}c_{{\mathcal{O}}}\cdot c_{\theta_{0},{\mathcal{O}}}\left(0\right)$. For $\lambda\in{\mathcal{O}}_{F}^{\times}$, we denote $\theta_{0,\lambda}\left(X\right)=\theta\left(\lambda^{-1}X\right)$
and $\theta_{\lambda}=\theta_{0,\lambda}\circ\exp^{-1}$. Then 
$$
J_{\text{qc}}\left(\theta_{\lambda}\right)=\underset{{\mathcal{O}}\in\text{Nil}\left(g_{0}\right)}{\sum}c_{{\mathcal{O}}}\cdot c_{\theta_{0,\lambda},{\mathcal{O}}}\left(0\right)=\underset{{\mathcal{O}}\in\text{Nil}\left(g_{0}\right)}{\sum}\left|\lambda\right|^{\frac{\dim\left({\mathcal{O}}\right)}{2}}c_{{\mathcal{O}}}\cdot c_{\theta_{0},{\mathcal{O}}}\left(0\right).
$$
On the other hand, since $\theta_{f_{\omega,\lambda}}\circ\exp^{-1}=\theta_{\lambda}$,
we have  
$$
J_{\text{qc}}\left(\theta_{\lambda}\right)= J^{\text{Lie}}\left(f_{\omega,\lambda}\right)
=\left|\lambda\right|^{n^{2}-1}c_{0,V}\int_{Z_{G}\left(F\right)H_{V}\left(F\right)\backslash G\left(F\right)}\int_{\mathfrak{h}_{V,0}\left(F\right)}f_\omega\left(g^{-1}Xg\right)dXdg
$$
$$
+\left|\lambda\right|^{n^{2}-n}D^{H_{V}}\left(S_{V}\right)^{1/2}\int_{Z_{G}\left(F\right)\left(H_{V}\right)_{S_{V}}\left(F\right)\backslash G\left(F\right)}\int_{\Sigma_{V}\left(F\right)+S_{V}}\widehat{f_\omega}\left(g^{-1}Xg\right)d\mu_{\Sigma_{V}}Xdg
$$
$$
+\left|\lambda\right|^{n^{2}-1}c_{0,V^{\prime}}\int_{Z_{G}\left(F\right)H_{V^{\prime}}\left(F\right)\backslash G\left(F\right)}\int_{\mathfrak{h}_{V^{\prime},0}\left(F\right)}f_\omega\left(g^{-1}Xg\right)dXdg
$$
$$
+\left|\lambda\right|^{n^{2}-n}D^{H_{V^{\prime}}}\left(S_{V^{\prime}}\right)^{1/2}\int_{Z_{G}\left(F\right)\left(H_{V^{\prime}}\right)_{S_{V^{\prime}}}\left(F\right)\backslash G\left(F\right)}\int_{\Sigma_{V^{\prime}}\left(F\right)+S_{V^{\prime}}}\widehat{f_\omega}\left(g^{-1}Xg\right)d\mu_{\Sigma_{V^{\prime}}}Xdg.
$$
By comparing the above two expression, since $\dim(\mathcal{O})\leq \dim(\mathcal{O}_{reg})=2(n^2-n)$, it follows that $c_{{\mathcal{O}}}=0$,
for any non-regular nilpotent orbit ${\mathcal{O}}$. Moreover
\begin{equation}\label{eqn6.2}
J_{\text{qc}}\left(\theta\right)=D^{H_{V}}\left(S_{V}\right)^{1/2}\int_{Z_{G}\left(F\right)\left(H_{V}\right)_{S_{V}}\left(F\right)\backslash G\left(F\right)}\int_{\Sigma_{V}\left(F\right)+S_{V}}\widehat{f_\omega}\left(g^{-1}Xg\right)d\mu_{\Sigma_{V}}Xdg
\end{equation}
$$
+D^{H_{V^{\prime}}}\left(S_{V^{\prime}}\right)^{1/2}\int_{Z_{G}\left(F\right)\left(H_{V^{\prime}}\right)_{S_{V^{\prime}}}\left(F\right)\backslash G\left(F\right)}\int_{\Sigma_{V^{\prime}}\left(F\right)+S_{V^{\prime}}}\widehat{f_\omega}\left(g^{-1}Xg\right)d\mu_{\Sigma_{V^{\prime}}}Xdg.
$$
Since $G=\text{Res}_{E/F}\text{GL}\left(V\right)$ has only one regular nilpotent orbit, there exists $c_\chi\in \mathbb{C}$ such that $J_{\text{qc}}\left(\theta\right)= c_\chi \cdot c_{\theta}(1)$.
\end{proof}

\subsection{End of the proof of Theorem \ref{7.1} and the strong multiplicity one of the local twisted GGP conjecture}\label{sec7.4}
We prove Theorem \ref{7.1}.
\begin{proof}
    According to Proposition \ref{7.4}(ii), there remains to show the coefficient
$c_{\chi}$ is 1. By Proposition \ref{7.4}(ii), it follows that 
$$
\underset{\pi\in \Pi_2\left(G,\chi^{-1}\right)}{\sum}(m\left(\bar{\pi}\right)-1)\int_{\Gamma_{\text{ell}}\left(Z_{G}\backslash G\right)}D^{G}\left(x\right)\theta\left(x\right)\theta_{\pi}\left(x\right)dx=(c_{\chi}-1)\cdot c_{\theta}\left(1\right),
$$
for any $\theta\in QC\left(Z_{G}\left(F\right)\backslash G\left(F\right),\chi\right)$.
By \cite[Corollary 5.8]{GGP23}, it follows that $m\left(\text{St}\right)=1$,
where $\text{St}$ is the Steinberg representation of $G\left(F\right)$.
By substituting $\theta=\theta_{\text{St}}$, one has $c_{1}=1$.

We now consider the case $\chi\neq1$. We take motivations from the
proof of Proposition 5.2.3 in \cite{Wan17}. Let $\omega \subseteq \mathfrak{g}_0(F)$ be a $G(F)$-excellent neighborhood of $0$ (see Section \ref{sec3.1}) and $\mathcal{U}=\exp(\omega)\subseteq Z_G(F)\backslash G(F)$. Assume further that $\mathcal{U}$ admits a section $s:{\mathcal{U}}\rightarrow G\left(F\right)$ preserving $G(F)$-conjugation such that it maps semisimple elements in $Z_G(F)\backslash G(F)$ to semisimple elements in $G(F)$ and the map 
$$
Z_G(F)\times \text{Im}(s)\longrightarrow Z_G(F)\text{Im}\left(s\right):(s,x)\mapsto sx
$$
is bijective. We set $X=Z_{G}\left(F\right)\text{Im}\left(s\right)$. For any $f\in C_{c}^{\infty}\left(Z_{G}\left(F\right)\backslash X,\chi\right)$,
we can construct a function $f^{\prime}\in C_{c}^{\infty}\left(Z_{G}\left(F\right)\backslash X\right)$
such that $f\left(zg\right)=\chi\left(z\right)f^{\prime}\left(g\right)$,
for any $z\in Z_{G}\left(F\right)$ and $g\in \text{Im}(s)$. 

We can assume $f$ to be strongly cuspidal. We want to show that $f^{\prime}$ is strongly cuspidal. Let $P=MU$ be a proper parabolic subgroup. We consider 2 elements $mu$ and $mu^\prime$ in $P(F)\cap X$. Then there exists unique $z,z^\prime\in Z_G(F)$ and $x,x^\prime \in \text{Im}(s)$ such that $mu=zx$ and $mu^\prime=z^\prime x^\prime$. Observe that $x,x^\prime \in P(F)$. By Levi decomposition, we can factorize $x=x_Mx_U$ and $x^\prime=x_M^\prime x^\prime_U$, where $x_M,x_M^\prime\in M(F)$ and $x_U,x_U^\prime \in U(F)$. Hence, we have $m=zx_M=z^\prime x_M^\prime$. Since $\omega$ is $G(F)$-excellent (in particular it is completely $G(F)$-invariant) and $s$ is semisimple, it follows that $x_M,x_M^\prime\in \text{Im}(s)$. Since $zx_M=z^\prime x_M^\prime$ and $Z_G(F)\times \text{Im}(s)\cong Z_G(F)\text{Im}(s)$, we have $z=z^\prime$ and $x_M=x_M^\prime$. Thus, whenever $mu=zx\in P(F)\cap X$, the element $z\in Z_G(F)$ only depends on $m$. Therefore, for any $m\in M(F)$, we have 
$$
\int_{U(F)}f^\prime(mu)du = \chi(z(m))^{-1}\int_{U(F)}f(mu)du=0,
$$
where $z(m)\in Z_G(F)$ only depends on $m$. This gives us $f^\prime$ is strongly cuspidal. From the definition of $f$ and $f^\prime$, it follows that $c_{\theta_{f}}\left(1\right)=c_{\theta_{f^{\prime}}}\left(1\right)$. To distinguish the change of central characters, we add indexes $\chi$ and $1$ to all the linear forms involving central characters $\chi$ and $1$. Since we have shown $J_{\text{qc},1}\left(\theta_{f^{\prime}}\right)=c_{\theta_{f^{\prime}}}\left(1\right)$,
there remains to prove $J_{\text{qc},\chi}\left(\theta_{f}\right)=J_{\text{qc},1}\left(\theta_{f^{\prime}}\right)$. 
It suffices to show $J_\chi(f)=J_1(f^\prime)$. Observe 
$$
J_{V,\chi}(f)=\int_{Z_G(F)H_V(F)\backslash G(F)}\int_{Z_H(F)\backslash H(F)}f(x^{-1}hx)\Theta_{\omega_{V,\psi,\mu,\chi}}(h)dhdx
$$
$$
=\int_{Z_G(F)H_V(F)\backslash G(F)}\int_{ H(F)\cap X}f(x^{-1}hx)\Theta_{\omega_{V,\psi,\mu,\chi}}(h)dhdx
$$
$$
=\int_{Z_G(F)H_V(F)\backslash G(F)}\int_{Z_H(F)\backslash H(F)\cap \mathcal{U}}f^\prime(x^{-1}s(h)x)\Theta_{\omega_{V,\psi,\mu,\chi}}(s(h))dhdx
$$
$$
=\int_{Z_G(F)H_V(F)\backslash G(F)}\int_{\omega \cap \mathfrak{h}_{V,0}}(f^\prime\circ s)_\omega(x^{-1}Xx)\Theta_{\omega_{V,\psi,\mu,\chi}}(\exp X)dhdx,
$$
noting that the last line follows from Proposition \ref{7.4}. By (\ref{eqn6.2}) in the proof of Proposition \ref{7.4}, we have
$$
J_\chi(f)=\int_{Z_G(F)H_V(F)\backslash G(F)}\int_{\omega \cap \mathfrak{h}_{V,0}}(f^\prime\circ s)_\omega(x^{-1}Xx)\Theta_{\omega_{V,\psi,\mu,\chi}}(\exp X)dhdx
$$
$$
+\int_{Z_G(F)H_V(F)\backslash G(F)}\int_{\omega \cap \mathfrak{h}_{V,0}}(f^\prime\circ s)_\omega(x^{-1}Xx)\Theta_{\omega_{V,\psi,\mu,\chi}}(\exp X)dhdx
$$
$$
=D^{H_{V}}\left(S_{V}\right)^{1/2} \int_{Z_{G}\left(F\right)\left(H_{V}\right)_{S_{V}}\left(F\right)\backslash G\left(F\right)}\int_{\Sigma_{V}\left(F\right)+S_{V}}\widehat{(f^\prime\circ s)_\omega}\left(g^{-1}Xg\right)d\mu_{\Sigma_{V}}Xdg
$$
$$
+D^{H_{V^{\prime}}}\left(S_{V^{\prime}}\right)^{1/2}\int_{Z_{G}\left(F\right)\left(H_{V^{\prime}}\right)_{S_{V^{\prime}}}\left(F\right)\backslash G\left(F\right)}\int_{\Sigma_{V^{\prime}}\left(F\right)+S_{V^{\prime}}}\widehat{(f^\prime\circ s)_\omega}\left(g^{-1}Xg\right)d\mu_{\Sigma_{V^{\prime}}}Xdg.
$$
A similar argument shows that 
$$
J_1(f^\prime)=D^{H_{V}}\left(S_{V}\right)^{1/2} \int_{Z_{G}\left(F\right)\left(H_{V}\right)_{S_{V}}\left(F\right)\backslash G\left(F\right)}\int_{\Sigma_{V}\left(F\right)+S_{V}}\widehat{(f^\prime\circ s)_\omega}\left(g^{-1}Xg\right)d\mu_{\Sigma_{V}}Xdg
$$
$$
+D^{H_{V^{\prime}}}\left(S_{V^{\prime}}\right)^{1/2}\int_{Z_{G}\left(F\right)\left(H_{V^{\prime}}\right)_{S_{V^{\prime}}}\left(F\right)\backslash G\left(F\right)}\int_{\Sigma_{V^{\prime}}\left(F\right)+S_{V^{\prime}}}\widehat{(f^\prime\circ s)_\omega}\left(g^{-1}Xg\right)d\mu_{\Sigma_{V^{\prime}}}Xdg,
$$
which is to say $J_\chi(f)=J_1(f^\prime)$. Therefore $J_{\text{qc},\chi}(\theta_f)=J_{\text{qc},1}(\theta_{f^\prime})=c_{\theta_{f^\prime}}(1)=c_{\theta_f}(1)$, which implies $c_\chi=1$. This ends our proof for Theorem \ref{7.1}.
\end{proof}
We now prove Theorem \ref{thm1.2}(i), which finish the induction hypothesis stated at the beginning of section \ref{sec5}.
\begin{proof}
If $\Pi$ is a tempered principal series of $G\left(F\right)$,
by Theorem 4.8 in \cite{GGP23} and the induction hypothesis,
we have $m\left(\Pi\right)=1.$
Suppose $\Pi$ is a discrete series of $G\left(F\right)$ with
central character $\chi$. By Theorem \ref{7.1}, we have  
$$
\underset{\pi\in\Pi_2\left(G,\chi^{-1}\right)}{\sum}(m\left(\bar{\pi}\right)-1)\int_{\Gamma_{\text{ell}}\left(Z_{G}\backslash G\right)}D^{G}\left(x\right)\theta\left(x\right)\theta_{\pi}\left(x\right)dx=0.
$$
By taking $\theta=\theta_{\bar{\Pi}}$, it follows that $m\left(\Pi\right)=1$. We have finished our proof for Theorem \ref{thm1.2}(i).
\end{proof}

\section{A twisted local trace formula and tempered intertwinings}\label{sec8}
\subsection{Twisted endoscopy and matching orbits}\label{sec8.1}

Let $\left(M,\tilde{M}\right)$ be a twisted group. Assume $M$ is split and fix an element $\tilde{\theta}\in\tilde{M}\left(F\right)$.
We assume further there exists a pinning of $M$ defined over $F$ which is invariant
under $\theta_{\tilde{\theta}}$. We fix a regular nilpotent orbit
of $\mathfrak{m}_{\tilde{\theta}}\left(F\right)$. Let $\left(G,s,^{L}\xi\right)$
be an endoscopic triple of $\left(M,\tilde{M}\right)$ in sense of
section 2.1 in \cite{KS99}. Let $G_{\text{reg}}\left(F\right)/\text{stconj}$
and $\tilde{M}_{\text{reg}}\left(F\right)/\text{stconj}$ be sets
of stable conjugacy classes of regular semisimple elements in $G\left(F\right)$ and $M\left(F\right)$,
respectively. As described in \cite{KS99}, we have a twisted endoscopic
correspondence from a subset of $G_{\text{reg}}\left(F\right)/\text{stconj}$,
which is called $\tilde{M}$-strongly regular, to $\tilde{M}_{\text{reg}}\left(F\right)/\text{stconj}$.
We then have the transfer factor $\Delta_{G,\tilde{M}}\left(y,\tilde{x}\right)$
defined on $G_{\text{reg}}\left(F\right)/\text{stconj}\times\tilde{M}_{\text{reg}}\left(F\right)/\text{conj}$,
which is zero if $\tilde{x}$ and $y$ are not corresponding. Let
$\tilde{\Theta}$ be a conjugate-invariant locally integrable distribution
of $\tilde{M}\left(F\right)$ and $\Theta^{G}$ be a stably conjugate-invariant
locally integrable distribution of $G\left(F\right)$. Then $\tilde{\Theta}$
is called a transfer of $\Theta^{G}$ if $\tilde{\Theta}\left(\tilde{x}\right)D^{\tilde{M}}\left(\tilde{x}\right)^{1/2}=\underset{y}{\sum}\Theta^{G}\left(y\right)D^{G}\left(y\right)^{1/2}\Delta_{G,\tilde{M}}\left(y,\tilde{x}\right)$, for any $\tilde{x}\in\tilde{M}_{\text{reg}}\left(F\right)/\text{conj}$, where the sum is over $y\in G_{\text{reg}}\left(F\right)/\text{stconj}$ corresponding to $\tilde{x}$.

We now describe twisted endoscopy in our setting. Let $E/F$ be a quadratic extension of $p$-adic fields. Let $\sigma$
be the nontrivial element in $\text{Gal}\left(E/F\right)$ and we
denote $\bar{x}=\sigma\left(x\right)$ for any $x\in E$. We consider $G=\text{Res}_{E/F}\text{GL}_{n}\overset{\Delta}{\longrightarrow}M=\text{Res}_{E/F}\text{GL}_{n}\times\text{GL}_{n}$. Let 
$\theta_{n}:\left(g,h\right)\mapsto\left(J_{n}{}^{t}\bar{h}^{-1}J_{n}^{-1},J_{n}{}^{t}\bar{g}^{-1}J_{n}^{-1}\right)$ be an involution on $M$, where 
$J_{n}=\left(\begin{array}{cccc}
0 & \cdots & 0 & -1\\
\vdots & 0 & 1 & 0\\
0 & \cdots & 0 & \vdots\\
\left(-1\right)^{n} & 0 & \cdots & 0
\end{array}\right)$. We denote $\tilde{M}=M\theta_{n}$. Let $G_{\text{ell}}\left(F\right)$
and $\tilde{M}_{\text{ell}}\left(F\right)$ be the subsets of
regular elliptic elements of $G\left(F\right)$ and $\tilde{M}\left(F\right)$.
We have the following bijection from $G_{\text{ell}}\left(F\right)/\text{conj}$
to $\tilde{M}_{\text{ell}}\left(F\right)/\text{conj}$
$$
\begin{array}{ccc}
G_{\text{ell}}\left(F\right)/\text{conj} & \longleftrightarrow & \tilde{M}_{\text{ell}}\left(F\right)/\text{conj}\\
x & \mapsto & \left(x,1\right)\theta_{n}
\end{array},
$$
whose inverse is $\left(x,y\right)\theta_{n} \mapsto xJ_n\,^{t}\bar{y}^{-1}J_n^{-1}$. The transfer factor $\Delta_{G,\tilde{M}}\left(y,\tilde{x}\right)$
is equal to $1$ whenever $y$ and $\tilde{x}$ are corresponding.
Moreover, in this case, the transfer is well-defined at the level of elliptic germs defined in Definition \ref{3.3}.

\subsection{Twisted endoscopic character identity}\label{sec8.2}

In this subsection, we prove a twisted endoscopic character identity
between irreducible elliptic representations of $G\left(F\right)$
and $\tilde{M}\left(F\right)$. Let $\pi$ be an irreducible elliptic
representation of $G\left(F\right)$. Then $\pi\times{}^{\sigma}\pi^{\vee}$
is an irreducible elliptic representation of $M\left(F\right)$ which
can be extended to a representation $\widetilde{\pi\times{}^{\sigma}\pi^{\vee}}$
of $\tilde{M}\left(F\right)$. From now, we fix an extension $\widetilde{\pi\times{}^{\sigma}\pi^{\vee}}\left(\theta_{n}\right)\left(v\otimes w\right)=\pi\left(J_{n}\right)w\otimes{}^\sigma\pi^\vee\left(J_{n}\right)v$. In this case, we can see that $w(\widetilde{\pi\times{}^{\sigma}\pi^{\vee}},\psi)=1$. We prove the following character identity.
\begin{theorem}\label{8.1}
Let $\pi$ be an irreducible elliptic representation of $G\left(F\right)$.
By taking the normalization at the beginning of this subsection, we have 
$$
\Theta_{\pi}\left(y\right)D^{G}\left(y\right)^{1/2}=\Theta_{\widetilde{\pi\times{}^{\sigma}\pi^{\vee}}}\left(\tilde{x}\right)D^{\tilde{M}}\left(\tilde{x}\right)^{1/2},
$$
for any $y$ and $\tilde{x}$ are corresponding.
\end{theorem}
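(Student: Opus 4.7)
The plan is to prove the identity by an explicit computation of the twisted character, exploiting the concrete extension formula $\widetilde{\pi\times{}^{\sigma}\pi^{\vee}}(\theta_n)(v\otimes w) = \pi(J_n)w \otimes {}^\sigma\pi^\vee(J_n)v$. First I would use the correspondence $y \leftrightarrow (y,1)\theta_n$ from Section \ref{sec8.1} to reduce to the case $\tilde{x}=(y,1)\theta_n$ with $y \in G_{\text{ell}}(F)$, since both sides are invariant under (stable) conjugation and the transfer factor is $1$ in this setting.

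Next I would compute $\Theta_{\widetilde{\pi\times{}^{\sigma}\pi^{\vee}}}((y,1)\theta_n)$ directly. The operator $\widetilde{\pi\times{}^{\sigma}\pi^{\vee}}((y,1)\theta_n)$ on the underlying space acts by $(v\otimes w)\mapsto \pi(yJ_n)w \otimes {}^\sigma\pi^\vee(J_n)v$. This is of the ``flip-type'' form $v \otimes w \mapsto Aw \otimes Bv$, whose trace on $V \otimes V$ equals $\mathrm{tr}(AB)$, so
$$\Theta_{\widetilde{\pi\times{}^{\sigma}\pi^{\vee}}}((y,1)\theta_n) = \mathrm{tr}_{V_\pi}\bigl(\pi(yJ_n)\circ {}^\sigma\pi^\vee(J_n)\bigr).$$
Using the identification ${}^\sigma\pi^\vee(g) = \pi({}^t\bar{g}^{-1})$ that is implicit in the extension formula, together with the elementary identity $J_n\cdot {}^tJ_n^{-1} = (-1)^{n+1}I$, the right-hand side reduces to (a scalar multiple of) $\Theta_\pi(y)$, and the Whittaker normalization $w(\widetilde{\pi\times{}^{\sigma}\pi^{\vee}},\psi)=1$ pins down the scalar to be exactly $1$ rather than an arbitrary central character value.

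For the discriminant comparison, I would compute directly that $\theta_{\tilde{x}}^2 = \mathrm{Ad}(y) \oplus \mathrm{Ad}(y')$ on $\mathfrak{m}=\mathfrak{g}\oplus\mathfrak{g}$, where $y' = J_n\,{}^t\bar{y}^{-1}J_n^{-1}$ is stably conjugate to $y$, then identify $\mathfrak{m}_{\tilde{x}}$ with $\mathfrak{g}_y$ via $(X,-J_n\,{}^t\bar{X}J_n^{-1}) \leftrightarrow X$, and decompose $\mathfrak{m}/\mathfrak{m}_{\tilde{x}}$ according to the $(-1)$-eigenspace of $\theta_{\tilde{x}}$ inside $\mathfrak{m}_{\tilde{x}^2}/\mathfrak{m}_{\tilde{x}}$ versus its complement. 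Tracking the absolute values through $(1-\theta_{\tilde{x}})(1+\theta_{\tilde{x}}) = 1-\theta_{\tilde{x}}^2$ on the complement and using $|{-2}|_F^{\dim\mathfrak{g}_y}$ for the $(-1)$-piece yields $D^{\tilde{M}}(\tilde{x})^{1/2} = D^G(y)$, which combined with the previous step produces the desired equality.

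The hardest part will be making all the normalizations line up on the nose. The heart of the matter is showing that $\pi(J_n)\circ {}^\sigma\pi^\vee(J_n)$ acts as the identity (not merely as a central scalar) under the identification implicit in the paper's extension, and that the Whittaker constant $w=1$ is precisely what forces this. This relies on the specific combinatorial structure of $J_n$ (making $J_n\cdot{}^tJ_n^{-1}$ central) and on the Whittaker model calculation that ${}^\sigma\pi^\vee(J_n)=\pi(J_n)^{-1}$ under this canonical identification; verifying this identity carefully, together with the matching of discriminants at central elements in the Lie algebra computation above, is where the argument demands the most care.
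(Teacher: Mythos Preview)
Your proposal has a genuine gap: you cannot compute the twisted character $\Theta_{\widetilde{\pi\times{}^{\sigma}\pi^{\vee}}}(\tilde{x})$ at a point by taking the trace of the operator $\widetilde{\pi\times{}^{\sigma}\pi^{\vee}}(\tilde{x})$. The representation is infinite-dimensional, so $\pi(yJ_n)$ and ${}^\sigma\pi^\vee(J_n)$ are not trace-class, and neither is their composite; the ``flip-type'' identity $\mathrm{tr}(v\otimes w\mapsto Aw\otimes Bv)=\mathrm{tr}(AB)$ is only a theorem in finite dimensions. The Harish-Chandra character is defined distributionally via $\tilde f\mapsto\mathrm{Trace}(\tilde\pi(\tilde f))$ and only afterwards identified with a locally integrable function on the regular set; nothing in your outline justifies the passage from that definition to a pointwise operator trace.

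The paper's proof is precisely the distributional version of your idea. It introduces a test function $f_1\otimes f_2\in C_c^\infty(M(F))$, constructs from it a function $f\in C_c^\infty(G(F))$ by an explicit convolution, and shows two things: first, that the (twisted) orbital integrals match, $O_{\tilde\gamma}(f_1\otimes f_2)=O_\gamma(f)$ for every regular elliptic $\gamma$; second, that the integrated traces match, $\mathrm{Trace}\bigl(\widetilde{\pi\times{}^{\sigma}\pi^{\vee}}(f_1\otimes f_2)\bigr)=\mathrm{Trace}(\pi(f))$. This last equality is exactly your flip formula, but now applied to the \emph{finite-rank} operators $\pi(f_1)\pi(J_n)$ and ${}^\sigma\pi^\vee(f_2){}^\sigma\pi^\vee(J_n)$, where it is legitimate; the computation reduces to $\mathrm{Tr}(\pi(f_1(\cdot J_n^{-1})*f_2({}^t\bar{\cdot}^{-1}J_n^{-1})))=\mathrm{Tr}(\pi(f))$. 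The character identity with the correct discriminant factors then follows from these two matchings by invoking a general result of M\oe glin--Waldspurger, so no separate Lie-algebra discriminant computation is needed.
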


\begin{proof}
Let $f_{1}\otimes f_{2}\in C_{c}^{\infty}\left(M\left(F\right)\right)\simeq C_{c}^{\infty}\left(\tilde{M}\left(F\right)\right)$
be a strongly cuspidal function. We define a function $f\in C_{c}^{\infty}\left(G\left(F\right)\right)$
by $f\left(x\right)=\int_{G\left(F\right)}f_{1}\left(xy\right)f_{2}\left(J_{n}{}^{t}\bar{y}J_{n}^{-1}\right)dy$, for $x\in G\left(F\right)$. Let $\gamma$ be an element in $G_{\text{ell}}\left(F\right)_{\text{reg}}$ and $\tilde{\gamma}=\left(\gamma,1\right)\theta_{n}\in \tilde{M}(F)$. Define 
$$
O_{\gamma}\left(f\right)=\int_{G_{\gamma}\left(F\right)\backslash G\left(F\right)}f\left(g^{-1}\gamma g\right)dg\text{ and }O_{\tilde{\gamma}}\left(f_{1}\otimes f_{2}\right)=\int_{M_{\tilde{\gamma}}\left(F\right)\backslash M\left(F\right)}\left(f_{1}\otimes f_{2}\right)\left(m^{-1}\tilde{\gamma}m\right)dm.
$$
For any $\gamma\in G_{\text{ell}}\left(F\right)_\text{reg}$, we have 
$$
O_{\tilde{\gamma}}\left(f_{1}\otimes f_{2}\right)=\iint_{G_{\gamma}\left(F\right)\backslash G\left(F\right)\times G\left(F\right)}f_{1}\left(g_{1}^{-1}\gamma J_{n}{}^{t}\bar{g}_{2}^{-1}J_{n}^{-1}\right)f_{2}\left(g_{2}^{-1}J_{n}{}^{t}\bar{g}_{1}^{-1}J_{n}^{-1}\right)dg_{2}dg_{1}
$$
$$
=\int_{G_{\gamma}\left(F\right)\backslash G\left(F\right)}\int_{G\left(F\right)}f_{1}\left(g_{1}^{-1}\gamma g_{1}J_{n}{}^{t}\bar{g}_{2}^{-1}J_{n}^{-1}\right)f_{2}\left(g_{2}^{-1}\right)dg_{2}dg_{1},
$$
noting that $J_{n}^{2}=\left(-I_{n}\right)^{n}\in Z_{G}\left(F\right)$.
Then by definition 
$$
O_{\tilde{\gamma}}\left(f_{1}\otimes f_{2}\right)=\int_{G_{\gamma}\left(F\right)\backslash G\left(F\right)}f\left(g_{1}^{-1}\gamma g_{1}\right)dg_{1}=O_{\gamma}\left(f\right).
$$
By the corollary in \cite[Section 4.4]{MW16}, it suffices to show 
$\text{Trace}\left(\pi\left(f\right)\right)=\text{Trace}\left(\widetilde{\pi\times{}^{\sigma}\pi^{\vee}}\right)\left(f_{1}\otimes f_{2}\right)$. Let $v\otimes w\in\pi\times{}^{\sigma}\pi^{\vee}$. We have 
$$
\left(\widetilde{\pi\times{}^{\sigma}\pi^{\vee}}\right)\left(f_{1}\otimes f_{2}\right)\left(v\otimes w\right)=\left(\int_{G\left(F\right)}f_{1}\left(x\right)\pi\left(x\right)wdx\right)\otimes\left(\int_{G\left(F\right)}f_{2}\left(y\right)\pi\left(^{t}\bar{y}^{-1}\right)vdx\right).
$$
Assume $f_{1}$ and $f_{2}$ are bi-invariant by an open compact subgroup
$K$ of $G\left(F\right)$. We can assume $K=J_{n}\bar{K}^{t,-1}J_{n}^{-1}$.
Let $\left\{ v_{i}\right\} _{i=\overline{1,N}}$ be a basis for $\pi^{K}$.
We have 
$$
\text{Trace}\left(\left(\widetilde{\pi\times{}^{\sigma}\pi^{\vee}}\right)\left(f_{1}\otimes f_{2}\right)\right)
=\underset{i,j}{\sum}\left\langle \int_{G\left(F\right)}f_{1}\left(x\right)\pi\left(x\right)\pi\left(J_{n}\right)v_{j}dx,\,v_{i}\right\rangle \left\langle v_{i}\,,\int_{G\left(F\right)}f_{2}\left(y\right)\pi\left(^{t}\bar{y}^{-1}\right)\pi\left(J_{n}\right)v_{j}dx\right\rangle 
$$
$$
=\underset{i,j}{\sum}\left\langle \int_{G\left(F\right)}f_{1}\left(xJ_{n}^{-1}\right)\pi\left(x\right)v_{j}dx,\,v_{i}\right\rangle \left\langle v_{i}\,,\int_{G\left(F\right)}f_{2}\left(^{t}\bar{y}^{-1}J_{n}^{-1}\right)\pi\left(y\right)v_{j}dx\right\rangle 
$$
$$
=\text{Tr}\left(\pi\left(f_{1}\left(\cdot J_{n}^{-1}\right)\right)\circ\pi\left(f_{2}\left(^{t}\left(\bar{\cdot}\right)^{-1}J_{n}^{-1}\right)\right)\right)
=\text{Tr}\left(\pi\left(f_{1}\left(\cdot J_{n}^{-1}\right)*f_{2}\left(^{t}\left(\bar{\cdot}\right)^{-1}J_{n}^{-1}\right)\right)\right)=\text{Tr}\left(\pi\left(f\right)\right),
$$
where $\left(f_{1}*f_{2}\right)\left(x\right)=\int_{G\left(F\right)}f_{1}\left(xy\right)f_{2}\left(y^{-1}\right)dy$ is the usual convolution.
\end{proof}

\subsection{A local trace formula for $\epsilon$-factors}\label{sec8.3}

Let $V$ be an $n$-dimensional vector space over $E$. We fix a basis for $V$. Then $G=\text{Res}_{E/F}\text{GL}(V)$ can be identified with $\text{Res}_{E/F}\text{GL}_n$. The restriction of $\theta_{n}$ to $G$ is $g\mapsto J_{n}{}^{t}\bar{g}^{-1}J_{n}^{-1}$,
for which we still use the same notation. Let $\tilde{G}=G\theta_{n}$.
Then $\left(G,\tilde{G}\right)$ forms a twisted group. Let ${\mathcal{S}}\left(V\right)$
be the space of Schwarz functions on $V$. We define the Weil representation
$\omega_{\mu}$ of $G\left(F\right)$ realized on ${\mathcal{S}}\left(V\right)$
by $\left(\omega_{\mu}\left(g\right)\phi\right)\left(v\right)=\left|\det g\right|^{\frac{1}{2}}\mu\left(\det g\right)\phi\left(vg\right)$, for any $g\in G\left(F\right)$ and $\phi\in{\mathcal{S}}\left(V\right)$. Since $\omega_\mu$ is $\theta_n$-invariant, it can be extended to a representation of $\tilde{G}\left(F\right)$.
We fix an extension $\tilde{\omega}_{\psi,\mu}$ by letting
$\tilde{\omega}_{\psi,\mu}\left(\theta_{n}\right)\phi=\widehat{\overline{\phi}\left(\cdot J_{n}\right)}$,
where $\bar{\phi}\left(v\right)=\phi\left(\bar{v}\right)$ induced by the action of $\text{Gal}(E/F)$ on $\text{Res}_{E/F}V$ (depending on our choice of $E$-basis) and $\hat{\phi}$
is the Fourier transform of $\phi$ with respect to $\psi_{E}=\psi\circ\text{Tr}_{E/F}$.
We fix a central character $\chi\times {}^\sigma\chi^\vee$ of $M\left(F\right)$. The restriction of $\chi\times {}^\sigma\chi^\vee$ to $Z_G\left(F\right)$
is $\chi \cdot {}^\sigma\chi^\vee$. Let $\omega_{\mu,\chi}$ be the $(\chi \cdot {}^\sigma\chi^\vee)$-isotypic summand of $\omega_{\mu}$. We can see that $\omega_{\mu,\chi}$ is stable under the action of $\tilde{\omega}_{\psi,\mu}(\theta_n)$. We denote by $\tilde{\omega}_{\psi,\mu,\chi}$ the corresponding extension of $\omega_{\mu,\chi}$ to a representation of $\tilde{G}(F)$. Let $\left\{ \phi_{i}\right\} _{i\in I}$
be an orthonormal basis for $\tilde{\omega}_{\psi,\mu,\chi}$ . For
any $x\in M\left(F\right)$, we set 
$$
K_{\chi}\left(\tilde{f},x\right)=\underset{i}{\sum}\int_{Z_G\left(F\right)\backslash\tilde{G}\left(F\right)}\tilde{f}\left(x^{-1}\tilde{g}x\right)\left\langle \phi_i,\tilde{\omega}_{\psi,\mu,\chi}\left(\tilde{g}\right)\phi_{i}\right\rangle d\tilde{g},
$$
where $\tilde{f}\in \mathcal{C}_\text{scusp}(Z_M(F)\backslash \tilde{M}(F),\chi \times {}^\sigma \chi^\vee)$. The above kernel function is locally constant and invariant under
$G\left(F\right)Z_M\left(F\right)$. We define the following
linear form 
$$
J_{\chi}\left(\tilde{f}\right)=\int_{G\left(F\right)Z_M\left(F\right)\backslash M\left(F\right)}K_\chi\left(\tilde{f},x\right)dx,
$$
for $\tilde{f}\in \mathcal{C}_\text{scusp}(Z_M(F)\backslash \tilde{M}(F),\chi \times {}^\sigma \chi^\vee)$. Similar to Theorem \ref{6.2}, the integrals defining $K_\chi$ and $J_\chi$ are absolutely convergent.
\subsection{Tempered intertwinings and $\epsilon$-factors}\label{sec8.4}

Let $\left(\pi,E_{\pi}\right)\in\text{Temp}\left(G\right)$. We identify $^{\sigma}\pi^\vee$ with the representation $g\mapsto \pi(^t\bar{g}^{-1})$. For $e_{1},e_{2},e_{1}^{\prime},e_{2}^{\prime}\in E_{\pi}$
and $\phi,\phi^{\prime}\in\omega_{\mu}$, we define 
$$
{\mathcal{L}}_{\pi}\left(e_{1}\otimes e_{2}\otimes\phi,e_{1}^{\prime}\otimes e_{2}^{\prime}\otimes\phi^{\prime}\right)=\int_{G\left(F\right)}\left\langle \pi\left(g\right)e_{1},e_{1}^{\prime}\right\rangle \left\langle \pi\left(^{t}\bar{g}^{-1}\right)e_{2},e_{2}^{\prime}\right\rangle \left\langle \phi,\omega_{\mu}\left(g\right)\phi^{\prime}\right\rangle dg.
$$
A similar argument to \cite[Theorem 1.1.1(1)]{Xue16} shows that the above expression is absolutely convergent. Observe 
$$
{\mathcal{L}}_{\pi}\left(\pi\left(g\right)e_{1}\otimes\pi\left(^{t}\bar{g}^{-1}\right)e_{2}\otimes\phi,e_{1}^{\prime}\otimes e_{2}^{\prime}\otimes\omega_{\mu}\left(g\right)\phi^{\prime}\right)={\mathcal{L}}_{\pi}\left(e_{1}\otimes e_{2}\otimes\phi,e_{1}^{\prime}\otimes e_{2}^{\prime}\otimes\phi^{\prime}\right),
$$
for any $g\in G\left(F\right)$, $e_{1},e_{2},e_{1}^{\prime},e_{2}^{\prime}\in E_{\pi}$
and $\phi,\phi^{\prime}\in\omega_{\mu}$. By \cite[Theorem 15.1]{GGP12a}, we have  $\dim\text{Hom}_{G}\left(\pi\otimes{}^{\sigma}\pi^{\vee},\omega_{\mu}\right)=1$.
We fix a nonzero Whittaker functional $\xi$ of $\pi$ with respect
to $\left(U_{n},\psi_{E}\right)$. Then $\xi\circ\pi\left(w_{n}\right)$
is a Whittaker functional of $^{\sigma}\pi^{\vee}$ with respect to
$\left(U_{n},\psi_{E}^{-1}\right)$, where $w_n$ is the anti-diagonal identity matrix.
For $e\in E_{\pi}$, we define
$$
\begin{array}{ccccc}
W_{e} & : & G\left(F\right) & \rightarrow & \mathbb{C}\\
 &  & g & \mapsto & \xi\left(\pi\left(g\right)e\right)
\end{array}
$$
and 
$$
\begin{array}{ccccc}
W_{e}^{-} & : & G\left(F\right) & \rightarrow & \mathbb{C}\\
 &  & g & \mapsto & \xi\left(\pi\left(w_{n}\right)\pi\left(^{t}\bar{g}^{-1}\right)e\right).
\end{array}
$$
We set  
$$
\epsilon_{\psi}\left(\widetilde{\pi\times{}^{\sigma}\pi^{\vee}}\right)=\omega_{\pi}\left(-1\right)^{n}\omega_{E/F}\left(-1\right)^{n\left(n-1\right)/2}\epsilon\left(\frac{1}{2},\pi\times{}^{\sigma}\pi^{\vee}\times\mu^{-1},\psi_{E}\right).
$$

\begin{proposition}\label{8.2}
For any $e_{1},e_{2},e_{1}^{\prime},e_{2}^{\prime}\in E_{\pi}$ and
$\phi,\phi^{\prime}\in\omega_{\mu}$ and $\tilde{y}\in\tilde{G}\left(F\right)$,
we have  
$$
{\mathcal{L}}_{\pi}\left(\widetilde{\pi\times{}^{\sigma}\pi^{\vee}}\left(\tilde{y}\right)\left(e_{1}\otimes e_{2}\right)\otimes\phi,e_{1}^{\prime}\otimes e_{2}^{\prime}\otimes\tilde{\omega}_{\psi,\mu}\left(\tilde{y}\right)\phi^{\prime}\right)=\epsilon_{\psi}\left(\widetilde{\pi\times{}^{\sigma}\pi^{\vee}}\right){\mathcal{L}}_{\pi}\left(e_{1}\otimes e_{2}\otimes\phi,e_{1}^{\prime}\otimes e_{2}^{\prime}\otimes\phi^{\prime}\right).
$$
\end{proposition}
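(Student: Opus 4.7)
The plan is to use the local uniqueness of the branching $\mathrm{Hom}_G(\pi \otimes {}^\sigma\pi^\vee, \omega_\mu)$, which is one-dimensional by \cite[Theorem B]{Sun12}, to reduce the identity to a single scalar computation, and then to identify that scalar with $\epsilon_\psi(\widetilde{\pi \times {}^\sigma\pi^\vee})$ via the local Rankin-Selberg functional equation for $\pi \times {}^\sigma\pi^\vee \times \mu^{-1}$.

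First I would observe that for any fixed $\tilde{y} \in \tilde{G}(F)$, the sesquilinear form appearing on the left-hand side, call it $\mathcal{L}_\pi^{\tilde{y}}$, is still $G(F)$-invariant in the same sense as $\mathcal{L}_\pi$: one uses the relation $\tilde{y} g = \theta_{\tilde{y}}(g)\tilde{y}$ together with the intertwining identities defining $\widetilde{\pi \times {}^\sigma\pi^\vee}$ and $\tilde{\omega}_{\psi,\mu}$, and absolute convergence is preserved because both operators are unitary. By the one-dimensionality of the space of such $G$-invariant forms, there exists a scalar $c(\tilde{y})$ with $\mathcal{L}_\pi^{\tilde{y}} = c(\tilde{y})\mathcal{L}_\pi$. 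Writing $\tilde{y} = g_0 \theta_n$ with $g_0 \in G(F)$ and applying the $G(F)$-invariance of $\mathcal{L}_\pi$ shows that $c(\tilde{y})$ is independent of $\tilde{y}$, so it suffices to identify $c := c(\theta_n)$ on explicit test vectors.

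Second, I would realize $\mathcal{L}_\pi$ via the Jacquet-Piatetski-Shapiro-Shalika zeta integral. Using the Whittaker functions $W_e$ and $W_e^-$ from Section \ref{sec8.4}, and translating the pairing $\langle \phi, \omega_\mu(g)\phi'\rangle$ into the Schr\"odinger realization on $\mathcal{S}(F^n) \simeq \mathcal{S}(V)$ via
\[
  \langle \phi, \omega_\mu(g)\phi'\rangle = |\det g|^{1/2}\mu(\det g)^{-1}\int_V \phi(v)\overline{\phi'(vg)}\,dv,
\]
one can unfold $\mathcal{L}_\pi$ (after an inner-product substitution and Fubini) into the local zeta integral
\[
  Z(s, W_1, W_2^-, \Phi; \mu^{-1}) = \int_{U_n(F) \backslash G(F)} W_1(g)\, W_2^-(g)\, \Phi(\eta_n g)\, \mu(\det g)^{-1} |\det g|^{s} \, dg
\]
at $s = 1/2$, where $\eta_n = (0,\ldots,0,1)$. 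Absolute convergence at $s = 1/2$ follows from the temperedness of $\pi$ combined with the Schwartz decay of $\Phi$. The action of $\theta_n$ sends $\widetilde{\pi \times {}^\sigma\pi^\vee}(\theta_n)(e_1 \otimes e_2)$ to $\pi(J_n)e_2 \otimes {}^\sigma\pi^\vee(J_n)e_1$, swapping the factors with a $J_n$-twist, while $\tilde{\omega}_{\psi,\mu}(\theta_n)\phi = \widehat{\bar{\phi}(\cdot J_n)}$ replaces $\Phi$ by its Fourier transform composed with right-multiplication by $J_n$. Applying the standard local functional equation
\[
  Z(1-s, W_2, W_1, \hat{\Phi}; \mu) = \gamma\bigl(s, \pi \times {}^\sigma\pi^\vee \times \mu^{-1}, \psi_E\bigr)\, Z(s, W_1, W_2^-, \Phi; \mu^{-1})
\]
and specializing at $s = 1/2$, where $\gamma(1/2, \cdot, \psi_E) = \epsilon(1/2, \cdot, \psi_E)$ because the $L$-factors cancel by the Hermitian symmetry of $\pi \times {}^\sigma\pi^\vee$, produces the epsilon factor.

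The main obstacle will be the careful bookkeeping of the residual constants $\omega_\pi(-1)^n$ and $\omega_{E/F}(-1)^{n(n-1)/2}$ in the definition of $\epsilon_\psi(\widetilde{\pi \times {}^\sigma\pi^\vee})$. These arise from two distinct sources: first, from the discrepancy between our $J_n$, which is anti-diagonal with alternating signs $(\ldots, -1, 1, \ldots, (-1)^n)$, and the standard long Weyl element $w_n$ used in the definition of $W_e^-$, a discrepancy of the form $\mathrm{diag}(\pm 1)$ whose effect on the matrix coefficient contributes the factor $\omega_\pi(-1)^n$; and second, from the precise normalization of $\tilde{\omega}_{\psi,\mu}(\theta_n) = \widehat{\bar{\phi}(\cdot J_n)}$, whose Weil-index contribution through the Fourier transform produces $\omega_{E/F}(-1)^{n(n-1)/2}$. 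A clean route is to verify the identity first on explicit test data (for instance, essential vectors and characteristic functions of natural lattices) where both sides of the functional equation can be evaluated in closed form, and then propagate to arbitrary vectors using the uniqueness of the intertwining from the first step.
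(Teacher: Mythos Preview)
Your strategy is essentially the same as the paper's: both reduce to the local Rankin--Selberg functional equation of \cite{JPSS83}. The one substantive point where your write-up is imprecise is the second step. You cannot literally ``unfold'' the matrix-coefficient integral $\mathcal{L}_\pi$ into the Whittaker zeta integral $L_\pi(e\otimes e',\Phi,1/2)$ by a Fubini argument; there is no direct analytic identity between matrix coefficients and Whittaker functions. What the paper does instead is invoke multiplicity one a second time: the functional $e\otimes e'\otimes\Phi\mapsto \mathcal{L}_\pi(e\otimes e'\otimes\Phi,\,e_1'\otimes e_2'\otimes\phi')$ and the functional $e\otimes e'\otimes\Phi\mapsto L_\pi(e\otimes e',\Phi,1/2)$ both lie in the one-dimensional space $\mathrm{Hom}_G(\pi\otimes{}^\sigma\pi^\vee\otimes\omega_\mu^\vee,\mathbf{1})$, hence are proportional by a constant $C_0$ depending on $(e_1',e_2',\phi')$. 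This reduces the desired identity to the corresponding one for $L_\pi$, which is exactly your third step.

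The paper also does not verify the constants on explicit test data. Instead it takes $\tilde{y}=\theta_n$, writes out $L_\pi(\widetilde{\pi\times{}^\sigma\pi^\vee}(\theta_n)(e\otimes e'),\tilde\omega_{\psi,\mu}^\vee(\theta_n)\Phi,1/2)$ explicitly, performs the substitution $g\mapsto g J_n^{-1}$ and a direct computation of $\widehat{\overline{\Phi}(\cdot J_n)}(e_n\bar g J_n^{-1})=\hat\Phi(e_n g)$, and then matches the resulting integral with the right-hand side of \cite[Theorem~2.7]{JPSS83}. The factors $\omega_\pi(-1)^n$ and $\omega_{E/F}(-1)^{n(n-1)/2}$ fall out of this change of variables (via $J_n=w_nD_n$ with $D_n$ the diagonal sign matrix, and via $\mu(\det J_n^{-1})$) rather than from any Weil-index argument. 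This is cleaner than your proposed test-vector route and avoids any ambiguity about which normalizations are in play.
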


\begin{proof}
This equality is a restatement of the local functional equation for the local Rankin-Selberg integral in \cite{JPSS83}. Indeed, let 
$$
L_{\pi}\left(e\otimes e^{\prime},\Phi,s\right)=\int_{U\left(F\right)\backslash G\left(F\right)}W_{e}\left(g\right)W_{e^{\prime}}^{-}\left(g\right)\Phi\left(e_{n}g\right)\mu\left(\det g\right)^{-1}\left|\det g\right|^{s}dg,
$$
where $s\in\mathbb{C}$ such that $\text{Re}\left(s\right)>0$ and
$e,e^{\prime}\in E_{\pi}$ and $\Phi\in\omega_{\mu}$. By \cite{JPSS83},
the above expression is absolutely convergent. The linear form $e\otimes e^{\prime}\otimes\Phi\mapsto L_{\pi}\left(e\otimes e^{\prime},\Phi,\frac{1}{2}\right)$
is $G\left(F\right)$-invariant, i.e. it belongs to $\text{Hom}_{G}\left(\pi\otimes{}^{\sigma}\pi^{\vee}\otimes\omega_{\mu}^{\vee},1\right)$.
Since $e\otimes e^{\prime}\otimes\Phi\mapsto{\mathcal{L}}_{\pi}\left(e\otimes e^{\prime}\otimes\Phi,e_{1}^{\prime}\otimes e_{2}^{\prime}\otimes\phi^{\prime}\right)$
also lies in $\text{Hom}_{G}\left(\pi\otimes{}^{\sigma}\pi^{\vee}\otimes\omega_{\mu}^{\vee},1\right)$,
there exists a constant $C_{0}$ depending on $e_{1}^{\prime}\otimes e_{2}^{\prime}\otimes\phi^{\prime}$
such that 
$$
{\mathcal{L}}_{\pi}\left(e\otimes e^{\prime}\otimes\Phi,e_{1}^{\prime}\otimes e_{2}^{\prime}\otimes\phi^{\prime}\right)=C_{0}L_{\pi}\left(e\otimes e^{\prime},\Phi,\frac{1}{2}\right),
$$
for any $e,e^{\prime}\in E_{\pi}$ and $\Phi\in\omega_{\mu}$. It
suffices to prove 
$$
L_{\pi}\left(\widetilde{\pi\times{}^{\sigma}\pi^{\vee}}\left(\tilde{y}\right)\left(e\otimes e^{\prime}\right),\tilde{\omega}_{\psi,\mu}^{\vee}\left(\tilde{y}\right)\Phi,\frac{1}{2}\right)=\epsilon_{\psi}\left(\widetilde{\pi\times{}^{\sigma}\pi^{\vee}}\right)L_{\pi}\left(e\otimes e^{\prime},\Phi,\frac{1}{2}\right).
$$
Without loss of generality, assume $\tilde{y}=\theta_{n}$. We have
$$
L_{\pi}\left(\widetilde{\pi\times{}^{\sigma}\pi^{\vee}}\left(\theta_{n}\right)\left(e\otimes e^{\prime}\right),\tilde{\omega}_{\psi,\mu}^{\vee}\left(\theta_{n}\right)\Phi,\frac{1}{2}\right)
=\int_{U\left(F\right)\backslash G\left(F\right)}W_{e^{\prime}}\left(gJ_{n}\right)W_{e}^{-}\left(gJ_{n}\right)\widehat{\overline{\Phi}\left(\cdot J_{n}\right)}\left(e_{n}g\right)\mu\left(\det g\right)^{-1}\left|\det g\right|^{\frac{1}{2}}dg.
$$
A direct computation shows that  
$W_{e^{\prime}}\left(gJ_{n}\right)W_{e}^{-}\left(gJ_{n}\right)=\xi\left(\pi\left(gJ_{n}\right)e^{\prime}\right)\xi\left(\pi\left(w_{n}{}^{t}\bar{g}^{-1}J_{n}\right)e\right)$. Let $D_{n}=\left(\begin{array}{ccc}
\left(-1\right)^{n}\\
 & \ddots\\
 &  & -1
\end{array}\right)$. Then $J_{n}=w_{n}D_{n}$. By substituting $g\mapsto gJ_{n}^{-1}$, we have 
$$
L_{\pi}\left(\widetilde{\pi\times{}^{\sigma}\pi^{\vee}}\left(\theta_{n}\right)\left(e\otimes e^{\prime}\right),\tilde{\omega}_{\psi,\mu}^{\vee}\left(\theta_{n}\right)\Phi,\frac{1}{2}\right)
$$
$$
=\int_{U\left(F\right)\backslash G\left(F\right)}W_{e}\left(w_{n}{}^{t}\bar{g}^{-1}\right)W_{e^{\prime}}^{-}\left(w_{n}{}^{t}\bar{g}^{-1}\right)\widehat{\overline{\Phi}\left(\cdot J_{n}\right)}\left(e_{n}gJ_{n}^{-1}\right)\mu\left(\det (gJ^{-1}_n)\right)^{-1}\left|\det g\right|^{\frac{1}{2}}dg
$$
$$
=\omega_{E/F}\left(-1\right)^{\frac{n(n+1)}{2}}\int_{U\left(F\right)\backslash G\left(F\right)}W_{e}\left(w_{n}{}^{t}g^{-1}\right)W_{e^{\prime}}^{-}\left(w_{n}{}^{t}g^{-1}\right)\widehat{\overline{\Phi}\left(\cdot J_{n}\right)}\left(e_{n}\bar{g}J_{n}^{-1}\right)\mu\left(\det (w_n{}^t\bar{g}^{-1})\right)^{-1}\left|\det g\right|^{\frac{1}{2}}dg.
$$
Observe 
$$
\widehat{\overline{\Phi}\left(\cdot J_{n}\right)}\left(e_{n}\bar{g}J_{n}^{-1}\right)=\int_{\text{Mat}_{1,n}}\Phi\left(\bar{x}J_{n}\right)\psi_{E}\left(x^{t}\left(e_{n}\bar{g}J_{n}^{-1}\right)\right)dx
=\int_{\text{Mat}_{1,n}}\Phi\left(xJ_{n}\right)\psi_{E}\left(\bar{x}J_{n}{}^{t}\bar{g}{}^{t}e_{n}\right)dx 
$$
$$
= \int_{\text{Mat}_{1,n}}\Phi\left(x\right)\psi_{E}\left(x\,{}^{t}g{}^{t}e_{n}\right)dx
=\int_{\text{Mat}_{1,n}}\Phi\left(x\right)\psi_{E}\left(x{}^{t}\left(e_{n}g\right)\right)dx=\hat{\Phi}\left(e_{n}g\right).
$$
Therefore
$$
L_{\pi}\left(\widetilde{\pi\times{}^{\sigma}\pi^{\vee}}\left(\theta_{n}\right)\left(e\otimes e^{\prime}\right),\tilde{\omega}_{\psi,\mu}^{\vee}\left(\theta_{n}\right)\Phi,\frac{1}{2}\right)
$$
$$
=\omega_{E/F}\left(-1\right)^{\frac{n(n+1)}{2}}\int_{U\left(F\right)\backslash G\left(F\right)}W_{e}\left(w_{n}{}^{t}g^{-1}\right)W_{e^{\prime}}^{-}\left(w_{n}{}^{t}g^{-1}\right)\hat{\Phi}\left(e_{n}g\right)\mu\left(\det (w_n{}^t\bar{g}^{-1})\right)^{-1}\left|\det g\right|^{\frac{1}{2}}dg.
$$
By \cite[Theorem 2.7]{JPSS83}, the right hand side is equal to 
$$
\omega_{\pi}\left(-1\right)^{n}\omega_{E/F}\left(-1\right)^{n\left(n-1\right)/2}\epsilon\left(\frac{1}{2},\pi\times{}^{\sigma}\pi^{\vee}\times\mu^{-1},\psi_{E}\right)L_{\pi}\left(e\otimes e^{\prime},\Phi,\frac{1}{2}\right),
$$
which is to say 
$$
L_{\pi}\left(\widetilde{\pi\times{}^{\sigma}\pi^{\vee}}\left(\theta_{n}\right)\left(e\otimes e^{\prime}\right),\tilde{\omega}_{\psi,\mu}^{\vee}\left(\theta_{n}\right)\Phi,\frac{1}{2}\right)=\epsilon_{\psi}\left(\widetilde{\pi\times{}^{\sigma}\pi^{\vee}}\right)L_{\pi}\left(e\otimes e^{\prime},\Phi,\frac{1}{2}\right).
$$
\end{proof}

\section{A spectral expansion of $J_{\chi}\left(\tilde{f}\right)$}\label{sec9}

Let $\tilde{f}\in \mathcal{C}_\text{scusp}(Z_M(F)\backslash \tilde{M}(F),\chi \times {}^\sigma \chi^\vee)$.
Define 
$$
J_{\chi,\text{ spec}}\left(\tilde{f}\right)=\underset{\tilde{L}\in\mathcal{L}(\tilde{L}_{\min})}{\sum}|\widetilde{W}^L||\widetilde{W}^G|^{-1}(-1)^{a_{\tilde{L}}-a_{\tilde{G}}}\int_{E_{\text{ell}}(Z_M(F)\backslash\tilde{L}(F),\chi^{-1}\times{}^\sigma\chi)}\hat{\theta}_{\tilde{f}}\left(\tilde{\pi}\right)\epsilon_{\psi}\left(\tilde{\pi}^{\vee}\right)d\tilde{\pi}.
$$
In this section, we prove the following theorem.
\begin{theorem}\label{9.1}
We have the following spectral
expansion 
$$
J_{\chi}\left(\tilde{f}\right)=J_{\chi,\text{spec}}\left(\tilde{f}\right).
$$
\end{theorem}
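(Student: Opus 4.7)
The plan is to mirror the strategy used for Theorem \ref{6.1}, replacing the untwisted tempered intertwining of Section \ref{sec5} by the twisted intertwining governed by Proposition \ref{8.2}. First, I would introduce an auxiliary distribution $\tilde{J}_{\text{aux}}(\tilde{f},\tilde{f}',\phi,\phi')$ defined analogously to $J_{\text{aux}}$ in Section \ref{sec6.2} but with the twisted convolution kernel $K^A_{\tilde{f},\tilde{f}'}(x,\tilde{h}y) = \int_{Z_M(F)\backslash M(F)} \tilde{f}(x^{-1}m\tilde{h}y)\tilde{f}'(m)dm$ and with $\tilde{\omega}_{\psi,\mu,\chi}$ in place of $\omega_{V,\psi,\mu,\chi}$. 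Its absolute convergence and continuity in $\tilde{f}'$ should follow by essentially the same chain of estimates as in Proposition \ref{6.3}: a tempered matrix-coefficient bound on $K^A_{\tilde{f},\tilde{f}'}$ combined with the fact that the restriction of $\tilde{\omega}_{\psi,\mu,\chi}$ to $G(F)$ is the Weil representation $\omega_{\mu,\chi}$, whose matrix coefficients enjoy the Schwartz-pairing estimate of Lemma \ref{5.2}.

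Next, I would split $\tilde{f}=\tilde{f}_1+\tilde{f}_2$ using the twisted matricial Paley--Wiener theorem (cf.\ \cite{Wal12b}) so that $\tilde{\pi}(\tilde{f}_2)=\tilde{\pi}(\tilde{f})$ for every discrete series $\tilde{\pi}\in E_{\text{disc}}(\tilde{M})$, while $\tilde{\pi}(\tilde{f}_1)=0$ on all such $\tilde{\pi}$. For $\tilde{f}_1$, the one-dimensionality $\dim\text{Hom}_G(\pi\otimes{}^\sigma\pi^\vee,\omega_\mu)=1$ (cf.\ \cite[Theorem 15.1]{GGP12a}), together with the $\tilde{G}(F)$-equivariance of the canonical intertwining up to the scalar $\epsilon_\psi(\widetilde{\pi\times{}^\sigma\pi^\vee})$ provided by Proposition \ref{8.2}, allows one to run the twisted analog of Lemma \ref{5.3}(iv): express the inner integral against $\tilde{\omega}_{\psi,\mu,\chi}$ spectrally, reassemble as $\tilde{J}_{\text{aux}}(\tilde{f}_1,\tilde{f}',\phi,\phi')$, and then apply the analog of Proposition \ref{6.4} to recognize this as an integral of $\hat{\theta}_{\tilde{f}_1}(\tilde{\pi})\epsilon_\psi(\tilde{\pi}^\vee)$ over the twisted tempered dual of $\tilde{M}(F)$. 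Decomposing this integral into its elliptic parts on each twisted Levi $\tilde{L}\in\mathcal{L}(\tilde{L}_{\min})$ produces exactly the combinatorial coefficients $|\widetilde{W}^L||\widetilde{W}^G|^{-1}(-1)^{a_{\tilde{L}}-a_{\tilde{G}}}$, as in the Arthur--Waldspurger treatment of weighted characters of strongly cuspidal functions.

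For the discrete-series part, reduce by linearity to the case where $\tilde{f}_2$ is a sum of matrix coefficients of a fixed $\tilde{\pi}\in E_{\text{disc}}(\tilde{M},\chi\times{}^\sigma\chi^\vee)$. A direct computation in the spirit of the discrete-series case of Theorem \ref{6.1} (replacing the scalar product $\mathcal{B}_\pi$ of Proposition \ref{6.7} by the pairing between $\pi\otimes{}^\sigma\pi^\vee\otimes\overline{\omega_{\mu,\chi}}$ and its conjugate, which is one-dimensional by \cite[Theorem 15.1]{GGP12a}) yields
$$J_\chi(\tilde{f}_2)=d(\tilde{\pi})^{-1}\epsilon_\psi(\tilde{\pi}^\vee)\,\mathrm{Tr}(\tilde{\pi}(\tilde{f}_2)),$$
which matches precisely the $\tilde{L}=\tilde{G}$ contribution of $J_{\chi,\text{spec}}(\tilde{f}_2)$. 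Summing the two pieces gives the identity $J_\chi(\tilde{f})=J_{\chi,\text{spec}}(\tilde{f})$.

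The main obstacle I anticipate is the rigorous verification of the convergence and continuity estimates for $\tilde{J}_{\text{aux}}$ and the twisted analog of Proposition \ref{6.4}. This requires analogs of the spherical-variety estimates of Proposition \ref{4.6} for the quotient $\tilde{G}(F)\backslash\tilde{M}(F)$, equipped with the representation $\tilde{\omega}_{\psi,\mu,\chi}$, in order to justify the exchange of spectral and group-theoretic integrations that produces the Plancherel expansion. Once these estimates are in place, Proposition \ref{8.2} plays the role held by Lemma \ref{5.3}(ii) in the Bessel case, and $\epsilon_\psi(\tilde{\pi}^\vee)$ appears as the proportionality constant governing the twisted intertwining in exactly the way that $m_V(\bar{\pi})$ appeared in Theorem \ref{6.1}.
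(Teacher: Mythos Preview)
Your strategy is sound but more elaborate than the paper's. The paper does \emph{not} split $\tilde{f}=\tilde{f}_1+\tilde{f}_2$; it treats all of $\tilde{f}$ uniformly through the auxiliary distribution of Section~\ref{sec9.2}. The reason the split was needed in Theorem~\ref{6.1} is that the factorization identity of Lemma~\ref{5.3}(ii) required $m_V(\bar\pi)\le 1$, which at that stage of the induction was known only for properly induced $\pi$, forcing a separate direct treatment of discrete series via Proposition~\ref{6.7}. In the twisted setting the relevant multiplicity-one statement $\dim\text{Hom}_G(\pi,\omega_\mu)=1$ holds unconditionally for every tempered $\pi$ of $M(F)$ (cf.\ \cite[Theorem~15.1]{GGP12a}), so Lemma~\ref{9.2}(i) applies across the board and the single identity $\tilde{J}_\chi(\tilde{f})=J_{\text{aux}}(f,f',\phi,\tilde\omega_{\psi,\mu,\chi}(\theta_n)\phi')$ together with Proposition~\ref{9.4} already yields the full spectral expansion; Proposition~\ref{8.2} is then invoked once to convert $\mathcal{L}_\pi^{\phi,\tilde\omega_{\psi,\mu,\chi}(\theta_n)\phi'}(\pi(f^{\prime,\vee})\tilde\pi(\theta_n))$ into $\epsilon_\psi(\tilde\pi^\vee)$. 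Your proposed discrete-series computation via an analog of Proposition~\ref{6.7} would succeed but is unnecessary. Note also that the paper's auxiliary kernel is built from an \emph{untwisted} auxiliary function $f'\in\mathcal{C}(Z_M(F)\backslash M(F))$, with the twist entering only through the $\theta_n$ in the definition of $K^2$; this keeps the analytic estimates (Proposition~\ref{9.3}) essentially identical to Proposition~\ref{6.3} rather than requiring a genuinely twisted version.
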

\subsection{Linear form ${\mathcal{L}}_{\pi}^{\phi_{1},\phi_{2}}$}\label{sec9.1}

Let $\pi$ be a tempered representation of $M\left(F\right)$ with central character $\chi^{-1}\times {}^\sigma\chi$.
For all $T\in\text{End}\left(\pi\right)^\infty$, the function $m\in M\left(F\right)\mapsto\text{Trace}\left(\pi\left(m^{-1}\right)T\right)$ is in the weak Harish-Chandra Schwartz space ${\mathcal{C}}^{w}\left(Z_M\left(F\right)\backslash M\left(F\right),\chi\times {}^\sigma\chi^\vee\right)$.
Let $\phi_{1},\phi_{2}\in\omega_{\mu,\chi}$. We define the following linear form
$$
\begin{array}{ccccc}
{\mathcal{L}}_{\pi}^{\phi_{1},\phi_{2}} & : & \text{End}\left(\pi\right)^\infty & \rightarrow & \mathbb{C}\\
 &  & T & \mapsto & \int_{Z_G\left(F\right)\backslash G\left(F\right)}\text{Trace}\left(\pi\left(g^{-1}\right)T\right)\left\langle \phi_{1},\omega_{\mu,\chi}\left(g\right)\phi_{2}\right\rangle dg
\end{array}.
$$
By adapting \cite[Proposition 1.1.1]{Xue16}, the above integral is
absolutely convergent. Moreover, we have $\pi\otimes\bar{\pi}\cong\text{End}\left(\pi\right)^\infty$
as $M\left(F\right)\times M\left(F\right)$-representations via the
map $e\otimes e^{\prime}\mapsto T_{e,e^{\prime}}:=\left(\cdot,e^{\prime}\right)e$. We identify ${\mathcal{L}}_{\pi}^{\phi_{1},\phi_{2}}$ with the continuous sesquilinear form on $\pi$ given by ${\mathcal{L}}_{\pi}^{\phi_{1},\phi_{2}}\left(e,e^{\prime}\right):={\mathcal{L}}_{\pi}^{\phi_{1},\phi_{2}}\left(T_{e,e^{\prime}}\right)$, for $e,e^{\prime}\in\pi$. We define a continuous linear map 
$$
\begin{array}{ccccc}
L_{\pi}^{\phi_{1},\phi_{2}} & : & \pi & \rightarrow & \overline{\pi}^{\vee}\\
 &  & e & \mapsto & \left(e^{\prime}\mapsto{\mathcal{L}}_{\pi}^{\phi_{1},\phi_{2}}\left(e,e^{\prime}\right)\right)
\end{array}.
$$
Since $\pi$ is tempered, we have $\bar{\pi}^{\vee}\cong\pi$. Therefore,
the following linear map can be viewed as an element in $\text{End}\left(\pi\right)^\infty$.
Moreover, $L_{\pi}^{\phi_{1},\phi_{2}}$ is of finite rank. Hence,
we have $\text{Trace}\left(TL_{\pi}^{\phi_{1},\phi_{2}}\right)=\text{Trace}\left(L_{\pi}^{\phi_{1},\phi_{2}}T\right)={\mathcal{L}}_{\pi}^{\phi_{1},\phi_{2}}\left(T\right)$. We give some properties of the above linear forms.
\begin{lemma}\label{9.2}
${}$
\begin{enumerate}
\item Suppose $\pi\in\text{Temp}\left(M,\chi^{-1}\times {}^\sigma\chi\right)$ or ${\mathcal{X}}_{\text{temp}}\left(M,\chi^{-1}\times {}^\sigma\chi\right)$.
If $S,T\in\text{End}\left(\pi\right)^\infty$, then 
$$
{\mathcal{L}}_{\pi}^{\phi_{1},\phi_{2}}\left(SL_{\pi}^{\phi_{3},\phi_{4}}T\right)={\mathcal{L}}_{\pi}^{\phi_{1},\phi_{4}}\left(S\right){\mathcal{L}}_{\pi}^{\phi_{3},\phi_{2}}\left(T\right).
$$
\item Let $f\in{\mathcal{C}}\left(Z_M\left(F\right)\backslash M\left(F\right),\chi\times {}^\sigma\chi^{-1}\right)$
and $\phi_{1},\phi_{2}\in\omega_{\mu,\chi}$. Then 
$$
\int_{Z_G\left(F\right)\backslash G\left(F\right)}f\left(g\right)\left\langle \phi_{1},\omega_{\mu,\chi}\left(g\right)\phi_{2}\right\rangle dg=\int_{{\mathcal{X}}_{\text{temp}}\left(M,\chi^{-1}\times {}^\sigma\chi\right)}{\mathcal{L}}_{\pi}^{\phi_{1},\phi_{2}}\left(\pi\left(f\right)\right)\mu\left(\pi\right)d\pi
$$
both integrals being absolutely convergent.
\item Let $f\in{\mathcal{C}}\left(Z_M\left(F\right)\backslash M\left(F\right),\chi\times {}^\sigma\chi^{-1}\right)$
and $f^{\prime}\in{\mathcal{C}}\left(Z_M\left(F\right)\backslash M\left(F\right),\chi^{-1}\times {}^\sigma\chi\right)$
and $\phi_{1},\phi_{2},\phi_{3},\phi_{4}\in\omega_{\mu,\chi}$. Then
we have the equality 
$$
\int_{{\mathcal{X}}_{\text{temp}}\left(M,\chi^{-1}\times {}^\sigma\chi\right)}{\mathcal{L}}_{\pi}^{\phi_{1},\phi_{2}}\left(\pi\left(f\right)\right){\mathcal{L}}_{\pi}^{\phi_{3},\phi_{4}}\left(\pi\left(f^{\prime,\vee}\right)\right)\mu\left(\pi\right)d\pi
$$
$$
=\underset{Z_G\left(F\right)\backslash G\left(F\right)}{\iint}\int_{Z_M\left(F\right)\backslash M\left(F\right)}f\left(gmg^{\prime}\right)f^{\prime}\left(m\right)\left\langle \phi_{3},\omega_{\mu,\chi}\left(g^{\prime}\right)\phi_{2}\right\rangle \left\langle \phi_{1},\omega_{\mu,\chi}\left(g\right)\phi_{4}\right\rangle dmdg^{\prime}dg,
$$
where the first integral is absolutely convergent and the second one converges in that order but not necessarily as a triple integral.
\end{enumerate}
\end{lemma}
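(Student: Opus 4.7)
\medskip

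\noindent\textbf{Proof proposal.} The three parts are the twisted counterparts of Lemma \ref{5.3}(2)--(4), with the role played by $m_V(\bar\pi)\le 1$ in the Bessel setting now taken by the multiplicity one statement $\dim\mathrm{Hom}_{G}(\pi\otimes{}^\sigma\pi^\vee,\omega_\mu)=1$ recalled in Section \ref{sec8.4} (and attributed there to \cite[Theorem 15.1]{GGP12a}). The plan is to follow the argument of Lemma \ref{5.3} step by step, replacing $H_V$ by the diagonal $G\subset M$ and $\omega_{V,\psi,\mu,\chi}$ by $\omega_{\mu,\chi}$, and invoking the absolute convergence estimates already available in the twisted setting (an analog of Lemma \ref{5.2}, which can be established by mimicking \cite[Theorem 1.1.1]{Xue16} as indicated at the start of Section \ref{sec9.1}).

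For part 1, I would introduce the auxiliary sesquilinear form
$$
\alpha(e,e^{\vee},\phi,\phi^{\vee})=\int_{Z_G(F)\backslash G(F)}\langle e,\pi(g)e^{\vee}\rangle\,\langle \phi,\omega_{\mu,\chi}(g)\phi^{\vee}\rangle\,dg
$$
on $\pi\otimes\bar\pi\otimes\omega_{\mu,\chi}\otimes\overline{\omega_{\mu,\chi}}$, which by the uniqueness statement belongs to a one-dimensional space of $G(F)$-invariant forms on $\pi\otimes{}^\sigma\pi^\vee\otimes\omega_\mu^\vee$. By continuity, reduce to the case $S=e_1\otimes e_1^{\vee}$, $T=e_2\otimes e_2^{\vee}$; a direct computation shows both sides of the claimed identity factor as products of two $\alpha$'s. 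Multiplicity one forces the two sesquilinear forms $(e^\vee,\phi)\mapsto\alpha(e_1,e^\vee,\phi_1,\phi)$ and $(e^\vee,\phi)\mapsto\alpha(e_2,e^\vee,\phi_3,\phi)$ to be proportional, and rearranging the products of $\alpha$-factors yields the identity.

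Part 2 is a Fourier inversion statement. Given $f\in\mathcal{C}(Z_M(F)\backslash M(F),\chi\times{}^\sigma\chi^{-1})$, write the inner integral in the right-hand side as ${\mathcal{L}}_{\pi}^{\phi_1,\phi_2}(\pi(f))=\mathrm{Trace}(\pi(f)L_\pi^{\phi_1,\phi_2})$ and use that $\pi\mapsto\pi(f)$ lies in $\mathcal{C}(\mathcal{X}_\text{temp}(M,\chi^{-1}\times{}^\sigma\chi),\mathcal{E}(M,\chi^{-1}\times{}^\sigma\chi))$ by the matricial Paley-Wiener theorem (Theorem \ref{2.2}). Then the identity follows from the Plancherel/inversion formula combined with the absolute convergence of the $G(F)$-integral (provided by the twisted analog of Lemma \ref{5.2}, using temperedness of $\pi$ and the rate-of-growth estimate in Corollary \ref{4.3} for $\omega_{\mu,\chi}$). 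The continuity assertion on $f$ is automatic from the topological isomorphism in Theorem \ref{2.2}.

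Part 3 is the convolution identity and will follow from parts 1 and 2 in exactly the same manner as Lemma \ref{5.3}(4). The right-hand side can be rewritten as
$$
\int\!\!\!\int_{Z_G(F)\backslash G(F)}\bigl(f^{\prime,\vee}*(L(g^{-1})f)\bigr)(g')\,\langle\phi_3,\omega_{\mu,\chi}(g')\phi_2\rangle\,\langle\phi_1,\omega_{\mu,\chi}(g)\phi_4\rangle\,dg'dg,
$$
and the Fourier transform of $f^{\prime,\vee}*L(g^{-1})f$ is the compactly supported smooth section $\pi\mapsto\pi(f^{\prime,\vee})\pi(g^{-1})\pi(f)$. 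Applying part 2 to the inner integral in $g'$, then part 1 to collapse the resulting trace, and finally part 2 once more to the outer integral in $g$ produces the absolutely convergent spectral integral on the left. The main obstacle in the whole lemma is securing the multiplicity one input at the right level of generality for $\pi\in\mathcal{X}_\text{temp}(M,\chi^{-1}\times{}^\sigma\chi)$; once this is in hand, the remaining work is a careful bookkeeping of absolute convergence, for which the estimates of Section \ref{sec4.4} and the twisted analog of Lemma \ref{5.2} suffice.
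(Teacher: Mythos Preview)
Your proposal is correct and follows exactly the approach the paper takes: the paper's proof is a single sentence stating that the argument of Lemma \ref{5.3} goes through verbatim once one uses the multiplicity one property $\dim\mathrm{Hom}_G(\pi,\omega_\mu)=1$ for irreducible generic $\pi$ of $M(F)$, and your outline spells this out in detail. The only caveat is that the multiplicity one input you cite from Section \ref{sec8.4} is stated there for representations of the form $\pi\otimes{}^\sigma\pi^\vee$, whereas (as you yourself note at the end) one needs it for arbitrary irreducible generic $\pi$ of $M(F)$; the paper simply invokes this more general statement directly.
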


\begin{proof}
The proof follows that of Lemma \ref{5.3} verbatim, noting that here we have the multiplicity one property $\dim\text{Hom}_{G}\left(\pi,\omega_{\mu}\right)=1$
for any irreducible generic representation $\pi$ of $M\left(F\right)$.
\end{proof}

\subsection{Some estimates and an auxiliary distribution}\label{sec9.2}

We fix a function $f$ in the space ${\mathcal{C}}_{\text{scusp}}\left(Z_M\left(F\right)\backslash M\left(F\right),\chi\times {}^\sigma\chi^{-1}\right)$.
For any $f^{\prime}$ in ${\mathcal{C}}\left(Z_M\left(F\right)\backslash M\left(F\right),\chi^{-1}\times {}^\sigma\chi\right)$
and $\phi,\phi^{\prime}\in\omega_{\mu,\chi}$, we define the following integrals 
$$
K_{f,f^{\prime}}^{A}\left(m_{1},m_{2}\right)=\int_{Z_M\left(F\right)\backslash M\left(F\right)}f\left(m_{1}^{-1}mm_{2}\right)f^{\prime}\left(m\right)dm,\ \ m_{1},m_{2}\in M\left(F\right),
$$
$$
K_{f,f^{\prime},\phi,\phi^{\prime}}^{1}\left(m,x\right)=\int_{Z_G\left(F\right)\backslash G\left(F\right)}K_{f,f^{\prime}}^{A}\left(m,gx\right)\left\langle \phi,\omega_{\mu,\chi}\left(g\right)\phi^{\prime}\right\rangle dg,\ \ m,x\in M\left(F\right),
$$
$$
K_{f,f^{\prime},\phi,\phi^{\prime}}^{2}\left(x,y\right)=\int_{Z_G\left(F\right)\backslash G\left(F\right)}K_{f,f^{\prime},\phi,\phi^{\prime}}^{1}\left(gx,\theta_{n}\left(gy\right)\right)dg,\ \ x,y\in M\left(F\right),
$$
$$
J_{\text{aux}}\left(f,f^{\prime},\phi,\phi^{\prime}\right)=\int_{Z_M\left(F\right)G\left(F\right)\backslash M\left(F\right)}K_{f,f^{\prime},\phi,\phi^{\prime}}^{2}\left(x,x\right)dx.
$$
\begin{proposition}\label{9.3}
Let $f$ in ${\mathcal{C}}_{\text{scusp}}\left(Z_M\left(F\right)\backslash M\left(F\right),\chi\times {}^\sigma\chi^{-1}\right)$ and $f^{\prime}\in{\mathcal{C}}\left(Z_M\left(F\right)\backslash M\left(F\right),\chi^{-1}\times{}^\sigma\chi\right)$. The integrals defining $K_{f,f^{\prime}}^{A}\left(m_{1},m_{2}\right)$, $K_{f,f^{\prime},\phi,\phi^{\prime}}^{1}(m,x)$, $K_{f,f^{\prime},\phi,\phi^{\prime}}^{2}\left(x,y\right)$ and $J_{\text{aux}}\left(f,f^{\prime}\right)$ are absolutely convergent.
\end{proposition}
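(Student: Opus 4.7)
The strategy would be to mirror closely the proof of Proposition \ref{6.3} from the unitary setting, with the unitary group $H_V$ replaced by the diagonal embedding $G \hookrightarrow M$ and the Weil representation $\omega_{V,\psi,\mu,\chi}$ replaced by $\omega_{\mu,\chi}$. I would proceed by establishing convergence of the four integrals in order, each step leveraging the previous ones. The convergence of $K_{f,f'}^A(m_1, m_2)$, together with the fact that $m_2 \mapsto K_{f,f'}^A(m_1, m_2)$ lies in the Harish-Chandra Schwartz space ${\mathcal{C}}(Z_M(F)\backslash M(F), \chi^{-1}\times {}^\sigma\chi)$, is a direct consequence of the twisted analog of \cite[Theorem 5.5.1(i)]{BP20}, which would also yield a quantitative estimate of the form $\nu(K_{f,f'}^A(m_1, \cdot)) \ll \mu(f')\Xi^M(m_1)\sigma(m_1)^{-d}$ for every $d > 0$ and suitable continuous seminorms $\nu, \mu$.

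The key new ingredient needed for $K^1$ is an analog of Lemma \ref{5.2} in the split setting: for any $f_0 \in {\mathcal{C}}^w(Z_G(F)\backslash G(F), \chi\cdot{}^\sigma\chi^\vee)$ and any $\phi, \phi' \in \omega_{\mu,\chi}$, the pairing
$$
\int_{Z_G(F)\backslash G(F)} f_0(g)\langle \phi, \omega_{\mu,\chi}(g)\phi'\rangle\, dg
$$
is absolutely convergent and depends continuously on $f_0$. This would follow from the Cartan decomposition of $G(F) = \text{GL}_n(E)$, together with the bound $|\langle \phi, \omega_\mu(a)\phi'\rangle| \ll |a_1 \cdots a_n|^{1/2}$ on the dominant torus, which is a computation of the type appearing in the proof of Proposition \ref{4.6}(vi). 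Combining this estimate with the bound from the first step yields the desired convergence of $K^1$ together with a pointwise majorization of the form $\Xi^M(m)\sigma(m)^{-d}\Xi^{G\backslash M}(x)\sigma_{G\backslash M}(x)^{d'}$.

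For $K^2$, the essential observation, adapting Proposition \ref{6.3}(iii), is that after expanding via Lemma \ref{9.2}(iii) and applying the matricial Paley-Wiener theorem (Theorem \ref{2.2}), one can identify $K_{f,f',\phi,\phi'}^2(x,x)$ with the kernel $K_\chi(\tilde\varphi_{f'}, x)$ for a strongly cuspidal function $\tilde\varphi_{f'} \in {\mathcal{C}}_{\text{scusp}}(Z_M(F)\backslash \tilde{M}(F), \chi\times{}^\sigma\chi^{-1})$, whose absolute convergence was already noted in Section \ref{sec8.3}. The convergence of $J_{\text{aux}}$ would then follow from the resulting pointwise bound on $K^2(x,x)$ together with the analog of Proposition \ref{4.6}(iii) for the spherical variety $Z_M G\backslash M$.

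The main obstacle I anticipate is the systematic development of Proposition \ref{4.6}-type estimates for the spherical variety $Z_M G \backslash M$, and in particular establishing the analog of part (viii) needed to control the double matrix coefficient integral arising implicitly in $K^2$. While the structure is formally parallel to the unitary case, care is required in identifying the relevant $(\theta, F)$-split tori and in proving a weak Cartan decomposition adapted to the diagonal embedding $G \hookrightarrow M$, as well as in establishing the simultaneous decay of matrix coefficients of $\omega_\mu$ with respect to the norm on $G(F)$ and the quotient norm on $Z_M G\backslash M$.
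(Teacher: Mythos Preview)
Your approach is exactly what the paper does: its entire proof of Proposition \ref{9.3} is the sentence ``The proof follows from Proposition \ref{6.3},'' so your detailed adaptation of that argument is precisely what is intended.

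One remark on your anticipated obstacle: the symmetric space $Z_M G\backslash M$ is in fact the group case. Since $M=\text{Res}_{E/F}(\text{GL}_n\times\text{GL}_n)$ and $G=\text{Res}_{E/F}\text{GL}_n$ is embedded diagonally, the map $(g_1,g_2)\mapsto g_1^{-1}g_2$ identifies $G\backslash M$ with $G$ itself as a $G\times G$-space. Hence the weak Cartan decomposition reduces to the ordinary Cartan decomposition of $\text{GL}_n(E)$, the function $\Xi^{G\backslash M}$ is essentially $\Xi^G$, and the analogs of Proposition \ref{4.6} (including part (viii)) are standard Harish-Chandra estimates on the group rather than genuinely new spherical-variety statements. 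This makes the twisted case technically simpler than the unitary case you are mimicking, not harder.
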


\begin{proof}
The proof follows from Proposition \ref{6.3}.
\end{proof}
\begin{proposition}\label{9.4}
We have the following equality 
$$
J_{\text{aux}}\left(f,f^{\prime},\phi,\phi^{\prime}\right)=\underset{\tilde{L}\in\mathcal{L}(\tilde{L}_{\min})}{\sum}\frac{(-1)^{a_{\tilde{L}}-a_{\tilde{G}}}|\widetilde{W}^L|}{|\widetilde{W}^G|}\int_{E_{\text{ell}}(Z_M(F)\backslash\tilde{L}(F),\chi^{-1}\times{}^\sigma\chi)}\hat{\theta}_{\tilde{f}}\left(\tilde{\pi}\right){\mathcal{L}}_{\pi}^{\phi,\phi^{\prime}}\left(\pi\left(f^{\prime,\vee}\right)\tilde{\pi}\left(\theta_{n}\right)\right)d\tilde{\pi},
$$
for all $f^{\prime}\in{\mathcal{C}}\left(Z_M\left(F\right)\backslash M\left(F\right),\chi^{-1}\times{}^\sigma\chi\right)$,
where $\tilde{f}\in{\mathcal{C}}_{\text{scusp}}\left(Z_M\left(F\right)\backslash\tilde{M}\left(F\right),\chi\times{}^\sigma\chi^\vee\right)$
is given by $\tilde{f}\left(x\theta_{n}\right)=f\left(x\right)$, for $x\in M\left(F\right)$.
\end{proposition}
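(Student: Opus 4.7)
The proof follows the same overall structure as that of Proposition \ref{6.4}, with the untwisted spectral identity of \cite[Theorem 5.5.1]{BP20} replaced by its twisted analogue for $\tilde{M}(F)$. First I would introduce truncation cutoffs $\alpha_{N}$ on $Z_{M}(F)G(F)\backslash M(F)$ (the characteristic function of $\sigma_{Z_{M}G\backslash M}\leq N$) and $\beta_{C\log(N)}$ on $Z_{G}(F)\backslash G(F)$, and define truncations $J_{\text{aux},N}$ and $J_{\text{aux},N,C}$ of $J_{\text{aux}}$ in direct analogy with the constructions preceding Claim \ref{6.5}. Using the growth bounds on $K^{A}_{f,f'}$ from Proposition \ref{9.3}, the analogues for the pair $(M,G)$ of the geometric estimates in Proposition \ref{4.6} (parts (iii), (v), (viii)) and the estimates on matrix coefficients of $\omega_{\mu,\chi}$ (cf.\ \cite[Theorem 1.1.1]{Xue16}), one shows that for some sufficiently large $C>0$,
$$
|J_{\text{aux}}(f,f',\phi,\phi')-J_{\text{aux},N,C}(f,f',\phi,\phi')|\ll N^{-1}, \quad N\geq 1.
$$

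Next, after the change of variable $y=gx$ and the identity $\theta_{n}(y)\theta_{n}=\theta_{n}y$ inside $M\rtimes\langle\theta_{n}\rangle$, the kernel rewrites as
$$
K^{A}_{f,f'}(y,g'\theta_{n}(y))=\int_{Z_{M}\backslash M}\tilde{f}\bigl(y^{-1}(mg'\theta_{n})y\bigr)f'(m)\,dm,
$$
which identifies the inner double integral over $G(F)$ with an integral over $\tilde{G}(F)$ of a twisted kernel paired against the matrix coefficient of the extended Weil representation $\tilde{\omega}_{\psi,\mu,\chi}$. An argument paralleling Claim \ref{6.6}, using a twisted version of the pointwise bound in \cite[Theorem 5.5.1(iii)]{BP20} for $\int_{Z_{M}\backslash M}K^{A}_{f,f'}(y,\tilde{g}y)dy$ (essentially bounded by $\Xi^{G}(\tilde{g})\sigma(\tilde{g})^{d}$), then removes the outer cutoff $\alpha_{N}$ in the limit and reduces the problem to
$$
J_{\text{aux}}=\lim_{N\to\infty}\int_{\tilde{G}(F)}\beta_{C\log(N)}(\tilde{g})\bigl\langle\phi,\tilde{\omega}_{\psi,\mu,\chi}(\tilde{g})\phi'\bigr\rangle\int_{Z_{M}\backslash M}K^{A}_{f,f'}(y,\tilde{g}y)\,dy\,d\tilde{g}.
$$

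The decisive step is the twisted spectral expansion of the inner $y$-integral: for strongly cuspidal $\tilde{f}$ and $\tilde{g}\in\tilde{G}(F)$,
$$
\int_{Z_{M}\backslash M}K^{A}_{f,f'}(y,\tilde{g}y)\,dy=\sum_{\tilde{L}\in\mathcal{L}(\tilde{L}_{\min})}\frac{(-1)^{a_{\tilde{L}}-a_{\tilde{G}}}|\widetilde{W}^{L}|}{|\widetilde{W}^{G}|}\int_{E_{\text{ell}}(Z_{M}\backslash\tilde{L},\chi^{-1}\times{}^{\sigma}\chi)}\hat{\theta}_{\tilde{f}}(\tilde{\pi})\,\theta_{\tilde{\pi}^{\vee}}\bigl(R(\tilde{g}^{-1})f'\bigr)\,d\tilde{\pi}.
$$
This is the twisted analogue of \cite[Theorem 5.5.1]{BP20}, proved via the twisted Harish-Chandra Plancherel formula for $\tilde{M}(F)$ together with the weighted character construction of \cite{Wal12b} and the definition of $\hat{\theta}_{\tilde{f}}$ recalled in Section \ref{sec3.6}; it relies crucially on the classification of $E_{\text{ell}}(\tilde{L})$ from Section \ref{sec2.4} and the twisted orthogonality relations of \cite[Theorem 7.3]{Wal12b}.

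Substituting this expansion into the truncated formula, exchanging the orders of the $\tilde{g}$- and $\tilde{\pi}$-integrations (justified by the matricial Paley-Wiener theorem and absolute convergence estimates analogous to those in Lemma \ref{9.2}(iii)), and recognising the resulting inner $\tilde{g}$-integral
$$
\int_{\tilde{G}(F)}\theta_{\tilde{\pi}^{\vee}}\bigl(R(\tilde{g}^{-1})f'\bigr)\bigl\langle\phi,\tilde{\omega}_{\psi,\mu,\chi}(\tilde{g})\phi'\bigr\rangle\,d\tilde{g}
$$
as $\mathcal{L}_{\pi}^{\phi,\phi'}\bigl(\pi(f'^{\vee})\tilde{\pi}(\theta_{n})\bigr)$ (by writing $\tilde{g}=g'\theta_{n}$, expanding the character in a $K$-finite basis, and invoking the analogue of Lemma \ref{9.2}(iii)) yields the claimed identity. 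The main obstacle is the third step: formulating the twisted spectral expansion with the correct weighting over $\mathcal{L}(\tilde{L}_{\min})$ and verifying that the intertwiner $\tilde{\pi}(\theta_{n})$ arises with the correct normalisation when transferring the $\theta_{\tilde{\pi}^{\vee}}$-character over to the explicit tempered intertwining $\mathcal{L}_{\pi}^{\phi,\phi'}$.
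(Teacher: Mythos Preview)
Your proposal is correct and follows essentially the same route as the paper: truncate by $\alpha_N$ and $\beta_{C\log N}$, establish the analogues of Claims \ref{6.5} and \ref{6.6}, apply a twisted spectral expansion to the inner $M(F)$-integral, and then unfold the remaining $G(F)$-integral against the Weil matrix coefficient via Lemma \ref{9.2}. The one point worth noting is that the step you flag as the ``main obstacle''---the twisted spectral expansion of $\int_{Z_M\backslash M}K^A_{f,f'}(y,g_2\theta_n(y))\,dy$ with the correct weighting over $\mathcal{L}(\tilde{L}_{\min})$---does not need to be developed from scratch: the paper simply invokes \cite[Theorem 5.1]{MW18} together with \cite[Proposition 2.9]{BW23}, which already package the twisted local trace formula in exactly the form required. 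With that citation in hand, the final identification uses only the bound $|\theta_{\tilde{\pi}^\vee}(R(g_2^{-1})\tilde{f'})|\ll\Xi^M(g_2)$ and Lemma \ref{9.2}(ii), rather than the more elaborate route through Lemma \ref{9.2}(iii) that you sketch.
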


\begin{proof}
For $N>0$, let us denote by $\alpha_{N}:Z_M\left(F\right)G\left(F\right)\backslash M\left(F\right)\rightarrow\left\{ 0,1\right\}$ the characteristic function of the set $\left\{ x\in Z_M\left(F\right)G\left(F\right)\backslash M\left(F\right):\ \sigma_{Z_MG\backslash M}\left(x\right)\leq N\right\}$ and $\beta_{N}$ the characteristic function of the set $\left\{ m\in Z_M\left(F\right)\backslash M\left(F\right):\ \sigma_{Z_M\backslash M}\left(m\right)\leq N\right\} $.
For all $N\geq1$ and $C>0$, define 
$$
J_{\text{aux},N}\left(f,f^{\prime},\phi,\phi^{\prime}\right)=\int_{Z_M\left(F\right)G\left(F\right)\backslash M\left(F\right)}\alpha_{N}\left(x\right)\iint_{Z_G\left(F\right)\backslash G\left(F\right)}K_{f,f^{\prime}}^{A}\left(g_{1}x,g_{2}\theta_{n}\left(g_{1}x\right)\right)
\left\langle \phi,\omega_{\mu,\chi}\left(g_{2}\right)\phi^{\prime}\right\rangle dg_{2}dg_{1}dx
$$
and 
$$
J_{\text{aux},N,C}\left(f,f^{\prime},\phi,\phi^{\prime}\right)
=\int_{Z_M\left(F\right)G\left(F\right)\backslash M\left(F\right)}\alpha_{N}\left(x\right)\iint_{Z_G\left(F\right)\backslash G\left(F\right)}\beta_{C\log\left(N\right)}\left(g_{2}\right)
$$
$$
K_{f,f^{\prime}}^{A}\left(g_{1}x,g_{2}\theta_{n}\left(g_{1}x\right)\right)\left\langle \phi,\omega_{\mu,\chi}\left(g_{2}\right)\phi^{\prime}\right\rangle dg_{2}dg_{1}dx.
$$
We can easily see that $J_{\text{aux}}\left(f,f^{\prime},\phi,\phi^{\prime}\right)=\underset{N\rightarrow\infty}{\lim}J_{\text{aux},N}\left(f,f^{\prime},\phi,\phi^{\prime}\right)$. We consider the following claim, whose proof is similar to claim \ref{6.5}.
\begin{claim}\label{9.5}
The triple integrals defining $J_{\text{aux},N}\left(f,f^{\prime},\phi,\phi^{\prime}\right)$
and $J_{\text{aux},N,C}\left(f,f^{\prime},\phi,\phi^{\prime}\right)$
are absolutely convergent and there exists $C>0$ such that 
$$
\left|J_{\text{aux},N}\left(f,f^{\prime},\phi,\phi^{\prime}\right)-J_{\text{aux},N,C}\left(f,f^{\prime},\phi,\phi^{\prime}\right)\right|\ll N^{-1}
$$
for all $N\geq1$.
\end{claim}
Let us fix such $C>0$. It follows that $J_{\text{aux}}\left(f,f^{\prime},\phi,\phi^{\prime}\right)=\underset{N\rightarrow\infty}{\lim}J_{\text{aux},N,C}\left(f,f^{\prime},\phi,\phi^{\prime}\right)$. Since the triple integral defining $J_{\text{aux},N,C}\left(f,f^{\prime},\phi,\phi^{\prime}\right)$
is absolutely convergent, one can write 
$$
J_{\text{aux},N,C}\left(f,f^{\prime},\phi,\phi^{\prime}\right)=\int_{Z_G\left(F\right)\backslash G\left(F\right)}\beta_{C\log\left(N\right)}\left(g_{2}\right)\left\langle \phi,\omega_{\mu,\chi}\left(g_{2}\right)\phi^{\prime}\right\rangle
$$
$$
\int_{Z_M\left(F\right)\backslash M\left(F\right)}\alpha_{N}\left(g_{1}\right)K_{f,f^{\prime}}^{A}\left(g_{1},g_{2}\theta_{n}\left(g_{1}\right)\right)dg_{1}dg_{2}.
$$
We have the following estimate, whose proof is similar to claim \ref{6.6}.
\begin{claim}\label{9.6}
$ $
$$
\left|J_{\text{aux},N,C}\left(f,f^{\prime},\phi,\phi^{\prime}\right)-\begin{array}{c}
\int_{Z_G\left(F\right)\backslash G\left(F\right)}\beta_{C\log\left(N\right)}\left(g_{2}\right)\left\langle \phi,\omega_{\mu,\chi}\left(g_{2}\right)\phi^{\prime}\right\rangle \\
\int_{Z_M\left(F\right)\backslash M\left(F\right)}K_{f,f^{\prime}}^{A}\left(g_{1},g_{2}\theta_{n}\left(g_{1}\right)\right)dg_{1}dg_{2}
\end{array}\right|\ll N^{-1}
$$
for all $N\geq1$.
\end{claim}
From the above claim, we deduce  
$$
J_{\text{aux}}\left(f,f^{\prime},\phi,\phi^{\prime}\right)=\underset{N\rightarrow\infty}{\lim}\int_{Z_G\left(F\right)\backslash G\left(F\right)}\beta_{C\log\left(N\right)}\left(g_{2}\right)\left\langle \phi,\omega_{\mu,\chi}\left(g_{2}\right)\phi^{\prime}\right\rangle
\int_{Z_M\left(F\right)\backslash M\left(F\right)}K_{f,f^{\prime}}^{A}\left(g_{1},g_{2}\theta_{n}\left(g_{1}\right)\right)dg_{1}dg_{2},
$$
for $C$ sufficiently large. Since $f$ is strongly cuspidal, by \cite[Theorem 5.1]{MW18} and \cite[Proposition 2.9]{BW23}, it follows that 
$$
\int_{Z_M\left(F\right)\backslash M\left(F\right)}K_{f,f^{\prime}}^{A}\left(g_{1},g_{2}\theta_{n}\left(g_{1}\right)\right)dg_{1}
=\underset{\tilde{L}\in\mathcal{L}(\tilde{L}_{\min})}{\sum}\frac{(-1)^{a_{\tilde{L}}-a_{\tilde{G}}}|\widetilde{W}^L|}{|\widetilde{W}^G|}
$$
$$
\int_{E_{\text{ell}}(Z_M(F)\backslash\tilde{L}(F),\chi^{-1}\times{}^\sigma\chi)}\hat{\theta}_{\tilde{f}}\left(\tilde{\pi}\right)\theta_{\tilde{\pi}^{\vee}}\left(R\left(g_{2}^{-1}\right)\tilde{f^{\prime}}\right)d\tilde{\pi},
$$
where $\tilde{f}^{\prime}$ is given by $\tilde{f}^{\prime}\left(x\theta_{n}\right)=f^{\prime}\left(x\right)$, for any $x\in M\left(F\right)$. Observe $\left|\theta_{\tilde{\pi}^{\vee}}\left(R\left(g^{-1}\right)\tilde{f^{\prime}}\right)\right|\ll\Xi^{M}\left(g\right)$, for all tempered representation $\tilde{\pi}$ of $\tilde{M}(F)$ and $g\in G\left(F\right)$. Thus, the discussion in Section \ref{sec9.1} shows that the double integral 
$$
\underset{\tilde{L}\in\mathcal{L}(\tilde{L}_{\min})}{\sum}\frac{(-1)^{a_{\tilde{L}}-a_{\tilde{G}}}|\widetilde{W}^L|}{|\widetilde{W}^G|}\int_{Z_G\left(F\right)\backslash G\left(F\right)}\left\langle \phi,\omega_{\mu,\chi}\left(g_{2}\right)\phi^{\prime}\right\rangle \int_{E_{\text{ell}}(Z_M(F)\backslash\tilde{L}(F),\chi^{-1}\times{}^\sigma\chi)}
\hat{\theta}_{\tilde{f}}\left(\tilde{\pi}\right)\theta_{\tilde{\pi}^{\vee}}\left(R\left(g_{2}^{-1}\right)\tilde{f^{\prime}}\right)d\tilde{\pi}dg_{2}
$$
is absolutely convergent and equal to $J_{\text{aux}}\left(f,f^{\prime},\phi,\phi^{\prime}\right)$.
By switching the two integrals and Lemma \ref{9.2}(ii), we have 
$$
J_{\text{aux}}\left(f,f^{\prime},\phi,\phi^{\prime}\right)=\underset{\tilde{L}\in\mathcal{L}(\tilde{L}_{\min})}{\sum}\frac{(-1)^{a_{\tilde{L}}-a_{\tilde{G}}}|\widetilde{W}^L|}{|\widetilde{W}^G|}\int_{E_{\text{ell}}(Z_M(F)\backslash\tilde{L}(F),\chi^{-1}\times{}^\sigma\chi)}
\hat{\theta}_{\tilde{f}}\left(\tilde{\pi}\right){\mathcal{L}}_{\pi}^{\phi,\phi^{\prime}}\left(\pi\left(f^{\prime,\vee}\right)\tilde{\pi}\left(\theta_{n}\right)\right)d\tilde{\pi}.
$$
\end{proof}

\subsection{Proof of Theorem \ref{9.1}}\label{sec9.3}

We fix $\tilde{y}\in\tilde{G}\left(F\right)$. For simplicity, we can choose $\tilde{y}$ to be $\theta_n$. As in Theorem \ref{9.1}, for $\tilde{f} \in {\mathcal{C}}_{\text{scusp}}\left(Z_M\left(F\right)\backslash \tilde{M}\left(F\right),\chi\times{}^\sigma\chi^\vee\right)$, we define $f\in{\mathcal{C}}_{\text{scusp}}\left(Z_M\left(F\right)\backslash M\left(F\right),\chi\times{}^\sigma\chi^\vee\right)$
such that $f\left(m\right)=\tilde{f}\left(m\tilde{y}\right)$. Let
$x\in M\left(F\right)$. We fix a distinguished compact open subgroup
$K_{f}$ of $K$ such that $f$ is $K_{f}$-bi-invariant and a compact
open subgroup $K_{x}^{\prime}$ of $G\left(F\right)$ such that $xK_{x}^{\prime}x^{-1}\subset K_{f}$
and $\theta_{\tilde{y}}\left(x\right)K_{x}^{\prime}\theta_{\tilde{y}}\left(x\right)^{-1}\subset K_{f}$.
Let $\left\{ \phi_{i}\right\} _{i}$ be an orthonormal basis for $\omega_{\mu,\chi}^{K_{g}^{\prime}}$,
which is of finite dimensional. We consider 
$$
K_{\chi}\left(\tilde{f},x\right)=\underset{i}{\sum}\int_{Z_G\left(F\right)\backslash\tilde{G}\left(F\right)}\tilde{f}\left(x^{-1}\tilde{g}x\right)\left\langle \phi_{i},\tilde{\omega}_{\psi,\mu,\chi}\left(\tilde{g}\right)\phi_{i}\right\rangle d\tilde{g}
$$
$$
=\underset{i}{\sum}\int_{Z_G\left(F\right)\backslash G\left(F\right)}f\left(x^{-1}g\theta_{\tilde{y}}\left(x\right)\right)\left\langle \phi_{i},\tilde{\omega}_{\psi,\mu,\chi}\left(g\tilde{y}\right)\phi_{i}\right\rangle dg.
$$
From the discussion in Section \ref{sec9.1} or \cite[Appendix D.1]{Xue16},
one can show that 
$$
\int_{Z_G\left(F\right)\backslash G\left(F\right)}f\left(x^{-1}g\theta_{\tilde{y}}\left(x\right)\right)\left\langle \phi_{i},\tilde{\omega}_{\psi,\mu,\chi}\left(g\tilde{y}\right)\phi_{i}\right\rangle dg\text{ is absolutely convergent.} 
$$
This gives us the linear form $K_{\chi}\left(\tilde{f},x\right)$
is absolutely convergent. By applying the Plancherel formula for $f$,
as in Lemma \ref{9.2}(ii), we have 
$$
K_{\chi}\left(\tilde{f},x\right)=\underset{i}{\sum}\int_{{\mathcal{X}}_{\text{temp}}\left(M,\chi^{-1}\times{}^\sigma\chi\right)}{\mathcal{L}}_{\pi}^{\phi_{i},\tilde{\omega}_{\psi,\mu,\chi}\left(\tilde{y}\right)\phi_{i}}\left(\pi\left(\theta_{\tilde{y}}\left(x\right)\right)\pi\left(f\right)\pi\left(x\right)^{-1}\right)\mu\left(\pi\right)d\pi.
$$
Since $\dim\text{Hom}_{G}\left(\pi,\omega_{\mu}\right)=1$ for any
tempered irreducible representation $\pi$, by mimicing \cite[Corollary 7.6.1(ii)]{BP20} as in Corollary \ref{5.8}, one can construct a function $f^{\prime}\in{\mathcal{C}}\left(Z_M\left(F\right)\backslash M\left(F\right),\chi^{-1}\times{}^\sigma\chi\right)$
and $\phi,\phi^{\prime}\in\omega_{\mu,\chi}$ such that ${\mathcal{L}}_{\pi}^{\phi,\phi^{\prime}}\left(\pi\left(f^{\prime,\vee}\right)\right)=1$
whenever $\pi\left(f\right)\neq0$. Observe 
$$
K_{\chi}\left(\tilde{f},x\right)=\underset{i}{\sum}\int_{{\mathcal{X}}_{\text{temp}}\left(M,\chi^{-1}\times{}^\sigma\chi\right)}{\mathcal{L}}_{\pi}^{\phi_{i},\tilde{\omega}_{\psi,\mu,\chi}\left(\tilde{y}\right)\phi_{i}}\left(\pi\left(\theta_{\tilde{y}}\left(x\right)\right)\pi\left(f\right)\pi\left(x\right)^{-1}\right){\mathcal{L}}_{\pi}^{\phi,\phi^{\prime}}\left(\pi\left(f^{\prime,\vee}\right)\right)\mu\left(\pi\right)d\pi
$$
$$
=\underset{i}{\sum}\underset{Z_G\left(F\right)\backslash G\left(F\right)}{\iint}\int_{Z_M\left(F\right)\backslash M\left(F\right)}f\left(x^{-1}g_{1}mg_{2}\theta_{\tilde{y}}\left(x\right)\right)f^{\prime}\left(m\right)
\left\langle \phi,\tilde{\omega}_{\psi,\mu,\chi}\left(g_{2}\tilde{y}\right)\phi_{i}\right\rangle \left\langle \phi_{i},\omega_{\mu,\chi}\left(g_{1}\right)\phi^{\prime}\right\rangle dmdg_{2}dg_{1}
$$
$$
=\underset{Z_G\left(F\right)\backslash G\left(F\right)}{\iint}\int_{Z_M\left(F\right)\backslash M\left(F\right)}f\left(x^{-1}g_{1}mg_{2}\theta_{\tilde{y}}\left(x\right)\right)f^{\prime}\left(m\right)\left\langle \phi,\tilde{\omega}_{\psi,\mu,\chi}\left(g_{2}\theta_{\tilde{y}}\left(g_{1}\right)\tilde{y}\right)\phi^{\prime}\right\rangle dmdg_{2}dg_{1},
$$
noting that the second line comes from Lemma \ref{9.2}(iii). We now consider
the outer integral. By substituting $\tilde{y}=\theta_n$, we have 
$$
\tilde{J}_{\chi}\left(\tilde{f}\right)=\int_{Z_M\left(F\right)G\left(F\right)\backslash M\left(F\right)}K_{\chi}\left(\tilde{f},x\right)dx=J_{\text{aux}}\left(f,f^{\prime},\phi,\tilde{\omega}_{\psi,\mu,\chi}\left(\theta_{n}\right)\left(\phi^{\prime}\right)\right).
$$
By Proposition \ref{9.3}, the linear form $\tilde{J}_{\chi}\left(\tilde{f}\right)$
is absolutely convergent. By Proposition \ref{9.4}, we have 
$$
\tilde{J}_{\chi}\left(\tilde{f}\right)=\underset{\tilde{L}\in\mathcal{L}(\tilde{L}_{\min})}{\sum}\frac{(-1)^{a_{\tilde{L}}-a_{\tilde{G}}}|\widetilde{W}^L|}{|\widetilde{W}^G|}
\int_{E_{\text{ell}}(Z_M(F)\backslash\tilde{L}(F),\chi^{-1}\times{}^\sigma\chi)}\hat{\theta}_{\tilde{f}}\left(\tilde{\pi}\right){\mathcal{L}}_{\pi}^{\phi,\tilde{\omega}_{\psi,\mu,\chi}\left(\theta_{n}\right)\phi^{\prime}}\left(\pi\left(f^{\prime,\vee}\right)\tilde{\pi}\left(\theta_{n}\right)\right)d\tilde{\pi}
$$
$$
=\underset{\tilde{L}\in\mathcal{L}(\tilde{L}_{\min})}{\sum}\frac{(-1)^{a_{\tilde{L}}-a_{\tilde{G}}}|\widetilde{W}^L|}{|\widetilde{W}^G|}\int_{E_{\text{ell}}(Z_M(F)\backslash\tilde{L}(F),\chi^{-1}\times{}^\sigma\chi)}\hat{\theta}_{\tilde{f}}\left(\tilde{\pi}\right)\epsilon_\psi\left(\tilde{\pi}^{\vee}\right){\mathcal{L}}_{\pi}^{\phi,\phi^{\prime}}\left(\pi\left(f^{\prime,\vee}\right)\right)d\tilde{\pi}
$$
$$
=\underset{\tilde{L}\in\mathcal{L}(\tilde{L}_{\min})}{\sum}\frac{(-1)^{a_{\tilde{L}}-a_{\tilde{G}}}|\widetilde{W}^L|}{|\widetilde{W}^G|}\int_{E_{\text{ell}}(Z_M(F)\backslash\tilde{L}(F),\chi^{-1}\times{}^\sigma\chi)}\hat{\theta}_{\tilde{f}}\left(\tilde{\pi}\right)\epsilon_\psi\left(\tilde{\pi}^{\vee}\right)d\tilde{\pi},
$$
noting that the second line follows from Proposition \ref{8.2}.

\section{Comparison of two trace formulae and the epsilon dichotomy}\label{sec10}
\subsection{Linearization of trace formulas}\label{sec10.1}

Let $V$ and $V^{\prime}$ be the two equivalent classes of non-degenerate $n$-dimensional skew-hermitian spaces. We recall some linear forms in Section \ref{sec7}.
Let $\chi$ be a character of $Z_{G}\left(F\right)$.
For strongly cuspidal function $f\in{\mathcal{C}}_{\text{scusp}}\left(Z_G(F)\backslash G(F),\chi\right)$, we set 
$$
J_{V,\chi}\left(f\right)=\int_{Z_{G}\left(F\right)H_{V}\left(F\right)\backslash G\left(F\right)}\underset{i}{\sum}\int_{Z_{H_{V}}\left(F\right)\backslash H_{V}\left(F\right)}f\left(g^{-1}hg\right)\left\langle \phi_{i},\omega_{V,\psi,\mu,\chi}\left(h\right)\phi_{i}\right\rangle dhdg
$$
and 
$$
J_{V^{\prime},\chi}\left(f\right)=\int_{Z_{G}\left(F\right)H_{V^\prime}\left(F\right)\backslash G\left(F\right)}\underset{i}{\sum}\int_{Z_{H_{V^\prime}}\left(F\right)\backslash H_{V^\prime}\left(F\right)}f\left(g^{-1}hg\right)\left\langle \phi_{i},\omega_{V^{\prime},\psi,\mu,\chi}\left(h\right)\phi_{i}\right\rangle dhdg
$$
and 
$$
J_{\chi}\left(f\right)=\mu\left(\det V\right)J_{V,\chi}\left(f\right)+\mu\left(\det V^{\prime}\right)J_{V^{\prime},\chi}\left(f\right).
$$
For any $\theta\in{\text{QC}}\left(Z_G(F)\backslash G(F),\chi\right)$, let 
$$
J_{\text{qc},V,\chi}\left(\theta\right)=\underset{\pi\in\Pi_{2}\left(G,\chi^{-1}\right)}{\sum}m_{V}\left(\bar{\pi}\right)\int_{Z_{G}\left(F\right)\backslash\Gamma\left(G\right)_{\text{ell}}}D^{G}\left(x\right)\theta\left(x\right)\theta_{\pi}\left(x\right)dx
$$
and 
$$
J_{\text{qc},V^{\prime},\chi}\left(\theta\right)=\underset{\pi\in\Pi_{2}\left(G,\chi^{-1}\right)}{\sum}m_{V^{\prime}}\left(\bar{\pi}\right)\int_{Z_{G}\left(F\right)\backslash\Gamma\left(G\right)_{\text{ell}}}D^{G}\left(x\right)\theta\left(x\right)\theta_{\pi}\left(x\right)dx
$$
and 
$$
J_{\text{qc},\chi}\left(\theta\right)=\mu\left(\det V\right)J_{\text{qc},V,\chi}\left(\theta\right)+\mu\left(\det V^{\prime}\right)J_{\text{qc},V^{\prime},\chi}\left(\theta\right).
$$
Similarly, for any $\tilde{\theta}\in{\text{QC}}\left(Z_M(F)\backslash\tilde{M}(F),\chi\times{}^\sigma\chi^\vee\right)$, we define
$$
\tilde{J}_{\text{qc},\chi}\left(\tilde{\theta}\right)=\underset{\tilde{\pi}\in\Pi_{2}\left(\tilde{M},\chi^{-1}\times{}^\sigma\chi\right)}{\sum}\epsilon_{\psi}\left(\tilde{\pi}^{\vee}\right)\int_{Z_M\left(F\right)\backslash\Gamma\left(\tilde{M}\right)_{\text{ell}}}D^{\tilde{M}}\left(\tilde{x}\right)\tilde{\theta}\left(\tilde{x}\right)\theta_{\tilde{\pi}}\left(\tilde{x}\right)d\tilde{x}.
$$
\begin{remark}\label{10.1}
        Since the above linear forms are supported on elliptic locus of $G(F)$ and $\tilde{M}(F)$, they are well-defined as linear forms of the sets of elliptic germs of quasi-characters (as in Definition \ref{3.3}) $\mathcal{E}(Z_G(F)\backslash G(F),\chi)$ and $\mathcal{E}(Z_M(F)\backslash \tilde{M}(F),\chi\times {}^\sigma \chi^\vee)$.
\end{remark}
In the remaining part of this section, we will prove the following theorem by induction.
\begin{theorem}\label{10.2}
For any $\theta\in{\mathcal{E}}\left(Z_G(F)\backslash G(F),\chi\right)$, we have $J_{\text{qc},\chi}\left(\theta\right)=\tilde{J}_{\text{qc},\chi}\left(\tilde{\theta}\right)$, where $\tilde{\theta}$ is a transfer of $\theta$ defined in section \ref{sec8.1}.
\end{theorem}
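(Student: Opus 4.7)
The plan is to proceed by induction on $n$, in parallel with the proof of Theorem \ref{7.1}, but now comparing the linear form $J_{\mathrm{qc},\chi}$ on $G(F)$ with its twisted analogue $\tilde{J}_{\mathrm{qc},\chi}$ on $\tilde{M}(F)$. The case $n=1$ (and $n=2$) follows from Conjecture \ref{1.1} for principal series, which has been established in \cite{GGP23}. Assume the statement for all smaller ranks.

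First, I will establish a linearization lemma in the style of Lemma \ref{7.2}: for any $f\in\mathcal{C}_{\mathrm{scusp}}(Z_G(F)\backslash G(F),\chi)$ with transfer $\tilde{f}\in\mathcal{C}_{\mathrm{scusp}}(Z_M(F)\backslash\tilde{M}(F),\chi\times{}^\sigma\chi^\vee)$, one has
$$
J_{\mathrm{qc},\chi}(\theta_f) = J_{\chi}(f) \quad \text{and} \quad \tilde{J}_{\mathrm{qc},\chi}(\tilde{\theta}_{\tilde{f}}) = \tilde{J}_{\chi}(\tilde{f}).
$$
The first identity uses Theorem \ref{6.1} applied to both $V$ and $V'$, combined with the induction hypothesis and part (i) of Theorem \ref{thm1.2} already proved, which ensure that the non-elliptic spectral contribution collapses. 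The second identity similarly follows from Theorem \ref{9.1}, together with the induction hypothesis which allows us to remove the contribution of proper twisted Levis $\tilde{L}\subsetneq\tilde{G}$. Using the twisted endoscopic character identity of Theorem \ref{8.1}, the elliptic orbital integrals of $f$ and $\tilde{f}$ match on corresponding regular classes, so the spectral expansions on both sides can then be compared through Theorem \ref{8.1} at the level of discrete representations.

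Next, following the semisimple descent argument of Proposition \ref{7.3}, I will show that both $J_{\mathrm{qc},\chi}$ and $\tilde{J}_{\mathrm{qc},\chi}$ are supported on $Z_G(F)$ and $Z_M(F)$ respectively, modulo the contribution already controlled by induction. For a noncentral semisimple class represented by $x\in H_{V,ss}(F)$, the centralizer $(G_x,H_x,\omega_{V,\psi,\mu}\mid_{H_x})$ forms a lower-rank twisted GGP triple, and a corresponding descended twisted group $\tilde{M}_{\tilde{x}}$ appears on the twisted side. Lifting a strongly cuspidal test function $f_x$ on $G_x$ to $G$ via Proposition \ref{3.18}, and using that the transfer factor equals $1$ in our setting (Section \ref{sec8.1}), the identity for the descended groups supplied by the induction hypothesis yields the equality near each noncentral elliptic class.

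It then remains to treat the contributions near the center. Following Proposition \ref{7.4}, I will pass to the Lie algebra via a $G$-excellent neighborhood of $0$ and use the homogeneity of the distributions. A local character expansion of $\omega_{V,\psi,\mu,\chi}$ (via Corollary \ref{4.5}, giving the minimal nilpotent orbit and leading coefficient $1$) together with a scaling argument $f\mapsto f_\lambda$ will reduce both $J_{\mathrm{qc},\chi}(\theta)$ and $\tilde{J}_{\mathrm{qc},\chi}(\tilde{\theta})$ to scalar multiples of $c_\theta(1)$, so that the theorem reduces to the identity of two explicit constants $c_\chi^{\mathrm{untw}}$ and $c_\chi^{\mathrm{tw}}$. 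The main obstacle will be this final step: pinning down the constant. I will resolve it by evaluating both sides on the Steinberg representation, where the multiplicity side is computed by \cite[Corollary 5.10]{GGP23} (and Corollary \ref{3.19} gives the existence of a strongly cuspidal witness), while the twisted side is computed through Theorem \ref{8.1} applied to the extension $\widetilde{\mathrm{St}\times{}^\sigma\mathrm{St}^\vee}$ and the epsilon factor computation of Proposition \ref{8.2}. Comparing both evaluations fixes the constants to be equal, which combined with the preceding descent arguments completes the induction and hence the proof. Part (ii) of Theorem \ref{thm1.2} then follows by specializing $\theta=\theta_{\bar{\pi}}$ for a discrete series $\pi$ and invoking the strong multiplicity one already proved in part (i).
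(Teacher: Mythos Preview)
Your overall architecture matches the paper's: induction on $n$, a linearization step (the paper's Lemma \ref{10.3}), semisimple descent away from the center (Proposition \ref{10.4}), and Lie algebra descent plus homogeneity to reduce to a single constant $c_\chi$ (Proposition \ref{10.5}). Two points deserve comment.

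First, a minor correction: the separate identities $J_{\mathrm{qc},\chi}(\theta_f)=J_\chi(f)$ and $\tilde J_{\mathrm{qc},\chi}(\tilde\theta_{\tilde f})=\tilde J_\chi(\tilde f)$ are not what holds. The induced spectral terms do not ``collapse''; rather, by the induction hypothesis and \cite[Theorem 4.8]{GGP23} they \emph{match} across the two sides. What one actually gets (and what the paper proves as Lemma \ref{10.3}) is only the difference identity
\[
J_{\mathrm{qc},\chi}(\theta_f)-\tilde J_{\mathrm{qc},\chi}(\theta_{\tilde f})=J_\chi(f)-\tilde J_\chi(\tilde f).
\]
This is harmless for the remainder of your outline, which only ever uses the difference.

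The real gap is in the final step. Your plan to pin down $c_\chi$ by evaluating at the Steinberg representation only treats $\chi=1$, since $\mathrm{St}$ has trivial central character; you give no indication how to pass to general $\chi$. The paper in fact does \emph{not} use Steinberg here (despite the sketch in the introduction). Instead, Section \ref{sec10.4} runs a contradiction argument valid for every $\chi$ simultaneously: substituting $\theta=\theta_\Pi$ for an arbitrary $\Pi\in\Pi_2(G,\chi)$ and using part (i) of Theorem \ref{thm1.2} shows $c_\chi\in\{0,\pm 2\,\omega_{E/F}(-1)^{n/2}\}$. If $c_\chi\neq 0$, then one sign of the epsilon factor is forced uniformly, so $m_V(\Pi)=0$ for \emph{every} $\Pi\in\Pi_2(G,\chi)$ and a fixed $V$. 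The spectral expansion (Theorem \ref{6.1}) then forces $J_{V,\chi}$ to have no support on the regular elliptic locus. But Harish-Chandra descent at a regular elliptic $x\in H_V(F)$ identifies $J_{V,\chi}$ near $x$ with a one-dimensional distribution $J^{E_n^1}_W$, for which the paper proves an explicit integral formula (Lemma \ref{10.6}) showing that non-identity elliptic points \emph{are} in the support. This contradiction yields $c_\chi=0$ directly, with no appeal to a distinguished test representation. Your Steinberg route could plausibly be completed by adapting the $\chi=1\Rightarrow$ general $\chi$ argument of Section \ref{sec7.4} to the present setting, but as written your proposal stops one nontrivial step short.
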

For $n=1$, the above theorem follows from \cite[Theorem 3.1]{GGP23}. From the next subsection, we prove Theorem \ref{sec10.2} for $n \geq 2$. We show that Theorem \ref{sec10.2} implies part (ii) of Theorem \ref{thm1.2}.
\begin{proof}
By \cite[Theorem 4.8]{GGP23}, it suffices to prove the statement for
discrete series. Let $\pi$ be an irreducible discrete series representation
of $G\left(F\right)$. Part (i) of Theorem \ref{thm1.2} gives us $m_{V}\left(\pi\right)+m_{V^{\prime}}\left(\pi\right)=1$. By substituting $\theta=\theta_{\pi}$ to Theorem \ref{sec10.2}, we have 
$$
\mu\left(\det V\right)m_{V}\left(\pi\right)+\mu\left(\det V^{\prime}\right)m_{V^{\prime}}\left(\pi\right)=\omega_{\pi}\left(-1\right)^{n}\omega_{E/F}\left(-1\right)^{n\left(n-1\right)/2}\epsilon\left(\frac{1}{2},\pi\times{}^{\sigma}\pi^{\vee}\times\mu^{-1},\psi_{E}\right).
$$
Therefore 
$$
m_{V}\left(\pi\right)=1\text{ if and only if }\mu\left(\det V\right)=\omega_{\pi}\left(-1\right)^{n}\omega_{E/F}\left(-1\right)^{n\left(n-1\right)/2}\epsilon\left(\frac{1}{2},\pi\times{}^{\sigma}\pi^{\vee}\times\mu^{-1},\psi_{E}\right).
$$
\end{proof}
We end this subsection by proving the following lemma, which relates the linear forms of quasi-characters $J_{qc,\chi}$ and $\tilde{J}_{qc,\chi}$ to $J_\chi$ and $\tilde{J}_\chi$.
\begin{lemma}\label{10.3}
Let $f\in \mathcal{C}_{scusp}(Z_G(F)\backslash G(F),\chi)$ and $\tilde{f}\in \mathcal{C}_{scusp}\left(Z_M(F)\backslash\tilde{M}(F),\chi\times {}^\sigma\chi^\vee\right)$ such that $\tilde{\theta}_{\tilde{f}}$ is a transfer of $\theta_f$. Then
$$
J_{qc,\chi}(\theta_f)-\tilde{J}_{qc,\chi}(\theta_{\tilde f})=J_\chi(f)-\tilde{J}_\chi(\tilde{f}).
$$
\end{lemma}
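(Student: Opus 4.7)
The plan is to derive the identity by applying the spectral expansions of $J_\chi(f)$ and $\tilde J_\chi(\tilde f)$ (Theorems~\ref{6.1} and~\ref{9.1}), isolating on each side the elliptic discrete contribution and identifying it with $J_{\text{qc},\chi}(\theta_f)$ and $\tilde J_{\text{qc},\chi}(\theta_{\tilde f})$ respectively, and then matching the remaining ``properly induced'' contributions on the two sides by the induction hypothesis on Theorem~\ref{10.2}.

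First, setting $m_\chi(\bar\pi) := \mu(\det V)m_V(\bar\pi)+\mu(\det V')m_{V'}(\bar\pi)$, Theorem~\ref{6.1} gives $J_\chi(f) = \int_{\mathcal{X}(G,\chi^{-1})} \hat\theta_f(\pi)\,m_\chi(\bar\pi)\,d\pi$. Splitting $\mathcal{X}(G,\chi^{-1}) = \mathcal{X}_{\text{ind}}(G,\chi^{-1})\sqcup\Pi_2(G,\chi^{-1})$ and noting that for $\pi\in\Pi_2(G,\chi^{-1})$ and strongly cuspidal $f$ one has
\[
\hat\theta_f(\pi) \;=\; \int_{Z_G(F)\backslash\Gamma(G)_{\text{ell}}} D^G(x)\,\theta_f(x)\,\theta_\pi(x)\, dx
\]
(by the Weyl integration formula combined with the vanishing of the non-elliptic orbital integrals of $f$), the discrete part of the spectral expansion reproduces $J_{\text{qc},\chi}(\theta_f)$. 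Running the analogous argument on Theorem~\ref{9.1}, the summand indexed by $\tilde L = \tilde M$ reproduces $\tilde J_{\text{qc},\chi}(\theta_{\tilde f})$. The lemma therefore reduces to equating
\[
\int_{\mathcal{X}_{\text{ind}}(G,\chi^{-1})} \hat\theta_f(\pi)\,m_\chi(\bar\pi)\,d\pi \;=\; \sum_{\tilde L\subsetneq \tilde M} \tfrac{(-1)^{a_{\tilde L}-a_{\tilde G}}|\widetilde W^L|}{|\widetilde W^G|}\int_{E_{\text{ell}}(Z_M(F)\backslash\tilde L(F),\chi^{-1}\times{}^\sigma\chi)} \hat\theta_{\tilde f}(\tilde\pi)\,\epsilon_\psi(\tilde\pi^\vee)\, d\tilde\pi.
\]

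I would match these properly induced contributions term by term. Each proper Levi $M_0\subsetneq G$ with $\sigma\in\mathcal{X}_{\text{ell}}(M_0)$ corresponds, via the diagonal embedding $M_0\hookrightarrow M_0\times{}^\sigma M_0$, to a proper twisted Levi $\tilde L\subsetneq \tilde M$ carrying the elliptic representation $\tilde\sigma = \widetilde{\sigma\times{}^\sigma\sigma^\vee}$. Parabolic descent rewrites $\hat\theta_f(i_{M_0}^G(\sigma))$ in terms of the weighted character of $\sigma$ against the Harish-Chandra constant term $f^{P_0}$ on $M_0$, and similarly on the twisted side via $\tilde f^{\tilde Q}$. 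Since $\theta_{\tilde f}$ is a transfer of $\theta_f$, the compatibility of twisted endoscopic transfer with constant terms implies that $\theta_{\tilde f^{\tilde Q}}$ is a transfer of $\theta_{f^{P_0}}$ on the pair $(M_0, \tilde L)$.

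Finally, invoking the induction hypothesis of Theorem~\ref{10.2} on this strictly smaller pair $(M_0,\tilde L)$ matches the multiplicity factor $m_\chi$ on the $M_0$-side with the epsilon factor $\epsilon_\psi$ on the $\tilde L$-side. The principal obstacle will be the combinatorial bookkeeping required to align the parametrizations in Theorems~\ref{6.1} and~\ref{9.1}: one must verify that the Weyl-group indices, the signs $(-1)^{a_{\tilde L}-a_{\tilde G}}$, the action of $W(G,M_0)$ on $\mathcal{X}_{\text{ell}}(M_0)$, and the Plancherel-type measures on both sides precisely compensate under the Levi--twisted-Levi correspondence $M_0\leftrightarrow\tilde L$. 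Once these constants are shown to correspond, summing over $M_0$ and $\tilde L$ yields the claimed equality.
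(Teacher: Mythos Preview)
Your overall architecture---apply the two spectral expansions, peel off the discrete contributions as $J_{\text{qc},\chi}(\theta_f)$ and $\tilde J_{\text{qc},\chi}(\theta_{\tilde f})$, and then argue that the remaining properly induced pieces agree---is exactly the skeleton of the paper's proof. The divergence, and the gap, lies in how you propose to match the induced pieces.

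You write that parabolic descent rewrites $\hat\theta_f(i_{M_0}^G(\sigma))$ as a weighted character of $\sigma$ against the constant term $f^{P_0}$, and that you will then compare $\theta_{f^{P_0}}$ with $\theta_{\tilde f^{\tilde Q}}$ and invoke the induction hypothesis on the smaller pair $(M_0,\tilde L)$. But $f$ is strongly cuspidal, so $f^{P_0}=0$ for every proper parabolic $P_0$; the object you want to descend to is identically zero, while $\hat\theta_f(i_{M_0}^G(\sigma))$ is not. Arthur's splitting and descent formulas for weighted characters do not rescue this: they express $J_{M_0}^G(\sigma,f)$ as a sum over $L\in\mathcal L(M_0)$ of terms involving constant terms $f_{Q_L}$, all of which vanish for $L\subsetneq G$, so the formula degenerates to the tautology $J_{M_0}^G=J_{M_0}^G$. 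The combinatorial bookkeeping you flag as the principal obstacle is therefore not the issue; the descent mechanism itself does not apply to strongly cuspidal test functions.

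The paper sidesteps this entirely with a one-line identity at the level of the spectral terms. For \emph{every} $\pi\in\mathcal X(G,\chi^{-1})$ and strongly cuspidal $f$ one has the pairing formula
\[
\hat\theta_f(\pi)\;=\;\int_{\Gamma(Z_G(F)\backslash G(F))} D^G(x)\,\theta_f(x)\,\theta_\pi(x)\,dx,
\]
and likewise $\hat\theta_{\tilde f}(\widetilde{\pi\times{}^\sigma\pi^\vee})$ is the analogous pairing on $\tilde M$. The hypothesis that $\theta_{\tilde f}$ is a transfer of $\theta_f$, together with the twisted endoscopic character identity of Theorem~\ref{8.1} (which matches $\theta_\pi$ with $\theta_{\widetilde{\pi\times{}^\sigma\pi^\vee}}$), immediately gives $\hat\theta_f(\pi)=\hat\theta_{\tilde f}(\widetilde{\pi\times{}^\sigma\pi^\vee})$ for all $\pi$---no descent, no Levi-by-Levi matching. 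The induction hypothesis is then invoked only in the scalar form $m_\chi(\bar\pi)=\epsilon_\psi\bigl((\widetilde{\pi\times{}^\sigma\pi^\vee})^\vee\bigr)$ for $\pi\in\mathcal X_{\text{ind}}$ (via \cite[Theorem~4.8]{GGP23}), so that the induced contributions to $J_\chi(f)$ and $\tilde J_\chi(\tilde f)$ cancel term by term, and only the $\Pi_2$ sums survive on each side.
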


\begin{proof}
For any $\pi\in \mathcal{X}(G,\chi^{-1})$, we have  
$$
\hat{\theta}_{f}\left(\pi\right)=\int_{\Gamma\left(Z_G(F)\backslash G(F),\chi\right)}D^{G}\left(x\right)\theta_{f}\left(x\right)\theta_{\pi}\left(x\right)dx
$$
and 
$$
\hat{\theta}_{\tilde{f}}\left(\widetilde{\pi\times {}^\sigma\pi^\vee}\right)=\int_{\Gamma\left(Z_M(F)\backslash\tilde{M}(F),\chi^{-1} \times {}^\sigma\chi\right)}D^{\tilde{M}}\left(\tilde{x}\right)\theta_{\tilde{f}}\left(\tilde{x}\right)\theta_{\tilde{\pi}}\left(\tilde{x}\right)d\tilde{x}.
$$
Since $\theta_{\tilde{f}}$ is a transfer of $\theta_f$, it follows that $\hat{\theta}_{f}\left(\pi\right)=\hat{\theta}_{\tilde{f}}\left(\widetilde{\pi\times {}^\sigma\pi^\vee}\right)$. The induction hypothesis and \cite[Theorem 4.8]{GGP23} gives us
$$\mu(\det V)m_V(\pi)+\mu(\det V^\prime)m_{V^\prime}(\pi)=\epsilon_\psi(\widetilde{\pi \times {}^\sigma\pi^\vee}),$$
for any $\pi \in \mathcal{X}_{ind}(G)$.
Using the spectral expansions of $J_\chi$ and $\tilde{J}_\chi$ (i.e. Theorem \ref{6.1} and Theorem \ref{9.1}) and the above discussion, we have 
$$
J_\chi(f)-\tilde{J}_\chi(\tilde{f})= \int_{\mathcal{X}(G,\chi^{-1})}\hat{\theta}_f(\pi)(\mu(\det V)m_V(\bar{\pi})+\mu(\det V^\prime)m_{V^\prime}(\bar{\pi}))d\pi-\int_{\mathcal{X}(Z_M\backslash \tilde{M},\chi^{-1}\times {}^\sigma \chi)}\hat{\theta}_{\tilde{f}}(\tilde{\pi})\epsilon_\psi(\tilde{\pi}^\vee) d\tilde{\pi}$$
$$
=\sum_{\pi\in \Pi_2(G,\chi^{-1})}(\mu(\det V)m_V(\pi)+\mu(\det V^\prime)m_{V^\prime}(\pi))\int_{\Gamma_\text{ell}(Z_G(F)\backslash G(F))}D^G(x)\theta_f(x)\theta_\pi(x)dx
$$
$$-\sum_{\tilde{\pi}\in\Pi_2(\tilde M,\chi^{-1}\times{}^\sigma\chi)}\int_{\Gamma_\text{ell}(Z_M(F)\backslash \tilde{M}(F))}D^{\tilde{M}}(\tilde x)\theta_{\tilde f}(\tilde x)\theta_{\tilde \pi}(\tilde x)dx=J_{qc,\chi}(\theta_f)-\tilde{J}_{qc,\chi}(\theta_{\tilde{f}}).
$$
\end{proof}

\subsection{Comparision at non-identity locus via semisimple descent}\label{sec10.2}

In this section, we prove the following proposition.
\begin{proposition}\label{10.4}
For any $\theta\in{\mathcal{E}}\left(Z_G(F)\backslash G(F),\chi\right)$ such that $\text{Supp}\left(\theta\right)\cap Z_{G}\left(F\right)=\emptyset$,
we have 
$$
J_{\text{qc},\chi}\left(\theta\right)=\tilde{J}_{\text{qc},\chi}\left(\tilde{\theta}\right),
$$
where $\tilde{\theta}$ is a transfer of $\theta$.
\end{proposition}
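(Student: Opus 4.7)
The plan is to adapt the Harish-Chandra descent argument of Proposition \ref{7.3} to the comparison setting, using Lemma \ref{10.3} to convert the problem to one about $J_\chi$ and $\tilde{J}_\chi$, and then closing the loop with the induction hypothesis.

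First, by the linearity of both $J_{\text{qc},\chi}$ and $\tilde{J}_{\text{qc},\chi}$ in their quasi-character argument, by Remark \ref{10.1}, and by a partition-of-unity argument, I may reduce to the case where $\theta$ is supported in an arbitrarily small completely $G(F)$-invariant neighborhood $\Omega=\Omega_x^G$ of a single non-central elliptic semisimple class $(x)^G$, where $\Omega_x\subset G_x(F)$ is a $G$-good relatively compact (modulo conjugation) neighborhood. By the endoscopic correspondence of Section \ref{sec8.1}, this class corresponds to the unique class $\tilde{x}=(x,1)\theta_n\in\tilde{M}_{\text{ell}}(F)$, with transfer factor $\Delta_{G,\tilde{M}}(x,\tilde{x})=1$; a short computation shows the corresponding centralizer in $M$ is $M_{\tilde{x}}\cong G_x$ embedded via $g\mapsto(g,\theta_n(g))$, giving a genuine twisted group $(G_x,\tilde{M}_{\tilde{x}})$. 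Choose strongly cuspidal $f_x\in\mathcal{C}_{\text{scusp}}(\Omega_x)$ and $\tilde{f}_{\tilde{x}}\in\mathcal{C}_{\text{scusp}}(\tilde{\Omega}_{\tilde{x}}\tilde{x})$ such that $\theta_{\tilde{f}_{\tilde{x}}}$ is a transfer of $\theta_{f_x}$ at the level of this smaller endoscopic pair. Lifting via Proposition \ref{3.18} and its twisted analog (Remark \ref{3.4}) produces $f\in\mathcal{C}_{\text{scusp}}(Z_G(F)\backslash\Omega,\chi)$ and $\tilde{f}\in\mathcal{C}_{\text{scusp}}(Z_M(F)\backslash(\tilde{\Omega}_{\tilde{x}}\tilde{x})^M,\chi\times{}^\sigma\chi^\vee)$ with $\theta_f=\theta$ and $\theta_{\tilde{f}}=\tilde{\theta}$; moreover $\theta_{\tilde{f}}$ transfers $\theta_f$ since transfer commutes with the semisimple-descent lift of Proposition \ref{3.18}(i).

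Second, Lemma \ref{10.3} reduces the problem to showing $J_\chi(f)=\tilde{J}_\chi(\tilde{f})$. Mimicking the unfolding in the proof of Proposition \ref{7.3} (and, as there, vanishing unless $x$ is $G(F)$-conjugate to an element of $H_V(F)$, in which case I may assume $x\in H_V(F)$), one obtains
\[ J_{V,\chi}(f)=\mathrm{vol}(Z_{G_x}(F)/Z_G(F))\,\mathrm{vol}(Z_{H_{V,x}}(F)/Z_{H_V}(F))^{-1}\,J^{H_{V,x}}(f_x), \]
where $J^{H_{V,x}}$ is the analog of $J_V$ for the smaller twisted GGP triple $(G_x,H_{V,x},\omega_{V,\psi,\mu}\!\mid_{H_{V,x}})$; the Weil representation factors according to the orthogonal decomposition of $V$ under $x$ via Proposition \ref{4.1}. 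An analogous treatment of $J_{V',\chi}(f)$ produces the $V'$-summand. For the twisted side, the kernel $K_\chi(\tilde{f},\cdot)$ is unfolded near $\tilde{x}$ using the twisted integration formula of Section \ref{sec3.1}, yielding an analogous identity
\[ \tilde{J}_\chi(\tilde{f})=c'\cdot\tilde{J}^{\tilde{M}_{\tilde{x}}}(\tilde{f}_{\tilde{x}}), \]
where $\tilde{J}^{\tilde{M}_{\tilde{x}}}$ is the twisted linear form of Section \ref{sec8.3} attached to the pair $(M_{\tilde{x}},\tilde{M}_{\tilde{x}})$ with the restricted Weil representation; the volume constant $c'$ matches the untwisted one in view of $M_{\tilde{x}}\cong G_x$ and $\Delta_{G,\tilde{M}}(x,\tilde{x})=1$.

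Third, since $x$ is non-central elliptic, $V$ decomposes as an orthogonal sum of skew-hermitian spaces of strictly smaller dimension (or $E$ is replaced by a proper extension), so $G_x$ is of GGP type of rank less than $n$. Applying the induction hypothesis (Theorem \ref{10.2} in lower rank) to the pair $(G_x,\tilde{M}_{\tilde{x}})$, combined with Lemma \ref{10.3} at that smaller level, gives
\[ \mu(\det V)J^{H_{V,x}}(f_x)+\mu(\det V')J^{H_{V',x}}(f_x)=\tilde{J}^{\tilde{M}_{\tilde{x}}}(\tilde{f}_{\tilde{x}}). \]
Substituting into the descent formulas and matching the volume factors yields $J_\chi(f)=\tilde{J}_\chi(\tilde{f})$, and hence $J_{\text{qc},\chi}(\theta)=\tilde{J}_{\text{qc},\chi}(\tilde{\theta})$. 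The principal obstacle is the twisted descent in the second step: one must rigorously unfold the twisted kernel $K_\chi(\tilde{f},\cdot)$ near $\tilde{x}$ into a distribution on $\tilde{M}_{\tilde{x}}$ while tracking the restriction of $\tilde{\omega}_{\psi,\mu}$ to the centralizer (using the twisted analog of Proposition \ref{4.1} for $\omega_\mu$), and then verify that the resulting volume constants on the untwisted and twisted sides coincide, which is ultimately a reflection of the transfer factor being trivial in our setting.
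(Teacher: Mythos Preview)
Your overall strategy matches the paper's proof: localize near a non-central elliptic $x$ by partition of unity, lift via Proposition~\ref{3.18} on both sides, apply Lemma~\ref{10.3} to reduce to $J_\chi(f)=\tilde{J}_\chi(\tilde{f})$, descend, and invoke the induction hypothesis. The untwisted descent and the vanishing case ($x$ not conjugate into $H_V$) are handled exactly as in the paper.

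The one substantive imprecision is in your identification of the descended twisted object. You write the target as ``the pair $(M_{\tilde{x}},\tilde{M}_{\tilde{x}})$'', with $M_{\tilde{x}}\cong G_x$. But the twisted linear form of Section~\ref{sec8.3} is attached to a configuration $G\hookrightarrow M=G\times G$ together with the involution $\theta_n$, so the descended distribution must live on an object of the same shape. The paper makes this explicit: it introduces $\widetilde{G_x\times G_x}=(G_x\times G_x)\tilde{x}$, notes that $M_{\tilde{x}}\tilde{x}=(G_x\times G_x)_{\tilde{x}}\tilde{x}$ as twisted subgroups of $\tilde{M}$, and performs a \emph{second} lift $\tilde{f}_{\tilde{x}}\mapsto\tilde{f}^{\widetilde{G_x\times G_x}}$ via Proposition~\ref{3.18}. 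The comparison then splits into two pieces: first $J_\chi^{G_x}(f_x)=\tilde{J}_\chi^{\widetilde{G_x\times G_x}}(\tilde{f}^{\widetilde{G_x\times G_x}})$, which follows from the induction hypothesis (Theorem~\ref{10.2} for $G_x=R_{L/K}\mathrm{GL}_q$, with $K/F$ the degree-$n/q$ extension cut out by $x$ and $L=K\otimes_F E$) combined with Lemma~\ref{10.3} at that level; and second $\tilde{J}_\chi(\tilde{f})=\tilde{J}_\chi^{\widetilde{G_x\times G_x}}(\tilde{f}^{\widetilde{G_x\times G_x}})$, proved by a direct unfolding of both kernels down to the common neighborhood $\tilde{\Omega}_{\tilde{x}}$, using that the restriction of $\omega_\mu$ to $G_x(F)$ is the Weil representation $\omega_{\mu\circ\mathrm{N}_{L/E}}$. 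Your ``volume constants'' in fact disappear once both sides are expressed over this common neighborhood; there is no nontrivial matching to check beyond the functorial identification of Weil representations. Once you rewrite your second and third steps with $\widetilde{G_x\times G_x}$ as the descended twisted group (rather than the torsor over $M_{\tilde{x}}\cong G_x$), your argument becomes the paper's.
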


\begin{proof}
Since $J_{\text{qc},\chi}$ is supported in $\Gamma_{\text{ell}}\left(G\right)$,
by a partition of unity process, we only need to prove the equality
for $\theta\in{\mathcal{E}}\left(\Omega,\chi\right)$,
where $\Omega$ is a completely $G\left(F\right)$-invariant open
subset of $G\left(F\right)$ of the form $\Omega_{x}^{G}$ for some
$x\in G\left(F\right)_{\text{ell}}\backslash Z_{G}\left(F\right)$
and some $G$-good open neighborhood $\Omega_{x}\subseteq G_{x}\left(F\right)$
of $x$. We can assume $\Omega_{x}$ is relatively compact modulo
conjugation and $Z_{G}\left(F\right)$. Let $f_x\in {\mathcal{C}}_{\text{scusp}}\left(\Omega_{x},\chi\right)$ such that $\theta_{f_x}=\theta_{x,\Omega_{x}}$. Let $f\in {\mathcal{C}}_{\text{scusp}}\left(\Omega,\chi\right)$ be a lift
of $f_x$ via Proposition \ref{3.18}. 

Let $\tilde{x}$ be an element in $\tilde{M}(F)_\text{ell}$ which matches $x$ and $\tilde{\Omega}=(\Omega_x\tilde{x})^M$. By shrinking $\Omega_{x}$ if necessary, we can assume that the transfer
of $\theta$, which is $\tilde{\theta}$, is supported in $\tilde{\Omega}_{\tilde{x}}^{M}$,
where $\tilde{\Omega}_{\tilde{x}}$ is an $M$-good neighborhood $\tilde{\Omega}_{\tilde{x}}\subseteq M_{\tilde{x}}\tilde{x}$
of $\tilde{x}$. Let $\tilde{f}_{\tilde{x}}\in{\mathcal{C}}_{\text{scusp}}\left(\tilde{\Omega}_{\tilde{x}},\chi\right)$
such that $\theta_{\tilde{f}_{\tilde{x}}}=\tilde{\theta}_{\tilde{x},\tilde{\Omega}_{\tilde{x}}}$.
By Proposition \ref{3.18}, we denote a lift of $\tilde{f}_{\tilde{x}}$
to ${\mathcal{C}}_{\text{scusp}}\left(Z_M(F)\backslash\tilde{M}(F),\chi\times{}^\sigma\chi^\vee\right)$ by $\tilde{f}$.
By Lemma \ref{10.3}, we have $J_{qc,\chi}(\theta)-\tilde{J}_{qc,\chi}(\tilde{\theta})=J_\chi(f)-\tilde{J}_\chi(\tilde{f})$. It suffices to show $J_\chi(f)=\tilde{J}_\chi(\tilde{f})$. If $x$ is not $G\left(F\right)$-conjugate to $H_{V}\left(F\right)$
(and hence neither is $H_{V^{\prime}}\left(F\right)$), we can shrink
$\Omega_{x}$ so that $\Omega\cap H_{V}\left(F\right)=\Omega\cap H_{V^{\prime}}\left(F\right)=\emptyset$.
In this case, we have $J_{\chi}\left(f\right)=0$. We can also see that ${}^g\tilde{f}$ is zero when restricting to $\tilde{G}(F)$, for any $g\in M(F)$. This gives us $\tilde{J}_\chi(\tilde{f})=0$, which is to say $J_{qc,\chi}(\theta)=\tilde{J}_{qc,\chi}(\tilde{\theta})$. 

Assume $x\in H_{V}\left(F\right)$. By using Harish-Chandra
descent as in Proposition \ref{7.3}, we can easily show $J_{\chi}\left(f\right)=J_{\chi}^{G_{x}}\left(f_{x}\right)$, where $G_{x}=R_{L/K}\text{GL}_{q}\left(L\right)$ for some $n/q$-dimensional field extension $K$ of $F$ and $L=K\otimes_{F}E$ and $J_{\chi}^{G_{x}}$ is the analogous linear form of $J_{\chi}$ on ${\mathcal{C}}_{\text{scusp}}\left(G_{x}\right)$. Let $\widetilde{G_{x}\times G_{x}}=\left(G_{x}\times G_{x}\right)\tilde{x}$
and $\widetilde{\theta_{x,\Omega_{x}}}\in QC\left(Z_M(F)\backslash \widetilde{G_{x}\times G_{x}}(F),\chi\times {}^\sigma\chi^\vee\right)$ be a transfer of $\theta_{x,\Omega_{x}}$. 
By the induction hypothesis, we have 
\begin{equation}\label{eqn9.1}
J_{\text{qc},\chi}^{G_{x}}\left(\theta_{x,\Omega_{x}}\right)=J_{\text{qc},\chi}^{\widetilde{G_{x}\times G_{x}}}\left(\widetilde{\theta_{x,\Omega_{x}}}\right).
\end{equation}
We can see that $M_{\tilde{x}}\tilde{x}=\left(G_{x}\times G_{x}\right)_{\tilde{x}}\tilde{x}$
as (twisted) subgroups in $\tilde{M}$. By the definition of semisimple descent and transfer, we have $\left(\widetilde{\theta_{x,\Omega_{x}}}\right)_{\tilde{x},\tilde{\Omega}_{\tilde{x}}}=\tilde{\theta}_{\tilde{x},\tilde{\Omega}_{\tilde{x}}}$. Let $\tilde{f}^{\widetilde{G_{x}\times G_{x}}}$ be a lift of
$\tilde{f}_{\tilde{x}}$ to ${\mathcal{C}}_{\text{scusp}}\left(\widetilde{G_{x}\times G_{x}}\right)$ given by Proposition \ref{3.18}. By Lemma \ref{10.3} and (\ref{eqn9.1}). Observe
$$
J_\chi^{G_x}(f_x)-\tilde{J}_\chi^{\widetilde{G_x\times G_x}}(\tilde{f}^{\widetilde{G_{x}\times G_{x}}})=J_{qc,\chi}^{G_x}(\theta_{x,\Omega_x})-\tilde{J}_\chi^{\widetilde{G_x\times G_x}}(\widetilde{\theta_{x,\Omega_x}})=0.
$$
It suffices to show $\tilde{J}_\chi(\tilde{f})=\tilde{J}_\chi^{\widetilde{G_x\times G_x}}(\tilde{f}^{\widetilde{G_{x}\times G_{x}}})$. We have 
$$
\tilde{J}_{\chi}\left(\tilde{f}\right)=\int_{G\left(F\right)Z_M\left(F\right)\backslash M\left(F\right)}\underset{i}{\sum}\underset{Z_G\left(F\right)\backslash\tilde{G}\left(F\right)}{\int}\tilde{f}\left(m^{-1}\tilde{g}m\right)\left\langle \phi_{i},\tilde{\omega}_{\psi,\mu}\left(\tilde{g}\right)\phi_{i}\right\rangle d\tilde{g}dm
$$
$$
=\int_{G\left(F\right)Z_M\left(F\right)\backslash M\left(F\right)}\underset{i}{\sum}\underset{G_{\tilde{x}}\left(F\right)\backslash G\left(F\right)}{\int}\ \underset{Z_G\left(F\right)\backslash G_{\tilde{x}}\left(F\right)\tilde{x}}{\int}\left(^{m}\tilde{f}\right)_{\tilde{x},\tilde{\Omega}_{\tilde{x}}}\left(g_{\tilde{x}}^{-1}\tilde{g}g_{\tilde{x}}\right)\left\langle \phi_{i},\tilde{\omega}_{\psi,\mu}\left(\tilde{g}\right)\phi_{i}\right\rangle d\tilde{g}dg_{\tilde{x}}dm
$$
$$
=\underset{M_{\tilde{x}}\left(F\right)\backslash M\left(F\right)}{\int}\ \underset{Z_M\left(F\right)G_{\tilde{x}}\left(F\right)\backslash M_{\tilde{x}}\left(F\right)}{\int}\underset{i}{\sum}\underset{Z_G\left(F\right)\backslash G_{\tilde{x}}\left(F\right)\tilde{x}}{\int}\left(^{m}\tilde{f}\right)_{\tilde{x},\tilde{\Omega}_{\tilde{x}}}\left(m_{\tilde{x}}^{-1}\tilde{g}m_{\tilde{x}}\right)\left\langle \phi_{i},\tilde{\omega}_{\psi,\mu}\left(\tilde{g}\right)\phi_{i}\right\rangle d\tilde{g}dm_{\tilde{x}}dm
$$
$$
=\underset{M_{\tilde{x}}\left(F\right)\backslash M\left(F\right)}{\int}\alpha\left(m\right)\underset{Z_M\left(F\right)G_{\tilde{x}}\left(F\right)\backslash M_{\tilde{x}}\left(F\right)}{\int}\underset{i}{\sum}\underset{Z_G\left(F\right)\backslash G_{\tilde{x}}\left(F\right)\tilde{x}}{\int}\tilde{f}_{\tilde{x}}\left(m_{\tilde{x}}^{-1}\tilde{g}m_{\tilde{x}}\right)\left\langle \phi_{i},\tilde{\omega}_{\psi,\mu}\left(\tilde{g}\right)\phi_{i}\right\rangle d\tilde{g}dm_{\tilde{x}}dm
$$
$$
=\underset{Z_M\left(F\right)G_{\tilde{x}}\left(F\right)\backslash M_{\tilde{x}}\left(F\right)}{\int}\underset{i}{\sum}\underset{Z_G\left(F\right)\backslash G_{\tilde{x}}\left(F\right)\tilde{x}}{\int}\tilde{f}_{\tilde{x}}\left(m_{\tilde{x}}^{-1}\tilde{g}m_{\tilde{x}}\right)\left\langle \phi_{i},\tilde{\omega}_{\psi,\mu}\left(\tilde{g}\right)\phi_{i}\right\rangle d\tilde{g}dm_{\tilde{x}}.
$$
A similar computation
as above gives us 
$$
\tilde{J}_{\chi}^{\widetilde{G_{x}\times G_{x}}}\left(\tilde{f}^{\widetilde{G_{x}\times G_{x}}}\right)
=\underset{Z_M\left(F\right)G_{\tilde{x}}\left(F\right)\backslash M_{\tilde{x}}\left(F\right)}{\int}\underset{i}{\sum}\underset{Z_G\left(F\right)\backslash G_{\tilde{x}}\left(F\right)\tilde{x}}{\int}\tilde{f}_{\tilde{x}}\left(m_{\tilde{x}}^{-1}\tilde{g}m_{\tilde{x}}\right)\left\langle \phi_{i},\tilde{\omega}_{\psi\circ\text{Tr}_{K/F},\mu\circ\text{N}_{L/E}}\left(\tilde{g}\right)\phi_{i}\right\rangle d\tilde{g}dm_{\tilde{x}}.
$$
Since the restriction of the Weil representation $\omega_{\mu}$ to
$G_{x}(F)$ is the Weil representation $\omega_{\mu\circ\text{N}_{L/E}}$,
we have $\tilde{J}_\chi(\tilde{f})=\tilde{J}_\chi^{\widetilde{G_x\times G_x}}(\tilde{f}^{\widetilde{G_{x}\times G_{x}}})$ as desired.
\end{proof}

\subsection{Descent to Lie algebra and homogeneity}\label{sec10.3}

We recall a Lie algebra analog of the linear form $J_{\chi}\left(f\right)$
defined in Section \ref{sec7}. We identity the Lie algebra of $Z_G\backslash G$ with the subalgebra $\mathfrak{g}_0$ of trace-$0$ elements in the Lie algebra $\mathfrak{g}$ of $G$. For all $f\in\mathcal{C}_{\text{scusp}}$$\left(\mathfrak{g}_{0}\left(F\right)\right)$,
we set  
$$
J_{V,\chi}^{\text{Lie}}\left(f\right)=\int_{Z_{G}\left(F\right)H_{V}\left(F\right)\backslash G\left(F\right)}\underset{i}{\sum}\int_{\mathfrak{h}_{V,0}\left(F\right)}f\left(g^{-1}Xg\right)\left\langle \phi_{i},\omega_{V,\psi,\mu,\chi}\left(\exp X\right)\phi_{i}\right\rangle dXdg
$$
and 
$$
J_{V^{\prime},\chi}^{\text{Lie}}\left(f\right)=\int_{Z_{G}\left(F\right)H_{V^\prime}\left(F\right)\backslash G\left(F\right)}\underset{i}{\sum}\int_{\mathfrak{h}_{V^\prime,0}\left(F\right)}f\left(g^{-1}Xg\right)\left\langle \phi_{i},\omega_{V^{\prime},\psi,\mu,\chi}\left(\exp X\right)\phi_{i}\right\rangle dXdg
$$
and 
$$
J_{\chi}^{\text{Lie}}\left(f\right)=\mu\left(\text{det}V\right)J_{V,\chi}^{\text{Lie}}\left(f\right)+\mu\left(\text{det}V^{\prime}\right)J_{V^{\prime},\chi}^{\text{Lie}}\left(f\right).
$$
By Theorem \ref{6.1} and using a descent to Lie algebra, $J^{\text{Lie}}_\chi\left(f\right)$ is absolutely convergent when
$f$ is supported near $0$. In this section, we prove the following
proposition.
\begin{proposition}\label{10.5}
There exists a constant $c_{\chi}\in\mathbb{C}$ such that for any
$\theta\in{\mathcal{E}}\left(Z_G(F)\backslash G(F),\chi\right)$, we have 
$$
J_{\text{qc},\chi}\left(\theta\right)=c_{\chi}\cdot c_{\theta}\left(1\right)+\tilde{J}_{\text{qc},\chi}\left(\tilde{\theta}\right),
$$
where $\tilde{\theta}$ is a transfer of $\theta$.
\end{proposition}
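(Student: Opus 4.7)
The plan is to reduce to quasi-characters supported near the identity modulo the center, descend to the Lie algebra, and then use a homogeneity argument in the spirit of Proposition \ref{7.4}(ii). Since $J_{\text{qc},\chi}$ and $\tilde{J}_{\text{qc},\chi}$ are both supported on elliptic classes, and by Proposition \ref{10.4} we have $J_{\text{qc},\chi}(\theta)=\tilde{J}_{\text{qc},\chi}(\tilde{\theta})$ whenever $\operatorname{Supp}(\theta)\cap Z_G(F)=\emptyset$, a standard partition-of-unity argument reduces the problem to quasi-characters supported on $\Omega=Z_G(F)\exp(\omega)$ for some $G$-excellent open neighborhood $\omega\subseteq\mathfrak{g}_0(F)$ of $0$. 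After shrinking $\omega$ if necessary, we may assume that the corresponding neighborhood $\tilde{\Omega}=(Z_M(F)\exp(\tilde{\omega}))\theta_n$ on the twisted side (where $\tilde{\omega}$ is the transferred Lie algebra neighborhood) carries a parallel descent.

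Using Lemma \ref{10.3}, for any $f\in\mathcal{C}_{\text{scusp}}(Z_G(F)\backslash \Omega,\chi)$ with associated transfer $\tilde{f}\in\mathcal{C}_{\text{scusp}}(Z_M(F)\backslash \tilde{\Omega},\chi\times{}^\sigma\chi^\vee)$, we have
$$
J_{\text{qc},\chi}(\theta_f)-\tilde{J}_{\text{qc},\chi}(\theta_{\tilde{f}})=J_{\chi}(f)-\tilde{J}_{\chi}(\tilde{f}).
$$
Next I would define a Lie algebra analog $\tilde{J}_{\chi}^{\text{Lie}}$ of $\tilde{J}_{\chi}$ on $\mathcal{C}_{\text{scusp}}(\mathfrak{m}_0(F))$ by mimicking the definition of $J_\chi^{\text{Lie}}$ (using the Weil representation $\tilde{\omega}_{\psi,\mu,\chi}$ restricted to a sufficiently small neighborhood of $1$ in $\tilde{G}(F)$ and pulled back via $\exp$), and check, as in Proposition \ref{7.4}(i), that $\tilde{J}_{\chi}(\tilde{f})=\tilde{J}_{\chi}^{\text{Lie}}(\tilde{f}_{\tilde{\omega}})$. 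This reduces the problem to comparing the two Lie algebra linear forms $J_\chi^{\text{Lie}}$ and $\tilde{J}_\chi^{\text{Lie}}$ applied to compatible descents of $f$.

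The key input is then the local character expansion of the Weil representations near $1$ provided by Corollary \ref{4.5}: for both $\omega_{V,\psi,\mu,\chi}$ and $\omega_{V^\prime,\psi,\mu,\chi}$ (on the $J_\chi$ side), as well as for $\omega_{\mu,\chi}$ (on the $\tilde{J}_\chi$ side), the wavefront set is the closure of a single minimal nilpotent orbit with leading coefficient equal to $1$. Arguing exactly as in Proposition \ref{7.4}(ii), I would apply the rescaling $f\mapsto f_\lambda(X):=f(\lambda^{-1}X)$ for $\lambda\in\mathcal{O}_F^\times$ to both sides. On each side, this produces an expression of the form
$$
J_{\chi}^{\text{Lie}}(f_\lambda)-\tilde{J}_{\chi}^{\text{Lie}}(\tilde{f}_\lambda)=\sum_{\mathcal{O}\in\operatorname{Nil}(\mathfrak{g}_0)}|\lambda|^{\dim(\mathcal{O})/2}c_{\mathcal{O}}\,c_{\theta_0,\mathcal{O}}(0),
$$
where $\theta_0$ is the Lie algebra quasi-character attached to $f_\omega$. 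The explicit Fourier-analytic formulas for both $J_\chi^{\text{Lie}}$ and $\tilde{J}_\chi^{\text{Lie}}$ (as in equation (\ref{eqn6.2})) express them in terms of integrals over $\Sigma_V(F)+S_V$, $\Sigma_{V^\prime}(F)+S_{V^\prime}$, and an analogous affine space $\tilde{\Sigma}+\tilde{S}$ on the twisted side; comparing $|\lambda|$-degrees shows that only the regular nilpotent orbit can contribute. Since $G=\operatorname{Res}_{E/F}\operatorname{GL}_n$ has a unique regular nilpotent orbit, the entire linear form collapses to a constant multiple of $c_\theta(1)$, which yields the existence of $c_\chi\in\mathbb{C}$ with the stated identity.

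The main obstacle will be verifying the Lie algebra descent $\tilde{J}_\chi(\tilde{f})=\tilde{J}_\chi^{\text{Lie}}(\tilde{f}_{\tilde{\omega}})$, because the twisted Weil representation $\tilde{\omega}_{\psi,\mu,\chi}$ involves the non-trivial intertwiner $\tilde{\omega}_{\psi,\mu}(\theta_n)$ (a Fourier transform) and the exponential map has to be combined with the action of $\theta_n$; one has to check that near the identity element $\theta_n\in\tilde{G}(F)$, the matrix coefficients $\langle \phi_i,\tilde{\omega}_{\psi,\mu,\chi}(\exp(X)\theta_n)\phi_i\rangle$ admit a local character expansion matched by transfer with the nilpotent expansions of $\omega_{V,\psi,\mu,\chi}$ and $\omega_{V^\prime,\psi,\mu,\chi}$, so that the homogeneity argument delivers precisely the same power of $|\lambda|$ on both sides.
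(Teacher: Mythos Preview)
Your proposal is correct and follows the same route as the paper: reduce to a neighborhood of the identity via Proposition~\ref{10.4}, pass to the Lie algebra via Lemma~\ref{10.3} and Proposition~\ref{7.4}(i), use the local character expansions of the relevant Weil representations, and run a homogeneity argument. The obstacle you flag at the end---the local character expansion of $\tilde{\omega}_{\psi,\mu,\chi}$ near $\theta_n$---is exactly the technical point the paper has to resolve; it does so by applying Corollary~\ref{4.5} to the untwisted representation $\omega_{\mu,\chi}$ of $G(F)$ and then invoking \cite[Theorem~4.1]{Kon02} to conclude that the wavefront set of the \emph{twisted} representation is likewise the closure of a single minimal nilpotent orbit in $\mathfrak{g}_{\theta_n,0}(F)$, so that $\tilde{J}_\chi(\tilde{f}_\lambda)$ has the same homogeneity degrees $|\lambda|^{n^2-1}$ and $|\lambda|^{n^2-n}$ as the untwisted side.

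There is one difference in bookkeeping worth noting. The paper keeps the contributions from $\theta$ and from $\tilde{\theta}$ separate throughout, arriving after homogeneity at
\[
J_{\mathrm{qc},\chi}(\theta)-\tilde{J}_{\mathrm{qc},\chi}(\tilde{\theta})=c_{\mathcal{O}_{\mathrm{reg}}}\,c_{\theta,\mathcal{O}_{\mathrm{reg}}}(1)+\tilde{c}_{\mathcal{O}_{\mathrm{reg}}}\,c_{\tilde{\theta},\mathcal{O}_{\mathrm{reg}}}(\theta_n),
\]
and then uses the twisted endoscopic character identity (Theorem~\ref{8.1} together with Proposition~\ref{3.2}(i) and its twisted analogue) to prove $c_{\theta,\mathcal{O}_{\mathrm{reg}}}(1)=c_{\tilde{\theta},\mathcal{O}_{\mathrm{reg}}}(\theta_n)$ and collapse the two terms into a single $c_\chi\cdot c_\theta(1)$. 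Your choice to write the difference from the outset as a single sum $\sum_{\mathcal{O}} d_{\mathcal{O}}\, c_{\theta_0,\mathcal{O}}(0)$ is legitimate---since $\tilde{\theta}$ depends linearly on $\theta$ and the difference is supported at $1$ by Proposition~\ref{10.4}---and it sidesteps this final appeal to Theorem~\ref{8.1}, a mild streamlining of the paper's argument.
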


\begin{proof}
Let $\omega\subseteq\mathfrak{g}_{0}\left(F\right)$ is a $G$-excellent
open neighborhood of $0$ and set $\Omega=Z_{G}\left(F\right)\exp\left(\omega\right)$.
By Proposition \ref{10.4}, it suffices to consider $\theta\in{\mathcal{E}}\left(Z_{G}\left(F\right)\backslash\Omega,\chi\right)$.
Let $\tilde{\theta}$ be a transfer of $\theta$ in ${\mathcal{E}}_{\text{}}\left(Z_M\left(F\right)\backslash\tilde{M}\left(F\right),\chi\times{}^\sigma\chi^\vee\right)$.
We can view $J_{\text{qc},\chi}(\theta)-\tilde{J}_{\text{qc},\chi}(\tilde{\theta})$ as a linear form on $\mathcal{E}(Z_G(F)\backslash G(F),\chi)$. By Proposition \ref{10.4}, we can see that the support of $J_{\text{qc},\chi}(\theta)-\tilde{J}_{\text{qc},\chi}(\tilde{\theta})$ is contained in $\{1\}$. This gives us  
\begin{equation}\label{eqn9.2}   J_{\text{qc},\chi}\left(\theta\right)=\underset{\mathcal{O}\in\text{Nil}\left(\mathfrak{g}_{0}\right)}{\sum}c_{{\mathcal{O}}}\cdot c_{\theta,{\mathcal{O}}}\left(1\right)+\underset{{\mathcal{O}}\in\text{Nil}\left(\mathfrak{g}_{0}\right)}{\sum}\tilde{c}_{{\mathcal{O}}}\cdot c_{\tilde{\theta},{\mathcal{O}}}\left(\theta_{n}\right)+\tilde{J}_{\text{qc}}\left(\tilde{\theta},\chi\right),  
\end{equation}
noting that here we identify $G\left(F\right)\cong M_{\theta_{n}}\left(F\right)$. Let
$f\in{\mathcal{C}}_{\text{scusp}}\left(Z_{G}\left(F\right)\backslash\Omega,\chi\right)$
such that $\theta_f=\theta$. By Proposition \ref{7.4}(i), we have $J_\chi(f)=J^{\text{Lie}}_\chi(f_\omega)$. Let ${\mathcal{O}}_{V,m}$ be the minimal nilpotent orbit in $\mathfrak{h}_{V,0}$ such that $WF(\omega_{V,\psi,\mu,\chi})=\overline{\mathcal{O}_{V,m}}$ (c.f. Corollary \ref{4.5}) and $S_{V}$ be an element in ${\mathcal{O}}_{V,m}$. We can shrink $\omega$ so that
for any $f\in{\mathcal{C}}_{\text{scusp}}\left(\omega\right)$, we have
the following local character expansion 
$$
\underset{i}{\sum}\int_{\mathfrak{h}_{V,0}\left(F\right)}f\left(g^{-1}Xg\right)\left\langle \phi_{i},\omega_{V,\psi,\mu,\chi}\left(\exp X\right)\phi_{i}\right\rangle dX
$$
$$
=c_{0,V}\int_{\mathfrak{h}_{V,0}\left(F\right)}f\left(g^{-1}Xg\right)dX+c_{1,V}D^{H_{V}}\left(S_{V}\right)^{1/2}\int_{\left(H_{V}\right)_{S_{V}}\left(F\right)\backslash H_{V}\left(F\right)}\widehat{f^{g}\mid_{\mathfrak{h}_{V,0}\left(F\right)}}\left(x^{-1}S_{V}x\right)dx.
$$
Let $B\left(\cdot,\cdot\right)$ be the canonical nondegenerate bilinear
form on $\mathfrak{g}_{0}$. Let $\Sigma_{V}=\mathfrak{h}_{V,0}^{\perp}$
in $\mathfrak{g}_{0}$. Using the proof of Proposition \ref{7.4},
it follows that 
$$
J_{V,\chi}^{\text{Lie}}\left(f\right)=c_{0,V}\int_{Z_{G}\left(F\right)H_{V}\left(F\right)\backslash G\left(F\right)}\int_{\mathfrak{h}_{V,0}\left(F\right)}f\left(g^{-1}Xg\right)dXdg
$$
$$
+c_{1,V}D^{H_{V}}\left(S_{V}\right)^{1/2}\int_{Z_{G}\left(F\right)H_{S_{V}}(F)\backslash G(F)}\int_{\Sigma_{V}\left(F\right)+S_{V}}\widehat{f}\left(g^{-1}Xg\right)d\mu_{\Sigma_{V}}Xdg.
$$
For $\lambda\in\left(F^{\times}\right)^{2}$, let $f_{\lambda}\left(X\right)=f\left(\lambda^{-1}X\right)$.
If we take $\lambda\in\left({\mathcal{O}}_{F}^{\times}\right)^{2}$, then
$\text{supp}\left(f_{\lambda}\right)$ is still contained in $\omega$.
We have 
$$
J_{V,\chi}^{\text{Lie}}\left(f_{\lambda}\right)=\left|\lambda\right|^{n^{2}-1}c_{0,V}\int_{Z_{G}\left(F\right)H_{V}\left(F\right)\backslash G\left(F\right)}\int_{\mathfrak{h}_{V,0}\left(F\right)}f\left(g^{-1}Xg\right)dXdg
$$
$$
+\left|\lambda\right|^{n^{2}-n}c_{1,V}D^{H_{V}}\left(S_{V}\right)^{1/2}\int_{Z_{G}\left(F\right)H_{S_{V}}(F)\backslash G(F)}\int_{\Sigma_{V}\left(F\right)+S_{V}}\widehat{f}\left(g^{-1}Xg\right)d\mu_{\Sigma_{V}}Xdg.
$$
Similarly, for the skew-hermitian space $V^{\prime}$, we also
obtain
$$
J_{V^{\prime},\chi}^{\text{Lie}}\left(f_{\lambda}\right)=\left|\lambda\right|^{n^{2}-1}c_{0,V^{\prime}}\int_{Z_{G}\left(F\right)H_{V^{\prime}}\left(F\right)\backslash G\left(F\right)}\int_{\mathfrak{h}_{V^{\prime},0}\left(F\right)}f\left(g^{-1}Xg\right)dXdg
$$
$$
+\left|\lambda\right|^{n^{2}-n}c_{1,V^{\prime}}D^{H_{V^{\prime}}}\left(S_{V^{\prime}}\right)^{1/2}\int_{Z_{G}\left(F\right)H_{S_{V^{\prime}}}(F)\backslash G(F)}\int_{\Sigma_{V^{\prime}}\left(F\right)+S_{V^{\prime}}}\widehat{f}\left(g^{-1}Xg\right)d\mu_{\Sigma_{V^{\prime}}}Xdg.
$$
For $\lambda\in ({\mathcal{O}}_{F}^{\times})^2$, let $\theta_{0,\lambda}\left(X\right)=\theta\left(\lambda^{-1}X\right)$
and $\theta_{\lambda}=\theta_{0,\lambda}\circ\exp^{-1}$. Then $\theta_{f_{\omega,\lambda}}\circ\exp^{-1}=\theta_{\lambda}$. Let $\tilde{\theta}_{\lambda}$ be a transfer of $\theta_{\lambda}$
in ${\mathcal{E}}\left(Z_M\left(F\right)\backslash\tilde{M}\left(F\right),\chi\times{}^\sigma\chi^\vee\right)$.
By shrinking $\omega$ if necessary, we can assume $\tilde{\theta}$
is supported in $\tilde{\Omega}=(\theta_{n}\exp\left(\tilde{\omega}\right))^M$,
where $\tilde{\omega}\subset\mathfrak{m}_{\theta_{n}}\left(F\right)$
is an $M$-excellent open neighborhood of $0$ such that $\tilde{\omega}_{\psi,\mu}$
has a local character expansion on $\theta_{n}\exp\left(\tilde{\omega}\right)$.
We can see that $\tilde{\theta}_{\lambda}\left(\theta_{n}\exp X\right)=\tilde{\theta}\left(\theta_{n}\exp\left(\lambda^{-1}X\right)\right)$, for any $X\in\tilde{\omega}$. Let $\tilde{f}\in {\mathcal{C}}_{\text{scusp}}\left(Z_M\left(F\right)\backslash\tilde{\Omega},\chi\times {}^\sigma\chi^\vee\right)$
such that $\theta_{\tilde{f}}=\tilde{\theta}$.
We set $\tilde{f}_{\lambda}\left(\theta_{n}\exp X\right)=\tilde{f}_{\lambda}\left(\theta_{n}\exp\left(\lambda^{-1}X\right)\right)$, for $X\in\tilde{\omega}$. Then $\theta_{\tilde{f}_{\lambda}}=\tilde{\theta}_\lambda$. By Corollary \ref{4.5}, $WF(\omega_{\psi,\chi})$ is a closure of a minimal nilpotent orbit in $\mathfrak{g}(F)$. Using \cite[Theorem 4.1]{Kon02}, we conclude that the wavefront set of $\tilde{\omega}_{\psi,\mu,\chi}$ is the closure of a minimal nilpotent orbit of $\mathfrak{g}_{\theta_{n},0}\left(F\right)$.
We denote this minimal nilpotent orbit by $\mathfrak{O}_{\psi}$. Then
$$
\underset{i}{\sum}\int_{\mathfrak{g}_{\theta_{n},0}\left(F\right)}f\left(X\right)\left\langle \phi_{i},\tilde{\omega}_{\psi,\mu,\chi}\left(\theta_{n}\exp X\right)\phi_{i}\right\rangle dX
=c_{0}\int_{\mathfrak{g}_{\theta_{n},0}\left(F\right)}f\left(X\right)dX+c_{1}\int_{\mathfrak{O}_\psi}\widehat{f\mid_{\mathfrak{g}_{\theta_{n},0}\left(F\right)}}\left(X\right)dX.
$$
A similar argument as in the untwisted case gives us 
$$
\tilde{J}_{\chi}\left(\tilde{f}_{\lambda}\right)
=\left|\lambda\right|^{n^{2}-1}c_{0}\cdot\int_{Z_M\left(F\right)G_{\theta_{n}}\left(F\right)\backslash M\left(F\right)}\int_{\mathfrak{g}_{\theta_{n},0}\left(F\right)}\tilde{f}\left(m^{-1}\theta_{n}\left(\exp X\right)m\right)dXdm
$$
$$
+\left|\lambda\right|^{n^{2}-n}c_{1}\cdot D^{\tilde{G}}\left(S\right)^{1/2}\,\int_{Z_M\left(F\right)\left(G_{\theta_{n}}\right)_{S}\left(F\right)\backslash M\left(F\right)}\int_{\Sigma\left(F\right)+S}\mathcal{F}_\psi\left(L_{\theta_{n}}\left({}^{m}\tilde{f}\right)\circ\exp\right)\left(X\right)d\mu_{\Sigma}Xdm,
$$
where $S$ is an element in $\mathfrak{O}_{\psi}$ and $\mathfrak{m}_{\theta_{n},0}=\mathfrak{g}_{\theta_{n},0}\oplus^{\perp}\Sigma$
and $\mathcal{F}_\psi$ is the Fourier transform on $\mathcal{S}$$(\mathfrak{m}_{\theta_n,0}(F))$ with respect to $\psi$. By Lemma \ref{10.3}, we have  
$$
J_{\text{qc},\chi}\left(\theta_{\lambda}\right)-\tilde{J}_{qc,\chi}(\tilde{\theta}_\lambda)=J_\chi^\text{Lie}(f_{\omega,\lambda})-\tilde{J}_\chi(\tilde{f}_\lambda)
=\left|\lambda\right|^{n^{2}-1}c_{0,V}\cdot\mu\left(\det V\right)\int_{Z_{G}\left(F\right)H_{V}\left(F\right)\backslash G\left(F\right)}\int_{\mathfrak{h}_{V,0}\left(F\right)}f_{\omega}\left(g^{-1}Xg\right)dXdg
$$
$$
+\left|\lambda\right|^{n^{2}-n}c_{1,V}\cdot\mu\left(\det V\right)D^{H_V}\left(S_{V}\right)^{1/2}\,\underset{t}{\lim}\ \int_{Z_{G}\left(F\right)H_{S_{V}}(F)\backslash G(F)}\int_{\Sigma_{V}\left(F\right)+S_{V}}\widehat{f_{\omega}}\left(g^{-1}Xg\right)d\mu_{\Sigma_{V}}Xdg
$$
$$
+\left|\lambda\right|^{n^{2}-1}c_{0,V^{\prime}}\cdot\mu\left(\det V^{\prime}\right)\int_{Z_{G}\left(F\right)H_{V^{\prime}}\left(F\right)\backslash G\left(F\right)}\int_{\mathfrak{h}_{V^{\prime},0}\left(F\right)}f_{\omega}\left(g^{-1}Xg\right)dXdg
$$
$$
+\left|\lambda\right|^{n^{2}-n}c_{1,V^{\prime}}\cdot\mu\left(\det V^{\prime}\right)D^{H_{V^{\prime}}}\left(S_{V^{\prime}}\right)^{1/2}\int_{Z_{G}\left(F\right)H_{S_{V^{\prime}}}(F)\backslash G(F)}\int_{\Sigma_{V^{\prime}}\left(F\right)+S_{V^{\prime}}}\widehat{f_{\omega}}\left(g^{-1}Xg\right)d\mu_{\Sigma_{V^{\prime}}}Xdg
$$
$$
-\left|\lambda\right|^{n^{2}-1}c_{0}\cdot\int_{Z_M\left(F\right)G_{\theta_{n}}\left(F\right)\backslash M\left(F\right)}\int_{\mathfrak{g}_{\theta_{n},0}\left(F\right)}\tilde{f}\left(m^{-1}\theta_{n}\left(\exp X\right)m\right)dXdm
$$
$$
-\left|\lambda\right|^{n^{2}-n}c_{1}\cdot D^{\tilde{G}}\left(S\right)^{1/2}\,\int_{Z_M\left(F\right)\left(G_{\theta_{n}}\right)_{S}\left(F\right)\backslash M\left(F\right)}\int_{\Sigma\left(F\right)+S}\mathcal{F}_\psi\left(L_{\theta_{n}}\left({}^{m}\tilde{f}\right)\circ\exp\right)\left(X\right)d\mu_{\Sigma}Xdm,
$$
Moreover, for any ${\mathcal{O}}\in\text{Nil}\left(\mathfrak{g}_{0}\right)$, we have $c_{\theta_{\lambda},{\mathcal{O}}}\left(1\right)=\left|\lambda\right|^{\frac{\dim\left({\mathcal{O}}\right)}{2}}c_{\theta,{\mathcal{O}}}\left(1\right)$ and $c_{\tilde{\theta}_{\lambda},{\mathcal{O}}}\left(\theta_{n}\right)=\left|\lambda\right|^{\frac{\dim\left({\mathcal{O}}\right)}{2}}c_{\tilde{\theta},{\mathcal{O}}}\left(\theta_{n}\right)$. By using a basic argument of homogeneity for (\ref{eqn9.2}), we deduce
$$
J_{\text{qc},\chi}\left(\theta\right)=c_{{\mathcal{O}}_{\text{reg}}}\cdot c_{\theta,{\mathcal{O}}_{\text{reg}}}\left(1\right)+\tilde{c}_{{\mathcal{O}}_{\text{reg}}}\cdot c_{\tilde{\theta},{\mathcal{O}}_{\text{reg}}}\left(\theta_{n}\right)+\tilde{J}_{\text{qc},\chi}\left(\tilde{\theta}\right),
$$
for $\theta\in{\mathcal{E}}\left(Z_{G}\left(F\right)\backslash G\left(F\right),\chi\right)$,
where ${\mathcal{O}}_{\text{reg}}$ is the only regular nilpotent orbit
in $\mathfrak{g}_{0}\left(F\right)$. By the twisted endoscopic character identity proved in Theorem \ref{8.1}, together with Proposition \ref{3.2}(i) and its twisted analog, it follows that $c_{\theta,{\mathcal{O}}_{\text{reg}}}\left(1\right)=c_{\tilde{\theta},{\mathcal{O}}_{\text{reg}}}\left(\theta_{n}\right)$.
Therefore, there exists $c_{\chi}\in\mathbb{C}$ such that $J_{\text{qc},\chi}\left(\theta\right)=c_{\chi}\cdot c_{\theta}\left(1\right)+\tilde{J}_{\text{qc},\chi}\left(\tilde{\theta}\right)$, for any $\theta\in{\mathcal{E}}\left(Z_{G}\left(F\right)\backslash G\left(F\right),\chi\right)$.
\end{proof}

\subsection{Proof of Theorem \ref{sec10.2}}\label{sec10.4}
We prove Theorem \ref{sec10.2}. 
\begin{proof}
By Proposition \ref{10.5}, we have 
\begin{equation}\label{eqn9.3}
J_{\text{qc},\chi}\left(\theta\right)=c_{\chi}\cdot c_{\theta}\left(1\right)+\tilde{J}_{\text{qc},\chi}\left(\tilde{\theta}\right).
\end{equation}
for $\theta\in{\mathcal{E}}\left(Z_{G}\left(F\right)\backslash G\left(F\right),\chi\right)$.
There remains to show $c_{\chi}=0$. By substituting $\theta=\theta_{\Pi}$ to linear forms $J_{\text{qc},\chi}$ and $\tilde{J}_{\text{qc},\chi}$ in the above equality, for some irreducible discrete series $\Pi$ of $G(F)$ with central character $\chi$, we have 
$$
J_{\text{qc},\chi}\left(\theta_\Pi\right)= \sum_{\pi\in\Pi_2(G,\chi^{-1})}(\mu\left(\det V\right)m_{V}\left(\bar{\pi}\right)+\mu\left(\det V^{\prime}\right)m_{V^{\prime}}\left(\bar{\pi}\right))
\int_{\Gamma(Z_G(F)\backslash G)_\text{ell}}D^G(x)\theta_{\Pi}(x)\theta_\pi(x)dx
$$
$$
=\mu\left(\det V\right)m_{V}\left(\Pi\right)+\mu\left(\det V^{\prime}\right)m_{V^{\prime}}\left(\Pi\right),
$$
$$
\tilde{J}_{\text{qc},\chi}\left(\theta_{\widetilde{\Pi\times {}^\sigma\Pi^\vee}}\right)= \sum_{\pi\in\Pi_2(\tilde{M},\chi^{-1}\times{}^\sigma\chi)}\epsilon_\psi(\tilde{\pi}^\vee)
\int_{\Gamma(Z_M(F)\backslash \tilde{M})_\text{ell}}D^{\tilde{M}}(\tilde{x})\theta_{\widetilde{\Pi\times {}^\sigma\Pi^\vee}}(\tilde{x})\theta_{\tilde{\pi}}(\tilde{x})d\tilde{x}
$$
$$
=\epsilon_\psi({\widetilde{\Pi\times {}^\sigma\Pi^\vee}})=\chi(-1)^n\omega_{E/F}(-1)^{n(n-1)/2}\epsilon\left(\frac{1}{2},\Pi\times{}^\sigma\Pi^\vee\times\mu^{-1},\psi_E\right),
$$
noting that here we use the orthogonality relations of elliptic representations of connected and twisted reductive groups. From (\ref{eqn9.3}), we obtain 
$$
\mu\left(\det V\right)m_{V}\left(\Pi\right)+\mu\left(\det V^{\prime}\right)m_{V^{\prime}}\left(\Pi\right)=c_{\chi}+\chi(-1)^n\omega_{E/F}\left(-1\right)^{n\left(n-1\right)/2}\epsilon\left(\frac{1}{2},\Pi\times{}^{\sigma}\Pi^{\vee}\times\mu^{-1},\psi_{E}\right).
$$
Theorem \ref{thm1.2}(i) gives us $m_V(\Pi)+m_{V^\prime}(\Pi)=1$. Since $\chi(-1)^n\omega_{E/F}\left(-1\right)^{n\left(n-1\right)/2}\epsilon\left(\frac{1}{2},\Pi\times{}^{\sigma}\Pi^{\vee}\times\mu^{-1},\psi_{E}\right)$ is equal to $\pm\,\omega_{E/F}(-1)^{n/2}$, it follows that $c_\chi\in\{\pm2\,\omega_{E/F}(-1)^{n/2},0\}$. We prove $c_\chi=0$ by contrary. Assume $c_\chi \neq 0$. Without loss of generality, we can assume further $c_\chi=-2\omega_{E/F}(-1)^{n/2}$. The case $c_\chi=2\omega_{E/F}(-1)^{n/2}$ can be treated in the same way to below. We have $m_V(\Pi)=0$, for any $\Pi\in\Pi_2(G,\chi)$, where $V$ is the $n$-dimensional skew-hermitian space over $E$ such that $\mu(\det V)=1$. This gives us
$$
    J_{V,\chi}(f)=\int_{\mathcal{X}(G,\chi^{-1})}\hat{\theta}_f(\pi)m_V(\bar{\pi})d\pi
=\int_{\mathcal{X}_{ind}(G,\chi^{-1})}\hat{\theta}_f(\pi)m_V(\bar{\pi})d\pi,
$$
for any $f\in \mathcal{C}_{scusp}(Z_G(F)\backslash G(F),\chi)$. Since $\hat{\theta}_f(\pi)=\int_{\Gamma(Z_G\backslash G)}D^G(x)\theta_f(x)\theta_\pi(x)dx$ and $\theta_\pi$ vanishes on $\Gamma(Z_G\backslash G)_{\text{ell}}$, for any $\pi \in \mathcal{X}_{ind}(G,\chi^{-1})$, we can see that $\text{Supp}(J_{V,\chi})$ does not contain any elements in $\Gamma(Z_G\backslash G)_{\text{ell}}$. Let $x\in H_V(F)$ be a regular elliptic element in $G(F)$ and $\Omega_x \subseteq G_x(F)$ be a $G$-good neighborhood of $x$. We will show that $x$ lies in the support of $J_{V,\chi}$, which gives a contradiction. We set $\Omega=\Omega_x^G$. Let $f_x\in \mathcal{C}_{scusp}(Z_G(F)\backslash \Omega_x,\chi)$ and $f\in \mathcal{C}_{scusp}(Z_G(F)\backslash \Omega,\chi)$ be a lift of $f_x$ as in Proposition \ref{3.18}. As in the proof of Proposition \ref{7.3}, we have $J_{V,\chi}(f)=J_{\chi}^{H_{V,x}}(f_x)$. It suffices to show $x\in \text{Supp}(J_{\chi}^{H_{V,x}})$. Since $x\in H_V(F)$ is regular elliptic in $G(F)$, it follows that $H_{V,x}(F)$ is isomorphic to the group $E_0^1$ of elements in $E_0$ having $F_0$-norm $1$, where $F_n$ is an $n$-dimensional field extension over $F$ and $E_n=EF_n$. Therefore, the linear form $J_{\text{qc},\chi}^{H_{V,x}}$ can be deduced from the $1$-dimensional case for $E_n^1$. We prove the following lemma.
\begin{lemma}\label{10.6}
    Let $f\in C^\infty_c(E^\times)$ and $W$ be a $1$-dimensional skew-hermitian space over $E$. We have
    $$J^{E^1}_{W}(f)=\frac{1}{2}f(1)+\frac{\mu(\det W)\omega_{E/F}(-2)\gamma_{\psi}(N_{E/F})}{2}\lim_{s\rightarrow 0^+}\int_{E^1}f(x)\mu(1-x)^{-1}|1-x|_E^{s-1/2}dx.$$
\end{lemma}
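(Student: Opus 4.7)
The plan is to reduce the lemma to an explicit computation of the distributional character of $\omega_{W,\psi,\mu}|_{E^1}$, followed by a Tate-type analysis of the resulting zeta integral. Since $G = E^\times$ is abelian, the conjugation in the definition of $J_W^{E^1}$ from Section \ref{sec6.1} is trivial and, after straightforward measure bookkeeping together with the descent procedure used in Section \ref{sec10.2}, the linear form collapses to a regularized pairing of $f$ against the distributional character $\Theta_{\omega_{W,\psi,\mu}}$ on $E^1 \subset E^\times$. This reduces the lemma to (i) an explicit formula for $\Theta_{\omega_{W,\psi,\mu}}$ on the regular locus and (ii) the regularization of the resulting non-integrable integral near $x=1$.

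For (i), realize $\omega_{W,\psi,\mu}$ in the Schr\"odinger model on $\mathcal{S}(F)$, with $F\subset E\simeq \mathrm{Res}_{E/F}W$ chosen as a Lagrangian, and use the splitting $U(W)\to \mathrm{Mp}(\mathrm{Res}_{E/F}W)$ constructed in \cite{Kud94} via $\mu$ and recalled in Section \ref{sec4.1}. Parametrize $h\in E^1\setminus\{1\}$ by its inverse Cayley transform $y = (1-h)(1+h)^{-1}\in E^{-}$ (the trace-zero line), and factor the resulting symplectic element as a translation composed with a partial Fourier transform in the Schr\"odinger picture. A Gaussian-integration computation of the trace then gives, on the regular locus,
$$\Theta_{\omega_{W,\psi,\mu}}(h) \;=\; \mu(\det W)\,\omega_{E/F}(-2)\,\gamma_{\psi}(N_{E/F})\,\mu(1-h)^{-1}\,|1-h|_E^{-1/2},$$
in which $\gamma_{\psi}(N_{E/F})$ appears as the Weil index of the one-dimensional quadratic form $N_{E/F}|_{E^{-}}$ arising in the Gaussian, and the factor $\omega_{E/F}(-2)$ collects the Jacobian of the Cayley map together with the sign conventions in Kudla's splitting.

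For (ii), substitute the character formula into $J_W^{E^1}(f)$ and parametrize $E^1$ near $1$ by $t\in F$ via $h = (1+t\delta)(1-t\delta)^{-1}$ for a fixed $\delta\in E^{-}$. Then $|1-h|_E \sim |t|_F^{2}\cdot c$ and $\mu(1-h)^{-1}\sim \omega_{E/F}(t)^{-1}\cdot c'$ for constants $c,c'$, so the local contribution of the regularized integral $\int_{E^1} f(x)\mu(1-x)^{-1}|1-x|_E^{s-1/2}dx$ near $x=1$ is, up to smooth bounded factors, the Tate zeta integral
$$\int_F \phi(t)\,\omega_{E/F}(t)\,|t|_F^{2s-1}\,dt,$$
with $\phi\in C_c^\infty(F)$ satisfying $\phi(0)$ proportional to $f(1)$. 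A direct Laurent expansion at $s=0$ (via Tate's local functional equation and the standard analysis of the $|t|^{-1}$-singularity on the $F$-line) separates a residue contribution equal to $\tfrac{1}{2}f(1)$ from a finite principal-value part which is exactly the regularized integral in the statement; the coefficient $\tfrac12$ is the standard normalization of the principal value for this singularity. Combining these two steps yields the claimed identity.

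The hard part is step (i): pinning down the constants $\omega_{E/F}(-2)$ and $\gamma_{\psi}(N_{E/F})$ with the correct signs requires a careful tracking of the normalizations built into Kudla's splitting, the Schr\"odinger model, and the Cayley parametrization. Step (ii), once the character formula is in hand, is a routine Tate residue calculation.
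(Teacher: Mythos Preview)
Your approach is genuinely different from the paper's, and step (ii) contains a real gap.

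The paper argues purely spectrally. For $n=1$ the spectral expansion of Section \ref{sec6} is just Fourier analysis on $E^\times$: $J^{E^1}_W(f)=\sum_\chi m_W(\chi)\hat f(\chi)$. They then input two known facts: the multiplicity formula $m_W(\chi)=\tfrac12\bigl(1+\mu(\det W)\chi(-1)\epsilon(\tfrac12,\chi\times{}^\sigma\chi^{-1}\times\mu^{-1},\psi_E)\bigr)$ from \cite[Theorem 3.1]{GGP23}, and the integral representation of this epsilon factor from \cite[Lemme A.1]{BP15}. Fourier inversion then turns the constant $\tfrac12$ into $\tfrac12 f(1)$ and the epsilon term into the regularized integral. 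No character computation and no residue analysis is needed.

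Your step (ii) fails at the point where you claim a Laurent expansion at $s=0$ ``separates a residue contribution equal to $\tfrac12 f(1)$''. In your own parametrization the local integral near $x=1$ is $\int_F \phi(t)\,\omega_{E/F}(t)\,|t|_F^{2s-1}\,dt = Z(2s,\phi,\omega_{E/F})$, a Tate zeta integral for the \emph{nontrivial} quadratic character $\omega_{E/F}$. By Tate's thesis this is entire in $s$; there is no pole at $s=0$ and hence no residue. (Concretely: if $E/F$ is unramified one gets $(1-q^{-1})\sum_{n\ge 0}(-1)^n q^{-2ns}$, finite at $s=0$; if $E/F$ is ramified every shell integral already vanishes.) So the limit in the lemma statement simply exists and equals the full integral term; the $\tfrac12 f(1)$ cannot arise this way.

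In your framework the missing $\tfrac12 f(1)$ would have to come from a genuine distributional contribution of $\Theta_{\omega_{W,\psi,\mu}}$ at $h=1$ not captured by the regular-locus formula of step (i) --- but you have not produced such a term, and making sense of the ``character'' of an infinite-dimensional representation of the compact group $E^1$ is already delicate. The cleanest fix is to abandon the direct character computation and follow the paper's spectral route, where the $\tfrac12$ is visible from the start in the multiplicity formula.
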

\begin{proof}
Let $\chi$ be a continuous character of $E^\times$. Applying \cite[Lemme A.1 and (10) in pg. 1360]{BP15} for the character $\chi \times {}^\sigma\chi^{-1}\times \mu^{-1}$, we have 
$$
\epsilon\left(\frac{1}{2},\chi \times {}^\sigma\chi^{-1}\times \mu^{-1},\psi_E\right)=\omega_{E/F}(-2)\gamma_{\psi}(N_{E/F})\lim_{s\rightarrow 0^+}\int_{E^1}(\chi \times {}^\sigma\chi^{-1}\times \mu^{-1})(1-x)|1-x|_E^{s-1/2}dx
$$
$$
=\chi(-1)\omega_{E/F}(-2)\gamma_{\psi}(N_{E/F})\lim_{s\rightarrow 0^+}\int_{E^1}\chi(x)\mu(1-x)^{-1}|1-x|_E^{s-1/2}dx.
$$
The spectral expansion in the case $n=1$ gives us $J^{E^1}_W(f)= \sum_{\chi:E^\times\rightarrow \mathbb{C}^\times} m_W(\chi)\int_{E^\times}f(x)\chi(x)^{-1}dx$. By \cite[Theorem 3.1]{GGP23}, we have
$$
m_W(\chi)=\frac{1+\mu(\det W)\chi(-1)\epsilon\left(\frac{1}{2},\chi \times {}^\sigma\chi^{-1}\times \mu^{-1},\psi_E\right)}{2}.
$$
From the above discussion, the right hand side is equal to
$$
\frac{1}{2}+\frac{\mu(\det W)\omega_{E/F}(-2)\gamma_{\psi}(N_{E/F})}{2}\lim_{s\rightarrow 0^+}\int_{E^1}\chi(x)\mu(1-x)^{-1}|1-x|_E^{s-1/2}dx.
$$
Applying the Fourier inverse formula for $f$, we obtain
$$
J^{E^1}_W(f)= \sum_{\chi:E^\times\rightarrow \mathbb{C}^\times} m_W(\chi)\int_{E^\times}f(x)\chi(x)^{-1}dx
$$
$$
=\frac{1}{2}f(1)+\frac{\mu(\det W)\omega_{E/F}(-2)\gamma_{\psi}(N_{E/F})}{2}\lim_{s\rightarrow 0^+}\int_{E^1}f(x)\mu(1-x)^{-1}|1-x|_E^{s-1/2}dx.
$$
We have finished our proof for Lemma \ref{10.6}.
\end{proof}
It is clear that Lemma \ref{10.6} still holds if we replace $f\in C^\infty_c(E^\times)$ by $C^\infty_c(E^\times/E^{\prime,\times},\chi)$, where $E/E^\prime$ is a finite field extension and $\chi$ is a character of $E^{\prime,\times}$. By applying Lemma \ref{10.6} for $E=E_n$, $F=F_n$ and $W=\text{Res}_{E_n/E}E_n$ and noting that $J^{H_{V,x}}_{V,\chi}(f_x)=J^{E_n^1}_{\text{Res}_{E_n/E}E_n}(f_x)$, we have 
$$
H_{V,x}(F)\backslash\{1\}=E_n^1\backslash\{1\}\subseteq \text{Supp}(J^{E_n^1}_{\text{Res}_{E_n/E}E_n})= \text{Supp}(J^{H_{V,x}}_{V,\chi}).
$$
In particular, $x$ lies in the support of $J^{H_{V,x}}_{V,\chi}$ and hence gives a contradiction. We conclude that $c_\chi=0$. This ends our proof for Theorem \ref{sec10.2}.
\end{proof}

\appendix
\section{Finite multiplicity for the local twisted GGP conjecture}\label{app}
In this appendix, we prove the following theorem by induction on the dimension of skew-hermitian spaces $V$.
\begin{theorem}\label{A.1}
Let $\pi$ be a smooth representation of $G\left(F\right)$ of finite
length. Then 
$$
\dim\text{Hom}_{H}\left(\pi,\omega_{V,\psi,\mu}\right)<\infty.
$$
\end{theorem}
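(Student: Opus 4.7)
The plan is to prove Theorem \ref{A.1} by induction on $n = \dim_E V$, exploiting the functorial property of Weil representations (Proposition \ref{4.1}), the spherical variety structure of $H_V\backslash G$ (Section \ref{sec4.4}), and the orbit classification of [GGP23, Lemma 4.2]. The base case $n=1$ is immediate: $H(F) = E^1$ is the compact norm-one torus in $E^\times = G(F)$, so by admissibility of both $\pi$ and $\omega_{V,\psi,\mu}$, any $H(F)$-invariant map is determined by its values on a $K$-fixed subspace for some compact open $K$, yielding the finite-dimensionality. For the inductive step, I may replace $\pi$ by an irreducible constituent since the Hom functor is left exact.

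Assume first that $\pi$ is not supercuspidal. Then $\pi$ embeds in $i_P^G(\sigma)$ for some proper parabolic $P = MN$ of $G$ and some irreducible smooth representation $\sigma$ of $M(F)$. Frobenius reciprocity gives an injection $\text{Hom}_{H_V}(\pi, \omega_{V,\psi,\mu}) \hookrightarrow \text{Hom}_{H_V}(i_P^G(\sigma), \omega_{V,\psi,\mu})$. Since the double coset space $H_V(F)\backslash G(F)/P(F)$ is finite (as $H_V\backslash G$ is spherical), Mackey/Bruhat theory produces a finite filtration on $i_P^G(\sigma)|_{H_V(F)}$ whose graded pieces embed the Hom space into a finite direct sum of terms of the form $\text{Hom}_{H_{V,i}}(\sigma^{g_i} \otimes \delta_P^{1/2}, \omega_{V,\psi,\mu}|_{H_{V,i}})$ for open-orbit stabilizers $H_{V,i} = H_V \cap g_i P g_i^{-1}$, together with analogous contributions for closed orbits coming from Jacquet modules of $\sigma$. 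By [GGP23, Lemma 4.2], each open-orbit stabilizer is a product $U(W_{i,1}) \times \cdots \times U(W_{i,a})$ of unitary groups of strictly smaller skew-hermitian spaces. Proposition \ref{4.1} then identifies $\omega_{V,\psi,\mu}|_{H_{V,i}}$ with an exterior tensor product of Weil representations of the factors. Applying the induction hypothesis separately to each factor gives that each summand is finite-dimensional.

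The main obstacle is the supercuspidal case, where we cannot reduce via parabolic induction. Here the approach is to use the Bushnell--Kutzko realization $\pi \cong c\text{-}\mathrm{ind}_J^G(\lambda)$ for some compact-mod-center open subgroup $J$ and finite-dimensional $\lambda$, together with the fact that matrix coefficients of supercuspidal representations are compactly supported modulo $Z_G(F)$. A variant of Frobenius reciprocity combined with the analysis of $J\backslash G/H_V$ via the weak Cartan decomposition and the estimates of Proposition \ref{4.6}(vi) reduces the computation of $\text{Hom}_{H_V}(\pi, \omega_{V,\psi,\mu})$ to finitely many local contributions, each of which is finite-dimensional by admissibility of $\omega_{V,\psi,\mu}$ restricted to intersections $J \cap H_V$. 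Making this rigorous requires carefully tracking the intertwining between the $J$-types of $\pi$ and the $H_V$-structure of $\omega_{V,\psi,\mu}$; alternatively, one may appeal to general finite-multiplicity results for spherical varieties in the sense of Delorme or Sakellaridis--Venkatesh, applied to the spherical variety $H_V\backslash G$ equipped with the twisting by the line bundle underlying $\omega_{V,\psi,\mu}$.
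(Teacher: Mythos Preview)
Your non-supercuspidal reduction matches the paper's (Theorem \ref{A.12}), with two small repairs needed. First, if $\pi$ \emph{embeds} in $I=i_P^G(\sigma)$ you do not get $\text{Hom}_H(\pi,\omega)\hookrightarrow\text{Hom}_H(I,\omega)$; the paper instead uses Langlands classification to realize $\pi$ as a \emph{quotient} of an induced representation, so that left exactness of $\text{Hom}_H(-,\omega)$ gives the injection. Second, for a non-open orbit $[X]$ with $d=\dim(X\cap X^\perp)>0$ the reductive quotient of the stabilizer is $\text{GL}_d(E)\times U_{a-d}\times U_{b-d}$ (see \cite[Proposition 4.4]{GGP23}), and the resulting $\text{GL}_d$-Hom is not a twisted GGP multiplicity at all --- the paper dispatches it via Delorme's finite-multiplicity theorem \cite[Theorem 4.5]{Del09} for the symmetric space $\text{GL}_d(E)\times\text{GL}_d(E)/\text{GL}_d(E)$, which your sketch does not invoke.

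The genuine gap is the supercuspidal case. Your Bushnell--Kutzko route does not close: Frobenius reciprocity for $c\text{-}\mathrm{ind}_J^G$ controls $\text{Hom}_G$, not $\text{Hom}_H$, and Mackey theory for $J\backslash G/H$ produces infinitely many double cosets ($J$ is compact-mod-center, $H$ is not), so no finite sum arises and the weak Cartan decomposition does not help here. Your fallback, an appeal to Delorme or Sakellaridis--Venkatesh, presupposes a \emph{character} of $H$ rather than the infinite-dimensional $\omega_{V,\psi,\mu}$; those results do not apply off the shelf. What the paper actually does (Sections \ref{sec12.2}--\ref{sec12.3}) is adapt Delorme's constant-term machinery to $\omega$-valued functions: one defines $f_P\in C^\infty(M/(M\cap H),\omega_{V,\psi,\mu})$ for $f\in C^\infty(G/H,\omega_{V,\psi,\mu})$, shows that for supercuspidal $\pi$ the generalized matrix coefficients $c_{\xi,v}(g)=\xi(\pi(g^{-1})v)$ are $(H,\omega)$-cuspidal (their constant terms vanish because the Jacquet modules of $\pi$ do), and then proves (Lemma \ref{A.10}, Theorem \ref{A.11}) that $K$-fixed cuspidal functions of a given central type $\mathcal{X}$ are determined by their values on finitely many cosets. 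The decisive extra input, absent from your sketch, is the \emph{admissibility} of the central-isotypic piece $\omega_{V,\psi,\mu}(\mathcal{X})$ as an $H(F)$-module, established via Howe duality for the dual pair $(U_1,U(V))$; this is what forces the evaluation at each of those finitely many points to land in a finite-dimensional space and closes the argument.
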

When $\dim V=1$, the above theorem follows from \cite[Theorem 3.1]{GGP23}. We consider the case when $\dim V \geq 2$. In section \ref{sec12.3}, following the strategy in \cite{Del09}, we prove Theorem \ref{A.1} when $\pi$ is supercuspidal. In section \ref{sec12.4}, by using Mackey theory developed in \cite[Section 4]{GGP23} and the induction hypothesis, we prove Theorem \ref{A.1} when $\pi$ is not supercuspidal.

\subsection{Preliminaries}\label{sec12.1}

We recall some notations in \cite{Del09}. Let $A$ be a split torus
of $G$. Let $X_{*}\left(A\right)$ be the group of one-parameter
subgroups of $A$. We fix a uniformizing element $\varpi$ of $F$.
Let $\Lambda\left(A\right)$ be the image of $X_{*}\left(A\right)$
in $A$ via $\lambda\mapsto\lambda\left(\varpi\right)$. By this morphism,
$\Lambda\left(A\right)$ is isomorphic to $X_{*}\left(A\right)$.

Let $\left(P,P^{-}\right)$ be a pair of opposite parabolic subgroups
of $G$ and $M$ be their common Levi subgroup. We denote by $A_{M}$
the split component of its center. Let $U$ and $U^{-}$ be unipotent
radicals of $P$ and $P^{-}$ respectively. Let $\Sigma\left(P,A_{M}\right)$
be the set of roots of $A_{M}$ in $\text{Lie}\left(P\right)$ and
$\Delta\left(P,A\right)$ be the set of simple roots. We set 
$$
A_{M}^{-}\text{ (resp. }A_{M}^{--}\text{) }=\left\{ a\in A_{M}\left(F\right)\mid\ \left|\alpha\left(a\right)\right|_{F}\leq1\text{ (resp. }<1\text{),}\,\alpha\in\Delta\left(P,A\right)\right\} .
$$
We define $A_{M}^{+}$ and $A_{M}^{++}$ similarly by reversing the
inequalities. For $\epsilon>0$, we set 
$$
A_{M}^{-}\left(\epsilon\right)=\left\{ a\in A_{M}\left(F\right)\mid\ \left|\alpha\left(a\right)\right|_{F}\leq\epsilon,\,\alpha\in\Delta\left(P,A\right)\right\} .
$$
Let $M_{0}$ be a minimal Levi subgroup of $G$ and $A_{0}$ be the
split component of $Z_{M_{0}}$. We fix a minimal parabolic subgroup
$P_{0}$ of $G$ with Levi component $M_{0}$. We choose a maximal
compact subgroup $K_{0}$ of $G\left(F\right)$ with relative position
to $M_{0}$. If $P$ is a parabolic subgroup of $G$ containing $A_{0}$,
we denote by $P^{-}$ the opposite parabolic subgroup of $G$ to $P$
containing $A_{0}$ and $M=P\cap P^{-}$. We recall \cite[Proposition 1.4.4]{Cas}.
\begin{proposition}\label{A.2}
There exists a decreasing sequence of compact open subgroups $\left(K_{n}\right)_{n\in\mathbb{N}}$
of $G\left(F\right)$ such that for all $n\in\mathbb{N}^{*}$, $K_{n}$
is normal in $K_{0}$ and for every parabolic subgroup $P$ containing
$P_{0}$, the followings hold
\begin{enumerate}
\item $K_{n}=K_{n,U^{-}}K_{n,M}K_{n,U}$, where $K_{n,U^{-}}=K_{n}\cap U^{-}$,
$K_{n,U}=K_{n}\cap U$, $K_{n,M}=K_{n}\cap M$.
\item For all $a\in A_{M}^{-}$, one has $aK_{n,U}a^{-1}\subset K_{n,U}$
and $aK_{n,U^{-}}a^{-1}\subset K_{n,U^{-}}$.
\item The sequence $K_{n}$ forms a neighborhood basis of the identity in
$G\left(F\right)$.
\end{enumerate}
\end{proposition}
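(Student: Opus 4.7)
The plan is to construct the $K_n$ as principal congruence subgroups of a suitable integral model of $G$, chosen so that the required Iwahori-type factorization properties with respect to every standard parabolic arise automatically from an analogous decomposition on the $\mathcal{O}_F$-scheme level. First I would fix a smooth affine group scheme $\mathcal{G}$ over $\mathcal{O}_F$ whose generic fiber is $G$ and whose $\mathcal{O}_F$-points are $K_0$; since $K_0$ is in good relative position to $M_0$, one can arrange that $A_0$, $M_0$, and each standard parabolic $P\supset P_0$ extend to closed $\mathcal{O}_F$-subgroup schemes $\mathcal{A}_0$, $\mathcal{M}_0$, $\mathcal{P}$, together with Levi decompositions $\mathcal{P}=\mathcal{M}\mathcal{U}$ and $\mathcal{P}^-=\mathcal{M}\mathcal{U}^-$ compatible with the generic fibers.

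Next I would define $K_n=\ker\bigl(\mathcal{G}(\mathcal{O}_F)\to\mathcal{G}(\mathcal{O}_F/\varpi^n\mathcal{O}_F)\bigr)$ and $K_{n,H}=K_n\cap H(F)$ for any subgroup $H$ coming from such an $\mathcal{O}_F$-model. Then normality of $K_n$ in $K_0$ and the property that $(K_n)_n$ is a decreasing neighborhood basis of the identity both follow from general properties of congruence subgroups of smooth affine $\mathcal{O}_F$-group schemes, giving items (i) (the three $K_n$-factor definitions) and (iii). The Iwahori factorization $K_n=K_{n,U^-}K_{n,M}K_{n,U}$ would be deduced by passing to the Lie algebra: the decomposition $\mathfrak{g}=\mathfrak{u}^-\oplus\mathfrak{m}\oplus\mathfrak{u}$ integrates, via a truncated exponential (or the product map on $\mathcal{U}^-\times\mathcal{M}\times\mathcal{U}$, which is an isomorphism onto an open subscheme of $\mathcal{G}$ containing the identity section), to the corresponding product decomposition of $K_n$ for all $n\geq 1$.

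For the conjugation assertion in (ii), I would use the refined root decomposition $\mathfrak{u}=\bigoplus_{\alpha\in\Sigma(P,A_M)}\mathfrak{u}_\alpha$, on each summand of which $\mathrm{Ad}(a)$ acts by the scalar $\alpha(a)$. For $a\in A_M^-$ we have $|\alpha(a)|_F\leq 1$ for all $\alpha\in\Sigma(P,A_M)$, so $\mathrm{Ad}(a)$ preserves the $\mathcal{O}_F$-lattice in $\mathfrak{u}(F)$ corresponding to $\mathcal{U}(\mathcal{O}_F)$ (and multiplies the piece $\varpi^n\mathfrak{u}_\alpha(\mathcal{O}_F)$ into itself), giving $aK_{n,U}a^{-1}\subset K_{n,U}$; the statement for $K_{n,U^-}$ follows symmetrically since roots in $\Sigma(P^-,A_M)$ have negative absolute value growth under $a$.

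The main obstacle will be arranging the integral model cleanly enough that the Lie-algebra exponential/product map really does give honest equalities $K_n=K_{n,U^-}K_{n,M}K_{n,U}$ simultaneously for \emph{every} standard parabolic $P\supset P_0$, rather than for a single one; this is a uniform choice of model and of the small enough level $n\geq1$, and is handled either by Bruhat--Tits theory (taking $K_0$ to be the stabilizer of a special vertex $x$ and $K_n$ the $n$-th Moy--Prasad subgroup $\mathcal{G}_{x,n}$) or, as in Casselman, by a direct construction from the Chevalley basis. Once the uniform model is fixed, each item reduces to a routine verification.
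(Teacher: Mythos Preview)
The paper does not give its own proof of this proposition: it is stated purely as a citation of \cite[Proposition~1.4.4]{Cas} (Casselman's unpublished notes), with no argument supplied. So there is nothing in the paper to compare your sketch against; you have in fact provided more than the paper does. Your outline via congruence subgroups of a smooth $\mathcal{O}_F$-model (equivalently, Moy--Prasad filtrations at a special vertex) is the standard route and is essentially what Casselman carries out in a hands-on way using a Chevalley lattice.

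One small correction: your argument for the $U^-$ part of item~(ii) actually proves the \emph{reverse} inclusion. For $a\in A_M^-$ and a root $\beta\in\Sigma(P^-,A_M)$ one has $|\beta(a)|_F\geq 1$, so $\mathrm{Ad}(a)$ \emph{expands} the lattice in $\mathfrak{u}^-$, yielding $aK_{n,U^-}a^{-1}\supset K_{n,U^-}$ (equivalently $a^{-1}K_{n,U^-}a\subset K_{n,U^-}$), which is the form appearing in Casselman. The inclusion as literally written in the proposition is in the wrong direction; your reasoning is fine once the sign is straightened out.
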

We say $K$ has an Iwahori factorization with respect to $\left(P,P^{-}\right)$
if part (i) and (ii) hold. In this case, we have $\underset{n\in\mathbb{N}}{\bigcup}a^{-n}K_{U}a^{n}=U$, for any $a\in A_{M}^{--}$. Let $\sigma$ be an involution of $G$ defined over $F$ such that
$H=G^{\sigma}$. A parabolic subgroup $P$ of $G$ is called a $\sigma$-parabolic
subgroup if $P^{-}=\sigma\left(P\right)$. Then $M=P\cap\sigma\left(P\right)$
is the $\sigma$-stable Levi subgroup of $P$. By \cite[Proposition 13.4]{HW93},
it follows that $P^{-}H$ is open in $G$ and the restriction of the
modulus character $\delta_{P}$ to $H\left(F\right)$ is trivial.
Moreover, there are finitely many $H\left(F\right)$-conjugacy classes
of $\sigma$-parabolic subgroups of $G$. We denote this set by ${\mathcal{P}}^{\sigma}\left(G\right)$.

We recall the second adjointness theorem. Let $P=MU$ be a parabolic subgroup of $G$ and $P^-=MU^-$ be the opposite parabolic subgroup of $G$ to $P$. We also fix a minimal parabolic subgroup $P_0=M_0U_0$ such that $P_0$ is contained in $P$ and $A_0=A_{M_0}$ contains $A=A_M$. Let $\pi$ be a smooth representation of $G(F)$. We denote by $\pi_U$ the normalized Jacquet module of $\pi$ along $P$ and $j_P$ the projection map from $\pi$ to $\pi_U$. Let $\pi^\vee$ be the smooth dual of $\pi$. We recall the second adjointness theorem in \cite{Ber}.
\begin{theorem}\label{A.3}
    There exists a unique nondegenerate $M$-invariant bilinear form $\langle,\rangle_P$ on $\pi_{U^-}\times \pi_U$ such that for all compact open subgroups $K$ and for all $a\in A^{--}$, we have
    $$
    \delta_P^{1/2}(a^n)\langle j_{P^-}(v^\vee),\pi_U(a^n)j_P(v)\rangle_P=\langle v^\vee, \pi(a^n)v\rangle,\ \forall n\geq n_K(a),    
    $$
    where $n_K(a)\in\mathbb{N}$ depends only on $K$ and $a\in A^{--}$ but not on $\pi$ and $v\in \pi^K$ and $v^\vee\in (\pi^\vee)^K$.
\end{theorem}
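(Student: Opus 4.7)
The plan is to construct the pairing via the asymptotic behavior of matrix coefficients, following the spirit of Bernstein's original argument (as exposed in Bernstein's unpublished Harvard notes and also in work of Bushnell--Henniart and others).

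First, I would establish the key stabilization lemma. Fix a compact open subgroup $K$ with Iwahori factorization with respect to $(P,P^-)$ as provided by Proposition \ref{A.2}, and fix $a \in A^{--}$. For $v \in \pi^K$ and $v^\vee \in (\pi^\vee)^K$, the claim is that the sequence $n \mapsto \delta_P^{1/2}(a^n)\langle v^\vee, \pi(a^n) v \rangle$ is eventually constant in $n$, with the threshold $n_K(a)$ depending only on $K$ and $a$. The proof uses the Jacquet functor: the Iwahori factorization and the contracting property $aK_Ua^{-1} \subseteq K_U$ together with $\bigcup_n a^{-n}K_U a^n = U$ imply that the projection $j_P \colon \pi^K \to (\pi_U)^{K_M}$ is surjective and that its kernel is exactly $\pi(e_{K})\pi(U_n)v$-type averages, where $U_n = a^{-n}K_U a^n$. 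Concretely, for $n$ large enough depending only on $K,a$, one shows $v - v_0 \in \ker j_P$ for some $v_0$ with $\pi(a^n)v_0 = \pi(a^n)v$ modulo vectors annihilated by $v^\vee$. This yields the stabilization and simultaneously the factorization through $\pi_{U^-} \otimes \pi_U$.

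Second, I would \emph{define} $\langle j_{P^-}(v^\vee), \pi_U(a^n) j_P(v)\rangle_P$ as the stabilized value of $\delta_P^{1/2}(a^n) \langle v^\vee, \pi(a^n)v \rangle$ (for $n \geq n_K(a)$), and check that this is well-defined: independent of the lifts $v, v^\vee$ (because any two lifts of the same Jacquet-module vectors differ by elements in the kernels $\ker j_P$, $\ker j_{P^-}$, which are killed in the limit), and independent of $a$ (using the monoid structure of $A^{--}$ and comparing stabilized values for $a, a', aa'$). By extending $A$-linearly and then $M$-equivariantly (using the density of $A^{--}K_M$-translates in $M$), one gets an $M$-invariant pairing $\langle,\rangle_P$ on $\pi_{U^-} \times \pi_U$, and uniqueness is immediate from the defining stabilization identity.

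Third, and this will be the main obstacle, I would prove non-degeneracy. The natural $M$-invariant pairing gives a canonical map $\pi_{U^-} \to (\pi_U)^\vee$; non-degeneracy is equivalent to this being an isomorphism, which is essentially the content of Bernstein's second adjunction $(\operatorname{ind}_P^G)^\vee \cong \operatorname{ind}_{P^-}^G$ on the level of Jacquet modules, i.e.\ $(\pi^\vee)_{U^-} \cong (\pi_U)^\vee$ naturally in $\pi$. To prove this last statement I would reduce to the case of admissible $\pi$ (which suffices for our applications since we only deal with representations of finite length), in which case $(\pi_U)^\vee$ is itself admissible of finite length and both sides have equal dimensions on every $K_M$-fixed subspace. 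Using the explicit Schwartz-type description of $U(F)$-coinvariants via integration kernels, combined with the exactness of the Jacquet functor, one reduces the isomorphism to a finite-dimensional nondegeneracy statement that can be checked by producing, for each nonzero $\bar v \in \pi_U$, a vector $v^\vee$ with $\langle j_{P^-}(v^\vee), \bar v\rangle_P \neq 0$ via the stabilization formula applied to a suitably chosen lift. The hardest point is controlling the interaction of the two Jacquet functors with duality uniformly in $\pi$; this is where Bernstein's finiteness results on the Bernstein center (or alternatively, the uniform admissibility theorem) enter essentially.
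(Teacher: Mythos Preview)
The paper does not give its own proof of this statement: Theorem~\ref{A.3} is simply \emph{recalled} as Bernstein's second adjointness theorem, with a citation to \cite{Ber} (Bernstein's unpublished manuscript), and no argument is supplied. So there is nothing in the paper to compare your proposal against at the level of proof details.

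That said, your outline is a reasonable sketch of the standard approach attributed to Bernstein (and exposed e.g.\ in Casselman's notes and Bushnell--Henniart), and it would be an acceptable substitute for the citation. Two remarks. First, your stabilization claim as written is slightly off: the quantity $\delta_P^{1/2}(a^n)\langle v^\vee,\pi(a^n)v\rangle$ is not literally eventually constant in $n$; rather, for $n\ge n_K(a)$ it agrees with the Jacquet-module pairing evaluated at $\pi_U(a^n)j_P(v)$, which itself varies with $n$. What stabilizes is the \emph{identity} between the two sides, not a numerical sequence. Your second step implicitly uses this correctly, but the wording in the first step should be adjusted. Second, your non-degeneracy argument via ``equal dimensions on every $K_M$-fixed subspace'' presupposes that the map $(\pi^\vee)_{U^-}\to(\pi_U)^\vee$ is injective, which is the nontrivial direction; the cleaner route (and the one in Bernstein's argument) is to show directly that Casselman's canonical pairing is nondegenerate by producing, for any nonzero $\bar v\in\pi_U$, a $K$-fixed lift $v$ and a $v^\vee$ with $\langle v^\vee,\pi(a^n)v\rangle\ne 0$ for large $n$, using that $\pi(a^n)v$ is itself nonzero and $K$-fixed. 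For the purposes of this paper, however, simply citing \cite{Ber} (or Casselman) as the paper does is entirely sufficient.
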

We set $\Theta_P = \Delta(P_0\cap M,A_0)$ and $A_0^-(P,<\epsilon)=\{ a\in A_0^-\mid\ |\alpha(a)|_F<\epsilon,\,\forall \alpha \in \Delta(P_0,A_0)\setminus \Theta_P\}$ for $\epsilon > 0$.
\subsection{Constant term of smooth functions}\label{sec12.2}

Let $C^{\infty}\left(G/H,\omega_{V,\psi,\mu}\right)$ be the space
of smooth functions $f$ on $G\left(F\right)$, whose values belong
to $\omega_{V,\psi,\mu}$, satisfying $f\left(gh\right)=\omega_{V,\psi,\mu}\left(h^{-1}\right)f\left(g\right)$, for any $g\in G\left(F\right)$ and $h\in H\left(F\right)$. We consider the left regular representation $L$ of $C^{\infty}\left(G/H,\omega_{V,\psi,\mu}\right)$.
Let $\delta_{e}$ be the Dirac measure in $e$. Let $\left(P,P^{-}\right)$
be a pair of opposite $\sigma$-parabolic subgroups of $G$. Let $\{\phi_i\}_{i\in I}$ be an orthonormal basis for $\omega_{V,\psi,\mu}$.
\begin{definition}\label{A.5}
For $f\in C^{\infty}\left(G/H,\omega_{V,\psi,\mu}\right)$, let $f_{P} \in C^{\infty}\left(M/\left(H\cap M\right),\omega_{V,\psi,\mu}\right)$ characterized by  
$$
\left\langle f_{P}\left(mM\cap H\right),\phi_{i}\right\rangle =\left\langle j_{P^{-}}\left(\delta_{eH}\right),\delta_{P}^{1/2}\left(m\right)\left\langle j_{P}\left(L_{m^{-1}}f\right)\left(\cdot\right),\phi_{i}\right\rangle \right\rangle ,
$$
for $m\in M\left(F\right)$ and $i\in I$. Note that here we regard
$\left\langle j_{P}\left(L_{m^{-1}}f\right)\left(\cdot\right),\phi_{i}\right\rangle $
as a function on $G\left(F\right)$. We call $f_{P}$ the constant
term of $f$ along $P$.
\end{definition}

If $\pi$ is a smooth $G\left(F\right)$-module and
$v\in \pi$ and $\xi\in\text{Hom}_{H}\left(\pi,\omega_{V,\psi,\mu}\right)$,
we denote by $c_{\xi,v}(g)=\xi\left(\pi\left(g^{-1}\right)w\right)$ as a function in $C^{\infty}\left(G/H,\omega_{V,\psi,\mu}\right)$. The map $c_{\xi}:\pi\rightarrow C^{\infty}\left(G/H,\omega_{V,\psi,\mu}\right)$
given by $v\mapsto c_{\xi,v}$ is a morphism of $G\left(F\right)$-modules. We recall some analogs of Proposition 3.14 and Proposition 3.16 in \cite{Del09}.
\begin{proposition}\label{A.6}
If $A_{0}$ is a maximal split torus contained in $M=P\cap P^{-}$ and
$P_{0}\subset P$ is a minimal parabolic subgroup containing $A_{0}$,
we have 
$$
f\left(a^{-1}\right)=\delta_{P}^{1/2}\left(a\right)f_{P}\left(a^{-1}\right),\,\forall a\in A_{0}^{-}\left(P,<\epsilon_{K}\right),
$$
noting that here we identify the Weil representation $\omega_{V,\psi,\mu}$ of $H(F)$ with its restriction on $M(F)\cap H(F)$, which is also the (product of) Weil representations of unitary groups.
\end{proposition}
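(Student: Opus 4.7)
The plan is to deduce Proposition~\ref{A.6} directly from the second adjointness theorem (Theorem~\ref{A.3}) applied to the smooth $G(F)$-module $\pi := C^{\infty}\left(G/H,\omega_{V,\psi,\mu}\right)$ equipped with its left regular action. Following closely the strategy of \cite[Propositions 3.14--3.16]{Del09}, the Weil-representation twist is handled one basis vector at a time: since $\{\phi_i\}_{i\in I}$ is an orthonormal basis of $\omega_{V,\psi,\mu}$, it suffices to prove the identity after pairing both sides with each $\phi_i$.

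Fix an index $i$ and introduce the linear functional $\xi_i \in \pi^{\vee}$ defined by $\xi_i(f') = \langle f'(e),\phi_i\rangle$; the equivariance property $f'(gh) = \omega_{V,\psi,\mu}(h^{-1})f'(g)$ guarantees that $\xi_i$ restricts to an element of the smooth dual of the $G(F)$-submodule of $\pi$ generated by $f$. The matrix-coefficient interpretation gives
\[
\langle f(a^{-1}),\phi_i\rangle \;=\; \langle \xi_i,\, \pi(a) f\rangle .
\]
Next I would select a compact open subgroup $K\subset G(F)$ admitting an Iwahori factorization with respect to $(P,P^-)$ (as provided by Proposition~\ref{A.2}) such that both $f$ and $\xi_i$ are $K$-fixed. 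The threshold $\epsilon_K$ is then calibrated so that every $a\in A_0^-(P,<\epsilon_K)$ sits deep enough in the negative chamber for the hypothesis of Theorem~\ref{A.3} to hold already at $n=1$; concretely, one asks $a K_U a^{-1}\subsetneq K_U$ and $a^{-1} K_{U^-} a\subsetneq K_{U^-}$ with enough contraction so that the integer $n_K(a)$ appearing in the second adjointness equals one.

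With this choice, Theorem~\ref{A.3} applied to $v=f$, $v^{\vee}=\xi_i$, and $n=1$ yields
\[
\langle \xi_i,\, \pi(a) f\rangle \;=\; \delta_P^{1/2}(a)\,\langle j_{P^-}(\xi_i),\, \pi_U(a)\, j_P(f)\rangle_P .
\]
Setting $m=a^{-1}$, one has $L_{m^{-1}} f = \pi(a) f$ and, by the $M$-equivariance of $j_P$, $j_P(\pi(a)f) = \pi_U(a) j_P(f)$. Comparing the right-hand side to Definition~\ref{A.5} (applied with $m=a^{-1}$, and identifying $\xi_i$ with the symbol $\delta_{eH}\otimes \phi_i$ in that definition) gives
\[
\delta_P^{1/2}(a)\,\langle j_{P^-}(\xi_i),\, \pi_U(a)\, j_P(f)\rangle_P
\;=\; \delta_P^{1/2}(a)\,\langle f_P(a^{-1}(M\cap H)),\,\phi_i\rangle .
\]
Since $i$ is arbitrary, the pointwise identity in $\omega_{V,\psi,\mu}$ follows.

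The routine step is the choice of $\epsilon_K$ forcing $n_K(a)\le 1$, which is a standard consequence of the Iwahori factorization and the contraction behavior of $a\in A_0^-$ on $U$ and $U^-$. The main bookkeeping obstacle is checking that the abstract second adjointness pairing $\langle \cdot,\cdot\rangle_P$ agrees, after pairing with $\phi_i$, with the pairing used in Definition~\ref{A.5}; this reduces to the $M$-equivariance of $j_P$ and $j_{P^-}$ together with the compatibility of the functional $f'\mapsto \langle f'(e),\phi_i\rangle$ with the symbol $\delta_{eH}\otimes \phi_i$. Once this identification is verified, the proof is complete.
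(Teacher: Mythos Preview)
Your approach is essentially the same as the paper's: the paper's proof is a one-line reference to Definition~\ref{A.5} together with \cite[Lemma 2.1]{Del09}, and that lemma is precisely the asymptotic identity coming from Casselman's canonical pairing (the second adjointness, Theorem~\ref{A.3}), which is exactly what you invoke. Your write-up simply unpacks that reference componentwise in $\phi_i$, and the ``bookkeeping obstacle'' you flag at the end is tautological once one observes that Definition~\ref{A.5} is \emph{defined} via this very pairing.
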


\begin{proof}
The proof follows from Definition \ref{A.5} and \cite[Lemma 2.1]{Del09}.
\end{proof}
\begin{proposition}\label{A.7}
Let $\left(Q,Q^{-}\right)$
be another pair of opposite parabolic subgroups of $G$ with $Q\subset P,\,Q^{-}\subset P^{-}$
and such that $Q^{-}H$ is open. Denote by $R=Q\cap M$, $R^{-}=Q^{-}\cap M$.
We assume that $R^{-}(M\cap H)$ is open. Then $\left(R,R^{-}\right)$
is a pair of opposite parabolic subgroups of $M$. We have 
\begin{enumerate}
\item 
$$
f_{Q}=\left(f_{P}\right)_{R},\text{ for all }f\in C^{\infty}\left(G/H,\omega_{V,\psi,\mu}\right).
$$
\item 
$$
f_{h\cdot P}\left(hmh^{-1}\right)=\omega_{V,\psi,\mu}\left(h^{-1}\right)\left(L_{h^{-1}}f\right)_{P}\left(m\right),
$$
for all $h\in H\left(F\right),\,m\in M\left(F\right),\,f\in C^{\infty}\left(G/H,\omega_{V,\psi,\mu}\right).$
\end{enumerate}
\end{proposition}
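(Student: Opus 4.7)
The proof strategy is straightforward from Definition \ref{A.5} combined with standard transitivity and conjugation compatibilities of Jacquet functors, essentially mirroring the scalar-valued argument of Proposition 3.16 in \cite{Del09}. In both parts, the Weil representation $\omega_{V,\psi,\mu}$ enters only as a coefficient of a scalar-valued pairing against the fixed orthonormal basis $\{\phi_i\}$, so the verification reduces, term by term, to compatibilities already familiar in the character-valued setting.

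For part (i), I would unpack both $\langle f_Q(m(M\cap H)),\phi_i\rangle$ and $\langle (f_P)_R(m(M\cap H)),\phi_i\rangle$ via Definition \ref{A.5} and match them using three ingredients: (a) the factorization of modulus characters $\delta_Q|_M = \delta_R\cdot\delta_P|_M$, which comes from the decomposition $U_Q = U_R\cdot U_P$ of the unipotent radical of $Q$; (b) the transitivity of Jacquet restriction, providing canonical $M(F)$-equivariant isomorphisms $\pi_{U_Q}\simeq (\pi_{U_P})_{U_R^M}$ and $\pi_{U_{Q^-}}\simeq (\pi_{U_{P^-}})_{U_{R^-}^M}$ under which $j_Q$ corresponds to $j_R^M\circ j_P$ and $j_{Q^-}$ to $j_{R^-}^M\circ j_{P^-}$; and (c) the compatibility of these isomorphisms with the second adjointness pairing of Theorem \ref{A.3}. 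Under these identifications, the image of $\delta_{eH}$ in the twice-iterated Jacquet module transports to the image of $\delta_{e(M\cap H)}$ under $j_{R^-}^M$, because $P^-H$ is open in $G$ and $R^-(M\cap H)$ is open in $M$, so that $eH$ and $e(M\cap H)$ are singletons in their respective open orbits.

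For part (ii), I would apply Definition \ref{A.5} to $f_{h\cdot P}$ evaluated at $hmh^{-1}$. Conjugation by $h\in H(F)$ yields canonical identifications $\pi_{U_{hPh^{-1}}}\simeq \pi_{U_P}$ and $\pi_{U_{hP^-h^{-1}}}\simeq \pi_{U_{P^-}}$, together with the equality $\delta_{hPh^{-1}}(hmh^{-1}) = \delta_P(m)$ and the relation $j_{hP^-h^{-1}}(\delta_{eH}) = j_{P^-}(\delta_{eH})$ after transport, the latter because $\delta_{eH}$ is fixed by the left $H(F)$-action on $G/H$. Finally, using $L_{hm^{-1}h^{-1}} = L_h\circ L_{m^{-1}}\circ L_{h^{-1}}$ together with the right $H$-equivariance $f(gh) = \omega_{V,\psi,\mu}(h^{-1})f(g)$, one rewrites the occurrence of $L_{(hmh^{-1})^{-1}}f$ inside Definition \ref{A.5} as $\omega_{V,\psi,\mu}(h^{-1})\bigl(L_{m^{-1}}(L_{h^{-1}}f)\bigr)$, which collapses to the right-hand side $\omega_{V,\psi,\mu}(h^{-1})(L_{h^{-1}}f)_P(m)$.

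The main obstacle is careful bookkeeping: one must verify that the pairing
$$\bigl\langle j_{P^-}(\delta_{eH}),\,\delta_P^{1/2}(m)\langle j_P(L_{m^{-1}}f)(\cdot),\phi_i\rangle\bigr\rangle$$
behaves correctly both under iteration of Jacquet functors (for part (i)) and under conjugation by $h\in H(F)$ (for part (ii)), taking into account that the inner coefficient is itself a function on $G(F)$ landing in the appropriate smooth module. Once this is organized, the Weil-representation coefficient $\langle\cdot,\phi_i\rangle$ commutes with all the identifications in play, and the proof reduces to the scalar case of \cite[Proposition 3.16]{Del09}.
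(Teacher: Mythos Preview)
Your proposal is correct and matches the paper's approach exactly: the paper's entire proof reads ``The proof follows from Definition \ref{A.5},'' and what you have written is precisely the unpacking of that one line, reducing via the coefficient maps $\langle\cdot,\phi_i\rangle$ to the scalar-valued argument of \cite[Proposition 3.16]{Del09}.
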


\begin{proof}
The proof follows from Definition \ref{A.5}.
\end{proof}
We define $\left(H,\omega_{V,\psi,\mu}\right)$-cuspidal functions.
\begin{definition}\label{A.8}
A function $f\in C^{\infty}\left(G/H,\omega_{V,\psi,\mu}\right)$
is said to be $\left(H,\omega_{V,\psi,\mu}\right)$-cuspidal if for
any proper $\sigma$-parabolic subgroup $P$ of $G$, one has 
$$
\left(L_{g}f\right)_{P}=0,\text{ for any }g\in G\left(F\right).
$$
We denote by $C^{\infty}\left(G/H,\omega_{V,\psi,\mu}\right)_{\text{cusp}}$
the $G(F)$-space of $\left(H,\omega_{V,\psi,\mu}\right)$-cuspidal
smooth functions from $G\left(F\right)/H\left(F\right)$ to $\omega_{V,\psi,\mu}$.
\end{definition}

\subsection{Finite multiplicity of supercuspidal representations}\label{sec12.3}

In this section, we prove Theorem \ref{A.1} when $\pi$ is a supercuspidal representation. We first recall some basic set-ups in \cite{Del09}. A split torus is said to be $\sigma$-split if its elements are anti-invariant
by $\sigma$. Let $A_{\emptyset}$ be a maximal $\sigma$-split torus
of $G\left(F\right)$. We denote by $M_{\emptyset}$ the centralizer
of $A_{\emptyset}$ in $G$ and $P_{\emptyset}$ a minimal $\sigma$-parabolic
subgroup of $G$ whose $\sigma$-stable Levi component is $M_{\emptyset}$.
Let $A_{\emptyset,G}$ be the maximal $\sigma$-split torus in $A_{G}$.
We denote by $\Lambda^{+}\left(A_{\emptyset}\right)$ the set of $P_{\emptyset}$-dominant
elements in $\Lambda\left(A_{\emptyset}\right)$. Let $\Sigma\left(G,A_{\emptyset}\right)$
be the set of roots of $A_{\emptyset}$ in the Lie algebra of $G$.
We denote by $\Delta\left(G,A_{\emptyset}\right)$ the set of simple
roots of $\Sigma\left(P,A_{\emptyset}\right)$. Given a subset $\Theta\subseteq\Delta\left(G,A_{\emptyset}\right)$,
we denote by $P_{\Theta}$ the parabolic subgroup of $G$ containing
$A_{\emptyset}$ which is corresponding to $\Theta$. It is a $\sigma$-parabolic
subgroup of $G$. 

For $C>0$, we denote by $\Lambda^{+}\left(A_{\emptyset}\right)$
(resp. $\Lambda^{+}\left(A_{\emptyset},C\right)$) the set containing
$a\in\Lambda\left(A_{\emptyset}\right)$ such that $\left|\alpha\left(a\right)\right|_{F}\geq1$
(resp. $C>\left|\alpha\left(a\right)\right|_{F}\geq1$), for all $\alpha\in\Delta\left(G,A_{\emptyset}\right)$.
The following lemma is an analog of \cite[Lemma 4.3]{Del09}.
\begin{lemma}\label{A.10}
Let $K$ be a compact open subgroup of $G\left(F\right)$. Then there
exists a finite set $F_{\emptyset,K}\subset\Lambda^{+}\left(A_{\emptyset}\right)$
such that the restriction of every element of $C^{\infty}\left(G/H,\omega_{V,\psi,\mu}\right)_{\text{cusp}}^{K}$
to $\Lambda^{+}\left(A_{\emptyset}\right)$ is zero outside $F_{\emptyset,K}\Lambda\left(A_{\emptyset}\right)_{G}$.
\end{lemma}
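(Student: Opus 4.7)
The plan is to exploit cuspidality through the constant term identity of Proposition \ref{A.6}. For any proper $\sigma$-parabolic subgroup $P$ of $G$ and any $K$-fixed $f \in C^\infty(G/H,\omega_{V,\psi,\mu})_{\text{cusp}}$, cuspidality forces $f_P = 0$, so Proposition \ref{A.6} implies $f(a^{-1}) = 0$ whenever $a$ lies in the ``deep chamber'' $A_0^-(P,<\epsilon_K)$ (for an appropriate choice of maximal split torus $A_0 \supseteq A_\emptyset$ contained in the $\sigma$-stable Levi of $P$, and a minimal parabolic $P_0 \subseteq P$ containing $A_0$). The finite set $F_{\emptyset,K}$ will come from covering the complement of a bounded region of $\Lambda^+(A_\emptyset)/\Lambda(A_\emptyset)_G$ by finitely many such deep chambers, one for each proper $\Theta \subsetneq \Delta(G,A_\emptyset)$.

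More concretely, first I would enumerate the finitely many $\sigma$-parabolic subgroups $P_\Theta$ (with $\Theta \subsetneq \Delta(G,A_\emptyset)$) that contain $P_\emptyset$; for each one, select a maximal split torus $A_0$ of $G$ with $A_\emptyset \subseteq A_0 \subseteq M_\Theta = P_\Theta \cap \sigma(P_\Theta)$ and a minimal parabolic $P_0 \subseteq P_\Theta$ compatible with $P_\emptyset$ (so that the simple roots of $A_0$ in $P_0$ whose restriction to $A_\emptyset$ is nonzero recover $\Delta(G,A_\emptyset)$, and the roots belonging to $M_\Theta$ correspond to $\Theta$). Applying Proposition \ref{A.6} and the cuspidality of $f$ then gives $f(a^{-1}) = 0$ for every $a \in A_0^-(P_\Theta,<\epsilon_K)$. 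Setting $b = a^{-1}$ and restricting to $A_\emptyset$, this translates into vanishing of $f(b)$ on the subset
\[
\mathcal{C}_\Theta := \{\, b \in \Lambda^+(A_\emptyset) \;:\; |\alpha(b)|_F > \epsilon_K^{-1} \text{ for every } \alpha \in \Delta(G,A_\emptyset) \setminus \Theta \,\}.
\]

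Next I would verify that $\Lambda^+(A_\emptyset) \setminus \bigcup_{\Theta} \mathcal{C}_\Theta$ is bounded modulo $\Lambda(A_\emptyset)_G$. Indeed, if $b \in \Lambda^+(A_\emptyset)$ lies outside every $\mathcal{C}_\Theta$, then in particular for each $\alpha_0 \in \Delta(G,A_\emptyset)$ (take $\Theta = \Delta(G,A_\emptyset) \setminus \{\alpha_0\}$) we must have $|\alpha_0(b)|_F \leq \epsilon_K^{-1}$; combined with dominance $|\alpha(b)|_F \geq 1$ for all $\alpha$, the image of $b$ in $A_\emptyset/A_{\emptyset,G}$ is then confined to a compact set. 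Because $\Lambda(A_\emptyset)/\Lambda(A_\emptyset)_G$ is a lattice in $A_\emptyset/A_{\emptyset,G}$ (modulo torsion), that bounded region meets $\Lambda^+(A_\emptyset)$ in a finite collection of cosets, giving the required finite set $F_{\emptyset,K}$.

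The main technical obstacle I anticipate is the compatibility bookkeeping between the root system $\Delta(G,A_\emptyset)$ that governs the definition of $\Lambda^+(A_\emptyset)$ and the root system $\Delta(P_0,A_0)$ that appears in the hypothesis of Proposition \ref{A.6}: one must carefully choose $A_0$ and $P_0$ for each $P_\Theta$ so that ``deep in the $P_\Theta$-direction'' as defined by $A_0^-(P_\Theta,<\epsilon_K)$ really corresponds to the chamber $\mathcal{C}_\Theta$ defined via $\Delta(G,A_\emptyset)$. This should follow from the standard fact that, for $A_0$ chosen as above, the nonzero restrictions of elements of $\Delta(P_0,A_0) \setminus \Theta_{P_\Theta}$ to $A_\emptyset$ yield exactly $\Delta(G,A_\emptyset) \setminus \Theta$, but this is the step where care is required. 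Everything else reduces to the covering argument above, combined with the $\sigma$-Iwasawa-type decomposition $G(F) = H(F) A_\emptyset(F)^+ \Omega$ for a compact $\Omega$ (which lets us reduce the support question from $G(F)/H(F)$ to $\Lambda^+(A_\emptyset)$ modulo a compact set already absorbed by the $K$-fixedness).
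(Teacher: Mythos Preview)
Your proposal is correct and follows essentially the same route as the paper's proof. Both arguments apply Proposition~\ref{A.6} together with cuspidality to force $f$ to vanish on the ``deep chambers'' $\Lambda^+(P_\Theta,>C)$, and then observe that the complement of their union in $\Lambda^+(A_\emptyset)$ is bounded modulo $\Lambda(A_\emptyset)_G$; the paper is slightly more economical in that it only uses the maximal proper subsets $\Theta=\Delta(G,A_\emptyset)\setminus\{\alpha\}$ (which is exactly what your boundedness step actually needs) and invokes \cite[Lemma~4.2]{Del09} for the finiteness conclusion rather than spelling it out, while your final paragraph on the Cartan decomposition is extraneous for this lemma (it enters only in the subsequent proof of Theorem~\ref{A.11}).
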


\begin{proof}
We follow closely to the proof of \cite[Lemma 4.3]{Del09}. Let $A_0$ be a maximal split torus contained in $M_\emptyset$, hence contains $A_\emptyset$. Let $P_0$ be a minimal parabolic subgroup of $G$ contained in $P_\theta$ and containing $A_0$. Let $\epsilon>0$ and $C=\epsilon^{-1}$. We set
$$
\Lambda^+(P,>C)=\{a^{-1}\mid\ a\in\Lambda(A_\emptyset)\cap A_0^-(P,<\epsilon)\}.
$$
By Proposition \ref{A.6}, there exists $C>0$ such that for all $P=P_\Theta$, we have $f_{|\Lambda^+(P,>C)}=\delta_P^{1/2}(a)(f_P)_{|\Lambda^+(P,>C)}$, for $f\in C^\infty(G/H,\omega_{V,\psi,\mu)^K}$. When $f$ is cuspidal, by Proposition \ref{A.6}, it follows that $f_P=0$ for all $P=P_\Theta$, where $\Theta$ is equal to $\Delta(G,A_\emptyset)\backslash \{\alpha\}$, for some $\alpha\in \Delta(G,A_\emptyset)$. Moreover, the complement in $\Lambda^+(A_\emptyset)$ of the union of such sets is $\Lambda^+(A_\emptyset,C)$. By \cite[Lemma 4.2]{Del09}, there exists a finite set $F_{\emptyset,K}\subset \Lambda^+(A_\emptyset)$ such that $f_{|\Lambda^+(A_\emptyset)}$ is zero outside $F_{\emptyset,K}\Lambda(A_\emptyset)_G$. This ends the proof of Lemma \ref{A.10}. 
\end{proof}
Let $\left(A_{i}\right)_{i\in I}$ be a set of representatives of
the $H\left(F\right)$-conjugacy classes of maximal $\sigma$-split
tori of $G$. This set is finite. Assume this set contains $A_{\emptyset}$.
The tori $A_{i}$ are conjugated under $G\left(F\right)$. Let $y_{i}\in G\left(F\right)$
satisfying $y_{i}\cdot A_{\emptyset}=A_{i}$. We take $y_{\emptyset}$
to be the identity element $e$. We recall the Cartan decomposition $G\left(F\right)=\underset{i\in I}{\bigcup}\,\Omega\Lambda^{+}\left(A_{\emptyset}\right)y_{i}^{-1}H\left(F\right)$, where $\Omega$ is a certain compact subset of $G\left(F\right)$. We prove an analog of \cite[Theorem 4.4]{Del09}.
\begin{theorem}\label{A.11}
$ $
\begin{enumerate}
\item Let $K$ be a compact open subgroup of $G\left(F\right)$. Let ${\mathcal{X}}$
be a finite family of characters of $\Lambda\left(A_{G}\right)$.
The space $C^{\infty}\left(G/H,\omega_{V,\psi,\mu}\right)_{\text{cusp}}^{K}\left({\mathcal{X}}\right)$
containing elements in $C^{\infty}\left(G/H,\omega_{V,\psi,\mu}\right)_{\text{cusp}}^{K}$
which are of type ${\mathcal{X}}$ under the left regular action of $\Lambda\left(A_{G}\right)$
is finite dimensional.
\item Let $\pi$ be a finite-length supercuspidal representation of $G\left(F\right)$. Then the space $\text{Hom}_{H}\left(\pi,\omega_{V,\psi,\mu}\right)$ is of finite dimensional.
\end{enumerate}
\end{theorem}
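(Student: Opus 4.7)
The plan is to follow closely the strategy of Delorme in \cite[Theorem 4.4]{Del09}, adapting it to the Weil representation setting. Part (i) is the core finite-dimensionality result, and part (ii) is deduced from it by establishing that matrix coefficients of supercuspidal representations are $(H,\omega_{V,\psi,\mu})$-cuspidal in the sense of Definition \ref{A.8}.

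For part (i), I would first invoke the Cartan decomposition $G(F) = \bigcup_{i\in I} \Omega \Lambda^+(A_\emptyset) y_i^{-1} H(F)$ to reduce the problem to controlling the restriction of $f \in C^\infty(G/H, \omega_{V,\psi,\mu})_{\text{cusp}}^K(\mathcal{X})$ to each of the finitely many pieces $\Omega \Lambda^+(A_\emptyset) y_i^{-1}$. For the base case $i = \emptyset$, Lemma \ref{A.10} directly asserts that the restriction to $\Lambda^+(A_\emptyset)$ is supported on a finite set $F_{\emptyset,K}$ modulo $\Lambda(A_\emptyset)_G$. For the remaining indices $i \in I$, I would apply Lemma \ref{A.10} to the translate $L_{y_i^{-1}} f$, viewed as a cuspidal function for the conjugated involution $\sigma_i = \mathrm{Ad}(y_i^{-1}) \circ \sigma \circ \mathrm{Ad}(y_i)$ and the subgroup $H_i = y_i^{-1} H y_i$. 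Using right $K$-invariance together with the finiteness of $\mathcal{X}$, the function $f$ on each piece is determined by its values at a finite set of points, each value lying in $\omega_{V,\psi,\mu}^{K_i \cap H_i}$ for $K_i = y_i^{-1} K y_i$. By Howe duality (c.f.\ Section \ref{sec4.2}), after decomposing under the center of $H$ according to the finitely many central characters compatible with $\mathcal{X}$, this space of fixed vectors is finite dimensional. Summing over $i \in I$ yields the required bound.

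For part (ii), given a finite-length supercuspidal $\pi$, the key step is to show that every coefficient function $c_{\xi,v}(g) = \xi(\pi(g^{-1})v)$, with $\xi \in \text{Hom}_H(\pi, \omega_{V,\psi,\mu})$ and $v \in \pi$, is $(H, \omega_{V,\psi,\mu})$-cuspidal. Unfolding Definition \ref{A.5}, the scalar function $g \mapsto \langle c_{\xi,v}(g), \phi_i\rangle$ is a generalized matrix coefficient of $\pi$ paired against the linear functional $T_{\phi_i}: w \mapsto \langle \xi(w), \phi_i\rangle$, and its image under $j_P$ is controlled by the Jacquet module $\pi_U$ of $\pi$ along $P = MU$; supercuspidality forces $\pi_U = 0$ for every proper parabolic, so the pairing in Definition \ref{A.5} collapses and $(L_g c_{\xi,v})_P = 0$. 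Once cuspidality is established, the evaluation map $\xi \mapsto (v \mapsto c_{\xi,v})$ identifies $\text{Hom}_H(\pi, \omega_{V,\psi,\mu})$ with a subspace of $\text{Hom}(\pi^K, C^\infty(G/H, \omega_{V,\psi,\mu})_{\text{cusp}}^K(\mathcal{X}_\pi))$, where $K$ is a compact open subgroup such that $\pi^K$ generates $\pi$ and $\mathcal{X}_\pi$ is the finite set of characters of $\Lambda(A_G)$ appearing in $\pi$ through its (finite set of) central characters. Admissibility makes $\pi^K$ finite dimensional, while part (i) bounds the target, giving the conclusion.

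The main obstacle will be rigorously verifying the cuspidality of $c_{\xi,v}$ through the distribution-theoretic formula in Definition \ref{A.5}. The pairing $\langle j_{P^-}(\delta_{eH}), -\rangle$ involves the Jacquet module of a linear dual and must be interpreted via the second adjointness theorem (Theorem \ref{A.3}); one must argue that the scalar function $\langle j_P(L_{m^{-1}} c_{\xi,v})(\cdot), \phi_i\rangle$ factors through a matrix-coefficient-type map so that the vanishing of $\pi_U$ really does force the Jacquet image to vanish. This step is the analogue of Lemma 4.1 of \cite{Del09}, with the additional bookkeeping that the restriction of $\omega_{V,\psi,\mu}$ to $M \cap H$ remains a (product of) Weil representations by Proposition \ref{4.1}, ensuring that the recursive definition of the constant term is consistent in the Weil representation setting.
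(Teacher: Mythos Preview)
Your proposal is correct and follows essentially the same approach as the paper's proof: Cartan decomposition plus Lemma~\ref{A.10} to reduce part~(i) to finitely many evaluation points, Howe duality for $(U_1,U(V))$ to get admissibility of the relevant central-character pieces of $\omega_{V,\psi,\mu}$, and for part~(ii) the cuspidality of $c_{\xi,v}$ via vanishing of $\pi_U$ followed by restriction to a finite $K$-fixed generating set. The paper works directly with the tori $A_i=y_iA_\emptyset y_i^{-1}$ rather than conjugating $H$, and uses left (not right) $K$-invariance of $f$, but these are cosmetic differences.
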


\begin{proof}
(i) Since $\Omega$ is compact, there exists $g_{1},\ldots,g_{k}\in\Omega$
such that $\Omega\subset\underset{i}{\cup}Kg_{i}$. Let $f\in C^{\infty}\left(G/H,\omega_{V,\psi,\mu}\right)_{\text{cusp}}^{K}\left({\mathcal{X}}\right)$.
By the Cartan decomposition, it follows that $f$ is determined by
the restriction of $L_{y_{i}g_{j}^{-1}}f$ to $\Lambda^{+}\left(A_{i}\right)$,
when $i$ varies in $I$ and $j=\overline{1,k}$. Moreover, for any
$g\in G\left(F\right)$, the function $L_{g}f$ is $g\cdot K$-invariant.
By \cite[Lemma 4.2]{Del09} and Lemma \ref{A.10} and the properties of functions of type ${\mathcal{X}}$
(see in \cite[(6.3)]{Del09}), there exists a finite subset $X$ of
$G\left(F\right)/H\left(F\right)$ such that $f$ is zero if and only
if it is zero on $X$. Let $x\in X$. It suffices to show that $f\left(x\right)$
belongs to a finite dimensional vector space. We denote by ${\mathcal{X}}$
the set containing restrictions of elements in ${\mathcal{X}}$ to $\Lambda\left(A_{G}\right)\cap H\left(F\right)$.
Let $\omega_{V,\psi,\mu}\left({\mathcal{X}}\right)$ be the subspace of
$\omega_{V,\psi,\mu}$ containing elements of type ${\mathcal{X}}$. Since
$f$ is of type ${\mathcal{X}}$, it takes values in $\omega_{V,\psi,\mu}\left({\mathcal{X}}\right)$.
By using the Howe duality for the dual pair $\left(U_{1}\left(F\right),U\left(V\right)\right)$
(cf. \cite{GT16}), we can see that $\omega_{V,\psi,\mu}\left({\mathcal{X}}\right)$
is an admissible $H(F)$-module. This gives us $\dim\omega_{V,\psi,\mu}^{x^{-1}\cdot K \cap H(F)}\left({\mathcal{X}}\right)<\infty$. Since $f$ is $K$-invariant, it follows that $f\left(x\right)\in\omega_{V,\psi,\mu}^{x^{-1}\cdot K\cap H(F)}\left({\mathcal{X}}\right)$,
which is finite dimensional.

(ii) Since $\pi$ is of finite length, there is a finite
family ${\mathcal{X}}$ of characters of $\Lambda\left(A_{G}\right)$ such
that every element of $\pi$ is of type ${\mathcal{X}}$. We consider the
following map arising from the Frobenius reciprocity  
$$
\begin{array}{ccccc}
\psi_\pi & : & \text{Hom}_{G}\left(\pi,C^{\infty}\left(G/H,\omega_{V,\psi,\mu}\right)_{\text{cusp}}\left({\mathcal{X}}\right)\right) & \longrightarrow & \text{Hom}_{H}\left(\pi,\omega_{V,\psi,\mu}\right)\\
 &  & T & \mapsto & \delta_{eH}\circ T
\end{array}.
$$
We want to prove that the above map is an isomorphism. To be more precise, we will show that 
$$
\xi\in \text{Hom}_{H}\left(\pi,\omega_{V,\psi,\mu}\right) \mapsto c_{\xi}\in \text{Hom}_{G}(\pi,C^{\infty}\left(G/H,\omega_{V,\psi,\mu}\right))
$$
is its inverse. Let $v\in \pi$. It suffices to show that $c_{\xi,v}$ is cuspidal in $C^{\infty}\left(G/H,\omega_{V,\psi,\mu}\right)$. By Definition \ref{A.5}, for any proper $\sigma$-parabolic subgroup $P=MU$ of $G$, we have 
$$
\langle(c_{\xi,v})_P(mM\cap H),\phi_i\rangle = \langle j_{P^-}(\delta_{eH}),\delta_P^{1/2}(m)j_P(\lambda_i(\pi(m^{-1})v))\rangle,
$$
where $\lambda_i$ is the following composition of $G(F)$-modules
$$
v\in\pi\longmapsto c_{\xi,v}\in C^{\infty}(G/H,\omega_{V,\psi,\mu})\longmapsto\langle c_{\xi,v}(\cdot),\phi_{i}\rangle\in C^{\infty}(G).
$$
We denote by $j_P(\lambda_i)$ the $G(F)$-morphism from $\pi_U$ to $C^\infty(G)_U$ arising from taking Jaccquet modules. It follows that 
$$
\langle(c_{\xi,v})_P(mM\cap H),\phi_i\rangle=\langle j_{P^-}(\delta_{eH}),\delta_P^{1/2}(m)j_P(\lambda_i)(\pi_U(m^{-1})j_P(v))\rangle=0,
$$
for any $i\in I$, since $\pi$ is supercuspidal. Therefore, the map $\psi_\pi$ is an isomorphism. Since $\pi$ is of finite length, it is generated by finitely many elements. We denote this finite set
by $Y$. Let $K\subset G\left(F\right)$ be a compact open subgroup
fixing these generators. Let $T$ be an element of $\text{Hom}_{G}\left(\pi,C^{\infty}\left(G/H,\omega_{V,\psi,\mu}\right)_{\text{cusp}}\left({\mathcal{X}}\right)\right)$.
One has $T\left(y\right)\in C^{\infty}\left(G/H,\omega_{V,\psi,\mu}\right)_{\text{cusp}}^{K}\left({\mathcal{X}}\right)$,
for any $y\in Y$. Since $T$ is given by the image of $Y$, it follows from the first part of the theorem that 
$$
\text{Hom}_{G}\left(\pi,C^{\infty}\left(G/H,\omega_{V,\psi,\mu}\right)_{\text{cusp}}\left({\mathcal{X}}\right)\right)\text{ is finite dimensional},
$$
which is to say $\text{Hom}_{H}\left(\pi,\omega_{V,\psi,\mu}\right)$
is finite dimensional.
\end{proof}

\subsection{End of the proof of Theorem \ref{A.1}}\label{sec12.4}
In this section, we finish our proof for Theorem \ref{A.1}. By Langlands classification, any irreducible admissible representation is the irreducible quotient of a parabolically induced representation. Hence, it suffices to prove the following theorem.
\begin{theorem}\label{A.12}
    For a partition $n=a+b$ with $0<a\leq b$, let $V=V_a\oplus V_b$ with $\dim V_a=a$ and $\dim V_b=b$. We consider the maximal parabolic subgroup $P=P_{a,b}=MN$ of $G$ stabilizing $V_a$, with Levi factor $M=M_a\times M_b=\text{Res}_{E/F}(GL(V_a)\times GL(V_b))$. Let $\pi_1$ and $\pi_2$ be finite-length representations of $M_a(F)$ and $M_b(F)$ respectively. We denote $\pi=\pi_1\times \pi_2 = \text{Ind}^G_P(\pi_1\boxtimes \pi_2)$. Then $\text{Hom}_H(\pi,\omega_{V,\psi,\mu})$ is of finite dimensional.
\end{theorem}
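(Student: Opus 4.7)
The plan is to apply Mackey theory, as developed in \cite[Section 4]{GGP23}, to decompose the restriction $\pi|_H$ orbit-by-orbit, and then to reduce each contribution to a Hom space for a smaller unitary group, where the induction hypothesis on $\dim V$ applies.

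First, I would analyze the $H(F)$-orbits on $G/P_{a,b} = \text{Gr}_a(V)$, the Grassmannian of $a$-dimensional $E$-subspaces of $V$. By Witt's theorem, these orbits are parametrized by pairs $(r, [W])$, where $r$ is the dimension of the radical of the restriction of the skew-hermitian form to the subspace, and $[W]$ is the isomorphism class (over $E$) of the non-degenerate quotient of dimension $a-r$. There are only finitely many such orbits. By the geometric Mackey lemma for $p$-adic groups, $\pi|_H$ admits a finite filtration whose graded pieces have the form $\mathrm{ind}^H_{H_{\mathcal{O}}}\bigl(\delta_{\mathcal{O}}^{1/2}\cdot(\pi_1\boxtimes\pi_2)^{g_\mathcal{O}}\bigr)$ for each orbit $\mathcal{O}$ with representative $g_\mathcal{O}\in G(F)$ and stabilizer $H_\mathcal{O}=H\cap g_\mathcal{O} P g_\mathcal{O}^{-1}$. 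By Frobenius reciprocity, it suffices to bound the dimension of $\mathrm{Hom}_{H_\mathcal{O}}(\sigma_\mathcal{O},\omega_{V,\psi,\mu}|_{H_\mathcal{O}})$ for each orbit $\mathcal{O}$.

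For the generic (open) orbit, corresponding to a non-degenerate $a$-dimensional subspace $V'_a\subset V$, the stabilizer splits as $H_\mathcal{O}=U(V'_a)\times U(V'_b)$ with $V=V'_a\oplus V'_b$ orthogonal. By the standard behavior of Weil representations under orthogonal direct sums (a special case of Proposition \ref{4.1}), the restriction of $\omega_{V,\psi,\mu}$ to $H_\mathcal{O}$ is $\omega_{V'_a,\psi,\mu}\,\widehat{\otimes}\,\omega_{V'_b,\psi,\mu}$, and the Hom becomes
\[
\mathrm{Hom}_{U(V'_a)}(\pi_1,\omega_{V'_a,\psi,\mu})\otimes \mathrm{Hom}_{U(V'_b)}(\pi_2,\omega_{V'_b,\psi,\mu}).
\]
Since $\dim V'_a,\dim V'_b<n$ (as $a\geq 1$), both factors are finite-dimensional by the induction hypothesis, hence so is their tensor product.

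For a non-open orbit with radical of dimension $r>0$, the stabilizer $H_\mathcal{O}$ is a proper parabolic subgroup of $U(V)$ with Levi quotient of the form $\mathrm{GL}_r(E)\times U(V')\times U(V'')$, where $V'$ is the non-degenerate quotient of dimension $a-r$ and $V''$ is a non-degenerate skew-hermitian space of dimension $n-2r-(a-r)$. On the unipotent radical $N_\mathcal{O}$ of $H_\mathcal{O}$, the Weil representation acts through a Fourier--Jacobi type character, and taking the corresponding $(N_\mathcal{O},\psi_{N_\mathcal{O}})$-coinvariants of $(\pi_1\boxtimes\pi_2)^{g_\mathcal{O}}$ yields a representation of finite length on the Levi quotient by admissibility and finite length of $\pi_1,\pi_2$ (standard Jacquet module theory). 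After this descent, the Hom reduces to one for a product of a $\mathrm{GL}_r(E)$-factor (contributing finitely many constraints via a character), together with Hom spaces for $U(V')$ and $U(V'')$ against their Weil representations of \textit{strictly smaller} skew-hermitian spaces; these are finite-dimensional by the induction hypothesis. The main obstacle will be the careful bookkeeping for degenerate orbits: precisely identifying the Levi structure of $H_\mathcal{O}$, the character by which $N_\mathcal{O}$ acts on $\omega_{V,\psi,\mu}$, and verifying that the reduction fits strictly into the induction (i.e. that all unitary factors appearing have dimension $<n$). Once this is established for each of the finitely many orbits, summing over the filtration yields Theorem \ref{A.12}.
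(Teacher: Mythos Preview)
Your proposal is correct and follows essentially the same route as the paper: both apply the Mackey-theoretic orbit decomposition of \cite[Section~4]{GGP23} for $H(F)$ acting on $G(F)/P(F)$, and for each orbit (indexed by the radical dimension $d$) reduce via \cite[Proposition~4.4]{GGP23} to a Hom over the Levi $\mathrm{GL}_d(E)\times U_{a-d}\times U_{b-d}$, handling the unitary factors by the induction hypothesis and the $\mathrm{GL}_d$-factor by finite multiplicity for the diagonal pair (the paper cites \cite[Theorem~4.5]{Del09}, though Schur's lemma plus finite length of Jacquet modules suffices). The only imprecision in your sketch is that the descent through the unipotent radical produces ordinary Jacquet modules $(\pi_1)_{d,a-d}$, $(\pi_2)_{b-d,d}$ on the source side (with the character twist appearing on the target $\omega_{V_{n-2d},\psi,\mu}$), rather than twisted coinvariants of $\pi_1\boxtimes\pi_2$.
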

\begin{proof}
    We recall the study of orbits of $H(F)$ on the flag variety $G(F)/P(F)$ in \cite[Section 4]{GGP23}. By \cite[Lemma 4.2]{GGP23}, they are represented by the isometry classes of $a$-dimensional $E$-subspaces $X \subset V$, which are parametrized by the dimension $d=\dim(X\cap X^\perp)$ for some $d\leq \min\{a,\text{rank}(V)\}$ and a non-degenerate skew-hermitian form on $X/(X\cap X^\perp)$.
    
    Let $[X]$ be an $H(F)$-orbit in $G(F)/P(F)$, which is represented by an $E$-subspace $X\subset V$ of dimension $a$ and $d=\dim(X\cap X^\perp)$. By Mackey theory, the restriction of $\pi=\pi_1\times \pi_2$ to $H(F)$ has a finite equivariant filtration indexed by the $H(F)$-orbits in $G(F)/P(F)$. We denote the $H(F)$-subquotient of $\pi$ corresponding to $[X]$ by $\pi_X$. It suffices to show that 
    $$
    \dim\text{Hom}_H(\pi_X,\omega_{V,\psi,\mu})<\infty,
    $$
    for any $H(F)$-orbit $X$ in $G(F)/P(F)$. Let $S_X$ be the stabilizer of $[X]$ in $H(F)$ and $N(S_X)$ be its unipotent radical. Observe $S_X/N(S_X) \cong \text{GL}_d(E)\times \text{U}_{a-d}\times \text{U}_{b-d}$. By \cite[Proposition 4.4]{GGP23}, we have
    $$
    \text{Hom}_H(\pi_X,\omega_{V,\psi,\mu})\cong \text{Hom}_{S_X/N(S_X)}
    \left(\delta_{P/S}^{1/2}\cdot (\pi_1)_{d,a-d}\otimes (\pi_2)_{b-d,d},\,\delta_S^{1/2}\cdot |\text{det}_{\text{GL}_d}|^{-1/2}\mu\cdot \omega_{V_n-2d,\psi,\mu} \right),
    $$
    where the representations $(\pi_1)_{d,a-d}$ and $(\pi_2)_{b-d,d}$, as well as the characters $\delta_{P/S}^{1/2}$ and $\delta_S$ are given in \cite[Proposition 4.4]{GGP23}. For simplicity, we assume that $(\pi_1)_{d,a-d}=\pi_1^{d}\boxtimes \pi_1^{a-d}$ and $(\pi_2)_{b-d,d}=\pi_2^{b-d}\boxtimes \pi_2^{d}$, where $\pi_1^{d}$ and $\pi_2^{d}$ are finite-length representations of $\text{GL}_d(E)$ and $\pi_1^{a-d}$ is a finite-length representation of $\text{GL}_{a-d}(E)$ and $\pi_2^{b-d}$ is a finite-length representation of $\text{GL}_{b-d}(E)$. This gives us 
    $$
    \text{Hom}_H(\pi_X,\omega_{V,\psi,\mu})\cong \text{Hom}_{\text{GL}_d}(\delta_{P/S}^{1/2}\cdot \pi_1^{d}\otimes\pi_2^{d},\delta_S^{1/2}\cdot |\text{det}_{\text{GL}_d}|^{-1/2}\mu)
    $$
    $$
    \otimes \text{Hom}_{\text{U}_{a-d}}(\pi_1^{a-d},\omega_{V_{a-d},\psi,\mu}) 
    \otimes \text{Hom}_{\text{U}_{b-d}}(\pi_2^{b-d},\omega_{V_{b-d},\psi,\mu}),
    $$
    noting that $\omega_{V_{n-2d},\psi,\mu}$ is isomorphic to $\omega_{V_{a-d},\psi,\mu}\otimes \omega_{V_{b-d},\psi,\mu}$ as $\text{U}_{a-d}(F)\times\text{U}_{b-d}(F)$-modules. Applying \cite[Theorem 4.5]{Del09} for the symmetric variety $\text{GL}_d(E)\times \text{GL}_d(E)/\text{GL}_d(E)$, we have $\text{Hom}_{\text{GL}_d}(\delta_{P/S}^{1/2}\cdot \pi_1^{d}\otimes\pi_2^{d},\delta_S^{1/2}\cdot |\text{det}_{\text{GL}_d}|^{-1/2}\mu)$ is of finite dimensional. By the induction hypothesis, we have $\text{Hom}_{\text{U}_{a-d}}(\pi_1^{a-d},\omega_{V_{a-d},\psi,\mu})$ and $\text{Hom}_{\text{U}_{b-d}}(\pi_2^{b-d},\omega_{V_{b-d},\psi,\mu})$ are of finite dimensional. Therefore
    $$
    \dim \text{Hom}_H(\pi_X,\omega_{V,\psi,\mu})<\infty.
    $$
\end{proof}


\begin{thebibliography}{PTW02}
\providecommand{\bysame}{\leavevmode\hbox to3em{\hrulefill}\thinspace}
\providecommand{\MR}{\relax\ifhmode\unskip\space\fi MR }
\providecommand{\MRhref}[2]{%
  \href{http://www.ams.org/mathscinet-getitem?mr=#1}{#2}
}


\bibitem[AGRS10]{AGRS10}Aizenbud, A., Gourevitch, D., Rallis, S., Schiffmann, G. (2010). Multiplicity one theorems. Annals of Mathematics, 1407-1434.

\bibitem[Art93]{Art93}Arthur, J. (1993). On elliptic tempered characters. Acta Math, 171(1), 73-138.

\bibitem[Art94]{Art94}Arthur, J. (1994). On the Fourier transforms of weighted orbital integrals. J. Reine Angew. Math. 452, 163\textendash 217.

\bibitem[Art05]{Art05}Arthur, J. (2005). An introduction to the trace formula. Harmonic analysis, the trace formula, and Shimura varieties, 4, 1-263.

\bibitem[Ato18]{Ato18}Atobe, H. (2018). The local theta correspondence and the local Gan–Gross–Prasad conjecture for the symplectic-metaplectic case. Mathematische Annalen, 371, 225-295.

\bibitem[Ber]{Ber}Bernstein, J., Second adjointness theorem for representations of reductive p-adic groups, unpublished manuscript.

\bibitem[BO07]{BO07}Benoist, Y., Oh, H. (2007). Polar decomposition for p-adic symmetric spaces. International Mathematics Research Notices, 2007, rnm121.

\bibitem[BP14]{BP14}Beuzart-Plessis, R. (2014). Expression d'un facteur epsilon de paire par une formule intégrale. Canadian Journal of Mathematics, 66(5), 993-1049.

\bibitem[BP15]{BP15}Beuzart-Plessis, R. (2015). Endoscopie et conjecture locale raffinée de Gan–Gross–Prasad pour les groupes unitaires. Compositio Mathematica, 151(7), 1309-1371.

\bibitem[BP16]{BP16}Beuzart-Plessis, R. (2016). La conjecture locale de Gross-Prasad pour les representations temperees des groupes unitaires, Mem. Soc. Math. Fr. (N.S.), no. 149, vii+191.

\bibitem[BP18]{BP18}Beuzart-Plessis, R. (2018). On distinguished square-integrable representations for Galois pairs and a conjecture of Prasad. Inventiones mathematicae, 214(1), 437-521.

\bibitem[BP20a]{BP20}Beuzart-Plessis, R. (2020). A local trace formula for the Gan-Gross-Prasad conjecture for unitary groups: the archimedean case. Astérisque. 417. 308. 10.24033/ast.11120.

\bibitem[BP20b]{BP20b}Beuzart-Plessis, R. (2021) Comparison of local spherical characters and the Ichino–Ikeda conjecture for unitary groups. Journal of the Institute of Mathematics of Jussieu, 20(6):1803-1854. doi:10.1017/S1474748019000707

\bibitem[BW24]{BW23}Beuzart-Plessis, R., Wan, C. (2024). A local twisted trace formula for Whittaker induction of coregular symmetric pairs: the geometric side. arXiv preprint arXiv:2312.10845.

\bibitem[BoWa00]{BoWa00}Borel, A., Wallach, N. R. (2000). Continuous cohomology, discrete subgroups, and representations of reductive groups (No. 67). Mathematical Surveys and Monographs
Volume: 67, American Mathematical Soc..

\bibitem[Cas]{Cas}Casselman, W., Introduction to the theory of admissible representations of p-adic reductive groups, http://www.math.ubc.ca/\ensuremath{\sim}
cass/research.html.

\bibitem[CG25]{CG22}Chen, R., Gan, W. T. (2025). Twisted Gan-Gross-Prasad conjecture for certain tempered representations. Journal of the Institute of Mathematics of Jussieu, 24(1), 17-39.

\bibitem[Ch21]{Ch21}Chen, C. (2021). The local Gross-Prasad conjecture over archimedean local fields. arXiv preprint arXiv:2102.11404.

\bibitem[Ch23]{Ch23}Chen, C. (2023). Multiplicity formula for induced representations: Bessel and Fourier-Jacobi models over Archimedean local fields. arXiv preprint arXiv:2308.02912.

\bibitem[CL22]{CL22}Chen, C., Luo, Z. (2022). The local Gross-Prasad conjecture over R: Epsilon dichotomy. arXiv preprint arXiv:2204.01212.

\bibitem[Clo87]{Clo87}Clozel, L. (1987). Characters of Non-Connected, Reductive p-Abic Groups. Canadian Journal of Mathematics, 39(1), 149-167.

\bibitem[Del10]{Del09}Delorme, P. (2010). Constant term of smooth
$H_{\psi}$-spherical functions on a reductive p-adic group. Transactions of the American Mathematical Society, 362(2), 933-955.


\bibitem[DS11]{DS11}Delorme, P., Sécherre, V. (2011). An analogue of the Cartan decomposition for p-adic symmetric spaces of split p-adic reductive groups. Pacific journal of mathematics, 251(1), 1-21.

\bibitem[GI16]{GI16}Gan, W. T., Ichino, A. (2016). The Gross–Prasad conjecture and local theta correspondence. Inventiones mathematicae, 206, 705-799.

\bibitem[GGP12a]{GGP12a}Gan, W. T., Gross, B. H., Prasad, D. (2012). Symplectic local root numbers, central critical L-values, and restriction problems in the representation theory of classical groups, Asterisque 346 (2012), 1\textendash 109.


\bibitem[GGP12b]{GGP12b}Gan, W. T., Gross, B. H., Prasad, D. (2012). Restrictions of representations of classical groups: examples, Asterisque 346 (2012), 111\textendash 170.

\bibitem[GGP23]{GGP23}Gan, W. T., Gross, B. H., Prasad, D. (2023). Twisted GGP problems and conjectures. Compositio Mathematica, 159(9), 1916-1973.

\bibitem[GP92]{GP92}Gross, B. H., Prasad, D. (1992). On the decomposition of a representation of $SO_n$ when restricted to $SO_{n-1}$. Canadian Journal of Mathematics, 44(5), 974-1002.


\bibitem[GP94]{GP94}Gross, B. H., Prasad, D. (1994). On irreducible representations of $SO_{2n+1}\times SO_{2m}$. Canadian Journal of Mathematics, 46(5), 930-950.

\bibitem[GT16]{GT16}Gan, W. T., Takeda, S. (2016). A proof of the Howe duality conjecture. Journal of the American Mathematical Society, 29(2), 473-493.

\bibitem[GZ14]{GZ14}Gomez, R., Zhu, C. B. (2014). Local theta lifting of generalized Whittaker models associated to nilpotent orbits. Geometric and Functional Analysis, 24, 796-853.

\bibitem[He17]{He17}He, H. (2017). On the Gan–Gross–Prasad conjecture for U (p, q). Inventiones mathematicae, 209, 837-884.

\bibitem[HKS96]{HKS96}Harris, M., Kudla, S., Sweet, W. (1996). Theta dichotomy for unitary groups. journal of the American Mathematical society, 9(4), 941-1004.

\bibitem[HW93]{HW93}Helminck, A. G., Wang, S. P. (1993). On rationality properties of involutions of reductive groups. Adv. Math, 99(1), 26-96.

\bibitem[JPSS83]{JPSS83}Jacquet, H., Piatetskii-Shapiro, I. I., Shalika, J. A. (1983). Rankin-selberg convolutions. American journal of mathematics, 105(2), 367-464.

\bibitem[Kon02]{Kon02}Konno, T. (2002). Twisted endoscopy and the generic packet conjecture. Israel Journal of Mathematics, 129(1), 253-289.

\bibitem[Kot05]{Kot05}Kottwitz, R. E. (2005). Harmonic analysis on reductive p-adic groups and Lie algebras. Harmonic analysis, the trace formula, and Shimura varieties, 4, 393-522, Clay Math. Proc., Amer. Math. Soc., Providence, RI, 2005.

\bibitem[Kud94]{Kud94}Kudla, S. S. (1994). Splitting metaplectic covers of dual reductive pairs. Israel Journal of Mathematics, 87(1), 361-401.

\bibitem[KS99]{KS99}Kottwitz, R. E., Shelstad, D. (1999). Foundations of twisted endoscopy. Astérisque, 255, 1-190.

\bibitem[Luo20]{Luo21}Luo, Z. (2020). A local trace formula for the local gan-gross-prasad conjecture for special orthogonal groups. arXiv preprint arXiv:2009.13947.

\bibitem[Mil77]{Mil77}Miličić, D. (1977). Asymptotic behaviour of matrix coefficients of the discrete series. Duke Math. J. 44(1): 59-88

\bibitem[MW87]{MW87}Mœglin, C., Waldspurger, J. L. (1987). Modeles de Whittaker dégénérés pour des groupes p-adiques. Mathematische Zeitschrift, 196, 427-452.

\bibitem[MW12]{MW12}Mœglin, C., Waldspurger, J. L. (2012). La conjecture locale de Gross-Prasad pour les groupes sp\'eciaux orthogonaux: le cas g\'en\'eral. Asterisque, 347:167–216.

\bibitem[MW16]{MW16}Moeglin, C., Waldspurger, J. L. (2016). Appendice: représentations elliptiques; caractérisation et formule de transfert de caractères. Stabilisation de la formule des traces tordue: Volume 2, 1255-1302.

\bibitem[MW18]{MW18}Mœglin, C., Waldspurger, J. L. (2018). La formule des traces locale tordue. Memoirs of the American Mathematical Society, Vol. 251, No. 1198.

\bibitem[Prz93]{Prz93}Przebinda, T. (1993). Characters, dual pairs, and unitary representations. Duke Math. J. 69(3): 547-592.

\bibitem[Sun12]{Sun12}Sun, B. (2012). Multiplicity one theorems for Fourier-Jacobi models. American Journal of Mathematics, 134(6), 1655-1678.

\bibitem[Wal03]{Wal03}Waldspurger, J. L. (2003). La formule de Plancherel pour les groupes p-adiques. D’apres Harish-Chandra. Journal of the Institute of Mathematics of Jussieu, 2(2), 235-333.

\bibitem[Wal10]{Wal10}Waldspurger, J. L. (2010). Une formule intégrale reliée à la conjecture locale de Gross–Prasad. Compositio Mathematica, 146(5), 1180-1290.

\bibitem[Wal12a]{Wal12a}Waldspurger, J. L. (2012). Calcul d'une valeur d'un facteur epsilon par une formule int\'egrale. Asterique 347.

\bibitem[Wal12b]{Wal12b}Waldspurger, J. L. (2012). La formule des
traces locale tordue. Memoirs of the American Mathematical Society
Volume: 251; 2018; 180 pp.

\bibitem[Wal12c]{Wal12c}Waldspurger, J. L. (2012). La conjecture locale de Gross-Prasad pour les repr\'esentations temp\'er\'ees des groupes sp\'eciaux orthogonaux. Asterique 347, 103\textendash 165.

\bibitem[Wal12d]{Wal12d}Waldspurger, J. L. (2012). Une formule int\'egrale reli\'ee à la conjecture locale de Gross-Prasad, $2^{\text{ème}}$ partie: extension aux repr\'esentations temp\'er\'ees.  Asterisque, 346:171–312.

\bibitem[Wal12e]{Wal12e}Waldspurger, J. L. (2012). Une variante d’un résultat de Aizenbud, Gourevitch, Rallis et Schiffmann. Astérisque, 346, 313-318.

\bibitem[Wan17]{Wan17}Wan, C. (2017). A local trace formula and the multiplicity one theorem for the Ginzburg-Rallis model. Retrieved
from the University of Minnesota Digital Conservancy, https://hdl.handle.net/11299/190477.

\bibitem[Wan19]{Wan19}Wan, C. (2019). A Local Relative Trace Formula for the Ginzburg-Rallis Model: The Geometric Side (Vol. 261, No. 1263). Memoirs of the American Mathematical Society.

\bibitem[Xue16]{Xue16}Xue, H. (2016). Fourier–Jacobi periods and the central value of Rankin–Selberg L-functions. Israel Journal of Mathematics, 212, 547-633.


\bibitem[Xue20]{Xue20}Xue, H. (2020). Bessel models for unitary groups and Schwartz homology. preprint.

\bibitem[Xue23]{Xue23a}Xue, H. (2023). Bessel models for real unitary groups: the tempered case. Duke Mathematical Journal, 172(5).

\bibitem[Xue24]{Xue23b}Xue, H. (2024). Fourier–Jacobi models for real unitary groups. Journal of Functional Analysis, 287(12), 110645.

\end{thebibliography}
\end{document}